\documentclass[12pt,letterpaper,notitlepage]{article}
\usepackage{eurosym}
\usepackage{amsmath}
\usepackage{float}
\usepackage[left=1in,
            right=1in,
            top=1in,
            bottom=1in]{geometry}
\usepackage[footnotesize]{caption}
\usepackage{subcaption}
\usepackage{color}
\usepackage[pdftex]{graphicx}
\usepackage{graphicx}
\usepackage{psfrag}
\usepackage{Here}
\usepackage{multirow}
\usepackage{pgf}
\usepackage{color}
\usepackage{pgf,pgfarrows,pgfnodes,pgfautomata,pgfheaps,pgfshade}
\usepackage[T1]{fontenc}
\usepackage{color}
\usepackage{amsfonts}
\usepackage{amssymb, amsmath, amsthm,bm}
\usepackage{lscape}
\usepackage{natbib}
\usepackage{enumerate}
\usepackage{bbm}
\usepackage{lscape}
\usepackage{diagbox}
\usepackage{comment}
\usepackage{makecell}
\usepackage{titling}
\usepackage{setspace}
\usepackage{rotating}
\usepackage{mathtools}
\usepackage{xr-hyper}
\usepackage{afterpage}
\usepackage{lipsum}
\usepackage{enumitem}
 \usepackage[colorlinks,linkcolor={red},citecolor={blue},urlcolor={red},filecolor={red}]{hyperref}
\allowdisplaybreaks

\numberwithin{equation}{section}

\renewcommand{\arraystretch}{1.25}
\newtheorem{theorem}{Theorem}[section]

\newtheorem{assumption}{Assumption}

\newtheorem{lemma}{Lemma}[section]

\newtheorem{proposition}{Proposition}[section]
\newtheorem{remark}{Remark}

\makeatletter
\newcommand{\specialcell}[1]{\ifmeasuring@#1\else\omit$\displaystyle#1$\ignorespaces\fi}
\makeatother

\title{Bootstrap Inference under General Two-way Clustering with Serially and Spatially Dependent Common Effects\thanks{A previous version of this paper was circulated under the title
\textit{``Projection-Based Wild Bootstrap Under General Two-Way Cluster Dependence with Serial Dependence''}.}}
\author{ {\large \textbf{Ulrich Hounyo}\thanks{%
Department of Economics, University at Albany -- State University of New
York, Albany, NY 12222, United States.}} \and {\large \textbf{Jiahao Lin}%
\thanks{%
Department of Economics, University at Albany -- State University of New
York, Albany, NY 12222, United States.} }}

\makeatother

\begin{document}
\maketitle 
\begin{abstract}
\noindent This paper develops bootstrap procedures for inference in linear regression
models with two-way clustered data. We characterize the estimator's
asymptotic behavior in five mutually exclusive and exhaustive regimes: three
Gaussian and two non-Gaussian. We establish four impossibility results: heterogeneous score components preclude uniform
consistency; uniform consistency also fails in one
non-Gaussian (infeasible) regime; the infeasible regime is not uniformly distinguishable from a
feasible one; and uniform validity over all feasible regimes rules out uniform
conservativeness over the infeasible regime.

To address the feasible regimes, we propose a data-driven regime classifier
and a projection-based wild bootstrap procedure. The procedure delivers
uniformly valid inference across the four feasible regimes while allowing
serial dependence along the second clustering dimension and spatial dependence
along the first. This combination of regime adaptivity and flexible dependence
is new to the two-way clustering literature. Monte Carlo
simulations confirm the accuracy and flexibility of the proposed methods in
settings with complex clustering structures.

\bigskip{}
 \textbf{JEL Classification}: C15, C23, C31, C80

\noindent \medskip{}
 \textbf{Keywords}: Bootstrap, clustered
data, two-way clustering, robust inference, wild bootstrap. 
\end{abstract}

\vspace*{-0.5cm}

\vfill{}

\thispagestyle{empty} \pagebreak{}

\section{Introduction}

Understanding dependence structures is essential for valid statistical
inference. In many empirical settings, the classical assumption of
independence is violated, especially in time series and panel data
contexts. Ignoring such dependence, whether temporal or cross-sectional,
can lead to biased standard errors, inflated Type I error rates, and
invalid inference, as emphasized by Bertrand, Duflo, and Mullainathan~(\citeyear{bertrand2004much})
and Petersen~(\citeyear{petersen2008estimating}).

This paper addresses a key challenge in modern econometrics: conducting
inference under two-way clustering when the common effects may exhibit
arbitrary forms of serial and spatial dependence. This setting is
practically relevant but, to the best of our knowledge, has not been
studied in the existing literature.  We first characterize the asymptotic distribution of the estimator
$\widehat{\bm{\beta}}$ under general two-way clustering structures, yielding five mutually exclusive
and exhaustive regimes, classified by the relative growth of variance
along the two clustering dimensions: three Gaussian and two non-Gaussian. 

Building on the insight of Davezies,
Haultf{\ae}uille, and Guyonvarch~(\citeyear{davezies2025analytic}), we show that
heterogeneous score components impose a fundamental limit on uniformly
consistent inference under two-way clustering.
Moreover, based on Menzel~(\citeyear{menzel2021bootstrap}), who shows that
no inference procedure can achieve uniform consistency across the
full function space, we extend and sharpen this result by isolating
a particularly challenging regime. In this infeasible regime, consistent
inference is still impossible when the DGP is fully unspecified. We
further demonstrate that lack of knowledge of the true variance along
both dimensions is the \emph{only} obstacle to attaining uniform consistency
across the full function space. We also establish that uniform validity
across the four feasible regimes necessarily rules out uniform conservativeness
in the infeasible one.

Next, we develop a data-driven classification procedure that distinguishes
among all the feasible regimes using two informative discriminants.
This enables researchers to tailor inference procedures to the prevailing
asymptotic environment. However, we also show that the infeasible
scenario is not uniformly distinguishable from one feasible regime,
imposing fundamental limits on what can be learned from the data without
stronger assumptions.

Third, we introduce a family of projection-based wild bootstrap (PWB)
procedures and establish their \emph{uniform} consistency across different
regimes.\footnote{The code used to implement the methods in this paper is available
at:\\ \url{https://jiahaoecon.github.io/webpage/research/}.} Among these, the PWB-H variant delivers uniformly valid inference
across all four feasible regimes. The theory uses joint large-$N$, large-$T$ asymptotics,
but imposes no restriction on the relative growth of $N$ and $T$.

When temporal and spatial dependencies are absent, some PWB variants
closely resemble the approach of Menzel~(\citeyear{menzel2021bootstrap}).
Unlike Menzel (\citeyear{menzel2021bootstrap})'s bootstrap, which
mixes wild and i.i.d. components, our method applies the wild bootstrap
to projected scores, which preserves the correlations for uniform
consistency. Moreover, PWB shares conceptual similarities with the
method of Juodis~(\citeyear{juodis2021shock}). Our approach differs
mainly in two key respects. First, the procedure
detects whether the limiting distribution is asymptotically Gaussian
and, based on this diagnostic, to adaptively select the appropriate
regime, thereby achieving simultaneous validity across multiple scenarios.
Consequently, even though PWB employs the tuning thresholds as in
Menzel~\citeyearpar{menzel2021bootstrap} and Juodis~\citeyearpar{juodis2021shock},
it does \emph{not} inherit the boundary non-uniformity induced by
the indifference region that is intrinsic to purely threshold-based
tuning. Second, we introduce a scaling adjustment that corrects the
estimation error of the variance components along both clustering
dimensions, which is crucial for validity in regimes where the contribution
of the noise term is asymptotically negligible. 

Overall, the paper makes six contributions: (i) extends the framework to allow spatial dependence
along the first dimension; (ii) shows heterogeneous score components preclude uniform consistency; (iii) characterizes five
mutually exclusive and exhaustive asymptotic regimes for two-way clustering;
(iv) pinpoints one infeasible regime where uniform consistency fails,
showing that unknown true variance is the sole obstacle to uniform
consistency over the full function space, and further establishes
that uniform validity across the four feasible regimes necessarily
precludes uniform conservativeness in the infeasible regime; (v)
develops a data-driven regime classifier for the four feasible regimes
and shows that the infeasible regime cannot be uniformly distinguished; and 
(vi) proposes a projection-based wild bootstrap family and a hybrid
implementation that is uniformly asymptotically valid across all feasible
regimes. To the best of our knowledge, this level of
generality is new to the two-way clustering literature.

The rest of the paper is organized as follows. Section~\ref{sec:model}
presents the two-way clustering model and the five asymptotic regimes.
Section~\ref{sec:method} introduces the PWB procedures and develops
the theory for bootstrap validity. Section~\ref{sec:simulations}
reports various simulation results under five regimes. Section~\ref{sec:conclusion}
concludes. Technical proofs and additional results are deferred to
the appendix.

\section{Model Setting and Five Asymptotic Regimes}

\label{sec:model}

\subsection{Two-way Clustering with Serially and Spatially Dependent Common Effects}

We consider a linear regression model with two clustering dimensions.
Let $i=1,\dots,N$ index clusters in the first dimension and $t=1,\dots,T$
index clusters in the second dimension. For each intersection $(i,t)$,
suppose that 
\begin{equation}
\bm{y}_{it}=\bm{X}_{it}\bm{\beta}+\bm{u}_{it},
\end{equation}
where $\bm{y}_{it}$ is an $M_{it}\times1$ vector of outcomes, $\bm{X}_{it}$
is an $M_{it}\times K$ matrix of regressors, $\bm{u}_{it}$ is an
$M_{it}\times1$ vector of disturbances, and $\bm{\beta}$ is a $K\times1$
parameter vector. Here, $M_{it}$ denotes the number of observations
in intersection $(i,t)$. This setup allows for unbalanced panels
as well as multiple observations within a given intersection.

Let 
\[
M_{i}=\sum_{t=1}^{T}M_{it},\qquad M_{t}=\sum_{i=1}^{N}M_{it},\qquad M=\sum_{i=1}^{N}\sum_{t=1}^{T}M_{it},
\]
where $M_{i}$ and $M_{t}$ denote the numbers of observations in
cluster $i$ and cluster $t$, respectively, and $M$ is the total
sample size.

Stacking all observations yields 
\begin{equation}
\bm{y}=\bm{X}\bm{\beta}+\bm{u},\label{eq: linear regression model}
\end{equation}
where $\bm{y}$ is an $M\times1$ vector, $\bm{X}$ is an $M\times K$
matrix, and $\bm{u}$ is an $M\times1$ vector. We use $\bm{y}_{i},\bm{X}_{i},\bm{u}_{i}$
to denote the subvectors and submatrices associated with cluster $i$
in the first dimension, and $\bm{y}_{t},\bm{X}_{t},\bm{u}_{t}$ to
denote those associated with cluster $t$ in the second dimension.

The ordinary least squares estimator of $\bm{\beta}$ is 
\begin{equation}
\widehat{\bm{\beta}}=\left(\bm{X}^{\top}\bm{X}\right)^{-1}\bm{X}^{\top}\bm{y}.
\end{equation}
For simplicity, in the main text we focus on the case in which each
intersection contains the same finite number of observations. The
Internet Appendix IC extends the framework to allow for heterogeneous numbers
of observations and missing intersections.

The main feature of the model is its dependence structure, which 
is illustrated in Figure~\ref{fig:RV-generate-multiway-clustering-with-TS}.
Panels~(a)-(c) show the classical independence and one-way clustering dependence structures. Panel~(d)
depicts the conventional two-way clustering setup with no additional
dependence; see, e.g.,  Davezies,
D'Haultf{œ}uille, and Guyonvarch~(\citeyear{davezies2021empirical},
\citeyear{davezies2022marcinkiewicz}), MacKinnon, Nielsen, and Webb~(\citeyear{mackinnon2021wild}),
and Menzel~(\citeyear{menzel2021bootstrap}). Panel~(e) allows for
serial dependence in the time common effect $\{\bm{\xi}_{t}\}_{t=1}^{T}$; see, e.g., Chiang, Hansen, and Sasaki~(\citeyear{chiang2023standard}), Chen and Vogelsang (\citeyear{chen2023fixed}), Hounyo and Lin (\citeyear{hounyo2025jackknife}, \citeyear{hounyo2024wild}). 
Panel~(f) further extends the existing literature by allowing for spatial
dependence in the cross-sectional common effect
$\{\bm{\alpha}_{i}\}_{i=1}^{N}$. This yields a more general two-way
clustering environment with both serial and spatial dependence.

\begin{figure}[t!]
\centering \includegraphics[width=1\textwidth]{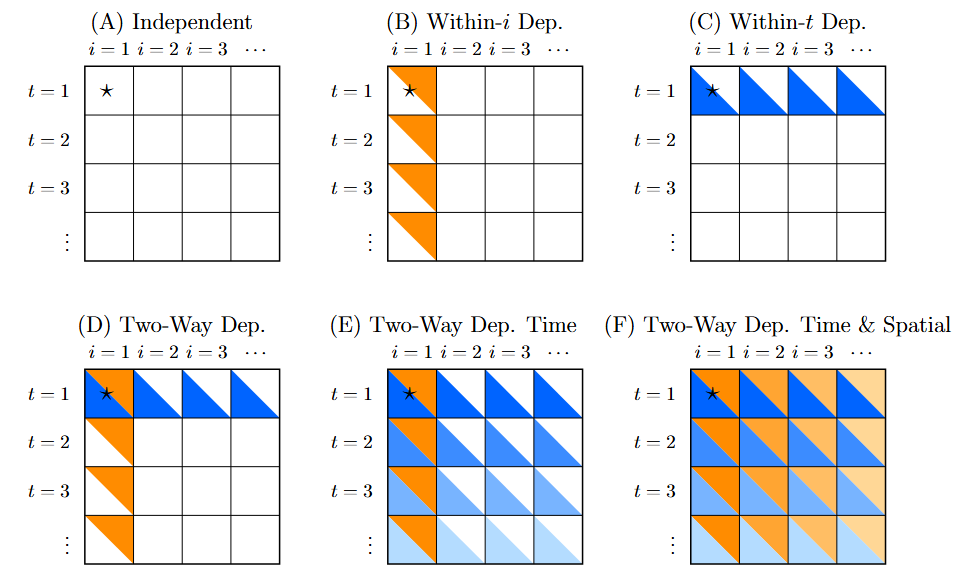} \caption{An illustration of two-way clustering with possibly dependent common
effects. A lighter color indicates weaker dependence. In this figure,
spatial dependence decays as $|i-j|$ increases.}
\label{fig:RV-generate-multiway-clustering-with-TS} 
\end{figure}

Following Conley~\citeyearpar{conley1999gmm}, we assume that the
spatial process $\{\bm{\alpha}_{i}\}_{i=1}^{N}$ is indexed by locations
$\{\mathfrak{s}_{i}\}_{i=1}^{N}\subset\mathcal{H}$, where $\mathcal{H}$
is a regular lattice in $\mathbb{R}^{2}$.\footnote{The restriction to
\(\mathbb R^{2}\) is imposed for expositional simplicity. The arguments can
be extended to \(\mathbb R^{c}\), for any fixed integer \(c>0\), with 
minor modifications.} Let $\mathfrak{d}_{ij}\equiv\|\mathfrak{s}_{i}-\mathfrak{s}_{j}\|$
denote the true Euclidean distance, satisfying $\mathfrak{d}_{ii}=0$,
$\mathfrak{d}_{ij}=\mathfrak{d}_{ji}$, $\mathfrak{d}_{ij}>0$ for
$i\neq j$, and $\mathfrak{d}_{ij}\leq\mathfrak{d}_{ij'}+\mathfrak{d}_{j'i}$.
The spatial
literature also allows $\mathfrak{d}_{ij}$ to be measured with error.
In particular, one may observe $\widetilde{\mathfrak{d}}_{ij}=\mathfrak{d}_{ij}+\varsigma_{ij}$
where $\varsigma_{ij}$ is bounded measurement error; under mild regularity
conditions, replacing $\mathfrak{d}_{ij}$ by $\widetilde{\mathfrak{d}}_{ij}$
 preserves consistency as further discussed in the theory (Conley~\citeyear{conley1999gmm};
see also Conley and Molinari~\citeyear{conley2007spatial}; Kelejian
and Prucha~\citeyear{kelejian2007hac}).

We further 
describe the dependence structure using an Aldous-Hoover-Kallenberg (AHK, Aldous~\citeyear{aldous1981representations};
Hoover~\citeyear{hoover1979relations}; Kallenberg~\citeyear{kallenberg1989representation})
representation, a standard device in the multiway clustering literature.

\begin{assumption}\label{as: AHS representation} There exists a
Borel measurable function $f$ such that 
\begin{equation}
\left(\bm{y}_{it},\bm{X}_{it},\bm{u}_{it}\right)=f\left(\bm{\alpha}_{i},\bm{\xi}_{t},\bm{\varepsilon}_{it}\right),\label{eq: real DGP}
\end{equation}
where $\{\bm{\alpha}_{i}\}$, $\{\bm{\xi}_{t}\}$, and $\{\bm{\varepsilon}_{it}\}$
are mutually independent random elements with uniform marginals on
$[0,1]$. The sequence $\{\bm{\alpha}_{i}\}$ is a strictly stationary spatially dependent process, $\{\bm{\xi}_{t}\}$
is a strictly stationary serially dependent process, and 
$\{\bm{\varepsilon}_{it}\}$ is i.i.d. over $(i,t)$. The function
$f$ is allowed to vary with the sample sizes $N$ and $T$. \end{assumption}

Assumption~\ref{as: AHS representation} allows for dependence beyond the
standard two-way clustering structure. Along the first dimension,
observations may be dependent within a cluster and across nearby clusters
through the spatially correlated common effect \(\bm{\alpha}_{i}\). Along
the second dimension, dependence may arise within a cluster and across
nearby clusters through the serially correlated common effect
\(\bm{\xi}_{t}\). Consequently, observations at intersections \((i,t)\)
and \((i',t')\) may be dependent not only when they share the same cluster
in either dimension, but also when \(i\) and \(i'\) are spatially linked or
when \(t\) and \(t'\) are serially linked.\footnote{All serial and spatial dependence considered in this paper operates,
as is standard, through the common effects \(\bm{\alpha}_i\) and/or
\(\bm{\xi}_t\). Individual-specific
dependence in \(\bm{\varepsilon}_{it}\) would introduce an additional
dependence channel and is outside the scope of the present paper.}

The representation is useful as it allows for a Hoeffding type decomposition
as follows: 
\[
\bm{X}_{it}^{\top}\bm{u}_{it}=\bm{a}_{i}+\bm{d}_{t}+\bm{w}_{it}+E\left(\bm{X}_{it}^{\top}\bm{u}_{it}\right),
\]
where 
\begin{align*}
\bm{a}_{i} & =E(\bm{X}_{it}^{\top}\bm{u}_{it}\mid\bm{\alpha}_{i})-E(\bm{X}_{it}^{\top}\bm{u}_{it}),\\
\bm{d}_{t} & =E(\bm{X}_{it}^{\top}\bm{u}_{it}\mid\bm{\xi}_{t})-E(\bm{X}_{it}^{\top}\bm{u}_{it}),\\
\bm{w}_{it} & =\bm{X}_{it}^{\top}\bm{u}_{it}-E(\bm{X}_{it}^{\top}\bm{u}_{it}\mid\bm{\alpha}_{i})-E(\bm{X}_{it}^{\top}\bm{u}_{it}\mid\bm{\xi}_{t})+E(\bm{X}_{it}^{\top}\bm{u}_{it})=\bm{v}_{it}+\bm{e}_{it},\\
\bm{v}_{it} & =E(\bm{X}_{it}^{\top}\bm{u}_{it}\mid\bm{\alpha}_{i},\bm{\xi}_{t})-E(\bm{X}_{it}^{\top}\bm{u}_{it}\mid\bm{\alpha}_{i})-E(\bm{X}_{it}^{\top}\bm{u}_{it}\mid\bm{\xi}_{t})+E(\bm{X}_{it}^{\top}\bm{u}_{it}),\\
\bm{e}_{it} & =\bm{X}_{it}^{\top}\bm{u}_{it}-E(\bm{X}_{it}^{\top}\bm{u}_{it}\mid\bm{\alpha}_{i},\bm{\xi}_{t}).
\end{align*}
Observe that $E\left(\bm{e}_{it}\mid\bm{\alpha}_{i},\bm{\xi}_{t}\right)=0$.
The application of the tower property of conditional expectation yields
that $E\left(\bm{e}_{it}\mid\bm{a}_{i},\bm{d}_{t},\bm{v}_{it}\right)=0$.
Similarly, it holds that $E\left(\bm{v}_{it}\mid\bm{a}_{i},\bm{d}_{t}\right)=0$.
The decomposition is based on a score expansion tailored to the OLS
setting; for a generic $M$-estimator, one may replace ${\bm{X}}_{it}^{\top}{\bm{u}}_{it}$
with the corresponding score (influence function).

To separate the effect of $\bm{\alpha}_{i}$ and $\bm{\xi}_{t}$ in
$\bm{v}_{it}$, we follow Menzel (\citeyear{menzel2021bootstrap})
and assume a low-rank approximation. Given that any square-integrable
function of $(\bm{\alpha}_{i},\bm{\xi}_{t})$ admits an expansion
in a tensor-product orthonormal basis, each component of $\bm{v}_{it}$,
$v_{it,k}$, admits a spectral decomposition: 
\begin{equation}
v_{it,k}=\sum_{l,l'=1}^{\infty}c_{ll'f,k}\phi_{l,k}\left(\bm{\alpha}_{i}\right)\psi_{l',k}\left(\bm{\xi}_{t}\right)\label{eq: spectral representation}
\end{equation}
under the $L_{2}\left(F_{\alpha\xi}\right)$ norm on the space of
smooth functions of $\left(\alpha,\xi\right)\in\left[0,1\right]^{2}$.
$F_{\alpha\xi}$ is the joint distribution of $\bm{\alpha}_{i},\bm{\xi}_{t}$.
For each $k$, $\left\{ \phi_{l,k}\left(\bm{\alpha}_{i}\right)\right\} _{l}$
and $\left\{ \psi_{l',k}\left(\bm{\xi}_{t}\right)\right\} _{l'}$ are
orthonormal.

\begin{assumption}\label{as: spectral representation} Assume that
there exists a sequence $\left\{ \bar{c}_{ll'}\right\} $ such that
$c_{ll'f,k}\leq\bar{c}_{ll'}$ for each $l$, $l'$ and $k$, and
$\sum_{l,l'=1}^{\infty}\bar{c}_{ll'}^{2}<\infty$. The first three
moments of $\phi_{l,k}\left(\bm{\alpha}_{i}\right)$ and $\psi_{l',k}\left(\bm{\xi}_{t}\right)$
are uniformly bounded by a constant $B>0$ for all $l$, $l'$, and $k$.
\end{assumption} Assumption~\ref{as: spectral representation} restricts
$v_{it,k}$ to be well-approximated
by the leading components of the SVD-type expansion. This is natural in empirical panels where
unit--time dependence is captured by a small number of interactive
effects, as in common-shock/factor-type specifications widely used
in macro--finance and firm-level panels (e.g., interactive fixed
effects, multi-factor return models, and factor-augmented panel regressions).

We now introduce notations that will be used throughout this paper.
Let $\odot$ denote the Hadamard (elementwise) product. For any matrices
$\bm{A}$, write $\mathbf{A}>0$ to indicate that the matrix $\mathbf{A}$
is positive definite. $\lambda_{\max}(\bm{A})$ and $\lambda_{\min}(\bm{A})$
denote the maximum and minimum eigenvalue of $\bm{A}$, respectively.
As is standard in the bootstrap literature, we write $\bm{A}_{n}^{*}\xrightarrow{P^{*}}\bm{A}$
and $\bm{A}_{n}^{*}\xrightarrow{d^{*}}\bm{A}$ to indicate that the
sequence of bootstrap random matrices $\bm{A}_{n}^{*}$ converges
in bootstrap probability and in bootstrap distribution, respectively,
to $\bm{A}$, as $N,T\to\infty$.

\subsection{Five Asymptotic Regimes}

\label{subsec:Limiting-Distribution-five-scenarios}

Define variances in different dimensions as $\bm{\sigma}_{a,f}^{2}=\frac{1}{N}\sum_{i,j=1}^{N}E(\bm{a}_{i}\bm{a}_{j}^{\top})$,
$\bm{\sigma}_{d,f}^{2}=\frac{1}{T}\sum_{t,t'=1}^{T}E\big(\bm{d}_{t}\bm{d}_{t'}^{\top}\big)$,
$\bm{\sigma}_{e,f}^{2}=\frac{1}{NT}\sum_{i,i'=1}^{N}\sum_{t,t'=1}^{T}E\big(\bm{e}_{it}\bm{e}_{i't'}^{\top}\big)$,
and $\bm{\sigma}_{v,f}^{2}=\frac{1}{NT}\sum_{i,i'=1}^{N}\sum_{t,t'=1}^{T}E\big(\bm{v}_{it}\bm{v}_{i't'}^{\top}\big)$.
Moreover, let $\phi_{l}(\bm{\alpha}_{i})\equiv\big(\phi_{l,1}(\bm{\alpha}_{i}),\ldots,\phi_{l,K}(\bm{\alpha}_{i})\big)^{\top}$,
$\psi_{l'}(\bm{\xi}_{t})\equiv\big(\psi_{l',1}(\bm{\xi}_{t}),\ldots,\psi_{l',K}(\bm{\xi}_{t})\big)^{\top}$,
and $\bm{c}_{ll',f}\equiv(c_{ll'f,1},\ldots,c_{ll'f,K})^{\top}$.
Note that the subscript $f$ indicates dependence on the function
$f$ (which may vary with $N$ and $T$); for notational simplicity,
we suppress the explicit $(N,T)$ dependence in the subscript. Assuming
$E\left(\bm{X}_{it}^{\top}\bm{u}_{it}\right)=0$, we can decompose
the mean of scores $\frac{1}{NT}\sum_{i=1}^{N}\sum_{t=1}^{T}\bm{s}_{it}\equiv\frac{1}{NT}\sum_{i=1}^{N}\sum_{t=1}^{T}\bm{X}_{it}^{\top}\bm{u}_{it}$
as follows: 
\begin{align}
\frac{1}{NT}\sum_{i=1}^{N}\sum_{t=1}^{T}\bm{s}_{it}= & \frac{1}{NT}\sum_{i=1}^{N}\sum_{t=1}^{T}\left(\bm{a}_{i}+\bm{d}_{t}+\bm{e}_{it}+\sum_{l,l'=1}^{\infty}\bm{c}_{ll',f}\odot\phi_{l}\left(\bm{\alpha}_{i}\right)\odot\psi_{l'}\left(\bm{\xi}_{t}\right)\right)\nonumber \\
= & \frac{\bm{\sigma}_{a,f}}{\sqrt{N}}\frac{1}{\sqrt{N}}\sum_{i=1}^{N}\bm{\sigma}_{a,f}^{-1}\bm{a}_{i}+\frac{\bm{\sigma}_{d,f}}{\sqrt{T}}\frac{1}{\sqrt{T}}\sum_{t=1}^{T}\bm{\sigma}_{d,f}^{-1}\bm{d}_{t}+\frac{1}{\sqrt{NT}}\frac{\bm{\sigma}_{e,f}}{\sqrt{NT}}\sum_{i=1}^{N}\sum_{t=1}^{T}\bm{\sigma}_{e,f}^{-1}\bm{e}_{it}\nonumber \\
 & +\frac{1}{\sqrt{NT}}\sum_{l,l'=1}^{\infty}\bm{c}_{ll',f}\odot\left(\frac{1}{\sqrt{N}}\sum_{i=1}^{N}\phi_{l}\left(\bm{\alpha}_{i}\right)\right)\odot\left(\frac{1}{\sqrt{T}}\sum_{t=1}^{T}\psi_{l'}\left(\bm{\xi}_{t}\right)\right)\nonumber \\
\equiv & \frac{\bm{\sigma}_{a,f}}{\sqrt{N}}\bm{Z}_{N}^{a}+\frac{\bm{\sigma}_{d,f}}{\sqrt{T}}\bm{Z}_{T}^{d}+\frac{\bm{\sigma}_{e,f}}{\sqrt{NT}}\bm{Z}_{NT}^{e}+\frac{1}{\sqrt{NT}}\sum_{l,l'=1}^{\infty}\bm{c}_{ll',f}\odot\bm{Z}_{N,l}^{\phi}\odot\bm{Z}_{T,l'}^{\psi}.\label{eq:decomposition score}
\end{align}
Here, the correlation $\left\{ Cov\left(\bm{Z}_{N}^{a},\bm{Z}_{N,l}^{\phi}\right)\right\} _{l}$
are not necessarily zero, given that $\bm{Z}_{N}^{a}$ and $\left\{ \bm{Z}_{N,l}^{\phi}\right\} _{l}$
depend on $\left\{ \bm{\alpha}_{i}\right\} _{i}$. The same for $\left\{ Cov\left(\bm{Z}_{T}^{d},\bm{Z}_{T,l'}^{\psi}\right)\right\} _{l'}$.
All remaining components are pairwise uncorrelated. Hence, $\bm{\sigma}_{NT,f}^{2}\equiv Var\left(\frac{1}{NT}\sum_{i=1}^{N}\sum_{t=1}^{T}\bm{s}_{it}\right)=\frac{1}{NT}(T\bm{\sigma}_{a,f}^{2}+N\bm{\sigma}_{d,f}^{2}+\bm{\sigma}_{e,f}^{2}+\bm{\sigma}_{v,f}^{2}).$

Heuristically, the decomposition~\eqref{eq:decomposition score}
implies that for each $k$, the limit law of $\frac{1}{NT}\sum_{i=1}^{N}\sum_{t=1}^{T}{s}_{itk}$
is determined by a collection of (possibly dependent) Gaussian primitives,
with relative magnitudes governed by the variance components $T\sigma_{ak,f}^{2}$,
$N\sigma_{dk,f}^{2}$, and $\sigma_{vk,f}^{2}$, where $\sigma_{ak,f}^{2}$,
$\sigma_{dk,f}^{2}$, and $\sigma_{vk,f}^{2}$ denote the $k$th diagonal
elements of $\bm{\sigma}_{a,f}^{2}$, $\bm{\sigma}_{d,f}^{2}$, and
$\bm{\sigma}_{v,f}^{2}$, respectively. Moreover, denote the (matrix)
square roots of $\sigma_{\bullet k,f}^{2}$ and $\bm{\sigma}_{\bullet,f}^{2}$
by $\sigma_{\bullet k,f}$ and $\bm{\sigma}_{\bullet,f}$, respectively.

\paragraph{(D) Strong clustering in at least one dimension.}

This regime occurs when at least one clustering variance diverges
(D):

\begin{equation}
(\mathrm{D}):\quad T\sigma_{ak,f}^{2}\to\infty\quad\text{or}\quad N\sigma_{dk,f}^{2}\to\infty.\label{eq: both diverge}
\end{equation}
It corresponds to strong clustering along at least one dimension and
generalizes settings such as Condition~(16) of MacKinnon et al. \citeyearpar{mackinnon2021wild},
which keeps $({\sigma}_{ak,f}^{2},{\sigma}_{dk,f}^{2})$ fixed. In
this case, the leading behavior is dominated by $\frac{{\sigma}_{ak,f}}{\sqrt{N}}{Z}_{Nk}^{a}+\frac{{\sigma}_{dk,f}}{\sqrt{T}}{Z}_{Tk}^{d}$.

\paragraph{(V) Vanishing clustering effects.}

Suppose both clustering variances vanish: 
\[
T\sigma_{ak,f}^{2}=o(1),\qquad N\sigma_{dk,f}^{2}=o(1).
\]
The limit then hinges on the interaction component ${\sigma}_{vk,f}^{2}$,
yielding two subcases.

\smallskip{}

\subparagraph{(V\&N) Vanishing clustering but non-Gaussian interaction:}

\begin{equation}
(\mathrm{V\&N}):\quad T\sigma_{ak,f}^{2}=o(1),\quad N\sigma_{dk,f}^{2}=o(1),\quad\sigma_{vk,f}^{2}>0.\label{eq: vanish nongaussian}
\end{equation}
This regime features no (or weak) clustering along either dimension,
while the non-Gaussian interaction component $\frac{1}{\sqrt{NT}}\sum_{l,l'=1}^{\infty}c_{ll'k}\,Z_{Nk,l}^{\phi}\,Z_{Tk,l}^{\psi}$
remains non-negligible. Such behavior may arise in interactive factor
designs, where multiplicative dependence is intrinsic.

\smallskip{}

\subparagraph{(V\&G) Vanishing clustering and Gaussian limit:}

\begin{equation}
(\mathrm{V\&G}):\quad T\sigma_{ak,f}^{2}=o(1),\quad N\sigma_{dk,f}^{2}=o(1),\quad\sigma_{vk,f}^{2}=o(1).\label{eq: vanish gaussian}
\end{equation}
In this case, the leading term reduces to $\frac{{\sigma}_{ek,f}}{\sqrt{NT}}{Z}_{NTk}^{e}$,
which is Gaussian. This corresponds to ``no clustering'' beyond
the intersection level (e.g., it generalizes Condition~(17) of MacKinnon
et al. \citeyearpar{mackinnon2021wild}).

\paragraph{(I) Intermediate clustering strength.}

The intermediate regime arises when no clustering variance diverges,
yet at least one remains non-negligible: 
\[
\max\{T\sigma_{ak,f}^{2},\,N\sigma_{dk,f}^{2}\}\to\varphi_{k}\in(0,\infty).
\]
Again, two subcases are determined by $\sigma_{vk,f}^{2}$.

\smallskip{}

\subparagraph{(I\&N) Intermediate clustering with non-Gaussian interaction:}

\begin{equation}
(\mathrm{I\&N}):\quad\max\{T\sigma_{ak,f}^{2},\,N\sigma_{dk,f}^{2}\}\to\varphi_{k}\in(0,\infty),\qquad\sigma_{vk,f}^{2}>0.\label{eq: converge non-gaussian}
\end{equation}
All terms in~\eqref{eq:decomposition score} may contribute. In fully
unspecified models, the limit may fail to be consistently estimable
due to contamination from $\{{v}_{it,k}\}$: the estimation error
$\widehat{e}_{NT,k}$ can be of order $O_{P}\!\bigl((NT)^{-1/2}{\sigma}_{vk,f}\bigr)$,
which is not negligible relative to the target $O_{P}\!\bigl((NT)^{-1/2}\bigr)$.

\smallskip{}

\subparagraph{(I\&G) Intermediate clustering with Gaussian limit:}

\begin{equation}
(\mathrm{I\&G}):\quad\max\{T\sigma_{ak,f}^{2},\,N\sigma_{dk,f}^{2}\}\to\varphi_{k}\in(0,\infty),\qquad\sigma_{vk,f}^{2}=o(1).\label{eq: converge gaussian}
\end{equation}
Here the interaction contamination vanishes, and a properly designed
procedure can make the estimation error asymptotically negligible.

\begin{remark}[Subsequence reduction and exhaustive regime classification]\label{rem:subsequence-regimes}
Let $\mathcal{B}$ be the class of Borel measurable set of functions
satisfying Assumptions~\ref{as: AHS representation}--\ref{as: same rate},
and allow $f=f_{NT}\in\mathcal{B}$ to vary with $(N,T)$. The induced
parameter vector is compact such that along any sequence $\{f_{NT}\}$
there exists a \emph{convergent} subsequence $\{f_{N_{k}T_{k}}\}$.
Working without loss of generality along an arbitrary convergent subsequence,
the limit necessarily falls into exactly one of the five regimes \eqref{eq: both diverge}-\eqref{eq: converge gaussian}.
Hence, the five regimes are \emph{mutually exclusive and exhaustive}.
For notational simplicity, we keep writing $\{f_{NT}\}$ in place
of the selected subsequence, with the understanding that the proofs
proceed from an arbitrary convergent sequence and use the subsequence
reduction to formally cover all sequences. \end{remark}

\paragraph{Practical relevance of different regimes.}

Among the five regimes, the two Gaussian cases (D) and (V\&G) are
canonical and empirically common, and are largely covered by existing
methods: (D) corresponds to strong clustering in at least one dimension,
whereas (V\&G) corresponds to essentially no clustering beyond the
intersection level. The practical difficulty is that empirical DGPs
need not fall neatly into either extreme. Regime (I\&G) captures the
\emph{transition} between (V\&G) and (D): clustering is present but
not strong enough to behave as in (D), while it is also too strong
to be safely treated as negligible as in (V\&G). In applications,
the boundary separating these regimes is typically unclear as further
demonstrated below, and may be hard to diagnose in finite samples.
Hence, procedures calibrated only for the two endpoints can be sensitive
to local departures from their target regimes, which makes (I\&G)
practically important and motivates inference that is uniformly valid
across the Gaussian continuum (V\&G)--(I\&G)--(D).

Regime (V\&N) is also practically relevant. It arises when main effects
are weak, yet the interaction component remains non-negligible because
common shocks interact with heterogeneous loadings. This structure
can arise in panels with interactive factor features (e.g., $y_{it}=\alpha_{i}\xi_{t}$),
such as asset-pricing panels (common risk shocks with heterogeneous
exposures), firm--time panels (aggregate shocks with heterogeneous
sensitivities), or shift--share type designs (common shocks with
heterogeneous shares). In such environments, treating the limit as
Gaussian may be misleading. Finally, (I\&N) highlights the intrinsic
difficulty of inference when intermediate clustering coexists with
non-vanishing interaction contamination.

\section{Projection-Based Wild Bootstrap (PWB)}

\label{sec:method}

\subsection{Oracle PWB under Known Variance}

For the bootstrap procedure, it is essential to replicate the dependence
structure of the true DGP. We now provide the algorithm procedure
for the projection-based wild bootstrap method. 
\begin{description}
\item [{Algorithm 1.}] \textbf{Projection-Based Wild Bootstrap Algorithm} 

\end{description}
\begin{enumerate}[label=\textbf{Step \arabic*:}, leftmargin=*, itemsep=1ex]
\item Regress $\bm{y}$ on $\bm{X}$ to obtain the regression estimate $\widehat{\bm{\beta}}$,
the residual $\widehat{\bm{u}}_{it}$, the empirical score $\widehat{\bm{s}}_{it}=\bm{X}_{it}^{\top}\widehat{\bm{u}}_{it}$. 
\item Generate the bootstrap score 
\begin{equation}
\bm{s}_{it}^{*b}=\widehat{\bm{a}}_{i}\cdot\eta_{i}^{*b}+\widehat{\bm{d}}_{t}\cdot\eta_{t}^{*b}+\widehat{\bm{w}}_{it}\cdot\eta_{i}^{*b}\eta_{t}^{*b}+\bar{\bm{s}}_{NT}.\label{eq: bootstrap score}
\end{equation}
Here, $\widehat{\bm{a}}_{i}$, $\widehat{\bm{d}}_{t}$, $\widehat{\bm{w}}_{it}$,
$\bar{\bm{s}}_{NT}$ are based on projections of $\widehat{\bm{s}}_{it}$
and serve as sample analogs of $\bm{a}_{i}$, $\bm{d}_{t}$, $\bm{w}_{it}$,
$E\left(\bm{X}_{it}^{\top}\bm{u}_{it}\right)$, respectively. These
components are discussed in further detail below. $b$ is an index
for the bootstrap number. $\eta_{i}^{*b}$ and $\eta_{t}^{*b}$ are bootstrap multipliers discussed below.
\item Compute the bootstrap statistics $\widehat{\bm{\beta}}^{*b}-\widehat{\bm{\beta}}=\left(\bm{X}^{\top}\bm{X}\right)^{-1}\sum_{i}\sum_{t}\bm{s}_{it}^{*b}$. 
\item Repeat Step 2 to Step 3 for $B$ times. The confidence interval for
$\bm\varrho^\top(\widehat{\bm{\beta}}-\bm{\beta}_{0})$ is then constructed by the
empirical distribution of $\left\{  \bm\varrho^\top(\widehat{\bm{\beta}}^{*b}-\widehat{\bm{\beta}})\right\} _{b}$
under the null hypothesis $\mathcal{H}_{0}:\bm\varrho^\top \bm{\beta}=\bm\varrho^\top\bm{\beta}_{0}$, where $\bm\varrho$ is a known unit vector.
\end{enumerate}

The spatially correlated bootstrap multipliers are generated as
\[
{\bm{\eta}}_{N}^{*b}
=
({\eta}_{i}^{*b})_{i=1}^{N}
=
\mathbb{K}_{N}^{1/2}\widetilde{\bm{\eta}}_{N}^{*b},\quad\widetilde{\bm{\eta}}_{N}^{*b}=(\widetilde{\eta}_{i}^{*b})_{i=1}^{N},
\]
where \(\{\widetilde{\eta}_{i}^{*b}\}_{i=1}^{N}\) are i.i.d. Rademacher random variables.
Given a bandwidth \(\mathfrak d_N>0\), let
\(
\mathbb{K}_{N}
=
\bigl[
\mathcal K(\mathfrak d_{ij}/\mathfrak d_N)
\bigr]_{i,j=1}^{N},
\)
where \(\mathcal K(\cdot)\) is a bounded kernel on \(\mathbb R\). We use the
Wendland \(C^2\) kernel,
\(
\mathcal K(u)=(1-|u|)_{+}^{4}(4|u|+1),
\)
which is compactly supported and ensures positive semi-definite. The compact
support is convenient for the large-sample theory, while positive
semi-definiteness ensures that \(\mathbb K_N^{1/2}\) is well defined.
Hence, by construction, the covariance structure of the spatially
correlated bootstrap multipliers is induced by \(\mathbb K_N\); in
particular,
\(
{Cov}^{*}
\bigl(
{\eta}_{i}^{*b},
{\eta}_{j}^{*b}
\bigr)
=
\mathcal K(\mathfrak d_{ij}/\mathfrak d_N).
\) 
Thus, the proposed bootstrap procedure is designed to preserve the
spatial covariance structure of the original data. In the simulations, we set \(\mathfrak d_N\asymp\lfloor N^{1/8}\rfloor\), which
performs well across the designs considered.\footnote{Kim and Sun~\citeyearpar{kim2011spatial} propose an optimal bandwidth
choice for spatially dependent data by modeling spatial dependence as a
linear process. Their bandwidth rule, however, is not directly applicable
under the general mixing framework maintained in this paper. Deriving an
optimal bandwidth choice for the present setting remains an open question
and is left for future research.} This construction is
closely related to the spatially dependent wild bootstrap of
Conley et al.\ (\citeyear{conley2023bootstrap}). 

The serially correlated bootstrap multipliers are generated by a simple
two-state Markov construction. Starting from
\(\eta_{0}^{*b}\), drawn from the Rademacher distribution, define, for
\(t\geq 0\),
\[
\eta_{t+1}^{*b}
=
\begin{cases}
\eta_{t}^{*b}, & \text{with probability } (1+q)/2,\\
-\eta_{t}^{*b}, & \text{with probability } (1-q)/2.
\end{cases}
\]
This construction yields serially dependent Rademacher multipliers, with
\({E}^{*}(\eta_t^{*b})=0\) and
\({Cov}^{*}(\eta_t^{*b},\eta_{t-h}^{*b})=q^{h}\) for
\(h\geq 0\).\footnote{The dependent multipliers $\eta_t^{*b}$ can be applied directly to the wild bootstrap in conventional time series settings; see also the dependent wild bootstrap (DWB) of Shao (\citeyear{shao2010dependent}) and Hounyo (\citeyear{hounyo2023wild}).}
The parameter $q$ quantifies serial dependence. The correlation is
$corr^{*}\left(\eta_{t}^{*b},\eta_{t+\iota}^{*b}\right)=q^{\iota}$,
for each $\iota\geq0$ and each $t\geq0$, which is a form of the
Laplacian kernel $k\left(\iota\right)=\exp\left(-\iota/S_{T}\right)$,
with $S_{T}=\left(\ln q\right)^{-1}$.\footnote{The multipliers $\{\eta_{t}^{*b}\}_{t}$ impose geometric decay, much
stronger than the general $\alpha$-mixing class assumed for the time
process. However, the goal here is only to approximate the long-run
autocovariance structure, and this specification is well-suited for that purpose.} The rule of thumb of selecting $q$ follows the plug-in bandwidth
guideline proposed by Andrews (\citeyear{andrews1991heteroskedasticity})
with kernel $k\left(\iota\right)$: 
\[
\widehat{q}=\exp\left(-\omega^{-1/3}T^{-1/3}\right),
\]
where $\omega$ represents a measure of autocorrelation, defined as
follows: for $k=1,\ldots,K$, let $\widehat{s}_{t,k}$ denote the
$k$-th element of $\widehat{\bm{s}}_{t}$, and let $\widehat{\rho}_{k}$
be the coefficient obtained by regressing $\widehat{s}_{t,k}$ on
$\widehat{s}_{t-1,k}$. Then, $\omega$ is given by: $\omega=\sum_{k=1}^{K}\frac{\widehat{\rho}_{k}^{2}}{(1-\widehat{\rho}_{k})^{4}}\Biggl/\sum_{k=1}^{K}\frac{(1-\widehat{\rho}_{k}^{2})^{2}}{(1-\widehat{\rho}_{k})^{4}}.$

Given the estimated score $\left\{ \widehat{\bm{s}}_{it}\right\} _{i,t}$,
we now construct an empirical analog of the decomposition. Specifically,
we project the empirical score onto different dimensions: 
\begin{equation}
\widehat{\bm{s}}_{it}=\ddot{\bm{a}}_{i}+\ddot{\bm{d}}_{t}+\ddot{\bm{w}}_{it}+\bar{\bm{s}}_{NT},\label{eq: decompose sit}
\end{equation}
where 
\begin{align*}
\ddot{\bm{a}}_{i}= & \frac{1}{T}\sum_{t=1}^{T}\widehat{\bm{s}}_{it}-\frac{1}{NT}\sum_{i=1}^{N}\sum_{t=1}^{T}\widehat{\bm{s}}_{it}\equiv\bar{\bm{s}}_{iT}-\bar{\bm{s}}_{NT},\\
\ddot{\bm{d}}_{t}= & \frac{1}{N}\sum_{i=1}^{N}\widehat{\bm{s}}_{it}-\frac{1}{NT}\sum_{i=1}^{N}\sum_{t=1}^{T}\widehat{\bm{s}}_{it}\equiv\bar{\bm{s}}_{Nt}-\bar{\bm{s}}_{NT},\\
\ddot{\bm{w}}_{it}= & \widehat{\bm{s}}_{it}-\ddot{\bm{a}}_{i}-\ddot{\bm{d}}_{t}-\bar{\bm{s}}_{NT}=\widehat{\bm{s}}_{it}-\bar{\bm{s}}_{iT}-\bar{\bm{s}}_{Nt}+\bar{\bm{s}}_{NT}.
\end{align*}

For the oracle PWB, $\widehat{\bm{a}}_{i}$, $\widehat{\bm{d}}_{t}$,
and $\widehat{\bm{w}}_{it}$ in $(\ref{eq: bootstrap score})$ depend
on the true variances in two dimensions $\bm{\sigma}_{a,f}^{2}$ and
$\bm{\sigma}_{d,f}^{2}$: 
\begin{equation}
\widehat{\bm{a}}_{i}=\bm{\vartheta}_{a}\ddot{\bm{a}}_{i},\quad\widehat{\bm{d}}_{t}=\bm{\vartheta}_{d}\ddot{\bm{d}}_{t},\quad\text{and}\quad\widehat{\bm{w}}_{it}=\ddot{\bm{w}}_{it},\label{eq: Oracle PWB}
\end{equation}
where the scaling terms $\bm{\vartheta}_{a}$ and $\bm{\vartheta}_{d}$
are define as follows: 
\begin{equation}
\bm{\vartheta}_{a}=\bm{\sigma}_{a,f}\left(\frac{1}{N}\sum_{i=1}^{N}\sum_{j=1}^{N}\mathcal{K}\!\left(\frac{\mathfrak{d}_{ij}}{\mathfrak{d}_{N}}\right)\ddot{\bm{a}}_{i}\ddot{\bm{a}}_{j}^{\top}\right)^{-1/2}\label{eq: PWB oracle scaling}
\end{equation}
and 
\begin{equation}
\bm{\vartheta}_{d}=\bm{\sigma}_{d,f}\left(\dfrac{1}{T}\sum\limits _{t=1}^{T}\sum_{\tau=1}^{T}q^{\left|t-\tau\right|}\ddot{\bm{d}}_{t}\ddot{\bm{d}}_{\tau}^{\top}\right)^{-1/2}.\label{eq:PWB oracle scaling 2}
\end{equation}
Here, each scaling term applies a whitening transformation (i.e., the negative square root matrix) with the
true variance, ensuring that the normalized statistic reproduces the
corresponding component in the limiting distribution. This whitening
is needed here because $\ddot{\bm{a}}_{i}$ is distorted by the contribution
of $\bm{v}_{it}$, which may not be separable.

\begin{assumption} \label{as: moment and variance} For some $\delta>0$
and $\zeta>1$, (i) $E(\bm{X}_{it}^{\top}\bm{u}_{it})=0$,
$\bm{Q}\equiv E(\bm{X}_{it}^{\top}\bm{X}_{it})>0$, $E(\left\Vert \bm{X}_{it}\right\Vert ^{8(\zeta+\delta)})\le C_{1}<\infty$,
$E(\left\Vert \bm{u}_{it}\right\Vert ^{8(\zeta+\delta)})\le C_{2}<\infty$,
and uniformly over $(N,T)$ such that the corresponding (possibly
$(N,T)$-dependent) variance satisfies $\sigma_{\bullet,f}>0$, the
$8(\zeta+\delta)$th moments of $\sigma_{a,f}^{-1}a_{ik}$, $\sigma_{d,f}^{-1}d_{tk}$,
$\sigma_{e,f}^{-1}e_{it,k}$, and $\sigma_{v,f}^{-1}v_{it,k}$ are
bounded, for each $k$. (ii) $\lambda_{\min}(\lim_{N,T\to\infty}(T\bm{\sigma}_{a,f}^{2}+N\bm{\sigma}_{d,f}^{2}+\bm{\sigma}_{v,f}^{2}+\bm{\sigma}_{e,f}^{2}))>0$. 
 \end{assumption}
 
Assumptions~\ref{as: moment and variance}~(i) and (ii) impose standard
moment conditions and require the limiting smallest eigenvalue of the
variance of the sum of scores to be bounded away from zero.

\begin{assumption} \label{as: time mixing} For the same
$\zeta$ and $\delta$ as in Assumption \ref{as: moment and variance}, 
(i) $\bm{\xi}_{t}$ is a $\alpha$-mixing sequence
with a mixing coefficient $\alpha(s)$ such that $\alpha(s)=O(s^{-\lambda})$
for a $\lambda>2\zeta/(\zeta-1)$. 
(ii) $q\to1$ as $T\to\infty$, and $(-\ln q)^{-1}=o(T^{1/2})$.  \end{assumption}

Assumption~\ref{as: time mixing} (i) imposes a standard mixing
condition from the time-series literature.\footnote{%
It weakens the
dependence restriction in Chiang et al.\ \citeyearpar{chiang2023standard}
by requiring an \(\alpha\)-mixing condition rather than a
\(\beta\)-mixing condition. Chiang et al. \citeyearpar{chiang2023standard} project the sum of products of scores 
\(\sum_{t,t'}\bm{s}_{it}\bm{s}_{it'}^{\top}\) onto the second dimension, rather than decomposing \(\bm{s}_{it}\) and studying the resulting
product terms. This approach requires a more delicate treatment of
fourth-order summations.}  Assumption~\ref{as: time mixing}~(ii) imposes
a condition on the kernel function $q^{\iota}$. Specifically, under
this assumption, the kernel tends to 1 as $T\to\infty$, and tends
to 0 when $\iota\geq T^{1/2}$ as $T\to\infty$, holding all other
factors constant. 

 Let $\mathcal{F}_{A}\equiv\sigma(\{\bm{\alpha}_{i}:i\in A\})$ for
$A\subset\{1,\dots,N\}$ and define $dist(A,B)\equiv\min\{\mathfrak{d}_{ij}:i\in A,\,j\in B\}$
and the strong mixing coefficient 
\[
\alpha_{d_{1},d_{2}}(r)\equiv\sup\Big\{\big|P(G\cap H)-P(G)P(H)\big|:\;G\in\mathcal{F}_{A},\ H\in\mathcal{F}_{B},|A|\leq d_{1},|B|\leq d_{2},\ dist(A,B)\ge r\Big\}.
\]
\begin{assumption}\label{as:spatial} For the same
$\zeta$ and $\delta$ as in Assumption \ref{as: moment and variance},  (i) The sampling region expands
in two non-opposing directions as $N\to\infty$. (ii) 
$\alpha_{\infty,\infty}(r)^{1-\frac{1}{2(\zeta+\delta)-1}}=o(r^{-4})$.
(iii) $\mathfrak{d}_{N}\to\infty$, as $N\to\infty$, and $\mathfrak{d}_{N}=o(N^{1/6})$.
(iv) If only $\widetilde{\mathfrak{d}}_{ij}={\mathfrak{d}}_{ij}+\varsigma_{ij}$ is observed, assume
$\sup_{i,j}|\varsigma_{ij}|\le C_{\varsigma}<\infty$ a.s., $\{\varsigma_{ij}\}$
are independent of $\{\bm{\alpha}_{i}\}$, $\{\bm{\xi}_{t}\}$, and
$\{\bm{\varepsilon}_{it}\}$. $\big[\mathcal{K}(\widetilde{\mathfrak{d}}_{ij}/\mathfrak{d}_{N})\big]_{i,j=1}^{N}$
is symmetric and positive semi-definite. \end{assumption}

Assumption~\ref{as:spatial} collects standard high-level regularity
conditions used in the spatial dependence and spatial HAC literature,
see, e.g., Conley \citeyearpar{conley1999gmm}. Part (i) is an increasing-domain
condition. Part (ii) imposes a sufficiently fast decay of strong mixing
to deliver a CLT for the sample mean and to control the variance of
the HAC estimator. Part~(iii) restricts the growth rate of the bandwidth
$\mathfrak{d}_{N}$ on the regular lattice $\mathcal{H}$.
In particular, it is chosen so that the maximal neighborhood size
is $O(\mathfrak{d}_{N}^{2})=o(N^{1/3})$.
Part (iv) allows the use of noisy distances, with uniformly bounded
measurement error independent of the latent spatial component.

\begin{assumption} \label{as: same rate} $\lambda_{\max}(\bm \sigma_{NT,f}^2)/\lambda_{\min}(\bm \sigma_{NT,f}^2)=O(1)$. \end{assumption}
Assumption~\ref{as: same rate} requires the aggregate variances of different
score components to be of the same order. Importantly, Assumption~\ref{as: same rate} does not impose coordinatewise
homogeneity within each componentwise variance term. For instance, the
diagonal entries of \(\bm\sigma_{e,f}^2\) may have heterogeneous orders
across coordinates, and the same is allowed for
\(\bm\sigma_{a,f}^2\), \(\bm\sigma_{d,f}^2\), and
\(\bm\sigma_{v,f}^2\). This allows different components of the score vector to fall into
different asymptotic regimes. Related restrictions have also been imposed in the recent two-way clustering literature; see, for example, Assumption 5 in Davezies,
D'Haultf{œ}uille, and Guyonvarch~\citeyearpar{davezies2025analytic}. In particular, their innovative Example 2 illustrates that, in the absence of such a condition, a standard least-squares approximation may fail under two-way clustered dependence. Proposition~\ref{prop: impossibility heter score} below extends this insight by showing that, without a
restriction of this type, the difficulty is not specific to least-squares
approximations but applies to all data-dependent procedures.

We denote the distribution of the original statistic and bootstrap
statistic using $P_{NT,f}$ and $P_{NT,f}^{*}$, respectively. $\left\Vert \cdot\right\Vert _{\infty}$
is the Kolmogorov metric. 

\begin{proposition}[Impossibility due to heterogeneous scores]
\label{prop: impossibility heter score}
Suppose that $\bm{\beta}=\bm{\beta}_0$. Let \(\mathcal D\) denote the collection of all measurable maps of
the observed data \(\{(\bm y_{it},\bm X_{it})\}_{i,t}\) into distribution
functions. Let \(\mathcal B_0\) be a class of DGPs \(f\) satisfying
Assumptions~\ref{as: AHS representation}--\ref{as:spatial}
and for all coordinate \(k\)'s, condition~\eqref{eq: converge non-gaussian} does not hold.

Assume that, for each \(f\in\mathcal B_0\) and $k$, there exists a deterministic
normalizing sequence \(a_{NTk,f}>0\) such that
\(
a_{NTk,f}\bigl(\widehat\beta_{k}-\beta_{k,0}\bigr)
\overset{d}{\to} L_{kf},
\)
where \(L_{kf}\) has a nondegenerate distribution function \(G_{kf}\). Then there
exists \(\varepsilon>0\) and $k$ such that
\[
\liminf_{N,T\to\infty}
\inf_{\widehat D\in\mathcal D}
\sup_{f\in\mathcal B_0}
P_{NT,f}\!\left(
\bigl\|
\widehat D\bigl(\{(\bm y_{it}^{(f)},\bm X_{it}^{(f)})\}_{i,t}\bigr)-G_{kf}
\bigr\|_{\infty}
>\varepsilon
\right)
>0.
\]
\end{proposition}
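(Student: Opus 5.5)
We will use a two-point (Le Cam) argument. The idea is to build two sequences of DGPs $f_{NT}^{(0)},f_{NT}^{(1)}\in\mathcal B_0$ whose laws of the observed data $\{(\bm y_{it},\bm X_{it})\}_{i,t}$ are asymptotically indistinguishable (total variation distance tending to zero, or at least bounded away from one), while the limiting distribution functions $G_{k f^{(0)}}$ and $G_{k f^{(1)}}$ of some coordinate $k$ differ in Kolmogorov distance by at least $2\varepsilon$. Once this is in place, the triangle inequality gives the bound for any $\widehat D\in\mathcal D$:
\[
P_{0}\bigl(\|\widehat D-G_{k f^{(0)}}\|_\infty>\varepsilon\bigr)+P_{1}\bigl(\|\widehat D-G_{k f^{(1)}}\|_\infty>\varepsilon\bigr)\ \ge\ 1-\mathrm{TV}(P_0,P_1),
\]
since the two events cannot both fail. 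Taking the supremum over $f$, then the infimum over $\widehat D$, and then the $\liminf$ yields the claim.

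The construction exploits heterogeneity across score coordinates, in the spirit of Example 2 of Davezies et al. Take $K=2$ and scalar intersections ($M_{it}=1$). Let $\bm X_{it}$ be built from $\bm\alpha_i,\bm\xi_t$, and take $u_{it}=\bm\varepsilon$-driven noise plus a small common component. We design the score so that the first coordinate is of order $(NT)^{-1/2}$ (regime (V\&G), driven by $e_{it}$), while the second has a diverging clustering variance, $T\sigma_{a2}^2\to\infty$ (regime (D)). The aggregate matrix $\bm\sigma_{NT,f}^2$ then has eigenvalues of different orders, so Assumption~\ref{as: same rate} fails. This is allowed, because $\mathcal B_0$ imposes only Assumptions~\ref{as: AHS representation}--\ref{as:spatial}. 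Since $\widehat{\bm\beta}-\bm\beta_0=(\bm X^\top\bm X)^{-1}\sum\bm s_{it}$, the off-diagonal entries of $(\bm X^\top\bm X)^{-1}$ contaminate coordinate $1$ with the large coordinate-$2$ component.

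The two DGPs differ only in a local perturbation $\bm Q^{-1}_{12}=h/\sqrt{N}$ versus $0$ of the design's cross moment (or, equivalently, an off-diagonal scaled by a sequence $\rho_{NT}$). We then pick $\rho_{NT}$ and $\sigma_{a2}$ so that the contamination $\rho_{NT}\cdot\sigma_{a2}N^{-1/2}Z^a_N$ is exactly of the same order as $(NT)^{-1/2}\sigma_{e1}Z^e$. Under $f^{(0)}$ the limit of $a_{NT1}(\widehat\beta_1-\beta_{1,0})$ is then $N(0,1)$, and under $f^{(1)}$ it is $N(0,1+c^2)$ with $c\neq0$. Their Kolmogorov distance is a fixed positive number, and both laws are nondegenerate, as the statement requires. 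We will verify the Hoeffding decomposition, the moment bounds and the mixing conditions directly; i.i.d. $\bm\alpha_i,\bm\xi_t$ make the mixing conditions trivial. We will also check that neither coordinate falls into (I\&N), by setting $v_{it}=0$.

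The main obstacle is indistinguishability. The perturbation has to live at the population level yet be statistically invisible. We would choose Gaussian designs in which $\bm X_{it}=\bm A\,\bm g(\bm\alpha_i,\bm\xi_t,\bm\varepsilon_{it})$, with the perturbation entering through $\bm X^\top\bm X$ only at order $\rho_{NT}$, and with $\rho_{NT}$ small relative to the sampling error $O_P((NT)^{-1/2})$ of $\bm X^\top\bm X/NT$ along the relevant direction. We would then bound $\mathrm{TV}$ by the Hellinger distance of the product Gaussian likelihoods, which is $O(NT\rho_{NT}^2)$. The balance condition needed for distinct limits is $\rho_{NT}\asymp T^{-1/2}\sigma_{a2}^{-1}$, and we need this to be $o((NT)^{-1/2})$. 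That holds because $\sigma_{a2}^2 N\to\infty$, which is exactly the heterogeneity across score coordinates. If the exact Gaussian computation is messy, we would fall back to a Le Cam third lemma/contiguity argument, showing that the log-likelihood ratio is $o_P(1)$.
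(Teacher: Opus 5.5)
Your Le Cam two-point reduction is fine. The construction meant to feed it fails at the indistinguishability step, however, because the rates are inverted.

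\emph{The rate conflict.} Take your own balance $\rho_{NT}\asymp T^{-1/2}\sigma_{a2}^{-1}$. The Hellinger bound is then $NT\rho_{NT}^{2}\asymp N/\sigma_{a2}^{2}$. So $\rho_{NT}=o((NT)^{-1/2})$ requires $\sigma_{a2}^{2}/N\to\infty$, not $N\sigma_{a2}^{2}\to\infty$. That is impossible here: Assumption~\ref{as: moment and variance}(i) bounds $E\|\bm X_{it}^{\top}\bm u_{it}\|^{2}$ uniformly, so $\sigma_{a2}^{2}=E(a_{i2}^{2})=O(1)$ in your i.i.d.\ design. More generally, $\bar s_{2}=O_P(N^{-1/2}+T^{-1/2})$, so any $\rho_{NT}$ that moves the $\sqrt{NT}$-limit of $\widehat\beta_1$ satisfies $\sqrt{NT}\rho_{NT}\gtrsim\min\{\sqrt N,\sqrt T\}\to\infty$.

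\emph{Consequence for your design.} In your Gaussian design the cross moment is estimable at rate $(NT)^{-1/2}$, so the two DGPs are asymptotically perfectly separable. For instance, $\sqrt{NT}\,\widehat{Q}_{12}$ is $O_P(1)$ under $f^{(0)}$ but of order $\sqrt N$ under $f^{(1)}$. Concretely, a procedure can use the observed $\widehat{\bm Q}$ and the regime-(D)-consistent $\widehat\sigma_{a2}$ to compute $\sqrt{T}\,[\widehat{\bm Q}^{-1}]_{12}\,\widehat\sigma_{a2}$. This estimates $c$ consistently, and hence the limit law $N(0,1+c^{2})$. Your contiguity fallback fails for the same reason, since the log-likelihood ratio diverges.

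\emph{Why this is structural.} Since $\widehat{\bm\beta}-\bm\beta_0=\widehat{\bm Q}^{-1}\bar{\bm s}$ with $\widehat{\bm Q}$ observed, a shift in $\bm Q_{12}$ stays hidden only if it is at most of the order of the sampling error of $\widehat{\bm Q}_{12}$. But that sampling error multiplies $\bar s_{2}$ exactly as $\rho_{NT}$ does. It then enters the limit at the same or larger order, and $f^{(0)}$ no longer has the $N(0,1)$ limit you claim. Any repair along your lines must put the perturbation exactly at the detection scale of a slowly estimable ($\bm\xi_t$- or $\bm\alpha_i$-driven) feature. That gives total variation bounded away from one, but with product-type, non-Gaussian limits that need a separate analysis.

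\emph{The paper's route.} The paper avoids local perturbations entirely. It takes $x_{it}=\alpha_i$ and $u_{it}=\xi_t+\varepsilon_{it}$, with $\varepsilon_{it}=\pm m$ with probability $1/(2NT)$ each and $0$ otherwise. The intercept removes the $\xi_t$ term exactly, and $NT(\widehat\beta_1-\beta_1)\overset{d}{\to} m\sum_{\ell=1}^{K}Z_\ell$ with $K\sim\mathrm{Poisson}(1)$, a law that depends on $m$. Yet on the event that all $\varepsilon_{it}=0$, whose probability tends to $e^{-1}$, the data do not depend on $m$. Hence $\|P_{m_1,NT}-P_{m_2,NT}\|_{\mathrm{TV}}\le 1-e^{-1}$, and the same Le Cam bound you wrote finishes the proof.
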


Proposition~\ref{prop: impossibility heter score} shows that, even after
excluding the infeasible (I\&N) regime, no feasible
data-dependent procedure can uniformly estimate the limiting law of
\(\widehat\beta_k\) when the aggregate score components are allowed to be
heterogeneous (without Assumption \ref{as: same rate}). In particular, uniform estimation may fail when different
coordinates exhibit different stochastic orders, for example when some
coordinates fall in regime (D) while others fall in the remaining regimes.   The difficulty arises because the heterogeneous
coordinates of the score vector are mixed through
\((\bm X^\top \bm X)^{-1}\). This reflects
a fundamental limitation of inference with heterogeneous scores under
two-way clustering.

With Assumption \ref{as: same rate}, each coordinate of $\widehat{\bm{\beta}}-\bm{\beta}_{0}$ converges at the same rate. We define the (infeasible) rate of convergence 
for $\widehat{\bm{\beta}}-\bm{\beta}_{0}$, $r_{NT,f}=\min\{\sqrt{N}\sigma_{a1,f}^{-1},\sqrt{T}\sigma_{d1,f}^{-1},\sqrt{NT}\}$.

\begin{theorem}\label{thm: main}
For the oracle PWB, under the null hypothesis
$\mathcal{H}_{0}:\bm\varrho^\top\bm{\beta}=\bm\varrho^\top\bm{\beta}_{0}$,
\begin{equation}
\Bigl\|
P_{NT,f}^{*}\!\left(
r_{NT,f}\bm\varrho^\top\bigl(\widehat{\bm{\beta}}^{*}-\widehat{\bm{\beta}}\bigr)
\right)
-
P_{NT,f}\!\left(
r_{NT,f}\bm\varrho^\top\bigl(\widehat{\bm{\beta}}-\bm{\beta}_{0}\bigr)
\right)
\Bigr\|_{\infty}
\xrightarrow{P} 0
\label{eq:theorem 1}
\end{equation}
holds uniformly over the entire function space $\mathcal{B}$ satisfying
Assumptions~\ref{as: AHS representation}--\ref{as: same rate}.
\end{theorem}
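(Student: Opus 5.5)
The plan is to establish the uniform statement through the subsequence reduction of Remark~\ref{rem:subsequence-regimes}. Suppose uniformity fails. Then there exist $\varepsilon>0$ and a sequence $f_{NT}\in\mathcal B$ along which the Kolmogorov distance in \eqref{eq:theorem 1} exceeds $\varepsilon$ with probability bounded away from zero. Passing to a convergent subsequence puts each coordinate $k$ in exactly one of the five regimes, and Assumption~\ref{as: same rate} forces all coordinates to share the common rate $r_{NT,f}$. It therefore suffices to prove pointwise convergence along an arbitrary convergent sequence in each regime and obtain a contradiction. Writing $\widehat{\bm\beta}-\bm\beta_0=(\bm X^\top\bm X)^{-1}\sum_{i,t}\bm s_{it}$, a uniform LLN gives $(NT)^{-1}\bm X^\top\bm X\to\bm Q$. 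Since the bootstrap statistic uses the same matrix $(\bm X^\top\bm X)^{-1}$, both sides reduce to comparing the laws of $r_{NT,f}\bm\varrho^\top\bm Q^{-1}(NT)^{-1}\sum\bm s_{it}$ and its bootstrap analogue. Because the limits are continuous, Pólya's theorem then upgrades weak convergence to convergence in the Kolmogorov metric.

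\textbf{Step 1: the original statistic.} Using decomposition \eqref{eq:decomposition score}, I would show joint convergence of $(\bm Z^a_N,\{\bm Z^\phi_{N,l}\}_l)$ and $(\bm Z^d_T,\{\bm Z^\psi_{T,l'}\}_{l'})$ to independent Gaussian families. The tools are a spatial CLT under Assumption~\ref{as:spatial}(i)--(ii) (Bolthausen-type) for the $\alpha$-block, and an $\alpha$-mixing CLT under Assumption~\ref{as: time mixing}(i) for the $\xi$-block. The $e$-block is handled by a conditional Lyapunov CLT given $(\alpha,\xi)$. The infinite series is truncated at level $L$, with a remainder bounded in $L_2$ by $\sum_{l,l'>L}\bar c_{ll'}^2\to0$ via Assumption~\ref{as: spectral representation}. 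This yields a limit that is Gaussian in (D), (V\&G), and (I\&G), and a Gaussian-plus-Gaussian-chaos limit in (V\&N) and (I\&N).

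\textbf{Step 2: the bootstrap statistic.} Conditional on the data, $\sum_{i,t}\bm s^*_{it}$ equals
\[
\bm\vartheta_a\sum_i\ddot{\bm a}_i\eta_i^*+\bm\vartheta_d\sum_t\ddot{\bm d}_t\eta_t^*+\sum_{i,t}\ddot{\bm w}_{it}\eta_i^*\eta_t^*,
\]
because the centered $\bar{\bm s}_{NT}$ term cancels against $\widehat{\bm\beta}$. The first two pieces are conditionally Gaussian in the limit by CLTs for $m$-dependent-type multipliers: the Wendland kernel has compact support, and the Markov multipliers are geometrically mixing. Their conditional variances are exactly the kernel quadratic forms in \eqref{eq: PWB oracle scaling}--\eqref{eq:PWB oracle scaling 2}. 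The oracle scalings therefore make them equal to $\bm\sigma^2_{a,f}$ and $\bm\sigma^2_{d,f}$ identically, which is the key reason the estimation noise in $\ddot{\bm a}_i,\ddot{\bm d}_t$ (contaminated by $\bm v_{it},\bm e_{it}$) drops out. For the product term, I would expand
\[
\ddot{\bm w}_{it}=\bm w_{it}-\bar{\bm w}_{i\cdot}-\bar{\bm w}_{\cdot t}+\bar{\bm w}+\bm X_{it}^\top\bm X_{it}(\bm\beta_0-\widehat{\bm\beta})+\text{centering},
\]
and use the spectral representation. Then $\sum_{i,t}\bm v_{it}\eta^*_i\eta^*_t$ splits as $\sum_{l,l'}\bm c_{ll'}\odot(\sum_i\phi_l\eta_i^*)\odot(\sum_t\psi_{l'}\eta^*_t)$, a product of two conditionally Gaussian vectors. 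Its joint conditional covariance with the $a$- and $d$-pieces must be shown to match $\mathrm{Cov}(\bm Z^a,\bm Z^\phi_l)$ and $\mathrm{Cov}(\bm Z^d,\bm Z^\psi_{l'})$; this holds because both are computed from the same $\ddot{\bm a}_i$ and the same multipliers. The $\bm e$-part contributes a conditionally Gaussian term with variance $\bm\sigma^2_{e,f}$ by a Lindeberg argument for the Rademacher chaos, using the eighth-moment bounds of Assumption~\ref{as: moment and variance}.

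\textbf{Step 3: kernel HAC consistency and the main obstacle.} Consistency of the kernel quadratic forms (uniformly, relative to $\bm\sigma^2_{a,f}$ and $\bm\sigma^2_{d,f}$) is needed both to control cross terms and to justify that the multiplier covariances reproduce the long-run covariances of $\bm\phi_l,\bm\psi_{l'}$. Its proof uses the bandwidth rates $\mathfrak d_N=o(N^{1/6})$ and $(-\ln q)^{-1}=o(T^{1/2})$, together with fourth-order cumulant bounds from mixing and Davydov's inequality. Parameter estimation error is negligible because the rates are common.

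I expect the main obstacle to be the (I\&N) and (V\&N) regimes. There the bootstrap must reproduce the non-Gaussian chaos jointly with the correlated Gaussian main effects. The difficulty is showing that replacing $\bm\phi_l(\bm\alpha_i)$ by the contaminated projections, and the true long-run covariances by kernel-weighted ones, leaves the joint conditional law unchanged to $o_P(1)$ uniformly in the truncation level $L$. I would attack it by truncating at $L$ first and applying a conditional multivariate CLT to the finite vector $(\sum_i\ddot{\bm a}_i\eta_i^*,\sum_i\phi_l\eta_i^*)_{l\le L}$. The product of the two independent blocks is then handled by the continuous mapping theorem, with the tail controlled by $\sum_{l,l'>L}\bar c_{ll'}^2$.
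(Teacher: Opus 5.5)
Your outline tracks the paper's proof closely: CLTs along convergent parameter sequences, the common $(\bm X^\top\bm X)^{-1}$ reduction, a joint conditional CLT obtained by conditioning on one multiplier block, truncation via $\sum_{l,l'}\bar c_{ll'}^2<\infty$, and uniformization by compactness. It has a genuine gap in regime (I\&N), at the point where you assert that the bootstrap reproduces $\mathrm{Cov}(\bm Z^a,\bm Z^\phi_l)$ ``because both are computed from the same $\ddot{\bm a}_i$,'' and that the oracle whitening makes the contamination in $\ddot{\bm a}_i$ drop out.

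Ignoring $\widehat{\bm\beta}$-error, $\ddot{\bm a}_i=(\bm a_i-\bar{\bm a})+\sum_{l,l'}\bm c_{ll',f}\odot(\phi_l(\bm\alpha_i)-\bar\phi_l)\odot\bar\psi_{l'}+(\bar{\bm e}_{i\cdot}-\bar{\bm e})$, where $\bar\phi_l=N^{-1}\sum_j\phi_l(\bm\alpha_j)$ and $\bar\psi_{l'}=T^{-1}\sum_t\psi_{l'}(\bm\xi_t)$, so $\sqrt T\,\bar\psi_{l'}=\bm Z^\psi_{T,l'}=O_P(1)$. In (I\&N) the middle term has the same order $T^{-1/2}$ as $\bm a_i$ and is random. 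It is a combination of the centered $\phi_l(\bm\alpha_i)$, which are exactly the directions on which the chaos term $\sum_{l,l'}\bm c_{ll',f}\odot\widehat{\bm Z}^{*\phi}_{N,l}\odot\widehat{\bm Z}^{*\psi}_{T,l'}$ loads. The whitening $\bm\vartheta_a$ only rescales: it fixes $\mathrm{Var}^*$ of the $a$-piece but cannot remove this component. Hence $\mathrm{Cov}^*(\widehat{\bm Z}^{a*}_N,\widehat{\bm Z}^{*\phi}_{N,l})$ converges only in distribution, to a limit that depends on the realized $\{\bm Z^\psi_{T,l'}\}_{l'}$, not in probability to $\bm\nu_{al,0}$. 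The same happens for the $d$--$\psi$ pair through $\bar\phi_l$. Your proposed remedy, a conditional multivariate CLT for $(\sum_i\ddot{\bm a}_i\eta_i^*,\sum_i\phi_l(\bm\alpha_i)\eta_i^*)_{l\le L}$, would only show this vector is conditionally Gaussian with this wrong, data-dependent covariance.

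This is a failure of the claim itself, not a missing estimate. Take the intercept-only model (so $s_{it}=u_{it}$) with $u_{it}=\sigma_a g(\alpha_i)+c\,\phi(\alpha_i)\psi(\xi_t)+\varepsilon_{it}$. Let $\alpha_i,\xi_t$ be i.i.d., $g,\phi$ mean-zero and orthonormal, $\psi$ mean-zero with unit variance, $T\sigma_a^2=\varphi>0$, $c\neq0$, and $\mathrm{Var}(\varepsilon_{it})=\sigma_e^2$. This DGP lies in $\mathcal B$ and in (I\&N).

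With $z=T^{-1/2}\sum_t\psi(\xi_t)$, one has $\bm\vartheta_a\approx\kappa$, where $\kappa^2=\varphi/(\varphi+c^2z^2+\sigma_e^2)$. The oracle statistic then behaves conditionally like $\kappa\sqrt\varphi\,G^*+\kappa E^*+E_2^*+c(\kappa z+B^*)A^*$. Here $G^*,A^*,B^*$ are the multiplier sums of the centered $g(\alpha_i),\phi(\alpha_i),\psi(\xi_t)$, and $E^*,E_2^*$ are the $\varepsilon$-contributions of $\ddot a_i$ and $\ddot w_{it}$. These are asymptotically independent centered normals, with $\mathrm{Var}^*(E^*)=\mathrm{Var}^*(E_2^*)=\sigma_e^2$ and the rest of unit variance. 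The target is $\sqrt\varphi\,G+c\,Z^\phi Z^\psi+\sigma_eZ^e$ with independent standard normals.

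The variances agree, but the bootstrap fourth moment exceeds the target's by $12c^4\kappa^2z^2$, which is non-degenerate. So \eqref{eq:theorem 1} fails along this sequence, and no completion of your Step~2 can deliver it on (I\&N). The paper's proof asserts the same limits in \eqref{eq:cov-aphi}--\eqref{eq:cov-dpsi} without treating this term, so following it does not close the gap.

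Your argument is fine on (D), (I\&G) and (V\&G), where the limit is Gaussian and matching the total variance suffices. It is also fine on (V\&N), where the whitened pieces have conditional variances $T\sigma^2_{a,f},N\sigma^2_{d,f}\to0$, so the contaminated components are killed.

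A smaller point: $\mathbb K_N^{1/2}\widetilde{\bm\eta}_N^{*b}$ is not $m$-dependent even though $\mathcal K$ has compact support. Run the conditional CLT on its representation as a weighted sum of the i.i.d.\ Rademachers $\widetilde\eta_i^{*b}$, as the paper does.
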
 Theorem~\ref{thm: main} shows the uniform consistency
result for the oracle PWB over the entire function space $\mathcal{B}$.\footnote{It is possible that \(\widehat{\bm{\beta}}-\bm{\beta}_{0}\) converges at rate
\(O_P(N^{-1/2})\), while a particular linear combination
\(\bm{\varrho}^{\top}(\widehat{\bm{\beta}}-\bm{\beta}_{0})\) converges at the faster
rate \(O_P((NT)^{-1/2})\). This can occur in extreme cases where two limiting
components are very close but not identical. In such cases, the normalization
\(r_{NT,f}\) remains of order \(\sqrt{N}\), so both original and bootstrap distributions converge to the same degenerate
limit. Consequently, the present theory does not directly cover
inference on hypotheses such as \(\beta_1>\beta_2\) in such extreme cases.}
However, this procedure is generally infeasible in practice, as it
requires knowledge of the DGP, which is typically unspecified. Interestingly,
it implies that the sole obstacle to achieving uniform consistency
is the lack of knowledge of the true variances in two dimensions $\bm{\sigma}_{a,f}^{2}$
and $\bm{\sigma}_{d,f}^{2}$.

\subsection{Feasible PWBs with Estimated Variance}

In practice, the true values of $\bm{\sigma}_{a,f}^{2}$ and $\bm{\sigma}_{d,f}^{2}$
are generally unknown to the researchers, and hence we propose the
feasible PWB method. Define the variance estimator of $\bm{\sigma}_{a,f}^{2}$
and $\bm{\sigma}_{d,f}^{2}$: 
\begin{align}
\widehat{\bm{\sigma}}_{a}^{2}=EVC\left(\frac{1}{N}\sum_{i=1}^{N}\sum_{j=1}^{N}\mathcal{K}\!\left(\frac{\mathfrak{d}_{ij}}{\mathfrak{d}_{N}}\right)\ddot{\bm{a}}_{i}\ddot{\bm{a}}_{j}^{\top}-\frac{1}{NT^{2}}\sum_{i=1}^{N}\sum_{j=1}^{N}\sum_{t=1}^{T}\mathcal{K}\!\left(\frac{\mathfrak{d}_{ij}}{\mathfrak{d}_{N}}\right)\ddot{\bm{w}}_{it}\ddot{\bm{w}}_{jt}^{\top}\right)\label{eq: sigma a estimator}
\end{align}
and 
\begin{align}
\widehat{\bm{\sigma}}_{d}^{2}= & EVC\Biggl(\frac{1}{T}\sum_{t=1}^{T}\sum_{\tau=1}^{T}q^{\left|t-\tau\right|}\ddot{\bm{d}}_{t}\ddot{\bm{d}}_{\tau}^{\top}-\frac{1}{N^{2}T}\sum_{i=1}^{N}\sum_{t=1}^{T}\sum_{\tau=1}^{T}q^{\left|t-\tau\right|}\ddot{\bm{w}}_{it}^{\top}\ddot{\bm{w}}_{it}\Biggl),\label{eq: sigma d estimator}
\end{align}
where $EVC\left(\cdot\right)$ replaces any negative eigenvalue by
zero to ensure positive semi-definiteness.

Lemma~\ref{lemma:limit form-1} demonstrates that the magnitude
of $\widehat{\bm{\sigma}}_{a}^{2}$ can be informative about the true
order of $\bm{\sigma}_{a,f}^{2}$, despite some ambiguity arising
from certain alternative cases. Based on this insight, we define 
\begin{align}
\widehat{\bm{a}}_{i}= & \widehat{\bm{\vartheta}}_{a}\ddot{\bm{a}}_{i},\quad\widehat{\bm{d}}_{t}=\widehat{\bm{\vartheta}}_{d}\ddot{\bm{d}}_{t},\quad\text{and}\quad\widehat{\bm{w}}_{it}=\ddot{\bm{w}}_{it},\label{eq: PWB}
\end{align}
where 
\begin{align}
\widehat{\bm{\vartheta}}_{a} & =\bm{D}_{a}\left(\widehat{\bm{\mu}}_{a}\right)\cdot\widehat{\bm{\sigma}}_{a}\left(\frac{1}{N}\sum_{i=1}^{N}\sum_{j=1}^{N}\mathcal{K}\!\left(\frac{\mathfrak{d}_{ij}}{\mathfrak{d}_{N}}\right)\ddot{\bm{a}}_{i}\ddot{\bm{a}}_{j}^{\top}\right)^{-1/2},\label{eq:PWB DV scaling}\\
\widehat{\bm{\vartheta}}_{d} & =\bm{D}_{d}\left(\widehat{\bm{\mu}}_{d}\right)\cdot\widehat{\bm{\sigma}}_{d}\left(\frac{1}{T}\sum_{t=1}^{T}\sum_{\tau=1}^{T}q^{\left|t-\tau\right|}\ddot{\bm{d}}_{t}^{\top}\ddot{\bm{d}}_{\tau}\right)^{-1/2}.\label{eq:PWB DV scaling 2}
\end{align}
Here, $\bm{D}_{a}(\widehat{\bm{\mu}}_{a})$ denotes the $K\times K$
diagonal matrix with 
\[
\bigl[\bm{D}_{a}(\widehat{\bm{\mu}}_{a})\bigr]_{kk}\;=\;\mathbb{I}\!\left\{ \widehat{\sigma}_{ak}^{2}\ge\widehat{\mu}_{ak}\right\} ,\qquad k=1,\ldots,K.
\]
Define $\bm{D}_{d}(\widehat{\bm{\mu}}_{d})$ analogously by $\bigl[\bm{D}_{d}(\widehat{\bm{\mu}}_{d})\bigr]_{kk}=\mathbb{I}\!\left\{ \widehat{\sigma}_{dk}^{2}\ge\widehat{\mu}_{dk}\right\} $.
Relative to the oracle scalings \(\bm{\nu}_{a}\) and \(\bm{\nu}_{d}\) in
\eqref{eq: PWB oracle scaling} and \eqref{eq:PWB oracle scaling 2}, their
feasible counterparts \(\widehat{\bm{\nu}}_{a}\) and
\(\widehat{\bm{\nu}}_{d}\) also implement a whitening transformation, but replace
the unknown population variances with their sample estimates and incorporate
indicator matrices. These indicators serve as safeguards in regimes where the
variance estimators may be unreliable, preventing spurious rescaling of
components whose variances are poorly estimated.

We propose several variants of the PWB method, each based on different
choices for the tuning parameters $\widehat{\bm{\mu}}_{a}$ and $\widehat{\bm{\mu}}_{d}$.
These parameter choices are designed to help detect whether the variance
components $T\bm{\sigma}_{a,f}^{2}$ and $N\bm{\sigma}_{d,f}^{2}$
diverge, vanish, or remain bounded, which in turn affects the validity
of the bootstrap procedure.

\paragraph{PWB-D (Divergence-sensitive).}

The tuning parameters $\widehat{\bm{\mu}}_{a,D}$ and $\widehat{\bm{\mu}}_{d,D}$
are chosen to diagnose divergence of the scaled variance components
$T\bm{\sigma}_{a,f}^{2}$ and $N\bm{\sigma}_{d,f}^{2}$, respectively.
In particular, their role is to detect whether the data fall into
scenario~(D), i.e., whether condition~\eqref{eq: both diverge}
holds. In practice, we recommend setting 
\begin{equation}
\widehat{\bm{\mu}}_{a,D}=\mathbf{1}_K\cdot{\log T}/{T}\quad\text{and}\quad\widehat{\bm{\mu}}_{d,D}=\mathbf{1}_K\cdot{\log N}/{N}.\label{eq: tuning pwb-d}
\end{equation}

\paragraph{PWB-V (Vanishing-sensitive).}

In contrast, the tuning parameters $\widehat{\bm{\mu}}_{a,V}$ and
$\widehat{\bm{\mu}}_{d,V}$ are selected to discriminate whether the
scaled variances $T\bm{\sigma}_{a,f}^{2}$ and $N\bm{\sigma}_{d,f}^{2}$,
together with $\bm{\sigma}_{v,f}^{2}$, vanish; that is, whether the
(V\&G) condition~\eqref{eq: vanish gaussian} holds. In practice,
we recommend 
\begin{equation}
\widehat{\bm{\mu}}_{a,V}=\mathbf{1}_K\cdot1/(T\log T)\text{ and  }\widehat{\bm{\mu}}_{d,V}=\mathbf{1}_K\cdot1/(N\log N).\label{eq: tuning pwb-v}
\end{equation}

The logarithmic terms in the definitions of \(\widehat{\bm{\mu}}_{a}\)
and \(\widehat{\bm{\mu}}_{d}\) reflect the absence of a sharp finite-sample
boundary between the relevant regimes, a feature intrinsic to the problem
and illustrated further in Section~\ref{sec: DRC}. Such thresholding
rules necessarily generate a shrinking but nonempty indifference region,
within which classification may be unstable. For this reason, procedures
that rely directly on a hard classification may be affected near the
boundary. The specific choices of \(\widehat{\bm{\mu}}_{a}\) and
\(\widehat{\bm{\mu}}_{d}\), however, are not DGP-specific. They are used
only to guide regime classification, and the hybrid procedure recommended below is designed precisely to
avoid relying on a sharp boundary classification; hence these threshold
choices do not affect its uniform validity. The simulation evidence also suggests that its finite-sample performance
is not sensitive to the particular threshold choice, provided that the
thresholding rule can reliably identify the relevant regime.

Define the feasible rate of convergence 
\[
\widehat{r}_{NT,f}=\min\{\sqrt{N}\widehat{\sigma}_{a1}^{-1},\sqrt{T}\widehat{\sigma}_{d1}^{-1},\sqrt{NT}\}.
\]
\begin{theorem}\label{thm: main-2} Suppose Assumptions~\ref{as: AHS representation}--\ref{as: same rate}
hold. Under the null hypothesis $\mathcal{H}_{0}:\bm\varrho^\top\bm{\beta}=\bm\varrho^\top\bm{\beta}_{0}$,
and with ${{r}}_{NT,f}$ replaced by its feasible counterpart $\widehat{{r}}_{NT,f}$,
the convergence result in~\eqref{eq:theorem 1} holds uniformly in
the following cases:

\begin{enumerate}[label=(\alph*)] 
\item \textbf{(PWB-D).} For each $k$, either one of \eqref{eq: vanish nongaussian},
\eqref{eq: vanish gaussian} holds or 
\[
T\sigma_{ak,f}^{2}>2\log T\quad\text{or}\quad N\sigma_{dk,f}^{2}>2\log N.
\]
\item \textbf{(PWB-V).} For each $k$, one of \eqref{eq: both diverge},
\eqref{eq: vanish gaussian}, or \eqref{eq: converge gaussian} holds. 
\end{enumerate}
\end{theorem}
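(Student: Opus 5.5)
The plan is to reduce Theorem~\ref{thm: main-2} to Theorem~\ref{thm: main} by showing that, in each admissible case, the feasible scalings $\widehat{\bm\vartheta}_a,\widehat{\bm\vartheta}_d$ and the feasible rate $\widehat r_{NT,f}$ behave asymptotically like their oracle counterparts. We only need agreement on the coordinates that matter in the limit. Following Remark~\ref{rem:subsequence-regimes}, we fix an arbitrary convergent sequence $\{f_{NT}\}$. Uniformity then follows from the usual subsequence argument: if uniform convergence failed, some sequence would violate it, and a further convergent subsequence would lie in one of the regimes covered by (a) or (b), which gives a contradiction.

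The first step is to establish the behaviour of the variance estimators coordinatewise. For the numerator, we expand $\frac1N\sum_{i,j}\mathcal K(\mathfrak d_{ij}/\mathfrak d_N)\ddot a_{ik}\ddot a_{jk}$ using $\ddot{\bm a}_i=\bm a_i+\bar{\bm w}_{iT}+(\text{lower order})$, where $\bar{\bm w}_{iT}=T^{-1}\sum_t(\bm v_{it}+\bm e_{it})$. We then use the spatial HAC consistency arguments under Assumption~\ref{as:spatial} (ii)--(iii) and the analogous time-series kernel arguments with $q^{|t-\tau|}$ under Assumption~\ref{as: time mixing}. This should give $\widehat\sigma_{ak}^2=\sigma_{ak,f}^2+O_P\big(T^{-1}\sigma_{vk,f}^2\big)+o_P(\sigma_{ak,f}^2+T^{-1})$. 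The subtracted $\ddot{\bm w}$ term removes the $T^{-1}\sigma_{ek,f}^2$ bias, but not the $\bm v$-contamination, which is not separable. This expansion is essentially the content of Lemma~\ref{lemma:limit form-1}, which we will invoke. The analogous statement holds for $\widehat\sigma_{dk}^2$.

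The second step is a case analysis for the indicators $\mathbb I\{\widehat\sigma_{ak}^2\ge\widehat\mu_{ak}\}$.

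\textbf{Under PWB-D,} there are two situations. If $T\sigma_{ak,f}^2>2\log T$, then $\widehat\sigma_{ak}^2/\sigma_{ak,f}^2\to_P1$ because the contamination is $O_P(T^{-1})=o(\sigma_{ak,f}^2)$. The indicator therefore equals one with probability tending to one, and $\widehat{\bm\vartheta}_a$ is asymptotically equivalent to $\bm\vartheta_a$. In the vanishing regimes (V\&N) or (V\&G), $\widehat\sigma_{ak}^2=O_P(T^{-1})<\log T/T$, so the indicator is zero w.p.a.1. In that case the $\bm a$-component of both the original and the bootstrap statistic is negligible at rate $\sqrt{NT}$, because $T\sigma_{ak,f}^2=o(1)$. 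The bootstrap then reduces to the wild bootstrap on $\ddot{\bm w}_{it}\eta_i^*\eta_t^*$, and this reproduces the (possibly non-Gaussian) limit of the $\bm v+\bm e$ part exactly as in the oracle proof.

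\textbf{Under PWB-V,} with threshold $1/(T\log T)$, there are three situations. In (D) the indicator is one and the consistency of the first step applies. In (V\&G) both $\sigma_{ak,f}^2$ and $\sigma_{vk,f}^2$ are negligible, so $\widehat\sigma_{ak}^2=o_P(1/(T\log T))$ requires a sharper bound; we argue it using the moment bounds of Assumption~\ref{as: moment and variance}. Either value of the indicator then leads to a negligible $\bm a$ contribution. In (I\&G) $\sigma_{vk,f}^2=o(1)$, so $T\widehat\sigma_{ak}^2-T\sigma_{ak,f}^2=o_P(1)$, and the indicator is one whenever $T\sigma_{ak,f}^2$ is bounded away from zero. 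If $T\sigma_{ak,f}^2\to0$ on that coordinate, either indicator value is harmless. In every case $\widehat{\bm\vartheta}_a-\bm\vartheta_a\to_P0$ on the relevant coordinates, and $\widehat r_{NT,f}/r_{NT,f}\to_P1$. Assumption~\ref{as: same rate} ensures that the first coordinate determines a common rate.

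The final step plugs these equivalences into the bootstrap conditional characteristic function from the proof of Theorem~\ref{thm: main}. We then use Slutsky's lemma for the ratio $\widehat r_{NT,f}/r_{NT,f}$. Since the limit law is continuous, we pass from weak convergence to convergence in the Kolmogorov metric via Pólya's theorem. The main obstacle is the first step's sharp contamination rate: the $\bm v$-term bias is of exact order $T^{-1}\sigma_{vk,f}^2$, and we must show it does not cross the thresholds, uniformly under the spatial and serial dependence. I would first try to bound the kernel-weighted fourth moments of $\bar{\bm w}_{iT}$ using the spectral representation (Assumption~\ref{as: spectral representation}) together with the mixing inequalities.
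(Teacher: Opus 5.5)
Your architecture matches the paper's. You write the bootstrap variance of the $k$th $a$-component as $\frac{\widehat r_{NT,f}^{2}}{N}\,\mathbb{I}\{\widehat\sigma_{ak}^{2}\ge\widehat\mu_{ak}\}\,\widehat\sigma_{ak}^{2}+o_P(1)$, and symmetrically for $d$. You read off the indicator regime by regime from Lemma~\ref{lemma:limit form-1}, and you hand the rest to the oracle argument of Theorem~\ref{thm: main}.

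The step that fails is the last one, Slutsky for $\widehat r_{NT,f}/r_{NT,f}$. You assert $\widehat r_{NT,f}/r_{NT,f}\xrightarrow{P}1$, but in the V\&N case of part (a) this is false in general. Lemma~\ref{lemma:limit form-1} only gives $T\widehat\sigma_{a1}^{2}=O_P(\sigma_{v1,f}^{2})$, and that term is a nondegenerate random variable (the Wishart-type fluctuation identified in the lemma's proof). Consequently $\widehat r_{NT,f}=\sqrt{NT}\min\{(T\widehat\sigma_{a1}^{2})^{-1/2},(N\widehat\sigma_{d1}^{2})^{-1/2},1\}$ falls below $r_{NT,f}=\sqrt{NT}$ by a random factor with non-vanishing probability. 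For example, take the $c=0$ design in the proof of Proposition~\ref{prop: impossibility}, with $x_{it}=\alpha_i^x\xi_t^x$ and $u_{it}=\alpha_i^u\xi_t^u$. A direct calculation gives $T\widehat\sigma_{a}^{2}\overset{d}{\to}2(\chi^2_1-1)_+$. This limit is driven by the same $\xi$-average that drives the limit of $\sqrt{NT}(\widehat\beta-\beta_0)$. The law of $\widehat r_{NT,f}\bm{\varrho}^\top(\widehat{\bm{\beta}}-\bm{\beta}_{0})$ is then a random rescaling of $\mathcal L_0$ correlated with the statistic. The bootstrap law treats $\widehat r_{NT,f}$ as a constant and cannot match it. The paper's proof is also silent on $\widehat r_{NT,f}$ in V\&N, so this point has to be supplied rather than copied.

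The repair is to drop the ratio altogether. $\widehat r_{NT,f}$ is a positive data-measurable factor common to the statistic and to every bootstrap draw. Bootstrap critical values, tests and confidence intervals are therefore unchanged if it is replaced by the deterministic $r_{NT,f}$. With $r_{NT,f}$, the Kolmogorov distance is invariant to the common scaling, and it is exactly what your indicator analysis plus the oracle argument controls. The feasible-rate statement should be read and proved in that sense.

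Separately, in the V\&G case of PWB-V you neither need the bound $\widehat\sigma_{ak}^2=o_P(1/(T\log T))$ nor can you derive it from V\&G alone; it requires the strengthened rates of Proposition~\ref{prop: two indicator factors}(c)(ii). The bootstrap variance of the $a$-part is at most $T\widehat\sigma_{ak}^2=o_P(1)$ whatever the indicator does, which is exactly how the paper argues.
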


Theorem~\ref{thm: main-2}(a) implies that PWB-D is uniformly valid
when clustering vanishes (regimes V\&G and V\&N), and also under sufficiently
strong clustering in at least one dimension (a condition strictly
stronger than D). The precise boundary of such ``strictly'' strong
clustering region is determined by the chosen tuning thresholds $\widehat{\bm{\mu}}_{a,D}$ and
$\widehat{\bm{\mu}}_{d,D}$.

Theorem~\ref{thm: main-2}(b) shows that PWB-V is uniformly consistent
on (almost) all class of DGPs that yield a Gaussian limit. Importantly, the tuning thresholds do not impose
a similarly strict restriction as PWB-D: in Gaussian scenario,
when $T\bm{\sigma}_{a,f}^{2}$ (or $N\bm{\sigma}_{d,f}^{2}$) vanishes,
its estimator $T\widehat{\bm{\sigma}}_{a}^{2}$ (or $N\widehat{\bm{\sigma}}_{d}^{2}$)
vanishes as well, so any misclassification induced by the indicator
$\widehat{\bm{D}}_{a}(\widehat{\bm{\mu}}_{a,V})$ (or $\widehat{\bm{D}}_{d}(\widehat{\bm{\mu}}_{d,V})$)
is innocuous in \eqref{eq:PWB DV scaling} and \eqref{eq:PWB DV scaling 2}. In fact, this suggests that the indicator matrices,
and the associated tuning parameters $\widehat{\bm{\mu}}_{a,V}$ and
$\widehat{\bm{\mu}}_{d,V}$, are superfluous for the validity of PWB-V.
They are retained for subsequent use and notational clarity.

\paragraph{PWB-H (Hybrid).}

By Theorem~\ref{thm: main-2}, PWB-V is consistent whenever the limiting
distribution is Gaussian, while PWB-D is valid in some settings with
non-Gaussian limiting distributions. However, neither method works
in both of the following scenarios: (I\&G) and
(V\&N). This is because the variance estimator cannot provide information
to distinguish these two scenarios. These observations motivate a
hybrid procedure that attempts to adapt to both underlying structures
simultaneously based on a second factor.

We define a data-driven combination of PWB-D and PWB-V via the tuning
parameters 
\[
\widehat{\bm{\mu}}_{a,H}=\bm{D}^{*}\widehat{\bm{\mu}}_{a,D}+(\bm{I}_{K}-\bm{D}^{*})\widehat{\bm{\mu}}_{a,V}\quad\text{and}\quad\widehat{\bm{\mu}}_{d,H}=\bm{D}^{*}\widehat{\bm{\mu}}_{d,D}+(\bm{I}_{K}-\bm{D}^{*})\widehat{\bm{\mu}}_{d,V},
\]
where $\bm{D}^{*}$ is a diagonal matrix whose $k$-th diagonal element
is constructed based on a Kolmogorov Smirnov (KS) normality test applied
to bootstrap statistics: 
\[
D_{k}^{*}=\mathbb{I}\left\{ P_{{\rm KS}}\left(\widehat{t}_{k}^{*b}\right)_{b=1}^{B}<\kappa\right\} .
\]
Here, $P_{{\rm KS}}\left(\widehat{t}_{k}^{*b}\right)_{b=1}^{B}$ returns
the KS $p$-values computed on each group $\left(\widehat{t}_{k}^{*1},\ldots,\widehat{t}_{k}^{*B}\right)$.
 The standardized version bootstrap
statistic $\widehat{t}_{k}^{*b}=\sum_{i,t}s_{it,k}^{*b}\Biggl/\sqrt{\frac{1}{B-1}\sum_{b=1}^{B}\left(\sum_{i,t}s_{it,k}^{*b}\right)^{2}}$
is computed under the PWB-V procedure. As we show that PWB-V reproduces
the \emph{form} of the asymptotic law: the PWB-V bootstrap statistic
is asymptotically Gaussian if and only if the original statistic is
asymptotically Gaussian. Observe that the components of $\widehat{\mu}_{ak,H}$
and $\widehat{\mu}_{dk,H}$ may differ across $k$, allowing each component
to have its own limiting behavior. Under the non-Gaussian alternative, this KS $p$-value converges to
zero at an exponential rate, which motivates the choice $\kappa=1/B$. 
Since the number of bootstrap replications $B$ can be chosen large
and is not tied to the sample size, this threshold does not induce
the shrinking indifference region associated with the tuning parameters
$\widehat{{\mu}}_{\bullet}$. Proposition \ref{prop: two indicator factors} further shows that the KS diagnostic distinguishes the Gaussian and
non-Gaussian cases with probability approaching one, including transition regimes that allow for drifting parameters. Moreover, such pre-test classifier
does not induce the type of post-selection distortion discussed by Leeb and P{\"o}tscher
\citeyearpar{leeb2008can}. The
simulation results in Table~\ref{tab:average correct rate for DRC varying NT, ols}
are consistent with this theoretical finding.

Note that PWB-H combines the strengths of PWB-V and PWB-D. When $D_{k}^{*}$
indicates a Gaussian limit, PWB-H applies PWB-V, which covers (almost)
all Gaussian regimes, namely D, I\&G, and V\&G. When the rule indicates
a non-Gaussian limit, PWB-H switches to PWB-D, which covers the non-Gaussian
regime V\&N.

More importantly, although PWB-H uses logarithmic tuning thresholds,
it does \emph{not} inherit the boundary non-uniformity that can arise
for PWB-D in certain regimes. This is because PWB-H avoids the thresholds
that may be misleading: in the non-Gaussian region it relies on D-type
thresholds (for which $\widehat{\bm{\mu}}_{a,D}$ remains well behaved
even when $\widehat{\bm{\mu}}_{a,V}$ can be misleading), whereas
in the Gaussian region it adopts the V-type procedure, which is uniformly
valid even when $\widehat{\bm{\mu}}_{a,D}$ may be misleading.

Consequently, as formalized in Theorem~\ref{thm: main-3}, PWB-H
is uniformly asymptotically exact over all regimes except I\&N, i.e.,
when~\eqref{eq: converge non-gaussian} holds, where uniform consistency
is ruled out by Proposition~\ref{prop: impossibility}(a). This supports
PWB-H as a general-purpose inference procedure.

\begin{theorem}\label{thm: main-3} \textbf{(PWB-H).} Suppose Assumptions~\ref{as: AHS representation}--\ref{as: same rate}
hold. Under the null hypothesis $\mathcal{H}_{0}:\bm\varrho^\top\bm{\beta}=\bm\varrho^\top\bm{\beta}_{0}$,
and with ${{r}}_{NT,f}$ replaced by its feasible counterpart $\widehat{{r}}_{NT,f}$,
the convergence result in~\eqref{eq:theorem 1} holds uniformly for
PWB-H, except when condition~\eqref{eq: converge non-gaussian} holds
for some $k$. \end{theorem}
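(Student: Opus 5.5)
The plan is to reduce Theorem~\ref{thm: main-3} to Theorem~\ref{thm: main-2} through the KS diagnostic $D_k^*$, working along an arbitrary convergent subsequence as in Remark~\ref{rem:subsequence-regimes}. Uniformity then reduces to checking validity along every sequence $\{f_{NT}\}$ whose limit lies in one of the four feasible regimes (D), (V\&G), (I\&G), (V\&N) coordinatewise, since (I\&N) is excluded by hypothesis. Fix such a sequence.

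For each coordinate $k$, I will use the fact (stated before Proposition~\ref{prop: two indicator factors}, which I take as given) that the PWB-V bootstrap statistic is asymptotically Gaussian if and only if the original statistic is. Together with Proposition~\ref{prop: two indicator factors}, this should give
\[
P_{NT,f}\bigl(D_k^*=\mathbb{I}\{k\text{ is in regime (V\&N)}\}\bigr)\to 1.
\]
Under the Gaussian regimes (D), (I\&G), (V\&G), the KS $p$-value of a Gaussian-consistent bootstrap sample is asymptotically uniform. Hence $P(P_{\rm KS}<1/B)\approx 1/B$, which can be made negligible as $B\to\infty$. Under (V\&N), the bootstrap law converges to a fixed non-Gaussian limit (a product-type chaos), so the KS statistic is bounded away from zero. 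The $p$-value then decays exponentially in $B$, far below $\kappa=1/B$.

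On the event $\{D_k^*=0\ \forall k \text{ Gaussian}\}\cap\{D_k^*=1\ \forall k\in(\mathrm{V\&N})\}$, PWB-H coincides coordinatewise with PWB-V on the Gaussian coordinates and with PWB-D on the (V\&N) coordinates. The next step is to re-run the argument of Theorem~\ref{thm: main-2} coordinate by coordinate, since the thresholds enter only through the diagonal indicators $[\bm D_a]_{kk}$ and $[\bm D_d]_{kk}$. Gaussian coordinates then fall under part (b), and (V\&N) coordinates fall under part (a), whose (V\&N) case needs no strong-clustering margin. The key point is that the boundary restriction $T\sigma_{ak,f}^2>2\log T$ in part (a) is only required for (D)-type coordinates, and those are now routed through PWB-V. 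The mixed-coordinate case is handled by Assumption~\ref{as: same rate}: all coordinates share the rate $r_{NT,f}$, so $\bm\varrho^\top(\bm X^\top\bm X)^{-1}\sum s^*_{it}$ is a fixed linear combination of coordinatewise-consistent pieces. The joint bootstrap law then matches the joint limit, provided the proof of Theorem~\ref{thm: main-2} delivers joint (not merely marginal) convergence of the projected components. I expect it does, since the same multipliers $\eta_i^*,\eta_t^*$ are shared across $k$. Replacing $r_{NT,f}$ by $\widehat r_{NT,f}$ is handled exactly as in Theorem~\ref{thm: main-2}.

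Finally, I bound the Kolmogorov distance by its value on the good event plus the probability of misclassification, and take the supremum over sequences. The main obstacle is uniformity of the diagnostic near the boundary between (I\&G)/(V\&G) and (V\&N), where $\sigma_{vk,f}^2$ drifts to zero slowly. There the bootstrap law is non-Gaussian but only approximately so, and $D_k^*$ may fail to detect it. I would handle this by showing that misclassification is harmless in this transition zone. If $D_k^*=0$, PWB-V is used, and it remains valid because the interaction contamination is $O_P((NT)^{-1/2}\sigma_{vk,f})$ with $\sigma_{vk,f}\to0$. If $D_k^*=1$, then PWB-D is used with a vanishing-clustering margin, which is valid under (V\&N)/(V\&G) by Theorem~\ref{thm: main-2}(a). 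Thus either decision yields a consistent bootstrap along any drifting subsequence, mirroring the drifting-parameter statement of Proposition~\ref{prop: two indicator factors}.
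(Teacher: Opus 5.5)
Your top-level reduction is the same as the paper's: classify each coordinate by $D_k^*$, then apply Theorem~\ref{thm: main-2}(b) to Gaussian coordinates and Theorem~\ref{thm: main-2}(a) to (V\&N) coordinates. The problem is that the step carrying the proof is the one you take as given. The paper's proof consists of showing that PWB-V reproduces the \emph{form} of the limit:
\begin{itemize}
\item Gaussian in (D), (I\&G) and (V\&G), by the argument for Theorem~\ref{thm: main-2}(b);
\item non-Gaussian in (V\&N) and (I\&N), because the bootstrap interaction term $\frac{1}{\sqrt{NT}}\sum_{l,l'}c_{ll'k,f}\widehat{Z}^{*\phi}_{Nk,l}\widehat{Z}^{*\psi}_{Tk,l'}$ has the same stochastic order as its sample counterpart.
\end{itemize}
Proposition~\ref{prop: two indicator factors}(a) is proved in the paper by pointing back to this very argument, so invoking it (or the informal sentence before it) is circular. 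You must supply the non-Gaussianity of the PWB-V bootstrap limit under (V\&N) yourself. This is not automatic. The Gaussian $a$-, $d$- and $e$-pieces of the PWB-V bootstrap are of the same order $(NT)^{-1/2}$ as the interaction term there, and the first two share its multipliers $\eta_i^{*},\eta_t^{*}$. So the non-Gaussianity of the sum needs an argument.

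The fallback in your last paragraph is where the argument actually fails. Consider sequences with $\sigma_{vk,f}^2\to0$ slowly and $T\sigma_{ak,f}^2\to\varphi_k\in(0,\infty)$. These are (I\&G) sequences adjacent to the excluded (I\&N) regime, not to (V\&N), and they lie inside the theorem's scope. If $D_k^*=1$ there, coordinate $k$ is routed to PWB-D, which Theorem~\ref{thm: main-2}(a) does not cover:
\begin{itemize}
\item $T\widehat{\sigma}_{ak}^2=O_P(1)$, so $[\bm{D}_a(\widehat{\bm{\mu}}_{a,D})]_{kk}=\mathbb{I}\{T\widehat{\sigma}_{ak}^2\ge\log T\}=0$ w.p.a.1;
\item the non-negligible component $(\bm{\nu}_{a,0}\bm{Z}^a)_k$ is therefore deleted, and the bootstrap under-disperses.
\end{itemize}
This is consistent with PWB-D having no check under (I\&G) in Table~\ref{tab: asymptotic property of methods}. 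The same failure occurs for (D) sequences that violate the $2\log T$/$2\log N$ margin of Theorem~\ref{thm: main-2}(a) while $\sigma_{vk,f}$ stays bounded away from zero.

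So ``either decision yields a consistent bootstrap'' is false. On these sequences you genuinely need $P(D_k^*=0)\to1$, which is exactly what the paper asserts. As you note yourself, with fixed $B$ and $\kappa=1/B$ the misclassification probability does not vanish. Letting $B\to\infty$ instead requires the residual non-Gaussianity of the PWB-V bootstrap law to stay below the KS detection threshold, of order $\sqrt{\log B/B}$ at level $1/B$. That residual disappears only as $\sigma_{vk,f}\to0$ in (I\&G), or as $\sigma_{vk,f}^2/(T\sigma_{ak,f}^2)\to0$ in (D). This is an extra condition linking $B$ to the DGP that you neither state nor verify. Without it, uniformity over (I\&G) and slowly diverging (D) is not established by your argument.
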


A natural next question is whether one can go one step further: when
uniform validity fails in the I\&N regime, can the procedure at least
be made conservative there. Proposition~\ref{prop: impossibility}(c)
indicates a fundamental tension: any method that is uniformly valid
in all feasible regimes \emph{cannot} be uniformly conservative in the
I\&N regime. Within the two-way clustering framework studied here, this trade-off helps
clarify the robustness of PWB-H: it delivers uniform validity over the feasible
regimes, while the remaining difficulty in I\&N reflects an intrinsic reflects a limitation that cannot be uniformly resolved without sacrificing validity elsewhere.

\subsection{Discussion}

\subsubsection{Dependence Regime Classification}

\label{sec: DRC}

Figure~\ref{fig: variance estimators} (visualizing Proposition~\ref{prop: two indicator factors})
summarizes the relationship between $T\widehat{\sigma}_{ak}^{2}$
and $T\sigma_{ak,f}^{2}$ across Gaussian and non-Gaussian scenarios.
When $D_{k}^{*}=0$ (Gaussian scenario), $T\widehat{\sigma}_{ak}^{2}$
matches the stochastic order of $T\sigma_{ak,f}^{2}$. When $D_{k}^{*}=1$
(non-Gaussian scenario), this order matching fails: under both non-Gaussian
regimes, $T\widehat{\sigma}_{ak}^{2}=O_{P}(1)$. Consequently, these
two non-Gaussian cases may overlap each other.

There exists no clear practical boundary between neighboring regimes, for
instance, between V\&G and I\&G, or between I\&G and D. Accordingly,
we use a $\log T$ or $\log N$ threshold to separate regimes, which
may create an indifference zone near the cutoff. We acknowledge this
limitation, but it is intrinsic: the data cannot, in general, cleanly
distinguish regimes that differ only in such local asymptotic behavior.

\begin{figure}[t!]
\centering \includegraphics[width=0.8\textwidth]{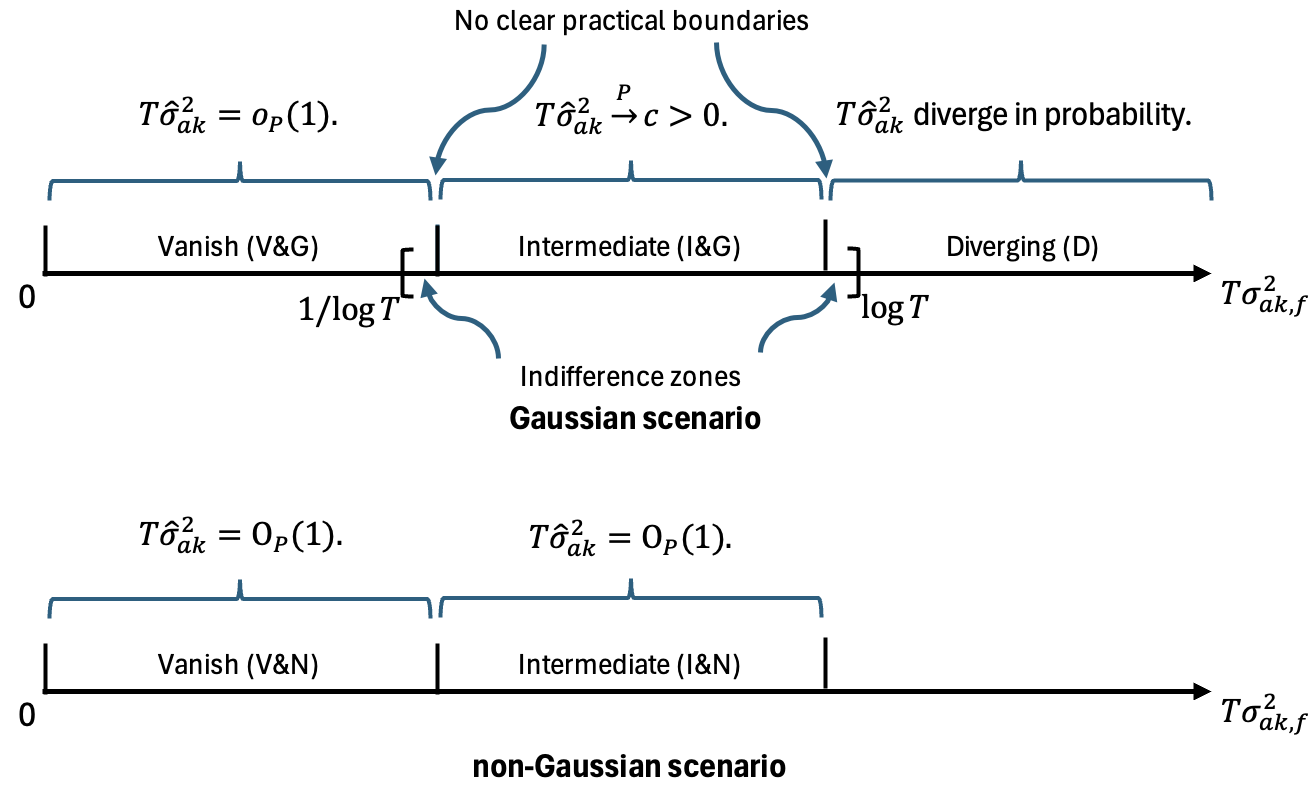} \caption{\textbf{Stochastic order of the variance estimator across true variance
regimes.}}
\label{fig: variance estimators} 
\end{figure}

Notice that relying solely on ``cluster-strength'' diagnostics (building
on the variance estimators) can be misleading, since Gaussian and
non-Gaussian regimes may yield identical diagnostics. Hence, we next
introduce a Dependence Regime Classifier (DRC) to distinguish different
feasible regimes. For each $k$, define the indicators 
\[
\widehat{D}_{D,k}=\max\!\left\{ \bigl[\bm{D}_{a}(\widehat{\bm{\mu}}_{a,D})\bigr]_{kk},\;\bigl[\bm{D}_{d}(\widehat{\bm{\mu}}_{d,D})\bigr]_{kk}\right\} ,\qquad\widehat{D}_{V,k}=\max\!\left\{ \bigl[\bm{D}_{a}(\widehat{\bm{\mu}}_{a,V})\bigr]_{kk},\;\bigl[\bm{D}_{d}(\widehat{\bm{\mu}}_{d,V})\bigr]_{kk}\right\} .
\]

\begin{description}
\item [{\textbf{Algorithm 2.}}] \textbf{Dependence Regime Classifier (DRC)}

\end{description}
\begin{enumerate}[label=\textbf{Step \arabic*:}, leftmargin=*, itemsep=1ex]
\item \textbf{Gaussian vs.\ non-Gaussian.} If $D_{k}^{*}=0$, treat the limiting distribution of $\frac{1}{NT}\sum_{i,t}{s}_{itk}$ as Gaussian
and proceed to Step~2. Otherwise, treat it as non-Gaussian and proceed
to Step~3.
\item \textbf{Gaussian branch.} If $\widehat{D}_{D,k}=1$, classify the
component as (D). Else if $\widehat{D}_{V,k}=0$, classify it as (V\&G).
Otherwise, classify it as lying in the \emph{Gaussian transition region}
between (V\&G) and (D) (i.e., one of V\&G, I\&G, or D). For simulation
reporting, we assign this case to (I\&G).
\item \textbf{Non-Gaussian branch.} Classify it as belonging to the \emph{non-Gaussian
region} (i.e., one of V\&N or I\&N). 
\end{enumerate}
Finally, note that the two non-Gaussian regimes cannot be distinguished
by DRC. One might hope to construct a sharper classifier, but Proposition~\ref{prop: impossibility}(b)
shows that, absent additional information on the DGP, no procedure
can uniformly distinguish these two regimes.

\subsubsection{Comparison of Existing Methods and Key Differences}

Table~\ref{tab: asymptotic property of methods} provides a summary
of the asymptotic properties of the limiting distributions and the
validity of various inference methods across different regimes. AdaWild
denotes the autoregressive double adaptive wild bootstrap introduced
by Juodis~(\citeyear{juodis2021shock}), CHS refers to the
variance estimator proposed by Chiang et al.~(\citeyear{chiang2023standard}),
and MWCB stands for the multiway cluster bootstrap developed by Hounyo
and Lin~(\citeyear{hounyo2024wild}).

Several inference methods have been proposed under the assumption
of no temporal dependence. For example, the variance estimator of
Cameron, Gelbach, and Miller~(\citeyear{cameron2011robust}), the
wild bootstrap procedures of MacKinnon et al. (\citeyear{mackinnon2021wild}),
and several bootstraps of Menzel~(\citeyear{menzel2021bootstrap})
are all designed without explicitly accounting for autocorrelation.
Nonetheless, their properties can be understood within the general
framework developed here. Particularly when $\{\bm{\xi}_{t}\}_{t}$
do not capture serial dependence, the behavior of Cameron et al.~(\citeyear{cameron2011robust})
estimator and MacKinnon et al.~(\citeyear{mackinnon2021wild}) wild
bootstrap methods are similar to the CHS variance estimator and MWCB,
respectively, and Menzel~(\citeyear{menzel2021bootstrap}) bootstrap
procedure with (without) model selection shares the same asymptotic
validity as our PWB-D (PWB-V) method.

The idea of decomposing the score into three components prior to bootstrapping
also appears in Menzel~\citeyearpar{menzel2021bootstrap} and Juodis~\citeyearpar{juodis2021shock}.
We summarize several key differences between these approaches and
our PWB framework (under no temporal dependence): 
\begin{enumerate}
\item Menzel~\citeyearpar{menzel2021bootstrap} employs i.i.d.\ resampling
with wild weights, whereas Juodis~\citeyearpar{juodis2021shock}
and PWB use wild weights to reproduce dependence. This further complicates
the bootstrap joint CLT because the resulting bootstrap components
are no longer independent.
\item Our data-dependent rescaling (introduced to maintain validity in I\&N)
and the thresholding indicators are conceptually related to the bootstrap
with model selection in Menzel~\citeyearpar{menzel2021bootstrap}.
In Juodis~\citeyearpar{juodis2021shock}, thresholding indicators
appear, but without the accompanying rescaling weights.
\item Unlike Menzel~\citeyearpar{menzel2021bootstrap} and Juodis~\citeyearpar{juodis2021shock},
we propose a hybrid bootstrap (PWB-H) that detects whether the
self-normalized statistic is asymptotically Gaussian. The guiding
principle is to deploy the divergence-sensitive and vanishing-sensitive
schemes precisely in the regimes where they are most reliable.
\item Consequently, relative to existing methods, the hybrid bootstrap offers
two main advantages: (i) it is valid on the union of the regimes covered
by the divergence- and vanishing-sensitive schemes, and (ii) it avoids
the boundary non-uniformity induced by the indifference region inherent
in threshold-based tuning. 
\end{enumerate}

\begin{table}
\renewcommand{\arraystretch}{1.5}
 \resizebox{\columnwidth}{!}{%
 {\Huge
\begin{tabular}{llcccccc}
\hline \hline 
\multicolumn{2}{l}{\multirow{2}{*}{\makecell{Limiting behavior of \\$\bm{\sigma}_{a,f}^{2}$, $\bm{\sigma}_{d,f}^{2}$, and $\bm{\sigma}_{v,f}^{2}$}}} & 
\multirow{2}{*}{\makecell{Divergent, $\left(\ref{eq: both diverge}\right)$}} & 
\multicolumn{2}{c}{Vanish}& &
\multicolumn{2}{c}{\makecell[c]{Intermediate}}\\
\cline{4-5}\cline{7-8}
&&& V\&N, \eqref{eq: vanish nongaussian} & 
V\&G, \eqref{eq: vanish gaussian} && 
I\&N, \eqref{eq: converge non-gaussian} & 
I\&G, \eqref{eq: converge gaussian} \\
\hline
\multirow{3}{*}{\makecell[l]{Asymptotic \\
Properties}} & {Limiting form of $\widehat{\bm{\beta}}$}                          & Gaussian & non-Gaussian & Gaussian && non-Gaussian & Gaussian \\
&{Order of $\widehat{\bm{\beta}}-\bm{\beta}$}                  & $O_P\left(\max\left\{\frac{\bm{\sigma}_{a,f}}{\sqrt{N}},\frac{\bm{\sigma}_{d,f}}{\sqrt{T}}\right\}\right)$ & $O_P\left(\frac{1}{\sqrt{NT}}\right)$ & $O_P\left(\frac{1}{\sqrt{NT}}\right)$ & &$O_P\left(\frac{1}{\sqrt{NT}}\right)$ & $O_P\left(\frac{1}{\sqrt{NT}}\right)$ \\
&{Order of $\max\{T\widehat{\bm{\sigma}}_{a}^{2},N\widehat{\bm{\sigma}}_{d}^{2}\}$}   & diverge &  $O_P\left(1\right)$&$o
_P\left(1\right)$    &   &$O_P\left(1\right)$&$O_P\left(1\right)$    \\
\hline
\multirow{7}{*}{\makecell[l]{Methods}} &{CHS CRVE} &$\checkmark$&&$\checkmark$&&&$\checkmark$\\ 
&
MWCB                             & $\checkmark$   &   & $\checkmark$ & &   & $\checkmark$ \\
&AdaWild                           & $\checkmark$   & $\checkmark$    & $\checkmark$  &&   &  \\
&Oracle PWB                         & $\checkmark$   & $\checkmark$    & $\checkmark$ & & $\checkmark$    & $\checkmark$ \\
&PWB-D                           & $\checkmark$   & $\checkmark$    & $\checkmark$ & &   &  \\
&PWB-V                               & $\checkmark$   &     & $\checkmark$ & &   & $\checkmark$ \\
&PWB-H                               & $\checkmark$   & $\checkmark$    & $\checkmark$ & &   & $\checkmark$ \\
\hline \hline 
\end{tabular}}}\caption{\textbf{Asymptotic properties of statistics and various methods.}  A checkmark is put when the method is consistent in the sub-scenario (ignore the indifference region).
}\label{tab: asymptotic property of methods}
\end{table}

\subsubsection{Other Possible Alternatives}

In this paper, the PWB methods are primarily built on the empirical
score $\widehat{\bm{s}}_{it}$ and employ a non-studentized statistic
designed to accommodate a broad range of settings. When perturbing
the score, we cannot directly compute the bootstrap empirical score
$\widehat{\bm{s}}_{it}^{*}$, which is required for forming a studentized
statistic. One possible workaround is 
\begin{equation}
\widehat{\bm{s}}_{it}^{*}=\bm{X}_{it}^{\top}\widehat{\bm{u}}_{it}^{*}=\bm{X}_{it}^{\top}\widehat{\bm{u}}_{it}-\bm{X}_{it}^{\top}\bm{X}_{it}(\widehat{\bm{\beta}}^{*}-\widehat{\bm{\beta}})=\widehat{\bm{s}}_{it}-\bm{X}_{it}^{\top}\bm{X}_{it}(\bm{X}^{\top}\bm{X})^{-1}\sum_{i}\sum_{t}\bm{s}_{it}^{*}.\label{eq: bootstrap empirical score}
\end{equation}
However, this approach is essentially perturbing residuals, as the
first equality in (\ref{eq: bootstrap empirical score}) fixes $\bm{X}_{it}$.

Another alternative is to perturb the score directly, such as $\widehat{\bm{s}}_{i}^{*}=\widehat{\bm{s}}_{i}\eta_{i}^{*}$.
But under Rademacher weights, the resulting bootstrap variance estimator
becomes identical to that of the original $t$-statistic, yielding
no gain relative to the non-studentized approach. We also experimented
with other weight distributions, but found no notable improvement.

Note that we multiply both $\widehat{\bm{a}}_{i}$ and $\widehat{\bm{w}}_{it}$
by the same $\eta_{i}^{*b}$, and both $\widehat{\bm{d}}_{t}$ and
$\widehat{\bm{w}}_{it}$ by the same $\eta_{t}^{*b}$. This design
enables the bootstrap to capture the correlation structures reflected
in $\left\{ Cov\left(\bm{Z}_{N}^{a},\bm{Z}_{N,l}^{\phi}\right)\right\} _{l}$
and $\left\{ Cov\left(\bm{Z}_{T}^{d},\bm{Z}_{T,l}^{\psi}\right)\right\} _{l}$,
which is necessary for the validity of oracle PWB over the entire
parameter function space. By contrast, a pure Efron (or pigeonhole)
bootstrap is ill-suited for reproducing these two underlying correlation
structures of the data, because such resampling tends to overinflate
the variance of $\bm{e}_{it}$.

\section{Simulation Results}

\label{sec:simulations} In this section, we examine the performance
of various methods and report the most relevant results. The additional
 results, including results for less favorable alternatives, heteroskedasticity framework, varying levels of dependence, nonseparable panel models, are provided in the Internet Appendix ID.\footnote{The nonseparable panel DGP is discussed in Fern{\'a}ndez-Val, Freeman, and Weidner \citeyearpar{fernandez2021low} and Chen, Fern{\'a}ndez-Val, Weidner \citeyearpar{chen2021nonlinear}.)} The overall pattern mirrors the findings reported in the main text.
The main goal of our simulation is to support the theoretical results.

We generate data based on the linear model: 
\begin{align}
 & y_{it}=\beta_{1}+\sum_{k=2}^{K}\beta_{k}X_{it,k}+u_{it},\label{eq: simulation DGP}\\
 & X_{it,k}=f_{1k}(\alpha_{i,k}^{x},\xi_{t,k}^{x},\varepsilon_{it,k}^{x}),\text{ and}\\
 & u_{it}=f_{2}(\alpha_{i}^{u},\xi_{t}^{u},\varepsilon_{it}^{u}).
\end{align}
We assume that $(\alpha_{i,k}^{x},\alpha_{i}^{u},\xi_{t,k}^{x},\xi_{t}^{u},\varepsilon_{it,k}^{x},\varepsilon_{it}^{u})$
are mutually independent random variables.  $(\varepsilon_{it,k}^{x},\varepsilon_{it}^{u})$ are standard Gaussian distributed, independent across $i$ and $t$, and $k$. The latent
components $\left(\xi_{t,k}^{x},\xi_{t}^{u}\right)$ are serially
dependent over $t$, following an AR(1) procedure: 
\begin{equation}
\xi_{t}=\rho\xi_{t-1}+\widetilde{\xi}_{t},\text{ where \ensuremath{\widetilde{\xi}_{t}} are independent draws from \ensuremath{\mathcal{N}(0,1-\rho^{2}).}}\label{eq: simulation DGP xi time}
\end{equation}
Such an AR(1) process with $\rho<1$ satisfies Assumption~\ref{as: time mixing}.

The latent
components $\left(\alpha_{i,k}^{x},\alpha_{i}^{u}\right)$ are generated by adopting the spatial design of Conley and Molinari~\citeyearpar{conley2007spatial},
which specifies a stationary, finite-range moving-average field with
geometrically decaying weights. Let $\{\mathfrak{s}_{i}\}_{i=1}^{N}\subset\mathbb{R}^{2}$
denote the (fixed) spatial locations and define the Euclidean distance
$\mathfrak{d}_{ij}\equiv\|\mathfrak{s}_{i}-\mathfrak{s}_{j}\|$. For
each component $k\in\{1,\dots,K\}$, draw i.i.d.\ Gaussian innovations
$\{z_{j,k}\}_{j=1}^{N}$ with $z_{j,k}\stackrel{\text{i.i.d.}}{\sim}\mathcal{N}(0,\sigma_{z}^{2})$,
independent across $j$ and $k$. Fix a range parameter $m>0$ and
a dependence parameter $\rho_{\mathfrak{d}}$, and define
the $m$-neighborhood $\mathcal{N}_{m}(i)\equiv\{j\le N:\mathfrak{d}_{ij}\le m\}$.
We then generate the spatial effect by the truncated, geometrically
weighted average 
\begin{align}
\alpha_{i,k}\equiv\sum_{j\in\mathcal{N}_{m}(i)}w_{ij}\,z_{j,k},\qquad w_{ij}\equiv\rho_{\mathfrak{d}}^{\,\mathfrak{d}_{ij}}\mathbf{1}\{\mathfrak{d}_{ij}\le m\}.\label{eq:conley_molinari_dgp}
\end{align}
Because the weights are compactly supported, $\alpha_{i,k}$ and $\alpha_{j,k}$
are independent whenever $\mathfrak{d}_{ij}>2m$, so dependence decays
with distance and vanishes beyond a finite cutoff, matching the short-range
dependence implied by our mixing assumptions.
In our baseline implementation, we set $m=5$, 
$\rho_{\mathfrak{d}}=0.10$, and $\sigma_{u}^{2}=1$. We further discuss the effect of varying levels of parameters in the Internet Appendix ID.

We set $\beta_{k}=1$ for all $k=1,\ldots,K$. For the number of regressors
($K$), MacKinnon (\citeyear{mackinnon2021fast}) suggests that performances
of many methods deteriorate with increasing $K$. Choosing a small
value of $K$, such as $K=2$ (a constant term and one regressor),
may yield an overly optimistic assessment. We hence choose $K=5$
and examine the true value of $\beta_{5}$ with various methods.

For functions $f_{1k}$ and $f_{2}$, we consider simulation designs
that cover all five scenarios: 
\begin{flalign}
 & \bullet\ \text{D, (\ref{eq: both diverge}) holds:} & {x_{it,k}=\alpha_{i,k}^{x}+\xi_{t,k}^{x}+\varepsilon_{it,k}^{x}\text{ and }u_{it}=\alpha_{i}^{u}+\xi_{t}^{u}+\varepsilon_{it}^{u},}\label{eq: dgp 1}\\
 & \bullet\ \text{V\&N, (\ref{eq: vanish nongaussian}) holds:} & x_{it,k}=\alpha_{i,k}^{x}\xi_{t,k}^{x}\text{ and }u_{it}=\alpha_{i}^{u}\xi_{t}^{u},\label{eq: dgp 2}\\
 & \bullet\ \text{V\&G, (\ref{eq: vanish gaussian}) holds:} & x_{it,k}=\varepsilon_{it,k}^{x}\text{ and }u_{it}=\varepsilon_{it}^{u},\label{eq: dgp 3}\\
 & \bullet\ \text{I\&N, (\ref{eq: converge non-gaussian}) holds:} & {x_{it,k}=\left(\alpha_{i,k}^{x}+N^{-1/4}\right)\xi_{t,k}^{x}\text{ and }u_{it}=\left(\alpha_{i}^{u}+N^{-1/4}\right)\xi_{t}^{u},}\label{eq: dgp 4}\\
 & \bullet\ \text{I\&G, (\ref{eq: converge gaussian}) holds:} & {x_{it,k}=\left(\varepsilon_{it,k}^{x}+N^{-1/4}\right)\xi_{t,k}^{x}\text{ and }u_{it}=\left(\varepsilon_{it}^{u}+N^{-1/4}\right)\xi_{t}^{u}.}
\label{eq: dgp 5}
\end{flalign}

\begin{figure}[t!]
\centering \begin{subfigure}[t]{0.49\hsize} \subcaption{DGP
(\ref{eq: dgp 1})}\includegraphics[width=1\textwidth]{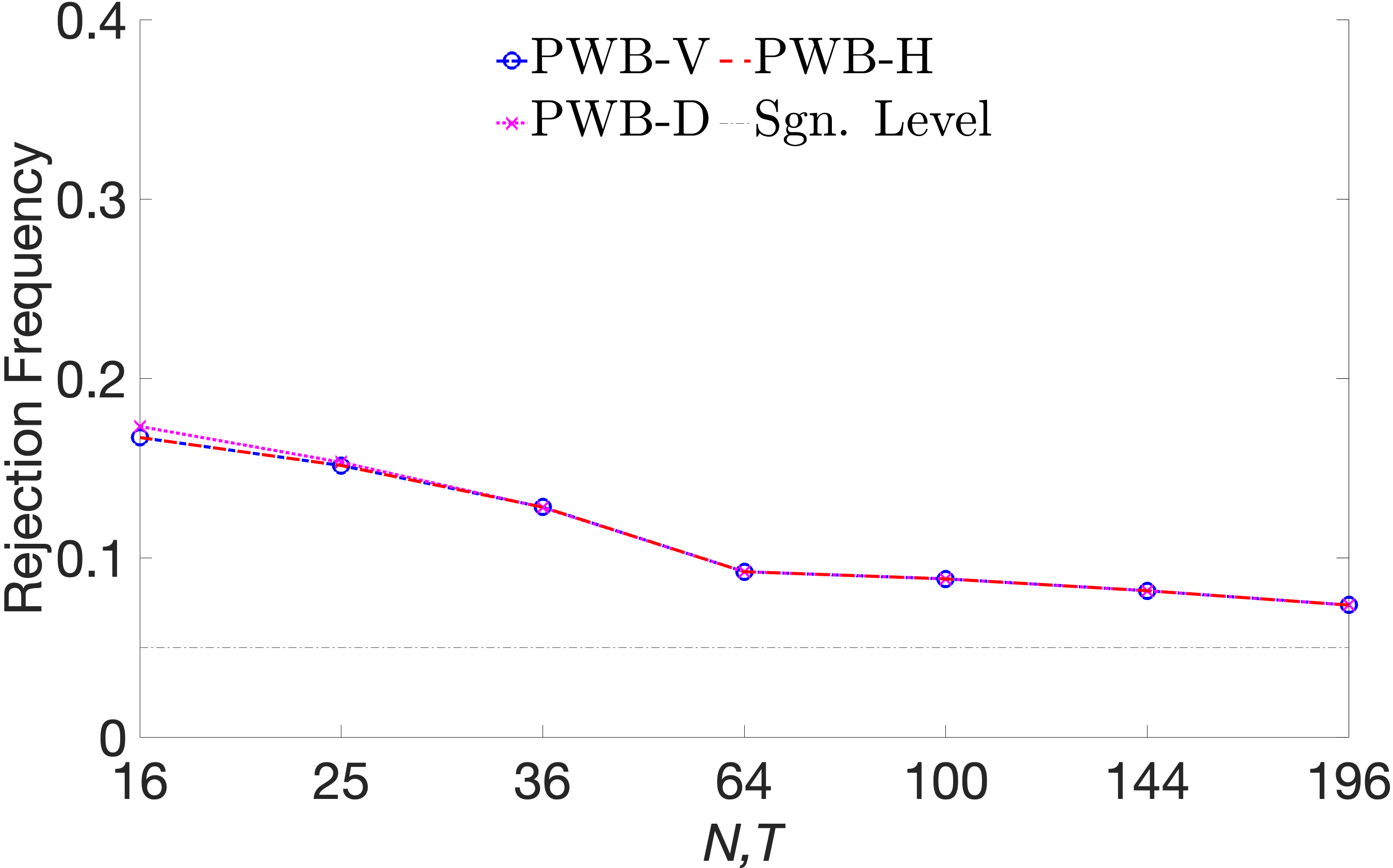}
\end{subfigure}

\begin{subfigure}[t]{0.49\hsize} \subcaption{DGP
(\ref{eq: dgp 2})}\includegraphics[width=1\textwidth]{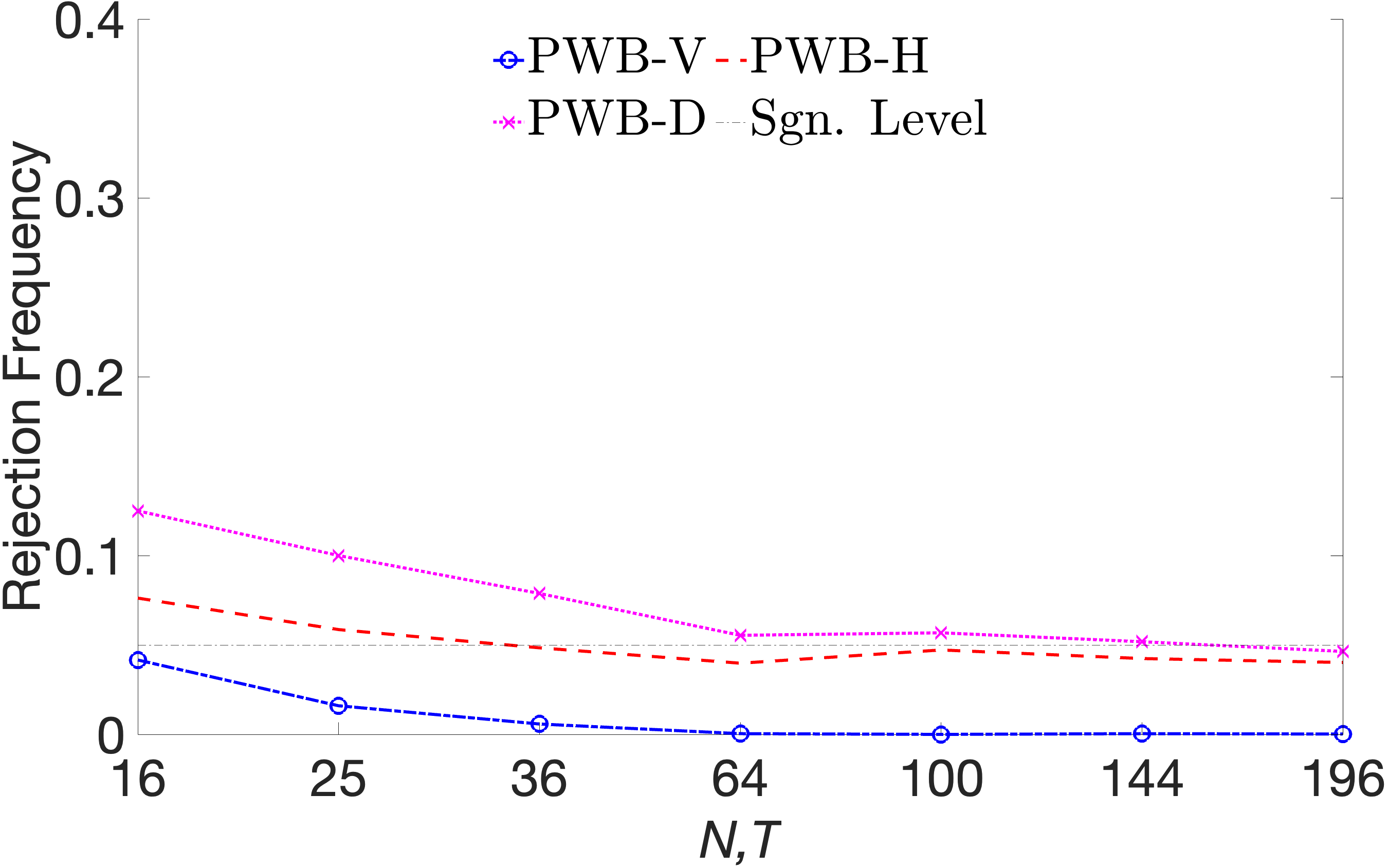}
\end{subfigure} \begin{subfigure}[t]{0.49\hsize} \subcaption{DGP
(\ref{eq: dgp 3})} \includegraphics[width=1\textwidth]{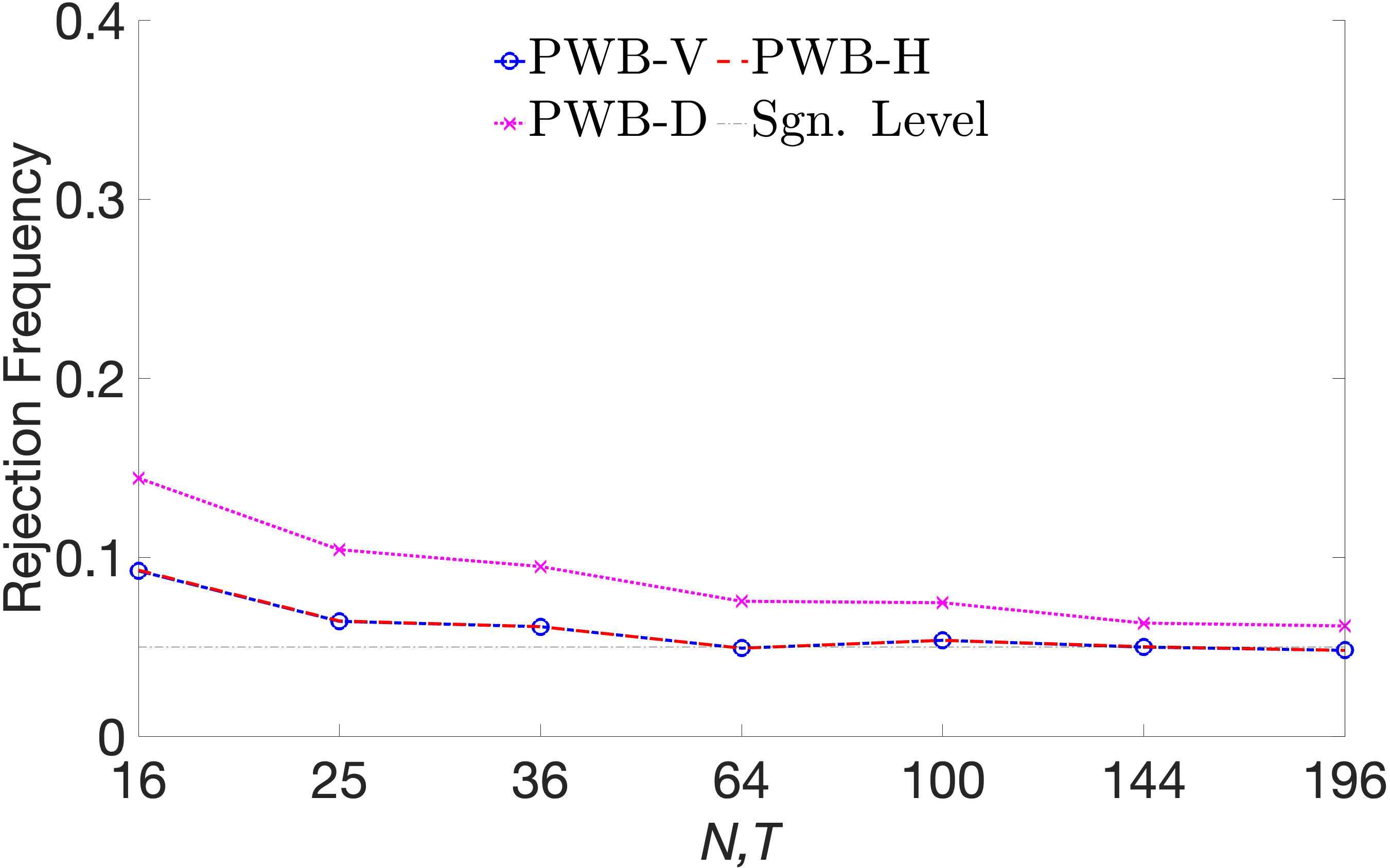}
\end{subfigure}

\begin{subfigure}[t]{0.49\hsize} \subcaption{DGP (\ref{eq: dgp 4})}\includegraphics[width=1\textwidth]{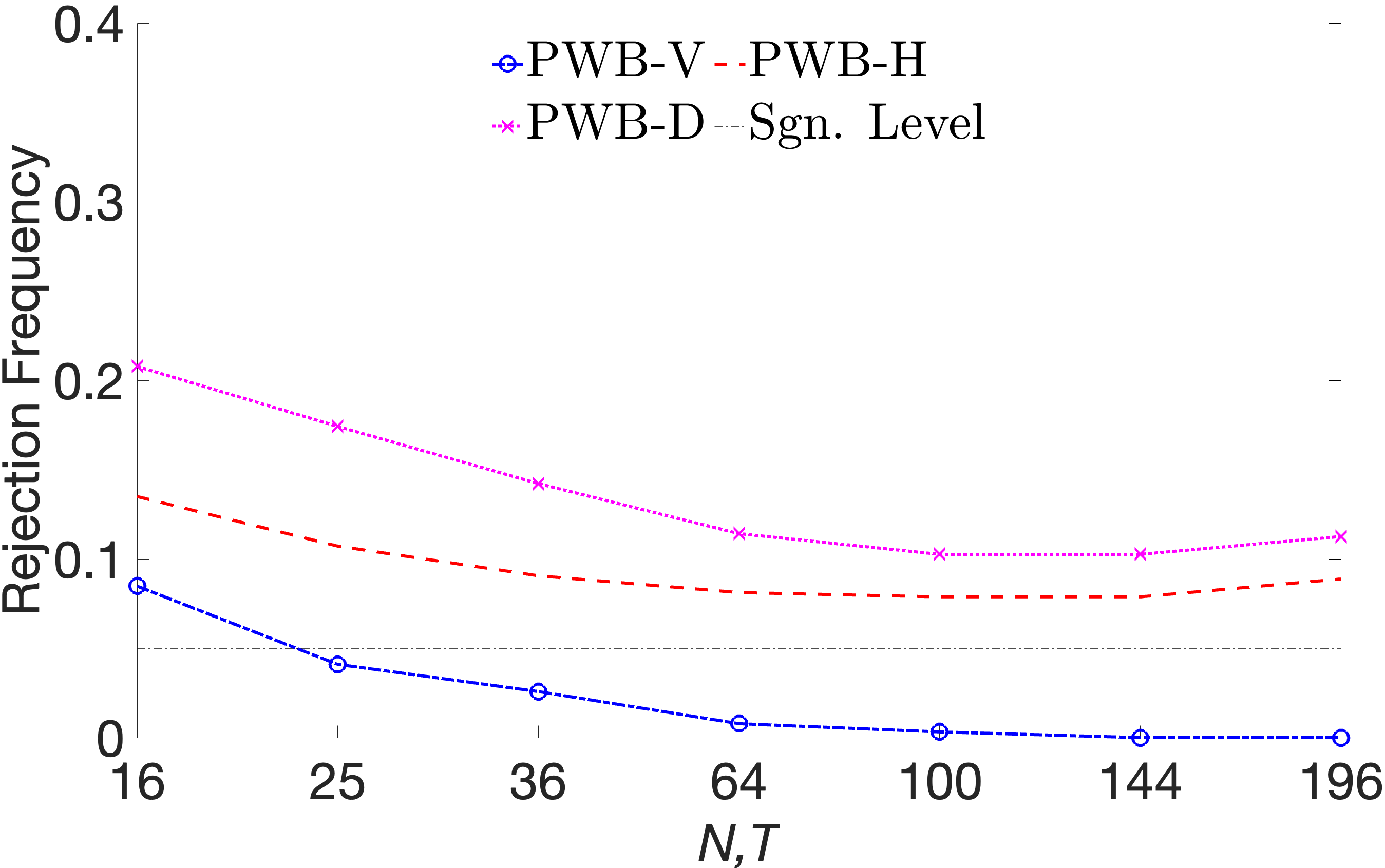}
\end{subfigure} \begin{subfigure}[t]{0.49\hsize} \subcaption{DGP
(\ref{eq: dgp 5})} \includegraphics[width=1\textwidth]{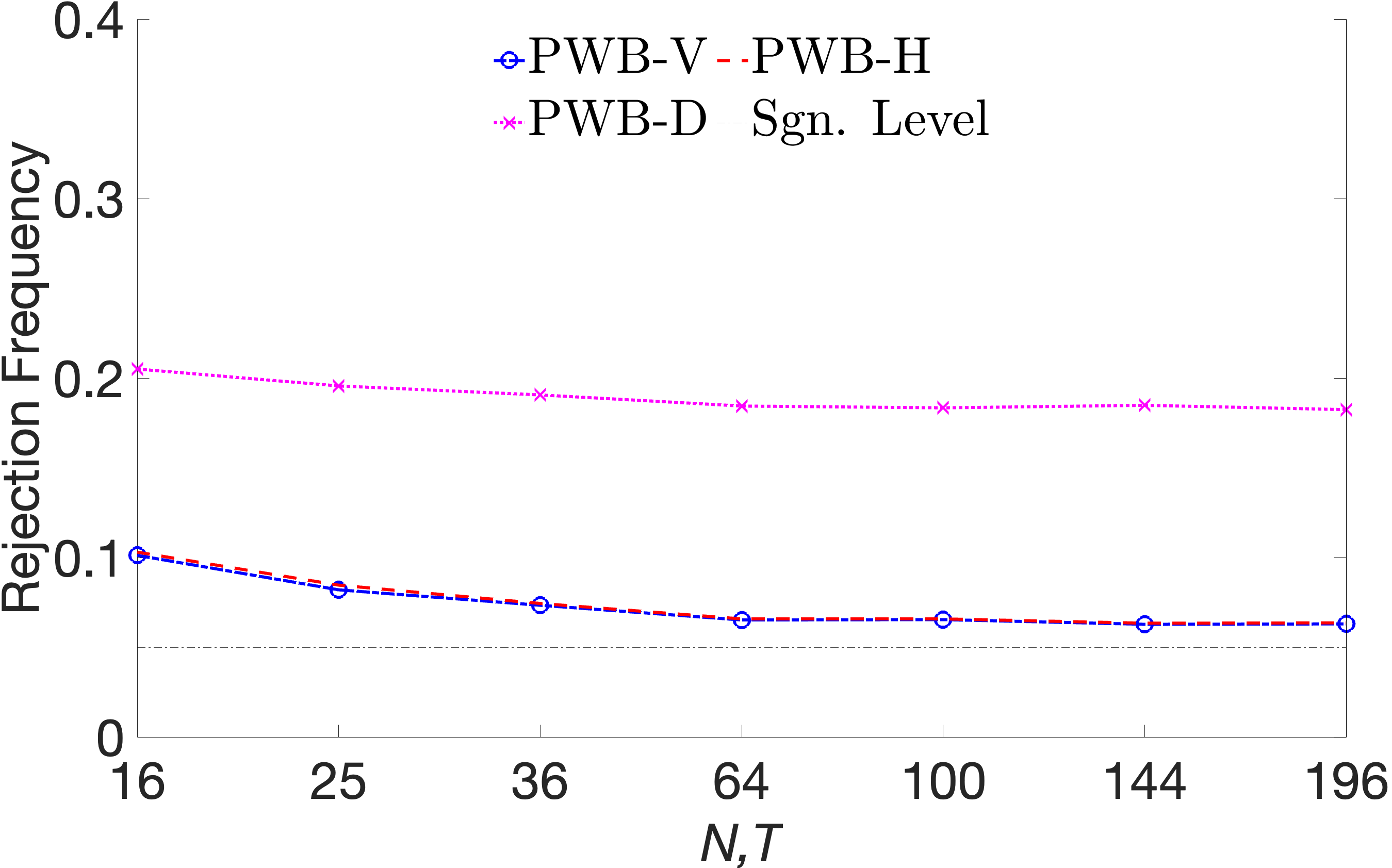}
\end{subfigure}

\caption{\textbf{Rejection Frequency for DGPs (\ref{eq: dgp 2})-(\ref{eq: dgp 5}).}
For each bootstrap
method, $B=999$. Results are based on 5,000 Monte Carlo replicates.
The predetermined significance level is 5\%.}
\label{fig: rej frequency DGPs 2-5} 
\end{figure}

Figure~\ref{tab:average correct rate for DRC varying NT, ols} reports the
corresponding rejection frequencies. Panel~(a) presents the results under
DGP~\eqref{eq: dgp 1}, where there is strong cluster dependence along both
dimensions. As expected, the performance of all methods improves as the
number of clusters increases, which is consistent with the theoretical
prediction that all methods are valid in this regime.

Panel~(b) reports the results under DGP~\eqref{eq: dgp 2}, where the
limiting distribution is non-Gaussian. In line with the theory, PWB-V
does not provide valid inference, whereas the other methods perform well.
Notably, PWB-H performs relatively well even in small samples, although
this may partly reflect finite-sample randomness. Since PWB-H is a hybrid
of PWB-D and PWB-V, its finite-sample performance generally lies between
those of the two benchmark procedures.

Panel~(c) presents the results under DGP~\eqref{eq: dgp 3}, a setting in
which the dependence is primarily driven by intersection-level clustering.
All methods perform adequately in this scenario, again consistent with the
theoretical predictions.

Panel~(d) considers DGP~\eqref{eq: dgp 4}, the most challenging case, in
which the limiting distribution is non-Gaussian and no feasible procedure
can achieve asymptotic validity. The simulation results confirm this theoretical impossibility result, as
none of the methods exhibits further improvement when the number of
clusters becomes sufficiently large. Panel~(e) examines DGP~\eqref{eq: dgp 5}, where the asymptotic
distribution is Gaussian. In this setting, PWB-D is invalid and exhibits
substantial overrejection even when the sample size is large. By contrast,
PWB-V and PWB-H perform well as $N$ and $T$ increase.

Overall, the simulation results across the five scenarios corroborate the
theoretical findings. Among the feasible procedures, PWB-H delivers robust
performance across most scenarios, except in the infeasible regime where no
feasible method can be asymptotically valid. Therefore, in view of both the
theoretical analysis and the simulation evidence, we recommend PWB-H for
practical applications.

\begin{table}[t!]
\centering %
\begin{tabular}{lccccccc}
\hline \hline
$N,T$  & 20  & 30  & 50  & 70  & 100  & 150  & 200 \tabularnewline
\hline 
D, DGP \eqref{eq: dgp 1}  & 0.991  & 0.998  & 0.999  & 0.999  & 0.999  & 0.999  & 0.999\tabularnewline
V\&N, DGP \eqref{eq: dgp 2}  & 0.987  & 0.996  & 0.998 & 0.999  & 0.999  & 0.999 & 0.999 \tabularnewline
V\&G, DGP \eqref{eq: dgp 3} & 0.686  & 0.715  & 0.786  & 0.812  & 0.860  & 0.880  & 0.922\tabularnewline
I\&N, DGP \eqref{eq: dgp 4}  & 0.972  & 0.990  & 0.990  & 0.991  & 0.997 & 0.999  & 0.999 \tabularnewline
I\&G, DGP \eqref{eq: dgp 5}  & 0.538  & 0.631  & 0.735  & 0.810  & 0.855  & 0.933  & 0.964 \tabularnewline
\hline \hline
\end{tabular}\caption{\textbf{Classification accuracy of the dependence-regime classifier
(DRC) across varying \(N\) and \(T\).} Entries report the fraction of
replications in which the DRC selects the population regime; the optimal
classification probability approaches one over distinguishable regimes. For
the two uniformly indistinguishable non-Gaussian regimes, classification is
counted as correct when the DRC selects the non-Gaussian branch. For each bootstrap
method, $B=999$. Results are based on
5,000 Monte Carlo replicates.}
\label{tab:average correct rate for DRC varying NT, ols} 
\end{table}

We also report simulation results for the dependence-regime classifier
(DRC) in Table~\ref{tab:average correct rate for DRC varying NT, ols}.
The entries in the table measure the accuracy of regime classification, not
the empirical size of the subsequent confidence interval; in the population
limit, the relevant classification probability is expected to converge to one
over the distinguishable regimes.
For the two indistinguishable non-Gaussian regimes, we record a classification
as correct whenever the procedure flags the component as non-Gaussian
(i.e., assigns it to either of the two non-Gaussian regimes). The
classification accuracy increases with the sample sizes $N$ and $T$,
and all scenarios exceed 90\% accuracy when $N=T=200$. When $N=T=20$,  V\&N and I\&N demonstrate very high accuracy, indicating the KS diagnostic remains accurate in small samples as long as $B$ is sufficiently large. By contrast, I\&G and V\&G exhibit lower classification
accuracy because they are more likely to be confused when the variance
components are imprecisely estimated in small samples.
 In the unreported results, we also consider
designs with a small numbers of factors ($K=2$), and
obtain similar results.

\section{Conclusion}

\label{sec:conclusion} This paper contributes to the econometrics literature on inference under
two-way clustering with serially and spatially dependent common effects. We
characterize the limiting distribution of the OLS estimator across five
mutually exclusive and exhaustive regimes, determined by the relative
contributions of the two clustering dimensions and the interaction component.
These regimes include both Gaussian and non-Gaussian limits and imply
different requirements for valid inference.

We show that one non-Gaussian regime is intrinsically infeasible: without
additional restrictions on the DGP, no procedure can achieve uniformly
consistent inference in that regime. We further establish two additional
impossibility results. First, the infeasible regime cannot be uniformly
distinguished from one feasible regime. Second, heterogeneous score components
under two-way clustering preclude uniformly consistent inference. Together,
these results identify fundamental limits on what can be learned from the
data in two-way clustered settings.

To address the feasible cases, we propose a family of projection-based wild
bootstrap procedures. The hybrid procedure, PWB-H, combines a data-driven
Gaussianity diagnostic, variance-scaling adjustments, and dependence-adaptive
bootstrap multipliers. It delivers uniformly valid inference across all four
feasible regimes, while the remaining non-Gaussian regime is shown to be
fundamentally beyond the reach of uniformly valid data-driven inference. 
Monte Carlo simulations confirm that PWB-H performs well across a range of
dependence structures and sample sizes.

\section*{Appendix}

\appendix

\section{Serial-Spatial Two-Way CRVE}

 \renewcommand{%
\theequation}{A.\arabic{equation}}
\renewcommand{\thelemma}{A.\arabic{lemma}}
\renewcommand{%
\thetheorem}{A.\arabic{theorem}}

\renewcommand{\thesubsection}{A.\arabic{subsection}}

We also propose a variance estimator for $\widehat{\bm{\beta}}-\bm{\beta}_{0}$
under the general two-way clustering with serial and spatial dependence:
\begin{equation}
\widehat{\bm{V}}=\widehat{\bm{Q}}^{-1}\left(\frac{\widehat{\bm{\sigma}}_{a}^{2}}{N}+\frac{\widehat{\bm{\sigma}}_{d}^{2}}{T}+\frac{\widehat{\bm{\sigma}}_{w}^{2}}{NT}\right)\widehat{\bm{Q}}^{-1},\label{eq: new variance}
\end{equation}
where $\widehat{\bm{Q}}=\frac{1}{NT}(\bm{X}^\top\bm{X})$, $\widehat{\bm{\sigma}}_{a}^{2}$ and $\widehat{\bm{\sigma}}_{d}^{2}$
are defined in (\ref{eq: sigma a estimator}) and (\ref{eq: sigma d estimator}),
respectively. The variance estimator for the intersection $\widehat{\bm{\sigma}}_{w}^{2}$
is given by 
\begin{align*}
\widehat{\bm{\sigma}}_{w}^{2} & =\frac{1}{NT}\sum_{i=1}^{N}\sum_{j=1}^{N}\sum_{t=1}^{T}\sum_{\tau=1}^{T}q^{\left|t-\tau\right|}\mathcal{K}\left(\frac{\mathfrak{d}_{ij}}{\mathfrak{d}_{N}}\right)\ddot{\bm{w}}_{it}\ddot{\bm{w}}_{j\tau}^{\top}.
\end{align*}
It is guaranteed to be positive definite and, when the DGP is clustered
along the intersection dimension, it is less likely to yield a standard
error of zero for the statistic of interest.

 \renewcommand{%
\theequation}{B.\arabic{equation}}
\renewcommand{\thelemma}{B.\arabic{lemma}}
\renewcommand{%
\thetheorem}{B.\arabic{theorem}}

\renewcommand{\thesubsection}{B.\arabic{subsection}}

\section{Proof of Main Theorem}

\subsection{Proof of Theorem \protect\ref{thm: main}}
\begin{proof}
Let $\mathcal{B}$ be the class of DGPs satisfying Assumptions \ref{as: AHS representation}-\ref{as: same rate}.
We prove bootstrap validity (and uniformity over $f\in\mathcal{B}$)
in three steps: (i) marginal CLTs + pairwise covariance, (ii) joint
CLT + truncation, and (iii) continuity of the limit + uniformization
via Lemma~\ref{lemma: uniform convergence}. Throughout, we work
along an arbitrary \emph{convergent} sequence of DGPs $\{f_{NT}\}_{N,T}$
such that the associated singular values $\{\bm{c}_{ll',f}\}_{l,l'}$ and
variance parameters $\bm{\nu}_{NT,f}$, as defined in \eqref{eq:nu-f},
satisfy 
\[
(\{\bm{c}_{ll',f}\}_{l,l'},\bm{\nu}_{NT,f})\;\longrightarrow\;(\{\bm{c}_{ll',0}\}_{l,l'},\bm{\nu}_{0})\qquad\text{as }N,T\to\infty.
\]
Most of the subsequent asymptotic arguments are established along
such convergent sequences. Uniform convergence over the admissible
class of DGPs is further obtained by an application of Lemma~\ref{lemma: uniform convergence}.

\paragraph{Step 1: Bootstrap marginal CLTs.}

\subparagraph{(a) The target limit.}

Define the limiting distribution 
\begin{equation}
\mathcal{\bm{L}}_{0}\!\left(\bm{\nu}_{0},\{\bm{c}_{ll',0}\}_{l,l'=1}^{\infty}\right)=\bm{\nu}_{a,0}\bm{Z}^{a}+\bm{\nu}_{d,0}\bm{Z}^{d}+\bm{\nu}_{e,0}\bm{Z}^{e}+\bm{\nu}_{v,0}\sum_{l,l'=1}^{\infty}\bm{c}_{ll',0}\odot\left(\bm{Z}_{l}^{\phi}\odot\bm{Z}_{l'}^{\psi}\right),\label{eq:L0}
\end{equation}
where $\bm{Z}^{a},\bm{Z}^{d},\bm{Z}^{e}$ are mutually independent
and distributed as $\mathcal{N}(\bm{0},\mathbf{I})$. The collection
$\{(\bm{Z}_{l}^{\phi},\bm{Z}_{l'}^{\psi})\}_{l,l'\ge1}$ is also Gaussian,
with the only potentially nonzero cross-covariances being $\{Cov(\bm{Z}^{a},\bm{Z}_{l}^{\phi})\}_{l\ge1}$
(due to $\{\bm{\alpha}_{i}\}_{i}$) and $\{Cov(\bm{Z}^{d},\bm{Z}_{l'}^{\psi})\}_{l'\ge1}$
(due to $\{\bm{\xi}_{t}\}_{t}$).

Define the parameter sequence 
\begin{equation}
{\bm{\nu}}_{NT,f}\equiv\left({\bm{\nu}}_{NT,af},{\bm{\nu}}_{NT,df},{\bm{\nu}}_{NT,ef},{{\nu}}_{NT,vf},{\bm{\nu}}_{NT,a1f},{\bm{\nu}}_{NT,d1f},\ldots\right)\label{eq:nu-f}
\end{equation}
with 
\begin{equation}
{\bm{\nu}}_{NT,af}=r_{NT,f}\frac{\bm{\sigma}_{a,f}}{\sqrt{N}},\quad{\bm{\nu}}_{NT,df}=r_{NT,f}\frac{\bm{\sigma}_{d,f}}{\sqrt{T}},\quad{\bm{\nu}}_{NT,ef}=r_{NT,f}\frac{\bm{\sigma}_{e,f}}{\sqrt{NT}},\quad{{\nu}}_{NT,vf}=r_{NT,f}\frac{1}{\sqrt{NT}},\label{eq:nu-hat}
\end{equation}
and, for each $l,l'\ge1$, 
\begin{align*}
&\bm{\sigma}_{al,f}=E\!\left(\bm{a}_{i}\phi_{l}\!\left(\bm{\alpha}_{i}\right)^{\top}\right),\quad\bm{\sigma}_{dl',f}=\sum_{\iota=-\infty}^{\infty}E\!\left(\bm{d}_{t}\psi_{l'}\!\left(\bm{\xi}_{t+\iota}\right)^{\top}\right),\\
&\quad{\bm{\nu}}_{NT,alf}=r_{NT,f}\frac{\bm{\sigma}_{al,f}}{\sqrt{N}},\quad{\bm{\nu}}_{NT,dl'f}=r_{NT,f}\frac{\bm{\sigma}_{dl',f}}{\sqrt{T}}.
\end{align*}
Note that the parameter space of $\bm{\nu}_{NT,\bullet f}$ is compact
by construction. In particular, since $|r_{NT,f}|\le\sqrt{NT}$,
we have $\|\bm{\nu}_{NT,ef}\|=\big\|r_{NT,f}\bm{\sigma}_{e,f}/\sqrt{NT}\big\|\le\|\bm{\sigma}_{e,f}\|$,
and $\bm{\sigma}_{e,f}$ is uniformly bounded over $f\in\mathcal{B}$
under Assumption~\ref{as: moment and variance}. Let $\bm{\nu}_{\bullet,0}=\lim_{N,T\to\infty}\bm{\nu}_{NT,\bullet}$.

By Lemma~\ref{lemma: original clt}, along the present convergent
sequence, we have 
\begin{equation}
\Bigl\| P_{NT,f}\!\Bigl(r_{NT,f}\Bigl(\frac{1}{NT}\sum_{i=1}^{N}\sum_{t=1}^{T}\bm{s}_{it}\Bigr)\Bigr)-\mathcal{\bm{L}}_{0}\!\left(\bm{\nu}_{0},\{\bm{c}_{ll',0}\}_{l,l'=1}^{\infty}\right)\Bigr\|_{\infty}\rightarrow0.\label{eq:orig-CLT}
\end{equation}
Moreover, the map $(\bm{\nu}_{0},\{\bm{c}_{ll',0}\})\mapsto\mathcal{\bm{L}}_{0}(\bm{\nu}_{0},\{\bm{c}_{ll',0}\})$
is continuous (Step~3 of Lemma~\ref{lemma: original clt}).

\subparagraph{(b) Reduction using $\widehat{\bm{Q}}^{-1}$.}

Under Assumption~\ref{as: AHS representation}, define 
\[
\bm{a}_{i}^{x}=E\!\left(\bm{X}_{it}^{\top}\bm{X}_{it}\mid\bm{\alpha}_{i}\right)-E\!\left(\bm{X}_{it}^{\top}\bm{X}_{it}\right),\quad\bm{d}_{t}^{x}=E\!\left(\bm{X}_{it}^{\top}\bm{X}_{it}\mid\bm{\xi}_{t}\right)-E\!\left(\bm{X}_{it}^{\top}\bm{X}_{it}\right),
\]
\[
\bm{w}_{it}^{x}=\bm{X}_{it}^{\top}\bm{X}_{it}-\bm{a}_{i}^{x}-\bm{d}_{t}^{x}-E\!\left(\bm{X}_{it}^{\top}\bm{X}_{it}\right),\qquad\widehat{\bm{Q}}=\frac{1}{NT}\sum_{i=1}^{N}\sum_{t=1}^{T}\bm{X}_{it}^{\top}\bm{X}_{it}.
\]
Then 
\begin{equation}
\widehat{\bm{Q}}=\frac{1}{N}\sum_{i=1}^{N}\bm{a}_{i}^{x}+\frac{1}{T}\sum_{t=1}^{T}\bm{d}_{t}^{x}+\frac{1}{NT}\sum_{i=1}^{N}\sum_{t=1}^{T}\bm{w}_{it}^{x}+E\!\left(\bm{X}_{it}^{\top}\bm{X}_{it}\right).\label{eq:Q-decomp}
\end{equation}
By i.i.d.\ of $\{\bm{\alpha}_{i}\}$ and stationarity of $\{\bm{\xi}_{t}\}$,
the variance admits the decomposition 
\begin{align*}
Var(\widehat{\bm{Q}}) & =\frac{1}{N}E(\bm{a}_{i}^{x}\bm{a}_{i}^{x\top})+\frac{1}{T^{2}}\sum_{t=1}^{T}\sum_{t'=1}^{T}E(\bm{d}_{t}^{x}\bm{d}_{t'}^{x\top})+\frac{1}{N^{2}T^{2}}\sum_{i=1}^{N}\sum_{t=1}^{T}\sum_{t'=1}^{T}E(\bm{w}_{it}^{x}\bm{w}_{it'}^{x\top}),\\
 & =\frac{1}{N}E(\bm{a}_{i}^{x}\bm{a}_{i}^{x\top})+\frac{1}{T}\sum_{\iota=-(T-1)}^{T-1}\Bigl(1-\frac{|\iota|}{T}\Bigr)E(\bm{d}_{t}^{x}\bm{d}_{t+\iota}^{x\top})+\frac{1}{NT}\sum_{\iota=-(T-1)}^{T-1}\Bigl(1-\frac{|\iota|}{T}\Bigr)E(\bm{w}_{it}^{x}\bm{w}_{it+\iota}^{x\top}).
\end{align*}

For any $p>1$ and any $\sigma$-field $\mathcal{G}$, conditional
Jensen implies 
\[
E\!\left(\bigl\| E(\bm{X}_{it}^{\top}\bm{X}_{it}\mid\mathcal{G})\bigr\|^{p}\right)\le E\!\left(\|\bm{X}_{it}^{\top}\bm{X}_{it}\|^{p}\right).
\]
Using convexity of $x\mapsto\|x\|^{4(\zeta+\delta)}$ and the inequality
$\|\sum_{j=1}^{4}x_{j}\|^{p}\le C\sum_{j=1}^{4}\|x_{j}\|^{p}$ for
$p\ge1$, we obtain 
\[
E\!\left(\|\bm{w}_{it}^{x}\|^{4(\zeta+\delta)}\right)\le C\,E\!\left(\|\bm{X}_{it}^{\top}\bm{X}_{it}\|^{4(\zeta+\delta)}\right)\le C\,E\!\left(\|\bm{X}_{it}\|^{8(\zeta+\delta)}\right)^{1/2}<\infty.
\]

Hence, by Jensen's inequality and Theorem~14.13.2 of Hansen~(\citeyear{hansen2022econometrics}),
\[
\Bigl\|\sum_{\iota=-\infty}^{\infty}E(\bm{w}_{it}^{x}\bm{w}_{it+\iota}^{x\top})\Bigr\|\le\sum_{\iota=-\infty}^{\infty}E\bigl\| E(\bm{w}_{it}^{x}\bm{w}_{it+\iota}^{x\top}\mid\bm{\alpha}_{i})\bigr\|\le C\,E\!\left(\|\bm{w}_{it}^{x}\|^{4(\zeta+\delta)}\right)^{1/2(\zeta+\delta)}\sum_{\iota=-\infty}^{\infty}\alpha(\iota)^{1-1/2(\zeta+\delta)}<\infty.
\]

By analogous arguments for $\bm{d}_{t}^{x}$, it follows that 
\[
Var(\widehat{\bm{Q}})=\frac{1}{N}E(\bm{a}_{i}^{x}\bm{a}_{i}^{x\top})+\frac{1}{T}\sum_{\iota=-\infty}^{\infty}E(\bm{d}_{t}^{x}\bm{d}_{t+\iota}^{x\top})\,(1+o(1))+\frac{1}{NT}\sum_{\iota=-\infty}^{\infty}E(\bm{w}_{it}^{x}\bm{w}_{it+\iota}^{x\top})\,(1+o(1))=o(1).
\]
Hence, by Chebyshev's inequality and continuous mapping theorem 
\[
\qquad\widehat{\bm{Q}}\overset{P}{\to}\bm{Q}\equiv E(\bm{X}_{it}^{\top}\bm{X}_{it}),\qquad\widehat{\bm{Q}}^{-1}\overset{P}{\to}\bm{Q}^{-1}>0.
\]
Since $\widehat{\bm{Q}}^{-1}$ is applied to both the original and
bootstrap statistics, by Slutsky's Lemma and Cramér-Wold Device it is sufficient to establish
the bootstrap analogue of \eqref{eq:orig-CLT} for the bootstrap score,
namely, 
\begin{equation}
\Bigl\| P_{NT,f}^{*}\!\Bigl(r_{NT,f}\Bigl(\frac{1}{NT}\sum_{i=1}^{N}\sum_{t=1}^{T}\bm{s}_{it}^{*}\Bigr)\Bigr)-\mathcal{\bm{L}}_{0}\!\left(\bm{\nu}_{0},\{\bm{c}_{ll',0}\}_{l,l'=1}^{\infty}\right)\Bigr\|_{\infty}\xrightarrow{P}0.\label{eq:boot-goal}
\end{equation}

\subparagraph{(c) Bootstrap decomposition.}

We can expand $r_{NT,f}\left(\frac{1}{NT}\sum_{i=1}^{N}\sum_{t=1}^{T}\bm{s}_{it}^{*}\right)$
as 
\begin{align}
 & r_{NT,f}\left(\frac{1}{\sqrt{N}}\frac{1}{\sqrt{N}}\sum_{i=1}^{N}\widehat{\bm{a}}_{i}\cdot\eta_{i}^{*b}+\frac{1}{\sqrt{T}}\frac{1}{\sqrt{T}}\sum_{t=1}^{T}\widehat{\bm{d}}_{t}\cdot\eta_{t}^{*b}+\frac{1}{\sqrt{NT}}\frac{1}{\sqrt{NT}}\sum_{i=1}^{N}\sum_{t=1}^{T}\widehat{\bm{w}}_{it}\cdot\eta_{i}^{*b}\eta_{t}^{*b}\right).\label{eq:decompose sit}
\end{align}
Focusing on $\frac{1}{\sqrt{NT}}\sum_{i=1}^{N}\sum_{t=1}^{T}\widehat{\bm{w}}_{it}\cdot\eta_{i}^{*b}\eta_{t}^{*b}$,
we can decompose $\widehat{\bm{w}}_{it}$ into $\widehat{\bm{e}}_{it}$,
$\widehat{\bm{c}}_{it}$, and $\widehat{\bm{v}}_{it}$: 
\begin{align}
\widehat{\bm{e}}_{it}= & \bm{e}_{it}-\bm{e}_{Nt}-\bm{e}_{iT}+\bm{e}_{NT}\nonumber \\
= & \bm{e}_{it}-\frac{1}{N}\sum_{i=1}^{N}\bm{e}_{it}-\frac{1}{T}\sum_{t=1}^{T}\bm{e}_{it}+\frac{1}{NT}\sum_{i=1}^{N}\sum_{t=1}^{T}\bm{e}_{it},\label{eq: decompose eit}\\
\widehat{\bm{c}}_{it}= & \left(\bm{X}_{it}^{\top}-\frac{1}{N}\sum_{i=1}^{N}\bm{X}_{it}^{\top}-\frac{1}{T}\sum_{t=1}^{T}\bm{X}_{it}^{\top}+\frac{1}{NT}\sum_{i=1}^{N}\sum_{t=1}^{T}\bm{X}_{it}^{\top}\right)\left(\widehat{\bm{\beta}}-\bm{\beta}_{0}\right),\nonumber \\
\widehat{\bm{v}}_{it}= & \widehat{\bm{w}}_{it}-\widehat{\bm{c}}_{it}-\widehat{\bm{e}}_{it}\nonumber \\
= & \bm{v}_{it}-\frac{1}{N}\sum_{i=1}^{N}\bm{v}_{it}-\frac{1}{T}\sum_{t=1}^{T}\bm{v}_{it}+\frac{1}{NT}\sum_{i=1}^{N}\sum_{t=1}^{T}\bm{v}_{it}\nonumber \\
= & \sum_{l,l'=1}^{\infty}\bm{c}_{ll',f}\odot\psi_{l'}\left(\bm{\xi}_{t}\right)\odot\phi_{l}\left(\bm{\alpha}_{i}\right)-\frac{1}{T}\sum_{l,l'=1}^{\infty}\sum_{t=1}^{T}\bm{c}_{ll',f}\odot\psi_{l'}\left(\bm{\xi}_{t}\right)\odot\phi_{l}\left(\bm{\alpha}_{i}\right)\nonumber \\
 & -\frac{1}{N}\sum_{l,l'=1}^{\infty}\sum_{i=1}^{N}\bm{c}_{ll',f}\odot\psi_{l'}\left(\bm{\xi}_{t}\right)\odot\phi_{l}\left(\bm{\alpha}_{i}\right)+\frac{1}{NT}\sum_{l,l'=1}^{\infty}\sum_{i=1}^{N}\sum_{t=1}^{T}\bm{c}_{ll',f}\odot\psi_{l'}\left(\bm{\xi}_{t}\right)\odot\phi_{l}\left(\bm{\alpha}_{i}\right)\nonumber \\
= & \sum_{l,l'=1}^{\infty}\bm{c}_{ll',f}\odot\left(\psi_{l'}\left(\bm{\xi}_{t}\right)-\frac{1}{T}\sum_{t=1}^{T}\psi_{l'}\left(\bm{\xi}_{t}\right)\right)\odot\left(\phi_{l}\left(\bm{\alpha}_{i}\right)-\frac{1}{N}\sum_{i=1}^{N}\phi_{l}\left(\bm{\alpha}_{i}\right)\right).\label{eq: decompose vit}
\end{align}
Then, we have the following decomposition: {\small{}{}{} 
\begin{align}
\frac{r_{NT,f}}{NT}\sum_{i=1}^{N}\sum_{t=1}^{T}\widehat{\bm{w}}_{it}\cdot\eta_{i}^{*b}\eta_{t}^{*b}= & \frac{r_{NT,f}}{\sqrt{NT}}\frac{1}{\sqrt{NT}}\sum_{i=1}^{N}\sum_{t=1}^{T}\widehat{\bm{e}}_{it}\cdot\eta_{i}^{*b}\eta_{t}^{*b}+\frac{r_{NT,f}}{\sqrt{NT}}\frac{1}{\sqrt{NT}}\sum_{i=1}^{N}\sum_{t=1}^{T}\widehat{\bm{c}}_{it}\cdot\eta_{i}^{*b}\eta_{t}^{*b}\nonumber \\
 & +\frac{r_{NT,f}}{\sqrt{NT}}\sum_{l,l'=1}^{\infty}\bm{c}_{ll',f}\odot\left(\frac{1}{\sqrt{T}}\sum_{t=1}^{T}\left(\psi_{l'}\left(\bm{\xi}_{t}\right)-\frac{1}{T}\sum_{t=1}^{T}\psi_{l'}\left(\bm{\xi}_{t}\right)\right)\eta_{t}^{*b}\right)\nonumber \\
 & \qquad\odot\left(\frac{1}{\sqrt{N}}\sum_{i=1}^{N}\left(\phi_{l}\left(\bm{\alpha}_{i}\right)-\frac{1}{N}\sum_{i=1}^{N}\phi_{l}\left(\bm{\alpha}_{i}\right)\right)\eta_{i}^{*b}\right).\label{eq:decompose wit}
\end{align}
}{\small\par}

Combining \eqref{eq:decompose sit}-\eqref{eq:decompose wit}, we
can write 
\begin{equation}
r_{NT,f}\Bigl(\frac{1}{NT}\sum_{i=1}^{N}\sum_{t=1}^{T}\bm{s}_{it}^{*}\Bigr)=\widehat{\bm{Z}}_{N}^{a*}+\widehat{\bm{Z}}_{T}^{d*}+\widehat{\bm{Z}}_{NT}^{e*}+\widehat{\bm{Z}}_{NT}^{c*}+\frac{r_{NT,f}}{\sqrt{NT}}\sum_{l,l'=1}^{\infty}\bm{c}_{ll',f}\odot\widehat{\bm{Z}}_{N,l}^{*\phi}\odot\widehat{\bm{Z}}_{T,l'}^{*\psi},\label{eq:boot-score-decomp}
\end{equation}
where 
\begin{align*}
\widehat{\bm{Z}}_{N}^{a*}= & \frac{r_{NT,f}}{\sqrt{N}}\frac{1}{\sqrt{N}}\sum_{i=1}^{N}\widehat{\bm{a}}_{i}\cdot\eta_{i}^{*b}, & \widehat{\bm{Z}}_{T}^{d*}= & \frac{r_{NT,f}}{\sqrt{T}}\frac{1}{\sqrt{T}}\sum_{t=1}^{T}\widehat{\bm{d}}_{t}\cdot\eta_{t}^{*b},\\
\widehat{\bm{Z}}_{NT}^{e*}= & \frac{r_{NT,f}}{\sqrt{NT}}\frac{1}{\sqrt{NT}}\sum_{i=1}^{N}\sum_{t=1}^{T}\widehat{\bm{e}}_{it}\cdot\eta_{i}^{*b}\eta_{t}^{*b}, & \widehat{\bm{Z}}_{NT}^{c*}= & \frac{r_{NT,f}}{\sqrt{NT}}\frac{1}{\sqrt{NT}}\sum_{i=1}^{N}\sum_{t=1}^{T}\widehat{\bm{c}}_{it}\cdot\eta_{i}^{*b}\eta_{t}^{*b},\\
\bm{Z}_{N,l}^{*\phi}= & \frac{1}{\sqrt{N}}\sum_{i=1}^{N}\left(\phi_{l}\left(\bm{\alpha}_{i}\right)-\frac{1}{N}\sum_{i=1}^{N}\phi_{l}\left(\bm{\alpha}_{i}\right)\right)\eta_{i}^{*b}, & \bm{Z}_{T,l'}^{*\psi}= & \frac{1}{\sqrt{T}}\sum_{t=1}^{T}\left(\psi_{l'}\left(\bm{\xi}_{t}\right)-\frac{1}{T}\sum_{t=1}^{T}\psi_{l'}\left(\bm{\xi}_{t}\right)\right)\eta_{t}^{*b}.
\end{align*}

\subparagraph{(d) Bootstrap marginal CLT limits (by regimes).}

We verify that each component matches the corresponding term in \eqref{eq:L0},
including the relevant covariance structure, and we only need to do
so for $l,l'\le L$ with a fixed truncation level $L<\infty$ (the
truncation error can be controlled by square summability of $\{\bm{c}_{ll',f}\}_{l,l'\ge1}$).

Under Assumption~\ref{as: same rate}, if either
\(T\bm{\sigma}_{a,f}^{2}\) or \(N\bm{\sigma}_{d,f}^{2}\) diverges, then
the diagonal components of \(\bm{\sigma}_{a,f}^{2}\) or
\(\bm{\sigma}_{d,f}^{2}\), respectively, are of the same order or negligible after normalization by the rate of convergence. This case
corresponds to Scenario~3 (Diverging) below. Otherwise, the relevant rate
of convergence is \(\sqrt{NT}\), corresponding to Scenario~1 (Vanishing)
or Scenario~2 (Intermediate).

To simplify notation, we  present the proof under the normalization
that the diagonal components of
\(\bm{\sigma}_{a,f}^{2}\), \(\bm{\sigma}_{d,f}^{2}\), and
\(\bm{\sigma}_{v,f}^{2}\) are of the same order within each component.
This restriction is only notational. We further discuss the case where the diagonal entries of
\(\bm{\sigma}_{a,f}^{2}\), \(\bm{\sigma}_{d,f}^{2}\), and
\(\bm{\sigma}_{v,f}^{2}\) are allowed to have heterogeneous orders across
coordinates.

Given that $\bm{\nu}_{f}$ converges, we can separate
into three main scenarios: 1) Vanishing: when $T\bm{\sigma}_{a,f}^{2}=o\left(1\right)$
and $N\bm{\sigma}_{d,f}^{2}=o\left(1\right)$; 2) Intermediate: $T\bm{\sigma}_{a,f}^{2}\to\bm{\varphi}_{a}>0$
or $N\bm{\sigma}_{d,f}^{2}\rightarrow\bm{\varphi}_{d}>0$, and both $T\bm{\sigma}_{a,f}^{2}$
and $N\bm{\sigma}_{d,f}^{2}$ do not diverge; 3) Diverging: $T\bm{\sigma}_{a,f}^{2}$
or $N\bm{\sigma}_{d,f}^{2}$ diverges.

\medskip{}
\underline{\textbf{Scenario 1 (Vanishing):}} $T\bm{\sigma}_{a,f}^{2}=o(1)$
and $N\bm{\sigma}_{d,f}^{2}=o(1)$. Then Lemma~\ref{lemma:limit form-1}
implies the corresponding variance estimator $T\widehat{\bm{\sigma}}_{a}^{2}=O_{P}(1)$
and $N\widehat{\bm{\sigma}}_{d}^{2}=O_{P}(1)$. Hence, $r_{NT,f}^2=NT$
 and $\bm{\nu}_{a,0}=\bm{\nu}_{d,0}=\bm{0}$. For
the bootstrap term $\widehat{\bm{Z}}_{N}^{a*}$, we have 
\begin{align}\label{eq:vanish-a}
E^{*}\bigl\|\widehat{\bm{Z}}_{N}^{a*}\bigr\|^{2} & =\Bigl\|\frac{r_{NT,f}}{N}\,\bm{\sigma}_{a,f}\Bigl(\frac{1}{N}\sum_{j=1}^{N}\ddot{\bm{a}}_{j}\ddot{\bm{a}}_{j}^{\top}\Bigr)^{-1/2}\Bigl(\frac{1}{N}\sum_{i=1}^{N}\ddot{\bm{a}}_{i}\ddot{\bm{a}}_{i}^{\top}\Bigr)\Bigl(\frac{1}{N}\sum_{j=1}^{N}\ddot{\bm{a}}_{j}\ddot{\bm{a}}_{j}^{\top}\Bigr)^{-1/2}\bm{\sigma}_{a,f}r_{NT,f}\Bigr\|\nonumber \\
 & =O_{P}\left(\bigl\| T\bm{\sigma}_{a,f}^{2}\bigr\|\right)+o_{P}(1)=o_{P}(1),
\end{align}
where we used $E^{*}(\eta_{i}^{*b}\eta_{j}^{*b})=\mathbb{I}\{i=j\}$
and the normalization in $\widehat{\bm{a}}_{i}$. Thus $\widehat{\bm{Z}}_{N}^{a*}=o_{P^{*}}(1)$,
and similarly $\widehat{\bm{Z}}_{T}^{d*}=o_{P^{*}}(1)$.

 For each fixed $l,l'\le L$, a similar argument as Lemma \ref{lemma: clt for bootstrap Za} yields that
\begin{equation}
\widehat{\bm{Z}}_{N,l}^{*\phi}\xrightarrow{d^{*}}\bm{Z}_{l}^{\phi},\label{eq:phi-star}
\end{equation}
conditional on the data. For $\widehat{\bm{Z}}_{T,l'}^{*\psi}$, use
the fact that $\{\eta_{t}^{*b}\}$ is a bounded Markov sign process
with $E^{*}(\eta_{t}^{*b}\eta_{t'}^{*b})=q^{|t-t'|}$; the same martingale-CLT
argument as in proof for Lemma \ref{lemma: clt for bootstrap Ze}
yields 
\begin{equation}
\widehat{\bm{Z}}_{T,l'}^{*\psi}\xrightarrow{d^{*}}\bm{Z}_{l'}^{\psi},\label{eq:psi-star}
\end{equation}
conditional on the data.

Moreover, consider the bootstrap $e$-component $\widehat{\bm{Z}}_{NT}^{e*}$.
By Lemma~\ref{lemma: clt for bootstrap Ze}, conditional on the data,
we have 
\[
\bm{\sigma}_{e}^{-1}\frac{1}{\sqrt{NT}}\sum_{i=1}^{N}\sum_{t=1}^{T}\widehat{\bm{e}}_{it}\,\eta_{i}^{*b}\eta_{t}^{*b}\;\overset{d^{*}}{\longrightarrow}\;\mathcal{N}\!\left(\bm{0},\mathbf{I}\right).
\]
Let $\bm{\nu}_{NT,ef}^{*}\;\equiv\;\bigl[Var^{*}(\widehat{\bm{Z}}_{NT}^{e*})\bigr]^{1/2}.$
Then, conditional on the data, 
\begin{equation}
\bm{\nu}_{NT,ef}^{*}\;\xrightarrow{P}\;\bm{\nu}_{e,0},\qquad\widehat{\bm{Z}}_{NT}^{e*}\;\overset{d^{*}}{\longrightarrow}\;\bm{\nu}_{e,0}\bm{Z}^{e}.\label{eq:e-star-limit}
\end{equation}

By a similar argument, one can deduce that $\widehat{\bm{Z}}_{NT}^{c*}$
is negligible given that $\widehat{\bm{\beta}}-\bm{\beta}_{0}=o_{P}(1)$,
\begin{equation}
\widehat{\bm{Z}}_{NT}^{c*}=o_{P^{*}}(1).\label{eq:c-negl}
\end{equation}

Finally, covariance components are negligible: for each fixed $l$,
\begin{equation}
\|\bm{\sigma}_{al,f}\|=\|E(\bm{a}_{i}\phi_{l}(\bm{\alpha}_{i})^{\top})\|\le\sqrt{E\|\bm{a}_{i}\|^{2}\,E\|\phi_{l}(\bm{\alpha}_{i})\|^{2}}=O\!\left(\|\bm{\sigma}_{a,f}^{2}\|^{1/2}\right)=o(T^{-1/2}),\label{eq:vanish-cov}
\end{equation}
hence $\bm{\nu}_{al,0}=\lim_{N,T\to\infty}r_{NT,f}\bm{\sigma}_{al,f}/\sqrt{N}=\bm{0}$,
and similarly $\bm{\nu}_{dl,0}=\bm{0}$. Therefore, in the vanishing
regime the bootstrap components $\widehat{\bm{Z}}_{N}^{a*}$ and $\widehat{\bm{Z}}_{T}^{d*}$,
as well as the covariance terms with $(\widehat{\bm{Z}}_{N,l}^{*\phi},\widehat{\bm{Z}}_{T,l'}^{*\psi})$,
are asymptotically negligible, matching the target law \eqref{eq:L0}
(marginally).

\medskip{}
\underline{\textbf{Scenario 2 (Intermediate):}} without loss of generality,
assume $T\bm{\sigma}_{a,f}^{2}\to\bm{\varphi}_{a}>0$, and neither $T\bm{\sigma}_{a,f}^{2}$
nor $N\bm{\sigma}_{d,f}^{2}$ diverges. Then the term $\bm{\nu}_{a,0}\bm{Z}^{a}$
is non-negligible. We first match its bootstrap marginal distribution
via variance consistency and a conditional CLT. Let $\bm{\nu}_{NT,1f}^{*}=[Var^{*}(\widehat{\bm{Z}}_{N}^{a*})]^{1/2}$.
A direct calculation as \eqref{eq:vanish-a} yields 
\begin{equation}
Var^{*}(\widehat{\bm{Z}}_{N}^{a*})\xrightarrow{P}\bm{\nu}_{a,0}\bm{\nu}_{a,0}^{\top},\qquad\bm{\nu}_{NT,1f}^{*}\xrightarrow{P}\bm{\nu}_{a,0}.\label{eq:interm-var-a}
\end{equation}
Applying Lemma \ref{lemma: clt for bootstrap Za} with Slutsky's Lemma gives 
\begin{equation}
\widehat{\bm{Z}}_{N}^{a*}\xrightarrow{d^{*}}\bm{\nu}_{a,0}\bm{Z}^{a},\label{eq:interm-clt-a}
\end{equation}
conditional on the data. Likewise, with $\bm{\nu}_{NT,df}^{*}=[Var^{*}(\widehat{\bm{Z}}_{T}^{d*})]^{1/2}$,
\begin{equation}
\bm{\nu}_{NT,df}^{*}\xrightarrow{P}\bm{\nu}_{d,0},\qquad\widehat{\bm{Z}}_{T}^{d*}\xrightarrow{d^{*}}\bm{\nu}_{d,0}\bm{Z}^{d},\label{eq:interm-clt-d}
\end{equation}
conditional on the data (the proof uses the dependence in $\eta_{t}^{*b}$
through $E^{*}(\eta_{t}^{*b}\eta_{t'}^{*b})=q^{|t-t'|}$,  Lemma \ref{lemma: convergence in covariance},  and the same argument as proof for Lemma~\ref{lemma: clt for bootstrap Ze}). The remaining marginal components $\widehat{\bm{Z}}_{NT}^{e*}$
and $\{(\widehat{\bm{Z}}_{N,l}^{*\phi},\widehat{\bm{Z}}_{T,l'}^{*\psi})\}_{l,l'\le L}$
will be handled in a similar manner.

Finally, in the intermediate regime we must match the covariance terms
$Cov^{*}(\widehat{\bm{Z}}_{N}^{a*},\widehat{\bm{Z}}_{N,l}^{*\phi})$
and $Cov^{*}(\widehat{\bm{Z}}_{T}^{d*},\widehat{\bm{Z}}_{T,l'}^{*\psi})$.
For the $d$--$\psi$ pair, using $E^{*}(\eta_{t}^{*b}\eta_{t'}^{*b})=q^{|t-t'|}$,
\begin{align}
Cov^{*}\!\left(\widehat{\bm{Z}}_{T}^{d*},\widehat{\bm{Z}}_{T,l'}^{*\psi}\right) & =\frac{r_{NT,f}}{\sqrt{T}}\frac{1}{T}\sum_{t=1}^{T}\sum_{t'=1}^{T}q^{|t-t'|}\widehat{\bm{d}}_{t}\Bigl(\psi_{l'}(\bm{\xi}_{t'})-\frac{1}{T}\sum_{\tau=1}^{T}\psi_{l'}(\bm{\xi}_{\tau})\Bigr)^{\!\top}\xrightarrow{P}\bm{\nu}_{dl',0},\label{eq:cov-dpsi}
\end{align}
where the convergence follows from Lemma~\ref{lemma: convergence in covariance}
and $\bm{\nu}_{dl',0}=\lim_{N,T\to\infty}r_{NT,f}\bm{\sigma}_{dl',f}/\sqrt{T}$.
Similarly, since $\eta_{i}^{*b}$ are i.i.d.\ with mean zero over
$i$, 
\begin{equation}
Cov^{*}\!\left(\widehat{\bm{Z}}_{N}^{a*},\widehat{\bm{Z}}_{N,l}^{*\phi}\right)\xrightarrow{P}\bm{\nu}_{al,0}.\label{eq:cov-aphi}
\end{equation}

\medskip{}
\underline{\textbf{Scenario 3 (Diverging):}} if $T\bm{\sigma}_{a,f}^{2}$
or $N\bm{\sigma}_{d,f}^{2}$ diverges, then $\bm{\nu}_{e,0}=\bm{0}$
and $\bm{\nu}_{v,0}=\bm{0}$ and the limit is Gaussian. For instance,
if $N\bm{\sigma}_{d,f}^{2}$ dominates, then 
\begin{equation}
Var^{*}\!\left(\widehat{\bm{Z}}_{T}^{d*}\right)=\frac{r_{NT,f}}{\sqrt{T}}\Bigl(\frac{1}{T}\sum_{t=1}^{T}\sum_{t'=1}^{T}q^{|t-t'|}\widehat{\bm{d}}_{t}\widehat{\bm{d}}_{t'}^{\top}\Bigr)\frac{r_{NT,f}}{\sqrt{T}}\xrightarrow{P}\bm{\nu}_{d,0}\bm{\nu}_{d,0}^{\top},\label{eq:div-var-d}
\end{equation}
by Lemma~\ref{lemma: convergence in covariance}. Hence, we have $\bm{\nu}_{NT,df}^{*}\xrightarrow{P}\bm{\nu}_{d,0}$
and $\widehat{\bm{Z}}_{T}^{d*}\xrightarrow{d^{*}}\bm{\nu}_{d,0}\bm{Z}^{d}$ by the bootstrap CLT for multipliers $\eta_t^*$ (Lemma \ref{lemma: clt for bootstrap Ze}).
A symmetric argument applies when $T\bm{\sigma}_{a,f}^{2}$ dominates by the bootstrap CLT for multipliers $\eta_i^*$ (Lemma \ref{lemma: clt for bootstrap Za}).

 This completes Step~1: for each fixed truncation level
$L<\infty$, the bootstrap matches the marginal limits of the non-negligible
components and the required pairwise covariance limits (those involving
$a$--$\phi$ and $d$--$\psi$).

\paragraph{Step 2: Joint CLT for the truncated bootstrap vector.}

Fix a truncation level $L<\infty$ and collect the bootstrap components
into 
\[
\widehat{\bm{S}}_{NT}^{*}(L)=\left(\widehat{\bm{S}}_{NT,1}^{*}(L),\widehat{\bm{S}}_{NT,2}^{*}(L)\right)\equiv\Bigl(\left(\widehat{\bm{Z}}_{N}^{a*},\;\widehat{\bm{Z}}_{NT}^{e*},\;\{\widehat{\bm{Z}}_{N,l}^{*\phi}\}_{l=1}^{L}\right),\left(\widehat{\bm{Z}}_{T}^{d*},\;\{\widehat{\bm{Z}}_{T,l'}^{*\psi}\}_{l'=1}^{L}\right)\Bigr).
\]
Given that the $T$-indexed terms $\widehat{\bm{Z}}_{T}^{d*}$ and
$\{\widehat{\bm{Z}}_{T,l'}^{*\psi}\}_{l'\le L}$ do \emph{not} contain
the $i$-multipliers $\{\eta_{i}^{*b}\}_{i}$, we proceed in two stages:
(a) obtain a joint CLT for all components driven by the $i$-multipliers
$\{\eta_{i}^{*b}\}_{i}$ conditional on the $t$-multipliers $\{\eta_{t}^{*b}\}_{t}$;
(b) combine this with the joint CLT for the $t$-multiplier block
using independence between the two multiplier sequences.

\subparagraph{a) Joint CLT for the $i$-multiplier block conditional on $\{\eta_{t}^{*b}\}_{t\le T}$.}

Define the $\sigma$-field $\mathcal{G}_{T}\equiv\sigma(\{\eta_{t}^{*b}\}_{t})$
and focus on the vector of bootstrap terms with $\{\eta_{i}^{*b}\}_{i}$:
$\widehat{\bm{S}}_{NT,1}^{*}(L)$. Rewrite the interaction component
as a triangular-array sum in $i$: 
\[
\widehat{\bm{Z}}_{NT}^{e*}=\frac{r_{NT,f}}{\sqrt{N}}\sum_{i=1}^{N}\eta_{i}^{*b}\,\bm{b}_{iT}^{*},\qquad\bm{b}_{iT}^{*}\equiv\frac{1}{\sqrt{T}}\sum_{t=1}^{T}\widehat{\bm{e}}_{it}\,\eta_{t}^{*b},
\]
and similarly, for each $l\le L$, 
\[
\widehat{\bm{Z}}_{N,l}^{*\phi}=\frac{1}{\sqrt{N}}\sum_{i=1}^{N}\eta_{i}^{*b}\,\bm{u}_{i,l},\qquad\bm{u}_{i,l}\equiv\phi_{l}(\bm{\alpha}_{i})-\frac{1}{N}\sum_{j=1}^{N}\phi_{l}(\bm{\alpha}_{j}),
\]
Therefore, for any conformable deterministic $\bm{\lambda}_{1}$,
conditional on $\mathcal{G}_{T}$ (and the data), $\bm{\lambda}_{1}^{\top}\widehat{\bm{S}}_{NT,1}^{*}(L)$
can be written as a sum of $N$ independent (due to $\eta_{i}^{*}$)
terms. Since $\{\eta_{i}^{*b}\}_{i}$ are i.i.d.\ with $E^{*}(\eta_{i}^{*b})=0$
and $E^{*}((\eta_{i}^{*b})^{2})=1$ and are independent of $\mathcal{G}_{T}$,
a conditional Lindeberg--Feller CLT applies given $\mathcal{G}_{T}$
once we verify variance convergence and a Lindeberg condition.

Define conditional variance of 
\[
\omega_{NT,1}^{2}(\bm{\lambda}_{1};\mathcal{G}_{T})\equiv Var^{*}\!\left(\bm{\lambda}_{1}^{\top}\widehat{\bm{S}}_{NT,1}^{*}(L)\mid\mathcal{G}_{T}\right).
\]
Using the marginal conditional limits for $\widehat{\bm{Z}}_{N}^{a*}$
\eqref{eq:interm-clt-a}, $\widehat{\bm{Z}}_{NT}^{e*}$ \eqref{eq:e-star-limit},
and $\{\widehat{\bm{Z}}_{N,l}^{*\phi}\}_{l\le L}$ \eqref{eq:phi-star},
together with the covariance limits for the shared $i$-multipliers
(the $a$--$\phi$ block) in \eqref{eq:cov-aphi}, we obtain 
\begin{equation}
\omega_{NT,1}^{2}(\bm{\lambda}_{1};\mathcal{G}_{T})\xrightarrow{P^{*}}\omega_{1}^{2}(\bm{\lambda}_{1}),\label{eq:joint clt 1}
\end{equation}
where $\omega_{1}^{2}(\bm{\lambda}_{1})=\bm{\lambda}_{1}^{\top}Var\left(\bm{S}_{1}(L)\right)\bm{\lambda}_{1}$,
with $\bm{S}_{1}(L)\equiv\left\{\bm{\nu}_{a,0}\bm{Z}^{a},\bm{\nu}_{e,0}\bm{Z}^{e},\left\{ \bm{Z}_{l}^{\phi}\right\} _{l=1}^{L}\right\} $.
Moreover, the Lindeberg condition holds given $\mathcal{G}_{T}$,
and hence the bootstrap CLT result implies that 
\begin{equation}
E^{*}\!\left(\exp\!\left\{ i\,\bm{\lambda}_{1}^{\top}\widehat{\bm{S}}_{NT,1}^{*}(L)\right\} \Bigm|\mathcal{G}_{T}\right)=\exp\!\left\{ -\tfrac{1}{2}\,\omega_{NT,1}^{2}(\bm{\lambda}_{1};\mathcal{G}_{T})\right\} +o_{P}(1).\label{eq: characteristic first}
\end{equation}

\subparagraph{b) Joint CLT for the $t$-multiplier block and combination by independence.}

Next, consider the $t$-multiplier block 
\[
\widehat{\bm{S}}_{NT,2}^{*}(L)\equiv\Bigl(\widehat{\bm{Z}}_{T}^{d*},\{\widehat{\bm{Z}}_{T,l'}^{*\psi}\}_{l'=1}^{L}\Bigr),
\]
which depends only on $\{\eta_{t}^{*b}\}_{t}$ (and the data), not
on the $i$-multipliers. By the marginal conditional CLTs \eqref{eq:psi-star},
\eqref{eq:interm-clt-d}, and the covariance limits for the shared
$t$-multipliers (the $d$--$\psi$ block) in \eqref{eq:cov-dpsi},
the vector $\widehat{\bm{S}}_{NT,2}^{*}(L)$ converges conditionally
to $\bm{S}_{2}(L)\equiv\left\{ \bm{\nu}_{d,0}\bm{Z}^{d},\left\{ \bm{Z}_{l'}^{\psi}\right\} _{l'=1}^{L}\right\} $.
By continuous mapping theorem, it follows that 
\begin{equation}
\exp\!\left\{ i\,\bm{\lambda}_{2}^{\top}\widehat{\bm{S}}_{NT,2}^{*}(L)\right\} \overset{d^{*}}{\to}\exp\!\left\{ i\,\bm{\lambda}_{2}^{\top}\bm{S}_{2}(L)\right\} .\label{eq:joint clt 2}
\end{equation}

Finally, note that conditional on the data, the two multiplier sequences
$\{\eta_{i}^{*b}\}_{i}$ and $\{\eta_{t}^{*b}\}_{t}$ are independent.
Let $\bm{\lambda}_{2}$ be conformable deterministic vectors and consider
the characteristic function (conditional on data) of $\bm{\lambda}_{1}^{\top}\widehat{\bm{S}}_{NT,1}^{*}(L)+\bm{\lambda}_{2}^{\top}\widehat{\bm{S}}_{NT,2}^{*}(L)$,
$\phi_{NT}^{*}(\bm{\lambda}_{1},\bm{\lambda}_{2})$. By law of iterated
expectation and \eqref{eq: characteristic first}, we have 
\begin{align}
\phi_{NT}^{*}(\bm{\lambda}_{1},\bm{\lambda}_{2}) & =E^{*}\!\left[\exp\!\left\{ i\,\bm{\lambda}_{2}^{\top}\widehat{\bm{S}}_{NT,2}^{*}(L)\right\} \cdot E^{*}\!\left(\exp\!\left\{ i\,\bm{\lambda}_{1}^{\top}\widehat{\bm{S}}_{NT,1}^{*}(L)\right\} \Bigm|\mathcal{G}_{T}\right)\right],\nonumber \\
 & =E^{*}\!\left[\exp\!\left\{ i\,\bm{\lambda}_{2}^{\top}\widehat{\bm{S}}_{NT,2}^{*}(L)\right\} \cdot\left(\exp\!\left\{ -\tfrac{1}{2}\,\omega_{NT,1}^{2}(\bm{\lambda}_{1};\mathcal{G}_{T})\right\} +o_{P}(1)\right)\right].\label{eq:joint clt 3}
\end{align}

Combining \eqref{eq:joint clt 1} and \eqref{eq:joint clt 2}, by
the continuous mapping theorem, one can deduce 
\[
\exp\!\left\{ i\,\bm{\lambda}_{2}^{\top}\widehat{\bm{S}}_{NT,2}^{*}(L)\right\} \cdot\left(\exp\!\left\{ -\tfrac{1}{2}\,\omega_{NT,1}^{2}(\bm{\lambda}_{1};\mathcal{G}_{T})\right\} +o_{P}(1)\right)\xrightarrow{d^{*}}\exp\!\left\{ i\,\bm{\lambda}_{2}^{\top}\bm{S}_{2}(L)\right\} \exp\!\left\{ -\tfrac{1}{2}\omega_{1}^{2}(\bm{\lambda}_{1})\right\} .
\]
Applying Portmanteau Lemma with \eqref{eq:joint clt 3} gives 
\[
\phi_{NT}^{*}(\bm{\lambda}_{1},\bm{\lambda}_{2})\overset{P}{\to}E\left(\exp\!\left\{ i\,\bm{\lambda}_{2}^{\top}\bm{S}_{2}(L)\right\} \exp\!\left\{ -\tfrac{1}{2}\omega_{1}^{2}(\bm{\lambda}_{1})\right\} \right)=\exp\!\left\{ -\tfrac{1}{2}\omega_{1}^{2}(\bm{\lambda}_{1})-\tfrac{1}{2}\omega_{2}^{2}(\bm{\lambda}_{2})\right\} ,
\]
where $\omega_{2}^{2}(\bm{\lambda}_{2})=\bm{\lambda}_{2}^{\top}Var\left(\bm{S}_{2}(L)\right)\bm{\lambda}_{2}$.
Since this holds for all $(\bm{\lambda}_{1},\bm{\lambda}_{2})$, LÃ©vy's
continuity theorem implies 
\[
\widehat{\bm{S}}_{NT}^{*}(L)\xrightarrow{d^{*}}\left(\bm{S}_{1}(L),\bm{S}_{2}(L)\right),
\]
which is the desired joint conditional CLT for the truncated bootstrap
vector. The application of the Cramér-Wold Device implies that
\begin{equation}
\left\Vert P_{NT,f}^{*}\left(r_{NT,f}\left(\frac{1}{NT}\sum_{i=1}^{N}\sum_{t=1}^{T}\bm{s}_{it}^{*}\right)\right)-\mathcal{\bm{L}}_{L}\left(\bm{\nu}_{0},\left\{ \bm{c}_{ll',f}\right\} _{l,l'=1}^{L}\right)\right\Vert _{\infty}\xrightarrow{P}0,\label{eq:joint-star-L}
\end{equation}

Finally, by square summability of $\{\bm{c}_{ll',0}\}_{l,l'\ge1}$, there
exists $L<\infty$ such that the truncation error is negligible, and
hence 
\begin{equation}
\Bigl\|\mathcal{\bm{L}}_{0}\!\left(\bm{\nu}_{0},\{\bm{c}_{ll',0}\}_{l,l'=1}^{\infty}\right)-\mathcal{\bm{L}}_{L}\!\left(\bm{\nu}_{0},\{\bm{c}_{ll',0}\}_{l,l'=1}^{L}\right)\Bigr\|_{\infty}\xrightarrow{P}0.\label{eq:trunc-L0}
\end{equation}
Combining \eqref{eq:joint-star-L}-\eqref{eq:trunc-L0} with the triangular
inequality yields \eqref{eq:boot-goal}.

\paragraph{Step 3: Continuity of the limit and uniformity over $f$.}

As shown in Step~3 of Lemma~\ref{lemma: original clt}, $\mathcal{\bm{L}}_{0}$
is continuous in $(\bm{\nu}_{0},\{\bm{c}_{ll',0}\}_{l,l'\ge1})$. We have
shown that for any convergent sequence $\bm{\theta}_{NT,f}=(\bm{\nu}_{NT,f},\{\bm{c}_{ll',f}\}_{l,l'\ge1})\to\bm{\theta}_{0}=(\bm{\nu}_{0},\{\bm{c}_{ll',0}\}_{l,l'\ge1})$,
(i) the original CLT \eqref{eq:orig-CLT} holds, (ii) the bootstrap
CLT \eqref{eq:boot-goal} holds, and (iii) the limiting law is continuous.
By Assumptions~\ref{as: spectral representation}--\ref{as: same rate},
the parameter space of $\bm{\theta}_{NT,f}$ is compact, so every
sequence admits a convergent subsequence (Bolzano--Weierstrass).
Therefore Lemma~\ref{lemma: uniform convergence} implies 
\[
\sup_{f\in\mathcal{B}}\Bigl\| P_{NT,f}\!\Bigl(r_{NT,f}\bigl(\widehat{\bm{\beta}}-\bm{\beta}_{0}\bigr)\Bigr)-\mathcal{\bm{L}}_{0}\bigl(\bm{\theta}_{NT}(f)\bigr)\Bigr\|_{\infty}\xrightarrow{P}0,
\]
and 
\[
\sup_{f\in\mathcal{B}}\Bigl\| P_{NT,f}^{*}\!\Bigl(r_{NT,f}\bigl(\widehat{\bm{\beta}}^{*}-\widehat{\bm{\beta}}\bigr)\Bigr)-\mathcal{\bm{L}}_{0}\bigl(\bm{\theta}_{NT}(f)\bigr)\Bigr\|_{\infty}\xrightarrow{P}0.
\]
The application of the  triangular inequality and Cramér-Wold yield
\[
\sup_{f\in\mathcal{B}}\Bigl\| P_{NT,f}\!\Bigl(r_{NT,f}\bigl(\widehat{\bm{\beta}}-\bm{\beta}_{0}\bigr)\Bigr)-P_{NT,f}^{*}\!\Bigl(r_{NT,f}\bigl(\widehat{\bm{\beta}}^{*}-\widehat{\bm{\beta}}\bigr)\Bigr)\Bigr\|_{\infty}\xrightarrow{P}0.
\] If the diagonal entries of
\(\bm{\sigma}_{a,f}^{2}\), \(\bm{\sigma}_{d,f}^{2}\), and
\(\bm{\sigma}_{v,f}^{2}\) are allowed to have heterogeneous orders across
coordinates, the same conclusion follows by applying the joint CLT
 and truncation argument as in Step~2.
\end{proof}

\subsection{Proof of Theorem \protect\ref{thm: main-2}}
\begin{proof}
\textbf{PWB-D.} Without loss of generality, assume that $T\sigma_{ak,f}^{2}$
diverges at a rate no slower than $N\sigma_{dk,f}^{2}$. Recall that
$\{\eta_{i}^{*}\}_{i=1}^{N}$ are i.i.d.\ with $E^{*}(\eta_{i}^{*}\eta_{j}^{*})=\mathbb{I}\{i=j\}$.
Writing 
\[
\widehat{\bm{Z}}_{N}^{a*}=\frac{\widehat{r}_{NT}}{\sqrt{N}}\frac{1}{\sqrt{N}}\sum_{i=1}^{N}\widehat{\bm{\vartheta}}_{a}\ddot{\bm{a}}_{i}\,\eta_{i}^{*},\qquad\widehat{\bm{Q}}_{a}\equiv\frac{1}{N}\sum_{j=1}^{N}\ddot{\bm{a}}_{j}\ddot{\bm{a}}_{j}^{\top},
\]
and using iterated expectation yields 
\begin{align*}
E^{*}\!\left(\widehat{\bm{Z}}_{N}^{a*}\widehat{\bm{Z}}_{N}^{a*\top}\right) & =\frac{\widehat{r}_{NT}}{N}\frac{1}{N}\sum_{i=1}^{N}\widehat{\bm{\vartheta}}_{a}\ddot{\bm{a}}_{i}\ddot{\bm{a}}_{i}^{\top}\widehat{\bm{\vartheta}}_{a}^{\top}\widehat{r}_{NT}\\
 & =\frac{\widehat{r}_{NT}}{N}\,\bm{D}_{a}\!\left(\widehat{\bm{\mu}}_{a}\right)\widehat{\bm{\sigma}}_{a}\,\widehat{\bm{Q}}_{a}^{-1/2}\Bigl(\frac{1}{N}\sum_{i=1}^{N}\ddot{\bm{a}}_{i}\ddot{\bm{a}}_{i}^{\top}\Bigr)\widehat{\bm{Q}}_{a}^{-1/2}\widehat{\bm{\sigma}}_{a}\widehat{r}_{NT}\;+\;o_{P}(1)\\
 & =\frac{\widehat{r}_{NT}}{N}\,\bm{D}_{a}\!\left(\widehat{\bm{\mu}}_{a}\right)\widehat{\bm{\sigma}}_{a}^{2}\widehat{r}_{NT}\;+\;o_{P}(1),
\end{align*}
where the second line uses the definition of $\widehat{\bm{\vartheta}}_{a}$
and the last line uses $\widehat{\bm{Q}}_{a}^{-1/2}\widehat{\bm{Q}}_{a}\widehat{\bm{Q}}_{a}^{-1/2}=\mathbf{I}$.
Hence, the $k$th diagonal element satisfies 
\[
Var^{*}\!\left(\widehat{Z}_{Nk}^{a*}\right)=\,\frac{1}{N}\mathbb{I}\!\left\{ \widehat{\sigma}_{a,k}^{2}\ge\widehat{\mu}_{ak,D}\right\} \widehat{r}_{NT}\widehat{{\sigma}}_{ak}^{2}\widehat{r}_{NT}\;+\;o_{P}(1),\qquad\widehat{\mu}_{ak,D}\equiv\frac{\log T}{T}.
\]
Applying Lemma~\ref{lemma:limit form-1} gives the following cases:

 \emph{(i) D:} when $T\sigma_{ak,f}^{2}>2\log T$, then under Assumption \ref{as: same rate}, we have $\widehat{\sigma}_{ak}\sigma_{ak,f}^{-1}\overset{P}{\to}1$,    $P(\widehat{r}_{NT}^2=N\sigma_{ak,f}^{-2})\to1$, 
and $P(\widehat{\sigma}_{a,k}^{2}>\widehat{\mu}_{ak,D})\to1$, so
the bootstrap $a$-component reproduces the Gaussian limit $(\bm{\nu}_{a,0}\bm{Z}^{a})_{k}$.

\emph{(ii) V\&G:} when $T\sigma_{ak,f}^{2}=o(1)$ and $\sigma_{vk,f}^{2}=o(1)$,
then $(\bm{\nu}_{a,0}\bm{Z}^{a})_{k}$ is negligible in the original limit, $T\widehat{\sigma}_{ak}^{2}=o_{P}(1)$,
and $\widehat{{\sigma}}_{NT,k}^{-2}/NT\overset{P}{\to}1$. Consequently,
\[
E^{*}\!\left|\widehat{Z}_{Nk}^{a*}\right|^{2}=Var^{*}\!\left(\widehat{Z}_{Nk}^{a*}\right)=T\widehat{\sigma}_{ak}^{2}+o_{P}(1)=o_{P}(1),
\]
which implies $\widehat{Z}_{Nk}^{a*}=o_{P^{*}}(1)$, matching the corresponding limit.

\emph{(iii) V\&N:} when $T\sigma_{ak,f}^{2}=o(1)$ and $\sigma_{vk,f}^{2}>0$,
then $(\bm{\nu}_{a,0}\bm{Z}^{a})_{k}$ is again negligible, and Lemma~\ref{lemma:limit form-1}
yields $T\widehat{\sigma}_{ak}^{2}=O_{P}(1)$ such that $P(\widehat{\sigma}_{a,k}^{2}<\widehat{\mu}_{ak,D})\to1$.
Hence $\widehat{Z}_{Nk}^{a*}=o_{P^{*}}(1)$.

The same argument applies to the $d$-component $\widehat{Z}_{Tk}^{d*}$
(with $N$ and $T$ interchanged). Moreover, for indices satisfying
$\widehat{\sigma}_{a,k}^{2}>\widehat{\mu}_{ak,D}$, the bootstrap
preserves cross-$k$ covariance and covariance terms such as $Cov^{*}(\widehat{Z}_{Nk}^{a*},\widehat{Z}_{Nk,l}^{*\phi})$
automatically because the resampling multipliers are common across
components. The remaining steps are identical to those for the oracle
PWB and are omitted.

\textbf{PWB-V.} Without loss of generality, assume $T\sigma_{ak,f}^{2}$
dominates $N\sigma_{dk,f}^{2}$. The same calculation as above gives
\[
Var^{*}\!\left(\widehat{Z}_{Nk}^{a*}\right)=T\,\mathbb{I}\!\left\{ \widehat{\sigma}_{a,k}^{2}\ge\widehat{\mu}_{ak,V}\right\} \widehat{\sigma}_{ak}^{2}\;+\;o_{P}(1),\qquad\widehat{\mu}_{ak,V}\equiv\frac{1}{T\log T}.
\]
Therefore, the application of Lemma~\ref{lemma:limit form-1} gives the following cases:

\emph{(i) D:} if $T\sigma_{ak,f}^{2}\to\infty$, we have $\widehat{\sigma}_{ak}/\sigma_{ak,f}\overset{P}{\to}1$
and $P(\widehat{\sigma}_{a,k}^{2}>\widehat{\mu}_{ak,V})\to1$, so
the bootstrap matches $(\bm{\nu}_{a,0}\bm{Z}^{a})_{k}$.

\emph{(ii) I\&G:} when $T\sigma_{ak,f}^{2}\to\varphi_{ak}\in(0,\infty)$
and $\sigma_{vk,f}^{2}=o(1)$, we have again that $\widehat{\sigma}_{ak}/\sigma_{ak,f}\overset{P}{\to}1$
and $P(\widehat{\sigma}_{a,k}^{2}>\widehat{\mu}_{ak,V})\to1$, so
the bootstrap matches $(\bm{\nu}_{a,0}\bm{Z}^{a})_{k}$.

\emph{(iii) V\&G:} when $T\sigma_{ak,f}^{2}=o(1)$ and $\sigma_{vk,f}^{2}=o(1)$,
we have that $(\bm{\nu}_{a,0}\bm{Z}^{a})_{k}$ is negligible and $T\widehat{\sigma}_{ak}^{2}=o_{P}(1)$,
implying $\widehat{Z}_{Nk}^{a*}=o_{P^{*}}(1)$. In this regime the
thresholding indicator is immaterial for the limiting argument.

The same conclusions hold for $\widehat{Z}_{Tk}^{d*}$. Furthermore,
cross-$k$ covariances are preserved whenever $\widehat{\sigma}_{a,k}^{2}>\widehat{\mu}_{ak,V}$.
In all regimes treated here, the limiting distribution is Gaussian,
completing the proof. 
\end{proof}

\subsection{Proof of Theorem \protect\ref{thm: main-3}}
\begin{proof}
Observe first that the PWB-V procedure is asymptotically valid in
all scenarios under which the limiting distribution of the statistic
is Gaussian, whereas in the remaining scenarios the limiting distribution
is non-Gaussian in probability. In contrast, the PWB-D procedure is
asymptotically valid in the regime where $T\sigma_{ak,f}^{2}=o(1)$
and $\sigma_{vk,f}^{2}>0$, in which case the limiting distribution
of the original statistic is non-Gaussian.

Therefore, the desired result follows if the data-driven selector
$D_{k}^{*}$ satisfies 
\[
P\left(D_{k}^{*}=0\right)\to1\quad\text{whenever the limiting distribution is Gaussian},
\]
and 
\[
P\left(D_{k}^{*}=1\right)\to1\quad\text{whenever the limiting distribution is non-Gaussian}.
\]
Under these conditions, the proposed procedure asymptotically selects
PWB-V in Gaussian regimes and PWB-D in non-Gaussian regimes, thereby
ensuring validity in all four scenarios. It suffices to show that
for PWB-V, the form of limiting distribution of the bootstrap statistic
matches that of the original statistic in all scenarios, which we
now demonstrate.

Without loss of generality, suppose that $T\sigma_{ak,f}^{2}$ diverges
at a rate no slower than $N\sigma_{dk,f}^{2}$. Observe that the only
potential source of non-Gaussianity comes from the cross-product component
\[
\frac{1}{\sqrt{NT}}\sum_{l,l'=1}^{\infty}c_{ll'k,f}\,Z_{Nk,l}^{\phi}Z_{Tk,l'}^{\psi}\;=\;O_{P}\!\left(\frac{\sigma_{vk,f}}{\sqrt{NT}}\right),
\]
appearing in the decomposition~\eqref{eq: decompose sit}. By the
same arguments used in the proof of Theorem~\ref{thm: main}, there
are three regimes in which the limit is Gaussian and PWB-V consistently
reproduces the corresponding limiting law, namely D, I\&G, and V\&G.

It therefore remains to show that PWB-V yields a \emph{non-Gaussian}
bootstrap limit in the two non-Gaussian regimes, I\&N and V\&N. In
either regime, the bootstrap counterpart of the cross-product term,
$\frac{1}{\sqrt{NT}}\sum_{l,l'=1}^{\infty}c_{ll'k,f}\,\widehat{Z}_{Nk,l}^{*\phi}\widehat{Z}_{Tk,l'}^{*\psi},$
is of the same stochastic order as $\frac{1}{\sqrt{NT}}\sum_{l,l'=1}^{\infty}c_{ll'k,f}Z_{Nk,l}^{\phi}Z_{Tk,l'}^{\psi}$
as shown in the proof of Theorem~\ref{thm: main}, while all remaining
terms in the bootstrap expansion are $O_{P}\!\left((NT)^{-1/2}\right)$
and hence asymptotically negligible relative to the leading cross-product
contribution. Consequently, the bootstrap limiting distribution is
non-Gaussian in I\&N and V\&N, as required.
\end{proof}

\singlespacing

 \bibliographystyle{chicago}
\bibliography{multiway_clustering}

\newpage
\title{Internet Appendix for ``Bootstrap Inference under General Two-way Clustering with Serially and Spatially Dependent Common Effects''}
\author{ {\large \textbf{Ulrich Hounyo}} \and {\large \textbf{Jiahao Lin}%
 }}


\maketitle

Appendix IA collects five technical lemmas used in the main text. Appendix IB presents the propositions on discriminant factors and the impossibility result. Appendix IC reports results on heterogeneous intersection sizes, and Appendix ID provides the residual-based PWB approach and additional simulation results.

%


\appendix

\section*{Appendix IA: Technical Lemmas}

\setcounter{equation}{0} \setcounter{assumption}{0}
\setcounter{figure}{0} \setcounter{table}{0}

 \renewcommand{%
\theequation}{IA.\arabic{equation}}
\renewcommand{\thelemma}{IA.\arabic{lemma}}
\renewcommand{\theassumption}{IA.\arabic{assumption}} \renewcommand{%
\thetheorem}{IA.\arabic{theorem}}

\renewcommand{\thetable}{IA.\arabic{table}}
\renewcommand{\thefigure}{IA.\arabic{figure}}

\renewcommand{\thesubsection}{IA.\arabic{subsection}}

\subsection{Uniform Convergence via Sequences}

\begin{lemma}[Uniform Convergence via Sequences]\label{lemma: uniform convergence}
Assume i) $\Theta\subset\mathbb{R}^{d}$ is compact; ii) let $g_{n}:\Theta\to\mathbb{R}^{l}$
be a sequence of functions and $g:\Theta\to\mathbb{R}^{l}$ be a continuous
function; iii) for every sequence $\{\bm{W}_{n}\}\subset\Theta$ such
that $\bm{W}_{n}\to\bm{W}\in\Theta$, we have $g_{n}(\bm{W}_{n})\to g(\bm{W}).$
Then $g_{n}\to g$ uniformly on $\Theta$, i.e., 
\[
\sup_{\bm{W}\in\Theta}\Vert g_{n}(\bm{W})-g(\bm{W})\Vert\to0.
\]
\end{lemma}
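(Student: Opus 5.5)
We argue by contradiction, using the compactness of $\Theta$ together with the continuity of $g$. Suppose the convergence is not uniform. Then there exist $\varepsilon>0$, a subsequence $\{n_k\}$ and points $\bm{W}_{n_k}\in\Theta$ such that
\[
\Vert g_{n_k}(\bm{W}_{n_k})-g(\bm{W}_{n_k})\Vert>\varepsilon\quad\text{for all }k.
\]
Because $\Theta$ is compact in $\mathbb{R}^d$, Bolzano--Weierstrass gives a further subsequence, still indexed by $n_k$, with $\bm{W}_{n_k}\to\bm{W}^{\ast}\in\Theta$.

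Hypothesis iii) is stated for sequences indexed by all $n$, so it must be extended to the subsequence. To do this, build a full sequence $\{\widetilde{\bm{W}}_n\}\subset\Theta$ as follows:
\begin{itemize}
\item set $\widetilde{\bm{W}}_n=\bm{W}_{n_k}$ whenever $n=n_k$;
\item for $n_k<n<n_{k+1}$, set $\widetilde{\bm{W}}_n=\bm{W}_{n_k}$.
\end{itemize}
This sequence converges to $\bm{W}^{\ast}$. By iii), $g_n(\widetilde{\bm{W}}_n)\to g(\bm{W}^{\ast})$, and in particular $g_{n_k}(\bm{W}_{n_k})\to g(\bm{W}^{\ast})$. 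Constant filling between the $n_k$ is legitimate because iii) places no restriction on the sequence beyond convergence within $\Theta$.

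Continuity of $g$ gives $g(\bm{W}_{n_k})\to g(\bm{W}^{\ast})$. The triangle inequality then yields
\[
\Vert g_{n_k}(\bm{W}_{n_k})-g(\bm{W}_{n_k})\Vert\le\Vert g_{n_k}(\bm{W}_{n_k})-g(\bm{W}^{\ast})\Vert+\Vert g(\bm{W}^{\ast})-g(\bm{W}_{n_k})\Vert\to0,
\]
which contradicts the lower bound $\varepsilon$. Hence $\sup_{\bm{W}\in\Theta}\Vert g_n(\bm{W})-g(\bm{W})\Vert\to0$.

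The only delicate step is the passage from sequences to subsequences in iii), which the interpolation above handles. Everything else is routine. If $\Theta$ is replaced by a compact metric space, such as the parameter set of the $\bm{\theta}_{NT,f}$ used in the main proofs, the same argument applies verbatim, with sequential compactness replacing Bolzano--Weierstrass.
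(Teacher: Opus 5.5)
Your proof is correct and follows the same route as the paper's: argue by contradiction, extract a convergent sub-subsequence by Bolzano--Weierstrass, then apply hypothesis iii), continuity of $g$, and the triangle inequality. The one difference is that you explicitly justify applying iii) along a subsequence via the constant-interpolation sequence $\widetilde{\bm{W}}_n$, a step the paper uses without comment; you should also define $\widetilde{\bm{W}}_n$ for $n<n_1$, e.g.\ as $\bm{W}_{n_1}$.
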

\begin{proof}[\textbf{Proof of Lemma \protect\ref{lemma: uniform convergence}}]
Fix $\varepsilon>0$. Suppose, by contradiction, that $g_{n}$ does
not converge uniformly to $g$ on $\Theta$. Then there exist $\varepsilon>0$,
a subsequence $\{n_{k}\}$, and points $\bm{W}_{n_{k}}\in\Theta$
such that 
\[
\Vert g_{n_{k}}(\bm{W}_{n_{k}})-g(\bm{W}_{n_{k}})\Vert>\varepsilon\quad\text{for all }k.
\]

Since $\Theta$ is compact, by the Bolzano-Weierstrass theorem, the
subsequence $\{\bm{W}_{n_{k}}\}$ admits a convergent sub-subsequence,
denoted $\{\bm{W}_{n_{k_{j}}}\}$, such that $\bm{W}_{n_{k_{j}}}\to\bm{W}_{0}\in\Theta.$

By the assumed sequential convergence property, we have $g_{n_{k_{j}}}(\bm{W}_{n_{k_{j}}})\to g(\bm{W}_{0}).$
By continuity of $g$, $g(\bm{W}_{n_{k_{j}}})\to g(\bm{W}_{0}).$
Therefore, 
\[
\Vert g_{n_{k_{j}}}(\bm{W}_{n_{k_{j}}})-g(\bm{W}_{n_{k_{j}}})\Vert\le\Vert g_{n_{k_{j}}}(\bm{W}_{n_{k_{j}}})-g(\bm{W}_{0})\Vert+\Vert g(\bm{W}_{0})-g(\bm{W}_{n_{k_{j}}})\Vert\to0,
\]
which contradicts the definition of the sequence $\{\bm{W}_{n_{k}}\}$.
Hence $g_{n}\to g$ uniformly on $\Theta$. 
\end{proof}

\subsection{Consistency of (Co)variance Estimator}

\begin{lemma}[Consistency of (Co)variance Estimator] \label{lemma: convergence in covariance}
Assume that $\bm{A}_{t}$ and $\bm{B}_{t}$ are $K\times1$ random
vectors. For some $\delta>0$ and $\zeta>1$, $E\left\Vert \bm{A}_{t}\right\Vert ^{4\left(\zeta+\delta\right)}<\infty$,
$E\left\Vert \bm{B}_{t}\right\Vert ^{4\left(\zeta+\delta\right)}<\infty$,
$\left\{ \left(\bm{A}_{t}^{\top},\bm{B}_{t}^{\top}\right)\right\} $
is strictly stationary and is a $\beta$-mixing sequence with a mixing
coefficient $\alpha(s)$ such that $\alpha(s)=O(s^{-\lambda})$ for
a $\lambda>2\zeta/(\zeta-1)$. Let $q=\exp\left(-cT^{-1/3}\right)$
for some $c>0$. Then, as $T\to\infty,$ 
\[
\frac{1}{T}\sum_{t=1}^{T}\sum_{\tau=1}^{T}q^{\left|t-\tau\right|}\bm{A}_{t}\bm{B}_{\tau}^{\top}\overset{P}{\to}\sum_{\iota=-\infty}^{\infty}E\left(\bm{A}_{t}\bm{B}_{t+\iota}^{\top}\right).
\]
\end{lemma}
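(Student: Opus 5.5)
This is a kernel HAC consistency result, and I will prove it by splitting into a bias term and a variance term. Write $\widehat{\bm\Omega}_T=\frac1T\sum_{t,\tau}q^{|t-\tau|}\bm A_t\bm B_\tau^\top=\sum_{|\iota|<T}q^{|\iota|}\widehat{\bm\Gamma}(\iota)$, where $\widehat{\bm\Gamma}(\iota)=\frac1T\sum_{t}\bm A_t\bm B_{t+\iota}^\top$ is summed over admissible $t$. Let $\bm\Gamma(\iota)=E(\bm A_t\bm B_{t+\iota}^\top)$ and $\bm\Omega=\sum_\iota\bm\Gamma(\iota)$. With the bandwidth $S_T=(-\ln q)^{-1}=c^{-1}T^{1/3}$, I decompose
\[
\widehat{\bm\Omega}_T-\bm\Omega=\bigl(\widehat{\bm\Omega}_T-E\widehat{\bm\Omega}_T\bigr)+\bigl(E\widehat{\bm\Omega}_T-\bm\Omega\bigr),
\]
and show that each term is $o(1)$ in mean square and in norm, respectively. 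The argument is applied elementwise, so scalar $A_t,B_t$ may be assumed.

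\textbf{Bias.} We have $E\widehat{\bm\Omega}_T=\sum_{|\iota|<T}q^{|\iota|}(1-|\iota|/T)\bm\Gamma(\iota)$. Davydov's covariance inequality (Theorem 14.13.2 of Hansen, as already used in the proof of Theorem~\ref{thm: main}) gives
\[
\|\bm\Gamma(\iota)\|\le C\alpha(|\iota|)^{1-1/(2(\zeta+\delta))}\|A\|_{4(\zeta+\delta)}\|B\|_{4(\zeta+\delta)}.
\]
The condition $\lambda>2\zeta/(\zeta-1)$ makes this sequence summable. Because $q^{|\iota|}(1-|\iota|/T)\to1$ for each fixed $\iota$, since $q\to1$ and $T\to\infty$, and the weights are bounded by one, dominated convergence yields bias $\to0$.

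\textbf{Variance.} This is the main step. I expand
\[
\mathrm{Var}(\widehat{\Omega}_T)=\frac1{T^2}\sum_{t,\tau,s,r}q^{|t-\tau|+|s-r|}\,\mathrm{Cov}(A_tB_\tau,A_sB_r).
\]
I then write the covariance as the sum of the products $\Gamma(s-t)\Gamma(r-\tau)$ and $\Gamma_{AB}(r-t)\Gamma_{BA}(s-\tau)$, plus a fourth-order cumulant $\kappa(t,\tau,s,r)$.

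The product terms contribute at most
\[
\frac1{T^2}\sum_{t,\tau}\Bigl(\sum_s|\Gamma(s-t)|\Bigr)\Bigl(\sum_r|\Gamma(r-\tau)|\Bigr)\cdot\mathbf 1\text{-weighted }q^{|t-\tau|}\lesssim\frac1T\sum_\iota q^{|\iota|}\asymp S_T/T=O(T^{-2/3}),
\]
which vanishes.

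For the cumulant term I follow Andrews (1991, Lemma 1) and Hansen (1992). I need $\sum_{h_1,h_2,h_3}|\kappa(0,h_1,h_2,h_3)|<\infty$. I will obtain this from the mixing-rate condition and the $4(\zeta+\delta)$ moments, via the standard bound of a fourth cumulant by $\alpha$-mixing coefficients of the largest gap. This is where the moment order $4(\zeta+\delta)$ and $\lambda>2\zeta/(\zeta-1)$ are used. With that bound, the cumulant part is $O(1/T)$ uniformly in $q$, since $q^{|\cdot|}\le1$.

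I expect the cumulant summability to be the main obstacle. If the direct cumulant bound is too delicate under $\alpha$-mixing alone, my fallback is a truncation argument. I would replace $A_t,B_t$ by bounded truncations $A_t\mathbf 1\{|A_t|\le M\}$, and similarly for $B_t$. I would then control the truncation remainder in $L^1$ by $\sum_\iota q^{|\iota|}/T\cdot\|A\mathbf 1\{|A|>M\}\|_2\|B\|_2=O(S_T)\,o_M(1)$. That alone is not small, so instead I would apply the centered mixingale $L^1$ bound of Hansen (1992), which gives
\[
E\Bigl\|\widehat{\Omega}_T-E\widehat{\Omega}_T\Bigr\|\lesssim \frac{S_T}{\sqrt T}
\]
under $L^{2(\zeta+\delta)}$-bounded products with summable mixing. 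This is $O(T^{-1/6})\to0$ and suffices without any fourth cumulants.

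Combining the two terms, Chebyshev's or Markov's inequality shows that $\widehat{\bm\Omega}_T-E\widehat{\bm\Omega}_T\overset{P}{\to}0$. Together with the vanishing bias, this gives the claim. Note that the rate $S_T=o(T^{1/2})$, guaranteed here by $S_T\asymp T^{1/3}$ and matching Assumption~\ref{as: time mixing}(ii), is exactly what the $S_T/\sqrt T$ bound requires.
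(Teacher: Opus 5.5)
Your bias step and your fallback variance bound are essentially the paper's proof. The paper handles the bias the same way: Davydov's inequality followed by dominated convergence on the factors $1-q^{\iota}$, plus the tail of a summable series. For the centered part, the paper also works lag by lag with a mixingale $L^2$ bound (Davidson's Theorem 14.2 together with Lemma A of Hansen 1992), which gives $O(T^{-1/2})$ per lag. It then splits at the cut-off lag $\sqrt{-(\ln q)^{-1}T^{1/2}}$. Short lags are summed using only $q^{\iota}\le 1$, giving squared $L^2$ error $O(S_T T^{-1/2})$. Long lags are bounded crudely in $L^1$ by a multiple of $\sum_{\iota}q^{\iota}$ beyond the cut-off, and this sum vanishes. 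Your version keeps the weight $q^{\iota}$ on every lag and applies Minkowski, so you get $E\|\widehat{\bm\Omega}_T-E\widehat{\bm\Omega}_T\|=O(S_T/\sqrt T)$ in one step and the split becomes unnecessary. Both versions need the per-lag bound to hold uniformly in $\iota$, and both need $S_T=o(T^{1/2})$.

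Your primary cumulant route, however, overclaims. Bounding $|\kappa(0,h_1,h_2,h_3)|\le C\alpha(m)^{\theta}$ is fine, where $m$ is the largest gap among the four time points and $\theta=1-1/(\zeta+\delta)$. The problem is that there are $O(m^2)$ configurations with largest gap $m$. Absolute summability, which is what Andrews's Lemma 1 delivers, therefore requires $\sum_m m^2\alpha(m)^{\theta}<\infty$, that is, $\lambda\theta>3$. The hypothesis $\lambda>2\zeta/(\zeta-1)$ only gives $\lambda\theta>2$; take $\zeta$ large and $\lambda$ slightly above $2$ for a counterexample. So the cumulants need not be summable, and the cumulant part is not $O(1/T)$ under the stated assumptions. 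The route is repairable, because you never needed summability. The cumulant contribution is at most $CT^{-1}\sum_{m\le 2T}(1+m)^2\alpha(m)^{\theta}$, and this is $o(1)$ precisely because $\lambda\theta>2$. With that correction, or simply by going with your fallback, the argument is complete.
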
 
\begin{proof}[\textbf{Proof of Lemma \protect\ref{lemma: convergence in covariance}}]

We begin with showing that the target $\sum_{\iota=-\infty}^{\infty}E\left(\bm{A}_{t}\bm{B}_{t+\iota}^{\top}\right)$
is well-defined. Applying Theorem 14.13.2 in Hansen (\citeyear{hansen2022econometrics}),
we can bound the covariance term as follows: 
\begin{align*}
\left\Vert \sum_{\iota=-\infty}^{\infty}E\left(\bm{A}_{t}\bm{B}_{t+\iota}^{\top}\right)\right\Vert  & \leq\sum_{\iota=-\infty}^{\infty}\left\Vert E\left(\bm{A}_{t}\bm{B}_{t+\iota}^{\top}\right)\right\Vert \\
 & \leq8\left(E\left\Vert \bm{A}_{t}\right\Vert ^{4\left(\zeta+\delta\right)}\right)^{1/4\left(\zeta+\delta\right)}\left(E\left\Vert \bm{B}_{t}\right\Vert ^{4\left(\zeta+\delta\right)}\right)^{1/4\left(\zeta+\delta\right)}\sum_{\iota=-\infty}^{\infty}\alpha(\iota)^{1-1/2(\zeta+\delta)}.
\end{align*}
Given that $\sum_{\iota=1}^{\infty}\alpha(\iota)^{1-1/2(\zeta+\delta)}<\infty$
holds under the mixing assumption, one can deduce that $\left\Vert \sum_{\iota=-\infty}^{\infty}E\left(\bm{A}_{t}\bm{B}_{t+\iota}^{\top}\right)\right\Vert <\infty.$

We rewrite 
\begin{equation}
\frac{1}{T}\sum_{t=1}^{T}\sum_{\tau=1}^{T}q^{\left|t-\tau\right|}\bm{A}_{t}\bm{B}_{\tau}^{\top}=\frac{1}{T}\sum_{t=1}^{T}\bm{A}_{t}\bm{B}_{t}^{\top}+\frac{1}{T}\sum_{\iota=1}^{T-1}\sum_{t=1}^{T-\iota}q^{\iota}\left(\bm{A}_{t}\bm{B}_{t+\iota}^{\top}+\bm{A}_{t+\iota}\bm{B}_{t}^{\top}\right).\label{eq:chiang 2}
\end{equation}
and now demonstrate 
\begin{equation}
\left\Vert \frac{1}{T}\sum_{\iota=1}^{T-1}\sum_{t=1}^{T-\iota}q^{\iota}\bm{A}_{t}\bm{B}_{t+\iota}^{\top}-\sum_{\iota=1}^{\infty}E\left(\bm{A}_{t}\bm{B}_{t+\iota}^{\top}\right)\right\Vert =o_{P}\left(1\right).\label{eq:chiang 3}
\end{equation}
Let $\kappa=\sqrt{-\left(\ln q\right)^{-1}T^{1/2}}$, we have 
\begin{align*}
\left\Vert \frac{1}{T}\sum_{\iota=1}^{T-1}\sum_{t=1}^{T-\iota}q^{\iota}\bm{A}_{t}\bm{B}_{t+\iota}^{\top}-\sum_{\iota=1}^{\infty}E\left(\bm{A}_{t}\bm{B}_{t+\iota}^{\top}\right)\right\Vert \leq & \left\Vert \sum_{\iota=1}^{\kappa-1}\frac{1}{T}\sum_{t=1}^{T-\iota}q^{\iota}\left(\bm{A}_{t}\bm{B}_{t+\iota}^{\top}-E\left(\bm{A}_{t}\bm{B}_{t+\iota}^{\top}\right)\right)\right\Vert \\
 & +\left\Vert \sum_{\iota=\kappa}^{T-1}\frac{1}{T}\sum_{t=1}^{T-\iota}q^{\iota}\left(\bm{A}_{t}\bm{B}_{t+\iota}^{\top}-E\left(\bm{A}_{t}\bm{B}_{t+\iota}^{\top}\right)\right)\right\Vert \\
 & +\left\Vert \sum_{\iota=1}^{T-1}\left(1-q^{\iota}\right)\frac{1}{T}\sum_{t=1}^{T-\iota}E\left(\bm{A}_{t}\bm{B}_{t+\iota}^{\top}\right)\right\Vert \\
 & +\left\Vert \sum_{\iota=T}^{\infty}E\left(\bm{A}_{t}\bm{B}_{t+\iota}^{\top}\right)\right\Vert +o\left(1\right).
\end{align*}

Consider the first term, $\left\Vert \sum_{\iota=1}^{\kappa-1}\frac{1}{T}\sum_{t=1}^{T-\iota}q^{\iota}\left(\bm{A}_{t}\bm{B}_{t+\iota}^{\top}-E\left(\bm{A}_{t}\bm{B}_{t+\iota}^{\top}\right)\right)\right\Vert $.
Set 
\[
\bm{H}_{t,\iota}=q^{\iota}\left(\bm{A}_{t}\bm{B}_{t+\iota}^{\top}-E\left(\bm{A}_{t}\bm{B}_{t+\iota}^{\top}\right)\right)
\]
and information set $\mathcal{F}_{-\infty}^{t-\ell}=\sigma\left(\left(\bm{A}_{\tau}\right)_{\tau=-\infty}^{t-\ell},\left(\bm{B}_{\tau}\right)_{\tau=-\infty}^{t-\ell}\right).$
By Theorem 14.2 in Davidson (\citeyear{davidson1994stochastic}),
we have 
\[
E\left(\left\Vert E\left(\bm{H}_{t,\iota}\vert\mathcal{F}_{-\infty}^{t-\ell}\right)\right\Vert ^{2}\right)^{1/2}\leq6\alpha(\ell)^{1/2-1/2\left(\zeta+\delta\right)}\left(E\left\Vert \bm{H}_{t,\iota}\right\Vert ^{2\left(\zeta+\delta\right)}\right)^{1/2\left(\zeta+\delta\right)}.
\]
Given that $\left|q^{\iota}\right|\leq1$, by the application of Cauchy-Schwarz
inequality, we have 
\[
E\left\Vert \bm{H}_{t,\iota}\right\Vert ^{2\left(\zeta+\delta\right)}\leq E\left\Vert \bm{A}_{t}\bm{B}_{t+\iota}^{\top}\right\Vert ^{2\left(\zeta+\delta\right)}\leq\left(E\left\Vert \bm{A}_{t}\right\Vert ^{4\left(\zeta+\delta\right)}\right)^{1/2}\left(E\left\Vert \bm{B}_{t}\right\Vert ^{4\left(\zeta+\delta\right)}\right)^{1/2}<\infty.
\]
Moreover, by the mixing assumption, we have $\sum_{\ell=1}^{\infty}6\alpha(\ell)^{1/2-1/2\left(r+\delta\right)}<\infty$.
Then, applying Lemma A in Hansen (\citeyear{hansen1992consistent})
yields 
\begin{equation}
E\left(\left\Vert \frac{1}{T}\sum_{t=1}^{T-\iota}\bm{H}_{t,\iota}\right\Vert ^{2}\right)^{1/2}\leq36\cdot\sum_{\ell=1}^{\infty}6\alpha(\ell)T^{-1/2}\left(E\left\Vert \bm{H}_{t,\iota}\right\Vert ^{2\left(\zeta+\delta\right)}\right)^{1/2\left(\zeta+\delta\right)}=O\left(T^{-1/2}\right).\label{eq:chiang 1}
\end{equation}
Together, the above result, the application of Markov inequality and
Minkowski inequality imply that 
\begin{align*}
P\left(\left\Vert \sum_{\iota=1}^{\kappa-1}\frac{1}{T}\sum_{t=1}^{T-\iota}\bm{H}_{t,\iota}\right\Vert \text{>}\varepsilon\right)= & O\left(E\left\Vert \sum_{\iota=1}^{\kappa-1}\frac{1}{T}\sum_{t=1}^{T-\iota}\bm{H}_{t,\iota}\right\Vert ^{2}\right)\\
= & O\left(\left[\sum_{\iota=1}^{\kappa-1}\left(E\left\Vert \frac{1}{T}\sum_{t=1}^{T-\iota}\bm{H}_{t,\iota}\right\Vert ^{2}\right)^{1/2}\right]^{2}\right)\\
= & O\left(\kappa^{2}T^{-1}\right).
\end{align*}
Recall that $\kappa=\sqrt{-\left(\ln q\right)^{-1}T^{1/2}}$ and $q=\exp\left(-cT^{-1/3}\right)$,
and hence we have $\kappa^{2}T^{-1}=o\left(1\right).$

For the second term, $\left\Vert \sum_{\iota=\kappa}^{T-1}\frac{1}{T}\sum_{t=1}^{T-\iota}q^{\iota}\left(\bm{A}_{t}\bm{B}_{t+\iota}^{\top}-E\left(\bm{A}_{t}\bm{B}_{t+\iota}^{\top}\right)\right)\right\Vert $.
By Cauchy-Schwarz inequality, one can deduce that $E\left\Vert \bm{A}_{t}\bm{B}_{t+\iota}^{\top}\right\Vert <\infty$.
Moreover, under the mixing assumption, $\ensuremath{\sum_{\iota=\kappa}^{T-1}q^{\iota}=\frac{q^{\kappa}\left(1-q^{T-\kappa}\right)}{1-q}\rightarrow0}$
as $T\to\infty$. Thus, by applying the triangular inequality, we
obtain that 
\begin{align*}
E\left\Vert \sum_{\iota=\kappa}^{T-1}\frac{1}{T}\sum_{t=1}^{T-\iota}q^{\iota}\left(\bm{A}_{t}\bm{B}_{t+\iota}^{\top}-E\left(\bm{A}_{t}\bm{B}_{t+\iota}^{\top}\right)\right)\right\Vert \leq & \sum_{\iota=\kappa}^{T-1}q^{\iota}E\left\Vert \bm{A}_{t}\bm{B}_{t+\iota}^{\top}-E\left(\bm{A}_{t}\bm{B}_{t+\iota}^{\top}\right)\right\Vert \\
= & O\left(\sum_{\iota=\kappa}^{T-1}q^{\iota}\right)\\
= & o\left(1\right).
\end{align*}

Next, consider the third term, $\left\Vert \sum_{\iota=1}^{T-1}\left(1-q^{\iota}\right)\frac{1}{T}\sum_{t=1}^{T-\iota}E\left(\bm{A}_{t}\bm{B}_{t+\iota}^{\top}\right)\right\Vert $.
Recall that we can bound the covariance term such that: 
\[
\left\Vert \sum_{\iota=1}^{T-1}\left(1-q^{\iota}\right)\frac{1}{T}\sum_{t=1}^{T-\iota}E\left(\bm{A}_{t}\bm{B}_{t+\iota}^{\top}\right)\right\Vert =\sum_{\iota=1}^{T-1}\left(1-q^{\iota}\right)\frac{T-\iota}{T}\left\Vert E\left(\bm{A}_{t}\bm{B}_{t+\iota}^{\top}\right)\right\Vert \leq\sum_{\iota=1}^{T-1}\left(1-q^{\iota}\right)\alpha(\iota)^{(1+2\delta)/(4+4\delta)},
\]
with $\sum_{\iota=1}^{\infty}\alpha(\iota)^{(1+2\delta)/(4+4\delta)}<\infty$
under the mixing assumption. Therefore, given that $1-q^{\iota}\rightarrow0$
as $T\rightarrow\infty$ for a fixed $\iota$, the application of
the dominated convergence theorem yields that $\left\Vert \sum_{\iota=1}^{T-1}\left(1-q^{\iota}\right)\frac{1}{T}\sum_{t=1}^{T-\iota}E\left(\bm{A}_{t}\bm{B}_{t+\iota}^{\top}\right)\right\Vert =o\left(1\right)$.

Finally, for the last term, given that $\left\Vert \sum_{\iota=-\infty}^{\infty}E\left(\bm{A}_{t}\bm{B}_{t+\iota}^{\top}\right)\right\Vert <\infty$,
we have directly that $\left\Vert \sum_{\iota=T}^{\infty}\frac{1}{T}\sum_{t=1}^{T-\iota}E\left(\bm{A}_{t}\bm{B}_{t+\iota}^{\top}\right)\right\Vert =o\left(1\right)$,
as $T\to\infty$. Together, the above result establishes that \eqref{eq:chiang 3}
holds. Following similar argument, we can deduce the desirable result. 
\end{proof}

\subsection{CLT for the Original Sample}

\begin{lemma}[CLT for the Original Sample]\label{lemma: original clt}
Assume that Assumptions \ref{as: AHS representation}-\ref{as: same rate}
hold and $(\bm{c}_{l,0},\bm{\nu}_{0})=\lim_{N,T\to\infty}(\bm{c}_{l,f},{\bm{\nu}}_{f})$
exist, where with $\bm{\nu}_{f}$ captures the variance and covariance
terms, as defined in \eqref{eq:nu-f}. Then, it follows that 
\[
\left\Vert P_{NT}\left(r_{NT,f}\left(\frac{1}{NT}\sum_{i=1}^{N}\sum_{t=1}^{T}\bm{s}_{it}\right)\right)-\mathcal{\bm{L}}_{0}\left(\bm{\nu}_{0},\left\{ \bm{c}_{ll',0}\right\} _{l,l'=1}^{\infty}\right)\right\Vert _{\infty}\xrightarrow{P}0,
\]
where the limiting distribution is given by 
\begin{equation}
\mathcal{\bm{L}}_{0}\left(\bm{\nu}_{0},\left\{ \bm{c}_{ll',0}\right\} _{l,l'=1}^{\infty}\right)=\bm{\nu}_{a,0}\bm{Z}^{a}+\bm{\nu}_{d,0}\bm{Z}^{d}+\bm{\nu}_{e,0}\bm{Z}^{e}+\bm{\nu}_{v,0}\sum_{l,l'=1}^{\infty}\bm{c}_{ll',0}\odot\left(\bm{Z}_{l}^{\phi}\odot\bm{Z}_{l'}^{\psi}\right),\label{eq: limiting distribution}
\end{equation}
which is continuous in $\bm{c}_{l,0}$ and $\bm{\nu}_{0}$. \end{lemma}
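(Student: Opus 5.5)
We would prove Lemma~\ref{lemma: original clt} along the fixed convergent sequence, using the decomposition \eqref{eq:decomposition score}. Multiplying by $r_{NT,f}$ and writing the summands with the coefficients $\bm{\nu}_{NT,af},\bm{\nu}_{NT,df},\bm{\nu}_{NT,ef},\nu_{NT,vf}$ of \eqref{eq:nu-hat} gives
\[
r_{NT,f}\frac{1}{NT}\sum_{i,t}\bm{s}_{it}=\bm{\nu}_{NT,af}\bm{Z}^{a}_{N}+\bm{\nu}_{NT,df}\bm{Z}^{d}_{T}+\bm{\nu}_{NT,ef}\bm{Z}^{e}_{NT}+\nu_{NT,vf}\sum_{l,l'}\bm{c}_{ll',f}\odot\bm{Z}^{\phi}_{N,l}\odot\bm{Z}^{\psi}_{T,l'} .
\]
Since the coefficients converge to $\bm{\nu}_{0}$ and $\bm{c}_{ll',f}\to\bm{c}_{ll',0}$, it suffices to prove a joint CLT for the primitive vector
\[
\bigl(\bm{Z}^{a}_{N},\{\bm{Z}^{\phi}_{N,l}\}_{l\le L},\bm{Z}^{d}_{T},\{\bm{Z}^{\psi}_{T,l'}\}_{l'\le L},\bm{Z}^{e}_{NT}\bigr)
\]
for each fixed $L$. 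We then conclude with the continuous mapping theorem, a truncation argument in $L$, and Slutsky.

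\textbf{Three independent blocks.} The $\alpha$-block depends only on $\{\bm{\alpha}_{i}\}$. For it we apply a CLT for strongly mixing random fields on the lattice $\mathcal{H}$ (Bolthausen-type, as in Conley~\citeyearpar{conley1999gmm}), with Cramér--Wold. This requires Assumption~\ref{as:spatial}(i)--(ii) for the increasing domain and mixing decay, and the $8(\zeta+\delta)$ moments of Assumptions~\ref{as: spectral representation}--\ref{as: moment and variance}. Whenever $\sigma_{a,f}>0$ we use the normalized moment bound uniformly in $f$. The limiting covariance between $\bm{Z}^{a}$ and $\bm{Z}^{\phi}_{l}$ is then the limit of $\bm{\sigma}_{al,f}$, rescaled. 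The $\xi$-block is handled the same way with a CLT for $\alpha$-mixing sequences under Assumption~\ref{as: time mixing}(i), e.g.\ Theorem~14.15 in Hansen. The long-run covariances $\sum_\iota E(\bm{d}_t\psi_{l'}(\bm{\xi}_{t+\iota})^\top)$ are finite by the mixing-inequality bound already used in Lemma~\ref{lemma: convergence in covariance}. For $\bm{Z}^{e}_{NT}$ we condition on $\mathcal{C}=\sigma(\{\bm{\alpha}_i\},\{\bm{\xi}_t\})$. Given $\mathcal{C}$, the $\bm{e}_{it}$ are independent and mean zero, because $\{\bm{\varepsilon}_{it}\}$ is i.i.d.\ and independent of the common effects. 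A conditional Lyapunov CLT therefore applies, with conditional variance $\frac{1}{NT}\sum_{i,t}E(\bm{e}_{it}\bm{e}_{it}^\top\mid\bm{\alpha}_i,\bm{\xi}_t)$. This variance converges in probability to $\bm{\sigma}^2_{e,f}$ by the same $Var\to0$ argument used for $\widehat{\bm{Q}}$ in the proof of Theorem~\ref{thm: main}. Joint convergence then follows from a characteristic-function factorization: the $\alpha$- and $\xi$-blocks are independent, and the $e$-block is asymptotically independent of both because its conditional characteristic function converges to a deterministic limit. This gives the independent Gaussian structure in \eqref{eq: limiting distribution}.

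\textbf{Degenerate and vanishing coefficients.} If some $\nu_{\bullet,0}=0$, the corresponding term is $o_P(1)$ by Chebyshev, since its normalized variance is bounded. No CLT is needed for it, which also covers the case where the normalized moments degenerate. Because $r_{NT,f}\le\sqrt{NT}$, we have $\nu_{NT,vf}\le1$ and $\|\bm{\nu}_{NT,ef}\|\le\|\bm{\sigma}_{e,f}\|$. Assumption~\ref{as: same rate} ensures that all coordinates share the rate $r_{NT,f}$.

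\textbf{Truncation (main obstacle).} The main obstacle is the infinite sum of products of dependent Gaussian primitives. The cross products $\bm{Z}^{\phi}_{N,l}\bm{Z}^{\psi}_{T,l'}$ are products of independent factors, so
\[
E\Bigl\|\sum_{(l,l')\notin[1,L]^2}\bm{c}_{ll',f}\odot\bm{Z}^{\phi}_{N,l}\odot\bm{Z}^{\psi}_{T,l'}\Bigr\|^2 .
\]
Bounding this second moment is delicate, because the $\phi_l(\bm{\alpha}_i)$ are orthonormal but spatially correlated across $i$, and likewise the $\psi_{l'}$ across $t$. We would expand the square and use the mixing covariance inequality to bound $|Cov(\bm{Z}^{\phi}_{N,l},\bm{Z}^{\phi}_{N,m})|$ by $C\,\delta_{lm}$ plus a summable remainder. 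If that fails, we would instead bound the tail by $C\sum_{(l,l')\notin[1,L]^2}\bar c_{ll'}^2$, using the uniform moment bound $B$ of Assumption~\ref{as: spectral representation} together with Cauchy--Schwarz, which yields a bound uniform in $N,T$. The same bound applies to the limit series, so both tails go to zero as $L\to\infty$ uniformly in $N,T$. A standard approximation argument (Billingsley, Theorem~3.2) then gives convergence in distribution.

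\textbf{Kolmogorov distance and continuity.} Convergence in distribution upgrades to the Kolmogorov metric by Pólya's theorem whenever the marginal limit is continuous. The limit is continuous by Assumption~\ref{as: moment and variance}(ii), since a nondegenerate Gaussian plus an independent term, or a nondegenerate second-chaos law, has no atoms. Continuity of $\mathcal{L}_0$ in $(\bm{\nu}_0,\{\bm{c}_{ll',0}\})$ follows from the same $L^2$ bound: the map into $L^2$ random variables is Lipschitz in $\bm{\nu}$ and in $\{\bm{c}_{ll'}\}$ under the $\ell^2$ norm, hence continuous in distribution, and Pólya's theorem again gives continuity in the Kolmogorov metric.
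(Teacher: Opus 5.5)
You have the paper's architecture: Conley's spatial CLT and Hansen's Theorem~14.15 for the $\alpha$- and $\xi$-blocks, a conditional Lyapunov CLT for $\bm Z^{e}_{NT}$ given the common effects, and a characteristic-function factorization. The gap is the step you flag as the main obstacle, which does not go through as proposed. Expand the tail coordinatewise and use independence of the two families:
\[
E\Bigl|\sum_{(l,l')\notin[1,L]^2}c_{ll'}Z^{\phi}_{N,l}Z^{\psi}_{T,l'}\Bigr|^2=\sum_{(l,l'),(m,m')\notin[1,L]^2}c_{ll'}c_{mm'}\,G^{\phi}_{N,lm}\,G^{\psi}_{T,l'm'},
\]
with $G^{\phi}_{N,lm}=E(Z^{\phi}_{N,l}Z^{\phi}_{N,m})$ and $G^{\psi}_{T,l'm'}=E(Z^{\psi}_{T,l'}Z^{\psi}_{T,m'})$.
\begin{itemize}
\item \emph{Your fallback.} The moment bound $B$ with Cauchy--Schwarz only yields $|G_{lm}|\le C$. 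That gives a tail bound of order $\bigl(\sum_{(l,l')\notin[1,L]^2}\bar c_{ll'}\bigr)^2$, which needs absolute summability of $\{\bar c_{ll'}\}$, not the square summability in Assumption~\ref{as: spectral representation}.
\item \emph{Your first route.} Davydov's inequality bounds the off-diagonal spatial or serial contributions to $G_{lm}$ by a constant that is uniform in $(l,m)$ but does not decay in $(l,m)$. So ``$C\delta_{lm}$ plus a summable remainder'' is not available.
\end{itemize}

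An $\ell^2$ argument needs $G^{\phi}_N$ and $G^{\psi}_T$ to be bounded operators on $\ell^2$, uniformly in $(N,T)$. Concretely, $\mathrm{Var}\bigl(N^{-1/2}\sum_{i}g(\bm\alpha_i)\bigr)\le C\,E\,g(\bm\alpha_i)^2$ must hold for every $g$ in the span of the centered $\phi_{l}$, with the analogue in $t$. With i.i.d.\ common effects this is automatic, since $G=\mathbf I$ by orthonormality, which is why square summability is the natural condition there. Under $\alpha$-mixing, covariance inequalities involve $\|g\|_p$ for some $p>2$, and Assumption~\ref{as: spectral representation} does not control that norm for $\ell^2$-combinations of the $\phi_l$. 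So you need an extra ingredient, for instance:
\begin{itemize}
\item a $\rho$-mixing (maximal-correlation) condition on $\{\bm\alpha_i\}$ and $\{\bm\xi_t\}$;
\item absolute summability, $\sum_{l,l'}\bar c_{ll'}<\infty$; or
\item an $L^p$ bound ($p>2$) on the remainder $v_{it}-v^{(L)}_{it}$ that vanishes as $L\to\infty$, combined with a covariance inequality for degenerate functions of two independent mixing processes.
\end{itemize}
The same operator bound underlies your $L^2$-Lipschitz continuity claim for $\mathcal{L}_0$ in $\{\bm c_{ll'}\}$. The limit series has second moment $\mathrm{tr}(C\,G^{\psi}C^{\top}G^{\phi})$, with $G^{\phi},G^{\psi}$ the long-run Gram matrices. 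The paper's proof does not close this step either. It truncates only the limit law ``by square summability'' and passes from the fixed-$L$ joint CLT to the full statistic without bounding the finite-sample tail.
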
 
\begin{proof}[\textbf{Proof of Lemma \protect\ref{lemma: original clt}}]
We organize the argument in three steps.

\paragraph{Step 1: Marginal CLTs and pairwise covariance convergence.}

We first establish marginal (vector) CLTs for the building blocks
and the convergence of the relevant cross-covariances.

\emph{(i) Marginal CLTs for $\bm{d}$, $\bm{a}$, $\phi$, and $\psi$.}
By Assumption~\ref{as: moment and variance}, 
\[
Var\!\left(\frac{1}{\sqrt{T}}\sum_{t=1}^{T}\bm{\sigma}_{d,f}^{-1}\bm{d}_{t}\right)\xrightarrow{P}\mathbf{I}.
\]
Under the stated stationarity/mixing and moment conditions (Assumption~\ref{as: moment and variance}),
Theorem~14.15 of Hansen \citeyearpar{hansen2022econometrics} yields
\[
\frac{1}{\sqrt{T}}\sum_{t=1}^{T}\bm{\sigma}_{d,f}^{-1}\bm{d}_{t}\xrightarrow{d}\mathcal{N}(\bm{0},\mathbf{I}),\qquad r_{NT,f}\frac{\bm{\sigma}_{d,f}}{\sqrt{T}}\bm{Z}_{T}^{d}\xrightarrow{d}\bm{\nu}_{d,0}\bm{Z}^{d}.
\]

For $\frac{1}{\sqrt{N}}\sum_{i=1}^{N}\bm{\sigma}_{a,f}^{-1}\bm{a}_{i}$, we seek to apply Theorem~2 of Conley~\citeyearpar{conley1999gmm}.
Since the sample mean is a special case of GMM with moment $g_{i}(\theta)=\bm{a}_{i}$
(no unknown parameters), Conley's Assumptions A1--A3 and B4--B5
are satisfied under Assumption \ref{as:spatial} with standing regularity
conditions on the sampling region and the kernel/truncation sequence.
It remains to verify the mixing and moment conditions (B1--B3).

\noindent \smallskip{}
 \emph{Moment condition.} By Jensen and Cauchy--Schwarz (for a generic
constant $C$), 
\[
E\|\bm{a}_{i}\|^{4(\zeta+\delta)}\;\le\;C\,E\|\bm{X}_{it}^{\top}\bm{u}_{it}\|^{4(\zeta+\delta)}\;\le\;C\Big(E\|\bm{X}_{it}\|^{8(\zeta+\delta)}\Big)^{1/2}\Big(E\|\bm{u}_{it}\|^{8(\zeta+\delta)}\Big)^{1/2}\;<\;\infty.
\]
Let $\delta'\equiv4(\zeta+\delta-1)>0$. Then $E\|\bm{a}_{i}\|^{4+\delta'}<\infty$,
which is Conley's moment requirement.

\noindent \smallskip{}
 \emph{Mixing conditions.} By Assumption \ref{as:spatial} with $\delta'\equiv4(\zeta+\delta-1)$,
we have $\alpha_{\infty,\infty}(r)^{\delta'/(2+\delta')}=o(r^{-4}).$
Then $\sum_{r\ge1}r\,\alpha_{\infty,\infty}(r)^{\delta'/(2+\delta')}<\infty$
because $r\,\alpha_{\infty,\infty}(r)^{\delta'/(2+\delta')}=o(r^{-3})$
and $\sum_{r\ge1}r^{-3}<\infty$. Since $\alpha_{1,1}(r)\le\alpha_{\infty,\infty}(r)$,
this implies Conley's B3: $\sum_{r=1}^{\infty}r\,\alpha_{1,1}(r)^{\delta'/(2+\delta')}<\infty.$
Moreover, $\alpha_{\infty,\infty}(r)=o\!\left(r^{-4(2+\delta')/\delta'}\right)$,
hence for $d_{1}+d_{2}\leq4$, 
\[
\sum_{r=1}^{\infty}r\,\alpha_{d_{1},d_{2}}(r)\;\le\;\sum_{r=1}^{\infty}r\,\alpha_{\infty,\infty}(r)\;=\;o\!\left(\sum_{r=1}^{\infty}r^{-(8+3\delta')/\delta'}\right)\;<\;\infty,
\]
because $(8+3\delta')/\delta'>1$, which establishes Conley's B1.
Finally, 
\[
\alpha_{1,\infty}(r)\le\alpha_{\infty,\infty}(r)=o\!\left(r^{-4(2+\delta')/\delta'}\right)=o(r^{-2}),
\]
which is Conley's B2. Therefore, applying Theorem~2 of Conley~\citeyearpar{conley1999gmm}  with Slutsky's Lemma
yields that  
\[
\frac{1}{\sqrt{N}}\sum_{i=1}^{N}\bm{\sigma}_{a,f}^{-1}\bm{a}_{i}\;\overset{d}{\to}\;\mathcal{N}(\bm{0},\mathbf{I}).
\]
The same argument gives
the marginal CLTs for $\bm{Z}_{N,l}^{\phi}$ and
$\bm{Z}_{T,l'}^{\psi}$.

\emph{(ii) Marginal CLT for $\bm{e}$.} Recall 
\[
\bm{Z}_{NT}^{e}=\frac{1}{\sqrt{NT}}\sum_{i=1}^{N}\sum_{t=1}^{T}\bm{\sigma}_{e,f}^{-1}\bm{e}_{it},\qquad\bm{e}_{it}=\bm{X}_{it}^{\top}\bm{u}_{it}-E(\bm{X}_{it}^{\top}\bm{u}_{it}\mid\bm{\alpha}_{i},\bm{\xi}_{t}),
\]
and define $\mathcal{F}_{NT}=\sigma(\{\bm{\alpha}_{i}\}_{i=1}^{N},\{\bm{\xi}_{t}\}_{t=1}^{T})$,
so that $E(\bm{e}_{it}\mid\mathcal{F}_{NT})=\bm{0}$. Let 
\[
\bm{V}_{NT}=\frac{1}{NT}\sum_{i=1}^{N}\sum_{t=1}^{T}\sum_{t'=1}^{T}E\!\left(\bm{\sigma}_{e,f}^{-1}\bm{e}_{it}\bm{e}_{it'}^{\top}\bm{\sigma}_{e,f}^{-1}\mid\mathcal{F}_{NT}\right).
\]
By Assumption~\ref{as: AHS representation}, conditional on $\mathcal{F}_{NT}$
the array $\{\bm{\sigma}_{e,f}^{-1}\bm{e}_{it}\}_{i,t}$ is independent
with mean zero and finite $(2+\delta)$-moment for some $\delta>0$.
Hence the conditional Lyapunov condition holds: 
\[
(NT)^{-(1+\delta/2)}\sum_{i,t}E\!\left(\|\bm{\sigma}_{e,f}^{-1}\bm{e}_{it}\|^{2+\delta}\mid\mathcal{F}_{NT}\right)=(NT)^{-\delta/2}E\!\left(\|\bm{\sigma}_{e,f}^{-1}\bm{e}_{it}\|^{2+\delta}\mid\mathcal{F}_{NT}\right)=o_{P}(1),
\]
where $E(\|\bm{\sigma}_{e,f}^{-1}\bm{e}_{it}\|^{2+\delta}\mid\mathcal{F}_{NT})=O_{P}(1)$
follows from $E\|\bm{\sigma}_{e,f}^{-1}\bm{e}_{it}\|^{2+\delta}<\infty$
and iterated expectation. Therefore, if $\bm{V}_{NT}\xrightarrow{P}\mathbf{I}$,
we have 
\[
\bm{V}_{NT}^{-1/2}\frac{1}{\sqrt{NT}}\sum_{i,t}\bm{\sigma}_{e,f}^{-1}\bm{e}_{it}\,\Big|\,\mathcal{F}_{NT}\xrightarrow{d}\mathcal{N}(\bm{0},\mathbf{I}).
\]
It remains to show $\bm{V}_{NT}\xrightarrow{P}\mathbf{I}$. By a law
of large numbers over $i$ (conditional on $\{\bm{\xi}_{t}\}$) and
then rewriting the double sum over $(t,t')$ by lags, 
\begin{align*}
\bm{V}_{NT} & =\frac{1}{T}\sum_{t=1}^{T}\sum_{t'=1}^{T}\bm{\sigma}_{e,f}^{-1}E(\bm{e}_{it}\bm{e}_{it'}^{\top}\mid\bm{\xi}_{t},\bm{\xi}_{t'})\bm{\sigma}_{e,f}^{-1}+o_{P}(1)\\
 & =\frac{1}{T}\sum_{t=1}^{T}\sum_{\iota=-(T-1)}^{T-1}\Bigl(1-\frac{|\iota|}{T}\Bigr)\bm{\sigma}_{e,f}^{-1}E(\bm{e}_{it}\bm{e}_{i,t+\iota}^{\top}\mid\bm{\xi}_{t},\bm{\xi}_{t+\iota})\bm{\sigma}_{e,f}^{-1}+o_{P}(1).
\end{align*}
By the same truncation/dominated-convergence argument as in Lemma~\ref{lemma: convergence in covariance}
(using absolute summability of lag-covariances and stationarity of
$\{\bm{\xi}_{t}\}$), 
\[
\bm{V}_{NT}=\sum_{\iota=-\infty}^{\infty}E\!\left(\bm{\sigma}_{e,f}^{-1}E(\bm{e}_{it}\bm{e}_{i,t+\iota}^{\top}\mid\bm{\xi}_{t})\bm{\sigma}_{e,f}^{-1}\right)+o_{P}(1)=\mathbf{I}+o_{P}(1),
\]
where the last equality uses the definition/normalization of $\bm{\sigma}_{e,f}$
in Assumption~\ref{as: moment and variance}. Consequently, 
\[
\frac{1}{\sqrt{NT}}\sum_{i,t}\bm{\sigma}_{e,f}^{-1}\bm{e}_{it}\xrightarrow{d}\mathcal{N}(\bm{0},\mathbf{I}),\qquad r_{NT,f}\frac{\bm{\sigma}_{e,f}}{\sqrt{NT}}\bm{Z}_{NT}^{e}\xrightarrow{d}\bm{\nu}_{e,0}\bm{Z}^{e}.
\]

\emph{(iii) Pairwise covariance convergence.} For example, 
\[
\frac{1}{\sqrt{T}}\sum_{t=1}^{T}\bm{d}_{t}\left(\frac{1}{\sqrt{T}}\sum_{t=1}^{T}\psi_{l'}(\bm{\xi}_{t})\right)^{\!\top}=\frac{1}{T}\sum_{t=1}^{T}\sum_{\iota=-(T-1)}^{T-1}\Bigl(1-\frac{|\iota|}{T}\Bigr)\bm{d}_{t}\psi_{l'}(\bm{\xi}_{t+\iota})^{\top}.
\]
Hence, by Lemma~\ref{lemma: convergence in covariance}, 
\[
\frac{1}{\sqrt{T}}\sum_{t=1}^{T}\bm{d}_{t}\left(\frac{1}{\sqrt{T}}\sum_{t=1}^{T}\psi_{l'}(\bm{\xi}_{t})\right)^{\!\top}\overset{P}{\to}\bm{\sigma}_{dl',f}.
\]
Analogous covariance limits hold for the other required pairs, e.g.\ $(\bm{Z}^{a},\bm{Z}_{l}^{\phi})$
with $\bm{\sigma}_{al,f}$, etc.

\paragraph{Step 2: Joint CLT and truncation.}

Fix $L<\infty$ and define the block vectors $\bm{S}_{NT}^{e}=\bm{\sigma}_{e,f}\bm{Z}_{NT}^{e}/\sqrt{NT}$,
\[
\bm{S}_{N}^{a}=\Bigl((r_{NT,f}\bm{\sigma}_{a,f}\bm{Z}_{N}^{a}/\sqrt{N})^{\top},\{\bm{Z}_{N,l}^{\phi\top}\}_{l=1}^{L}\Bigr)^{\top},\quad\bm{S}_{T}^{d}=\Bigl((r_{NT,f}\bm{\sigma}_{d,f}\bm{Z}_{T}^{d}/\sqrt{T})^{\top},\{\bm{Z}_{T,l'}^{\psi\top}\}_{l'=1}^{L}\Bigr)^{\top}.
\]
By Step~1, each block is asymptotically Gaussian: 
\[
\bm{S}_{N}^{a}\overset{d}{\to}\mathcal{N}(\bm{0},\bm{\sigma}_{S_{a}}^{2}),\qquad\bm{S}_{T}^{d}\overset{d}{\to}\mathcal{N}(\bm{0},\bm{\sigma}_{S_{d}}^{2}),\qquad\bm{S}_{NT}^{e}\overset{d}{\to}\mathcal{N}(\bm{0},\bm{\sigma}_{S_{e}}^{2}).
\]
To obtain joint convergence and asymptotic independence, consider
the joint characteristic function 
\[
\phi_{NT}(\bm{u},\bm{v},\bm{w})=E\exp\{i\bm{u}^{\top}\bm{S}_{N}^{a}+i\bm{v}^{\top}\bm{S}_{T}^{d}+i\bm{w}^{\top}\bm{S}_{NT}^{e}\}.
\]
Let $\mathcal{F}_{NT}=\sigma(\{\bm{\alpha}_{i}\},\{\bm{\xi}_{t}\})$
as above. Since $\bm{S}_{N}^{a}$ and $\bm{S}_{T}^{d}$ are $\mathcal{F}_{NT}$-measurable,
\[
\phi_{NT}(\bm{u},\bm{v},\bm{w})=E\Bigl[\exp\{i\bm{u}^{\top}\bm{S}_{N}^{a}+i\bm{v}^{\top}\bm{S}_{T}^{d}\}\,E(\exp\{i\bm{w}^{\top}\bm{S}_{NT}^{e}\}\mid\mathcal{F}_{NT})\Bigr].
\]
By the conditional CLT in Step~1(ii), 
\[
E(\exp\{i\bm{w}^{\top}\bm{S}_{NT}^{e}\}\mid\mathcal{F}_{NT})\xrightarrow{P}\exp\!\left(-\frac{1}{2}\bm{w}^{\top}\bm{\sigma}_{S_{e}}^{2}\bm{w}\right),\qquad\left|E(\exp\{i\bm{w}^{\top}\bm{S}_{NT}^{e}\}\mid\mathcal{F}_{NT})\right|\le1,
\]
and dominated convergence implies 
\[
\phi_{NT}(\bm{u},\bm{v},\bm{w})\to\exp\!\left(-\frac{1}{2}\bm{w}^{\top}\bm{\sigma}_{S_{e}}^{2}\bm{w}\right)\cdot\lim_{N,T\to\infty}E\exp\{i\bm{u}^{\top}\bm{S}_{N}^{a}+i\bm{v}^{\top}\bm{S}_{T}^{d}\}.
\]
Under $\sigma(\{\bm{\alpha}_{i}\})\perp\sigma(\{\bm{\xi}_{t}\})$,
the vectors $\bm{S}_{N}^{a}$ and $\bm{S}_{T}^{d}$ are independent
for each $(N,T)$, hence 
\[
E\exp\{i\bm{u}^{\top}\bm{S}_{N}^{a}+i\bm{v}^{\top}\bm{S}_{T}^{d}\}=E\exp\{i\bm{u}^{\top}\bm{S}_{N}^{a}\}\cdot E\exp\{i\bm{v}^{\top}\bm{S}_{T}^{d}\}.
\]
Using the marginal Gaussian limits, 
\[
E\exp\{i\bm{u}^{\top}\bm{S}_{N}^{a}\}\to\exp\!\left(-\frac{1}{2}\bm{u}^{\top}\bm{\sigma}_{S_{a}}^{2}\bm{u}\right),\qquad E\exp\{i\bm{v}^{\top}\bm{S}_{T}^{d}\}\to\exp\!\left(-\frac{1}{2}\bm{v}^{\top}\bm{\sigma}_{S_{d}}^{2}\bm{v}\right),
\]
so one can deduce $\phi_{NT}(\bm{u},\bm{v},\bm{w})\to\exp\!\left(-\frac{1}{2}\bm{u}^{\top}\bm{\sigma}_{S_{a}}^{2}\bm{u}-\frac{1}{2}\bm{v}^{\top}\bm{\sigma}_{S_{d}}^{2}\bm{v}-\frac{1}{2}\bm{w}^{\top}\bm{\sigma}_{S_{e}}^{2}\bm{w}\right).$
It follows that 
\[
(\bm{S}_{N}^{a},\bm{S}_{T}^{d},\bm{S}_{NT}^{e})\overset{d}{\to}\mathcal{N}\!\left(\bm{0},Diag(\bm{\sigma}_{S_{a}}^{2},\bm{\sigma}_{S_{d}}^{2},\bm{\sigma}_{S_{e}}^{2})\right).
\]

This joint CLT implies, for fixed $L<\infty$, 
\[
\Bigl\| P_{NT}\!\Bigl(r_{NT,f}\Bigl(\frac{1}{NT}\sum_{i=1}^{N}\sum_{t=1}^{T}\bm{s}_{it}\Bigr)\Bigr)-\mathcal{L}_{L}(\bm{\nu}_{0},\{\bm{c}_{ll',f}\}_{l,l'=1}^{L})\Bigr\|_{\infty}\xrightarrow{P}0,
\]
where 
\[
\mathcal{L}_{L}(\bm{\nu}_{0},\{\bm{c}_{ll',f}\}_{l,l'=1}^{L})=\bm{\nu}_{a,0}\bm{Z}^{a}+\bm{\nu}_{d,0}\bm{Z}^{d}+\bm{\nu}_{e,0}\bm{Z}^{e}+\bm{\nu}_{v,0}\sum_{l,l'=1}^{L}\bm{c}_{ll',f}\odot(\bm{Z}_{l}^{\phi}\odot\bm{Z}_{l'}^{\psi}).
\]
By square summability of $\{\bm{c}_{ll',f}\}_{l,l'\ge1}$, for any
$\varepsilon>0$ there exists $L$ such that 
\[
\Bigl\|\mathcal{L}_{L}(\bm{\nu}_{0},\{\bm{c}_{ll',f}\}_{l,l'=1}^{L})-\mathcal{L}_{0}(\bm{\nu}_{0},\{\bm{c}_{ll',f}\}_{l,l'=1}^{\infty})\Bigr\|_{\infty}\xrightarrow{P}0,
\]
where $\mathcal{L}_{0}$ denotes the infinite-series limit. Combining
the last two displays yields the desired limit approximation.

\paragraph{Step 3: Continuity of the limiting law.}

It remains to verify that the limiting distribution is continuous.
The limit is a measurable function of a collection of jointly Gaussian
components. The only potentially nonzero dependence in the limit arises
between $\bm{\nu}_{a,0}\bm{Z}^{a}$ and $\{\bm{Z}_{l}^{\phi}\}_{l\ge1}$
(and similarly between $\bm{\nu}_{d,0}\bm{Z}^{d}$ and $\{\bm{Z}_{l'}^{\psi}\}_{l'\ge1}$)
in the regime where both terms are non-negligible. Since the limit
is jointly Gaussian, we can orthogonalize these pairs: there exist
Gaussian vectors $\bm{Z}_{\perp}^{\phi}$ and $\bm{Z}_{\perp}^{\psi}$
such that 
\[
\bm{\nu}_{a,0}\bm{Z}^{a}=\bm{Z}_{\perp}^{\phi}+\bm{\nu}_{v,0}\sum_{l=1}^{\infty}\bm{c}_{l,a}\odot\bm{Z}_{l}^{\phi},\qquad\bm{\nu}_{d,0}\bm{Z}^{d}=\bm{Z}_{\perp}^{\psi}+\bm{\nu}_{v,0}\sum_{l'=1}^{\infty}\bm{c}_{l',d}\odot\bm{Z}_{l'}^{\psi},
\]
where the collection 
\[
\Bigl(\bm{Z}_{\perp}^{\phi},\{\bm{Z}_{l}^{\phi}\}_{l\ge1},\bm{Z}_{\perp}^{\psi},\{\bm{Z}_{l'}^{\psi}\}_{l'\ge1},\bm{Z}^{e}\Bigr)
\]
are mutually independent. We can rewrite as 
\begin{align*}
&\mathcal{\bm{L}}_{0}\left(\bm{\nu}_{0},\left\{ \bm{c}_{ll',0}\right\} _{l,l'=1}^{\infty}\right)\\
= & \left(\bm{Z}_{\perp}^{\phi}+\bm{\nu}_{v,0}\sum_{l=1}^{\infty}\bm{c}_{l,a}\odot\bm{Z}_{l}^{\phi}\right)+\left(\bm{Z}_{\perp}^{\psi}+\bm{\nu}_{v,0}\sum_{l'=1}^{\infty}\bm{c}_{l',d}\odot\bm{Z}_{l'}^{\psi}\right)+\bm{\nu}_{e,0}\bm{Z}^{e}+\bm{\nu}_{v,0}\sum_{l,l'=1}^{\infty}\bm{c}_{ll',0}\odot\left(\bm{Z}_{l}^{\phi}\odot\bm{Z}_{l'}^{\psi}\right)\\
= & \bm{Z}_{\perp}^{\phi}+\bm{Z}_{\perp}^{\psi}+\bm{\nu}_{e,0}\bm{Z}^{e}+\bm{\nu}_{v,0}\sum_{l=1}^{\infty}\bm{c}_{l,a}\odot\bm{Z}_{l}^{\phi}+\bm{\nu}_{v,0}\sum_{l'=1}^{\infty}\left(\bm{c}_{l',d}+\sum_{l=1}^{\infty}\bm{c}_{ll',0}\odot\bm{Z}_{l}^{\phi}\right)\odot\bm{Z}_{l'}^{\psi}.
\end{align*}
Consequently, conditional on the $\left\{ \bm{Z}_{l}^{\phi}\right\} _{l}$,
$\mathcal{L}_{0}\vert\left\{ \bm{Z}_{l}^{\phi}\right\} _{l}$ is Gaussian
distributed with $Var\left(\mathcal{L}_{0}\vert\left\{ \bm{Z}_{l}^{\phi}\right\} _{l}\right)>0$.
Hence, by the law of total probability, the distribution of $\mathcal{L}_{0}$
has no point masses and is therefore continuous. This completes the
proof. 
\end{proof}

\subsection{Order of Variance Estimators}

\begin{lemma}[Order of Variance Estimators] \label{lemma:limit form-1}
Assume Assumptions \ref{as: AHS representation}-\ref{as: same rate}
hold. Then, it holds that 
\begin{align}
\widehat{\bm{\sigma}}_{a}^{2} & =\bm{\sigma}_{a,f}^{2}+O_{P}\left(T^{-1}\bm{\sigma}_{v,f}^{2}+T^{-1}\bm{\sigma}_{d,f}^{2}\right)+o_{P}\left(\bm{\sigma}_{a,f}^{2}+T^{-1}\bm{\sigma}_{e,f}^{2}\right),\label{eq: sigma a order}\\
\widehat{\bm{\sigma}}_{d}^{2} & =\bm{\sigma}_{d,f}^{2}+O_{P}\left(N^{-1}\bm{\sigma}_{v,f}^{2}+N^{-1}\bm{\sigma}_{a,f}^{2}\right)+o_{P}\left(\bm{\sigma}_{d,f}^{2}+N^{-1}\bm{\sigma}_{e,f}^{2}\right).\label{eq: sigma d order}
\end{align}

\end{lemma}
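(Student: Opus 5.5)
We prove \eqref{eq: sigma a order}; \eqref{eq: sigma d order} follows symmetrically, with the spatial kernel $\mathcal{K}(\mathfrak{d}_{ij}/\mathfrak{d}_N)$ replaced by the serial kernel $q^{|t-\tau|}$ and Lemma~\ref{lemma: convergence in covariance} used in place of the spatial HAC argument. Since $EVC(\cdot)$ only truncates negative eigenvalues, and the target $\bm{\sigma}_{a,f}^2$ is positive semidefinite, the truncation can only reduce the distance to the target. It therefore suffices to work with the matrix inside $EVC$.

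\emph{Step 1: expand the empirical projections.} First replace $\widehat{\bm{s}}_{it}$ by $\bm{s}_{it}$. The difference is $-\bm{X}_{it}^{\top}\bm{X}_{it}(\widehat{\bm\beta}-\bm\beta_0)$, and its projected versions contribute terms of order $\|\widehat{\bm\beta}-\bm\beta_0\|^2=O_P(\lambda_{\max}(\bm\sigma^2_{NT,f}))$. By Assumption~\ref{as: same rate}, these are $o_P$ of the leading terms. Next use the Hoeffding decomposition:
\[
\ddot{\bm a}_i=(\bm a_i-\bar{\bm a})+\bar{\bm v}_{i\cdot}+\bar{\bm e}_{i\cdot}+\text{(centering terms)},
\]
where $\bar{\bm v}_{i\cdot}=T^{-1}\sum_t\bm v_{it}$, which by \eqref{eq: decompose vit} equals $\sum_{l,l'}\bm c_{ll',f}\odot\phi_l(\bm\alpha_i)\odot T^{-1}\sum_t\psi_{l'}(\bm\xi_t)$. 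Similarly, $\ddot{\bm w}_{it}$ equals the doubly-demeaned $\bm v_{it}+\bm e_{it}$. Substituting into the kernel-weighted quadratic form produces diagonal blocks $aa$, $vv$, $ee$ and cross blocks.

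\emph{Step 2: bound each block.}

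\begin{enumerate}[label=(\roman*)]
\item The $aa$ block is a spatial HAC estimator. By Conley's consistency argument under Assumption~\ref{as:spatial}(ii)--(iii), with neighborhood size $O(\mathfrak d_N^2)=o(N^{1/3})$ and moments from Assumption~\ref{as: moment and variance}, it equals $\bm\sigma^2_{a,f}(1+o_P(1))$. This gives the $o_P(\bm\sigma_{a,f}^2)$ term.
\item The $ee$ block has expectation approximately $T^{-1}\times$(kernel-weighted intersection variance). This is exactly what the subtracted term $\frac{1}{NT^2}\sum_{i,j,t}\mathcal K\,\ddot{\bm w}_{it}\ddot{\bm w}_{jt}^\top$ estimates, because $\bm e_{it}$ is conditionally independent across $(i,t)$. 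After the bias cancellation, only a centered fluctuation of relative order $o_P(1)$ remains, so this block is $o_P(T^{-1}\bm\sigma^2_{e,f})$.
\item The $vv$ block contains the factor $(T^{-1/2}\sum_t\psi_{l'}(\bm\xi_t))^2/T$, which does not average out. By Assumption~\ref{as: spectral representation} (square-summable $\bar c_{ll'}$, bounded moments) it is $O_P(T^{-1}\bm\sigma^2_{v,f})$, and the subtraction does not cancel it. This is why the statement has an $O_P$ rather than an $o_P$ term here.
\item The $d$-component enters through the overall demeaning and the serial correlation of $\bm\xi_t$. Any residual dependence on $\bm d_t$ in $\ddot{\bm a}_i$ averages over $t$, contributing $O_P(T^{-1}\bm\sigma^2_{d,f})$.
\item The cross terms are bounded by Cauchy--Schwarz as geometric means of the diagonal blocks. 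The $a$--$e$ and $a$--$v$ cross terms are conditionally mean zero, so their squares are dominated by the stated remainders.
\end{enumerate}

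\emph{Main obstacle.} The hardest step is (ii): showing that the correction term removes the $T^{-1}\bm\sigma_{e,f}^2$ bias up to $o_P$. This has to hold uniformly over regimes, with spatial kernel weights and serially dependent $\bm\xi_t$ entering the conditional variances. I would condition on $\mathcal F_{NT}=\sigma(\{\bm\alpha_i\},\{\bm\xi_t\})$ and use conditional independence of $\bm e_{it}$. The variance of the difference is then controlled by a fourth-moment computation, which gives order $T^{-2}\|\bm\sigma_{e,f}\|^4\,\mathfrak d_N^2/N$; this is $o(T^{-2}\|\bm\sigma_{e,f}\|^4)$ by Assumption~\ref{as:spatial}(iii). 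The remaining bias comes from the $\bm v$-part of $\ddot{\bm w}_{it}$. It is absorbed into the $O_P(T^{-1}\bm\sigma^2_{v,f})$ term.
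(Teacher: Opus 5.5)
Your proposal is correct and follows essentially the paper's route. The paper carries out the mirror-image computation for $\widehat{\bm\sigma}_d^2$: HAC consistency for the $\bm d$--$\bm e$ block, a non-averaging rank-one Wishart factor giving the $O_P(N^{-1}\bm\sigma_{v,f}^2)$ term, the subtracted $\ddot{\bm w}$ term removing the intersection-level bias, and an $O_P(\bm\sigma_{NT,f}^2)$ remainder from $\widehat{\bm\beta}$. It then obtains $\widehat{\bm\sigma}_a^2$ from a spatial HAC consistency argument (Davydov's inequality with $m_N^3/N=O(\mathfrak d_N^6/N)\to 0$) plus the same algebra, which is exactly your plan.

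One small correction: $\bm d_t$ cancels exactly from $\ddot{\bm a}_i$, so the $O_P(T^{-1}\bm\sigma_{d,f}^2)$ term arises only through $\widehat{\bm\beta}-\bm\beta_0$. That contribution is $O_P(\bm\sigma_{NT,f}^2)$, not $o_P$ of the leading terms as your Step 1 asserts. This is harmless, because $\bm\sigma_{NT,f}^2=N^{-1}\bm\sigma_{a,f}^2+T^{-1}\bm\sigma_{d,f}^2+(NT)^{-1}(\bm\sigma_{e,f}^2+\bm\sigma_{v,f}^2)$ is absorbed by the stated remainders.
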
 
\begin{proof}[\textbf{Proof of Lemma \protect\ref{lemma:limit form-1}}]
Define 
\[
\bm{\sigma}_{NT,f}^{2}\equiv Var\left(\frac{1}{NT}\sum_{i=1}^{N}\sum_{t=1}^{T}\bm{s}_{it}\right)=N^{-1}\bm{\sigma}_{a,f}^{2}+T^{-1}\bm{\sigma}_{d,f}^{2}+(NT)^{-1}(\bm{\sigma}_{v,f}^{2}+\bm{\sigma}_{e,f}^{2}),
\]
and hence by construction, we have 
\begin{equation}
N^{-1}\bm{\sigma}_{a,f}^{2}+T^{-1}\bm{\sigma}_{d,f}^{2}+N^{-1}T^{-1}\left(\bm{\sigma}_{e,f}^{2}+\bm{\sigma}_{v,f}^{2}\right)=O_{P}\left(\bm{\sigma}_{NT,f}^{2}\right).\label{eq: sigma NT}
\end{equation}

To proceed, consider the decomposition of $\ddot{\bm{d}}_{t}$. We
have: 
\begin{align}
\ddot{\bm{d}}_{t} & =\frac{1}{N}\sum_{i=1}^{N}\bm{s}_{it}-\frac{1}{NT}\sum_{i=1}^{N}\sum_{t=1}^{T}\bm{s}_{it}+\left(\frac{1}{N}\sum_{i=1}^{N}\bm{X}_{it}\bm{X}_{it}^{\top}-\frac{1}{NT}\sum_{i=1}^{N}\sum_{t=1}^{T}\bm{X}_{it}\bm{X}_{it}^{\top}\right)\left(\bm{\beta}-\widehat{\bm{\beta}}\right)\nonumber \\
 & =\bm{d}_{t}+\frac{1}{N}\sum_{i=1}^{N}\bm{w}_{it}-\frac{1}{T}\sum_{t=1}^{T}\left(\bm{d}_{t}+\frac{1}{N}\sum_{i=1}^{N}\bm{w}_{it}\right)+\left(\frac{1}{N}\sum_{i=1}^{N}\bm{X}_{it}\bm{X}_{it}^{\top}-\frac{1}{NT}\sum_{i=1}^{N}\sum_{t=1}^{T}\bm{X}_{it}\bm{X}_{it}^{\top}\right)\left(\bm{\beta}-\widehat{\bm{\beta}}\right),\label{eq: dit 3}
\end{align}

First, we focus on the main term of interest, showing that the autocovariance
component matches the true autovariance: 
\begin{align}
 & \frac{1}{T}\sum_{\iota=1}^{T-1}q^{\iota}\sum_{t=1}^{T-\iota}\left(\bm{d}_{t+\iota}+\frac{1}{N}\sum_{i=1}^{N}\bm{w}_{it+\iota}\right)\left(\bm{d}_{t}+\frac{1}{N}\sum_{i=1}^{N}\bm{w}_{it}\right)^{\top}\nonumber \\
=\, & \sum_{\iota=1}^{\infty}E(\bm{d}_{t+\iota}\bm{d}_{t}^{\top})+N^{-1}\sum_{\iota=1}^{\infty}E(\bm{w}_{it+\iota}\bm{w}_{it}^{\top})+O_{P}\left(N^{-1}\sum_{\iota=1}^{\infty}E(\bm{v}_{it+\iota}\bm{v}_{it}^{\top})+\bm{\sigma}_{NT,f}^{2}\right)+o_{P}\left(\bm{\sigma}_{d,f}^{2}+N^{-1}\bm{\sigma}_{e,f}^{2}\right).\label{eq: dw term}
\end{align}
To establish this, we decompose the left-hand side of \eqref{eq: dw term}
as: 
\begin{align}
 & \frac{1}{T}\sum_{\iota=1}^{T-1}q^{\iota}\sum_{t=1}^{T-\iota}\left(\bm{d}_{t+\iota}+\frac{1}{N}\sum_{i=1}^{N}\bm{w}_{it+\iota}\right)\left(\bm{d}_{t}+\frac{1}{N}\sum_{i=1}^{N}\bm{w}_{it}\right)^{\top}\nonumber \\
=\, & \sum_{\iota=1}^{T-1}q^{\iota}\frac{1}{T}\sum_{t=1}^{T-\iota}\left(\bm{d}_{t+\iota}+\frac{1}{N}\sum_{i=1}^{N}\bm{e}_{it+\iota}\right)\left(\bm{d}_{t}+\frac{1}{N}\sum_{i=1}^{N}\bm{e}_{it}\right)^{\top}\nonumber \\
 & +\sum_{\iota=1}^{T-1}q^{\iota}\frac{1}{T}\sum_{t=1}^{T-\iota}\left[\left(\bm{d}_{t+\iota}+\frac{1}{N}\sum_{i=1}^{N}\bm{e}_{it+\iota}\right)\left(\frac{1}{N}\sum_{i=1}^{N}\bm{v}_{it}\right)^{\top}+\left(\frac{1}{N}\sum_{i=1}^{N}\bm{v}_{it+\iota}\right)\left(\bm{d}_{t}+\frac{1}{N}\sum_{i=1}^{N}\bm{e}_{it}\right)^{\top}\right]\nonumber \\
 & +\sum_{\iota=1}^{T-1}q^{\iota}\frac{1}{T}\sum_{t=1}^{T-\iota}\left(\frac{1}{N}\sum_{i=1}^{N}\bm{v}_{it+\iota}\right)\left(\frac{1}{N}\sum_{i=1}^{N}\bm{v}_{it}\right)^{\top}\nonumber \\
\equiv\, & (1)+(2)+(3).\label{eq: term 123}
\end{align}

We begin with term (1), aiming to show: 
\begin{align}
 & \left\Vert \sum_{\iota=1}^{T-1}q^{\iota}\frac{1}{T}\sum_{t=1}^{T-\iota}\left(\bm{d}_{t+\iota}+\frac{1}{N}\sum_{i=1}^{N}\bm{e}_{it+\iota}\right)\left(\bm{d}_{t}+\frac{1}{N}\sum_{i=1}^{N}\bm{e}_{it}\right)^{\top}-\sum_{\iota=1}^{\infty}E\left(\bm{d}_{t+\iota}\bm{d}_{t}^{\top}+N^{-1}\bm{e}_{it+\iota}\bm{e}_{it}^{\top}\right)\right\Vert \nonumber \\
 & =o_{P}\left(\bm{\sigma}_{d,f}^{2}+N^{-1}\bm{\sigma}_{e,f}^{2}\right).\label{eq: term 1 oP}
\end{align}
Let $c=\sqrt{-\left(\ln q\right)^{-1}T^{1/2}}$. Under Assumption
\ref{as: moment and variance} (iv), we have $c\to\infty$ and $\sum_{\iota=c}^{T-1}q^{\iota}\to0$
as $T\to\infty$. Define $\check{\bm{d}}_{t}=\bm{\sigma}_{d,f}^{-1}\bm{d}_{t}$.
Then, by triangular inequality and quadratic inequality, we can bound:
\begin{align*}
\left\Vert \sum_{\iota=1}^{T-1}q^{\iota}\frac{1}{T}\sum_{t=1}^{T-\iota}\bm{d}_{t+\iota}\bm{d}_{t}^{\top}-\sum_{\iota=1}^{\infty}E\left(\bm{d}_{t+\iota}\bm{d}_{t}^{\top}\right)\right\Vert \leq & \left\Vert \bm{\sigma}_{d,f}^{2}\right\Vert \left\Vert \sum_{\iota=1}^{c}q^{\iota}\frac{1}{T}\sum_{t=1}^{T-\iota}\left(\check{\bm{d}}_{t+\iota}\check{\bm{d}}_{t}^{\top}-E\left(\check{\bm{d}}_{t+\iota}\check{\bm{d}}_{t}^{\top}\right)\right)\right\Vert \\
 & +\left\Vert \bm{\sigma}_{d,f}^{2}\right\Vert \left\Vert \sum_{\iota=c}^{T-1}q^{\iota}\frac{1}{T}\sum_{t=1}^{T-\iota}\left(\check{\bm{d}}_{t+\iota}\check{\bm{d}}_{t}^{\top}-E\left(\check{\bm{d}}_{t+\iota}\check{\bm{d}}_{t}^{\top}\right)\right)\right\Vert \\
 & +\left\Vert \bm{\sigma}_{d,f}^{2}\right\Vert \left\Vert \sum_{\iota=1}^{T-1}\left(1-q^{\iota}\right)\frac{1}{T}\sum_{t=1}^{T-\iota}E\left(\check{\bm{d}}_{t+\iota}\check{\bm{d}}_{t}^{\top}\right)\right\Vert \\
 & +\left\Vert \bm{\sigma}_{d,f}^{2}\right\Vert \left\Vert \sum_{\iota=T}^{\infty}\frac{1}{T}\sum_{t=1}^{T-\iota}E\left(\check{\bm{d}}_{t+\iota}\check{\bm{d}}_{t}^{\top}\right)\right\Vert \\
\equiv & (4)+(5)+(6)+(7).
\end{align*}
It follows from the standard arguments in Newey and West~(\citeyear{newey1986simple},
cf. equation (9)) that terms (4) and (7) are of orders $o_{P}(\bm{\sigma}_{d,f}^{2})$
and $o(\bm{\sigma}_{d,f}^{2})$, respectively. Term (5) is bounded
by: 
\[
(5)=O_{P}\left(\bm{\sigma}_{d,f}^{2}\sum_{\iota=c}^{T-1}q^{\iota}\right)=o_{P}\left(\bm{\sigma}_{d,f}^{2}\right).
\]

For term (6), applying Jensen's inequality and Corollary 6.17 of White
(\citeyear{white1984asymptotic}), the autocovariances are summable
with decay: 
\[
E\left(\check{\bm{d}}_{t+\iota}\check{\bm{d}}_{t}^{\top}\right)\leq C\alpha(\iota)^{(1+2\delta)/(4+4\delta)},
\]
where $\sum_{\iota=1}^{\infty}\alpha(\iota)^{(1+2\delta)/(4+4\delta)}<\infty$,
for some $C<\infty$. Since $q^{\iota}\to1$ as $T\to\infty$, dominated
convergence implies $(6)=o(\bm{\sigma}_{d,f}^{2})$. Therefore, we
conclude: 
\begin{equation}
\left\Vert \sum_{\iota=1}^{T-1}q^{\iota}\frac{1}{T}\sum_{t=1}^{T-\iota}\bm{d}_{t+\iota}\bm{d}_{t}^{\top}-\sum_{\iota=1}^{\infty}E\left(\bm{d}_{t+\iota}\bm{d}_{t}^{\top}\right)\right\Vert =o_{P}(\bm{\sigma}_{d,f}^{2}).\label{eq: dd converge mixing}
\end{equation}

By similar arguments and the mutual independence of $\bm{d}_{t}$
and $\bm{e}_{it}$, the full result in \eqref{eq: term 1 oP} follows.

We now analyze term (3) from \eqref{eq: term 123} and apply the property
of the Hadamard product: 
\begin{align}
 & \sum_{\iota=1}^{T-1}q^{\iota}\frac{1}{T}\sum_{t=1}^{T-\iota}\left(\frac{1}{N}\sum_{i=1}^{N}\bm{v}_{it+\iota}\right)\left(\frac{1}{N}\sum_{i=1}^{N}\bm{v}_{it}\right)^{\top}\nonumber \\
=\, & \sum_{\iota=1}^{T-1}q^{\iota}\frac{1}{T}\sum_{t=1}^{T-\iota}\frac{1}{N}\sum_{i=1}^{N}\sum_{l_1,l_1'=1}^{\infty}\left(\bm{c}_{l_1l_1',f}\odot\phi_{l_1}(\bm{\alpha}_{i})\odot\psi_{l_1'}(\bm{\xi}_{t+\iota})\right)\cdot\frac{1}{N}\sum_{i'=1}^{N}\sum_{l_2,l_2'=1}^{\infty}\left(\bm{c}_{l_2l_2',f}\odot\phi_{l_2}(\bm{\alpha}_{i'})\odot\psi_{l_2'}(\bm{\xi}_{t})\right)^{\top}\nonumber \\
=\, & \frac{1}{N}\sum_{l_1,l_1'=1}^{\infty}\bm{c}_{l_1l_1',f}\bm{c}_{l_1l_1',f}^{\top}\odot\left(\frac{1}{\sqrt{N}}\sum_{i=1}^{N}\phi_{l_1}(\bm{\alpha}_{i})\right)\left(\frac{1}{\sqrt{N}}\sum_{i=1}^{N}\phi_{l_1}(\bm{\alpha}_{i})\right)^{\top}\odot\sum_{\iota=1}^{T-1}q^{\iota}\frac{1}{T}\sum_{t=1}^{T-\iota}\psi_{l_1'}(\bm{\xi}_{t+\iota})\psi_{l_1'}(\bm{\xi}_{t})^{\top}\nonumber \\
 & +\frac{1}{N}\sum_{(l_2,l_2')\ne (l_1,l_1')}\bm{c}_{l_1l_1',f}\bm{c}_{l_2l_2',f}^{\top}\odot\left(\frac{1}{\sqrt{N}}\sum_{i=1}^{N}\phi_{l_1}(\bm{\alpha}_{i})\right)\left(\frac{1}{\sqrt{N}}\sum_{i=1}^{N}\phi_{l_2}(\bm{\alpha}_{i})\right)^{\top}\odot\sum_{\iota=1}^{T-1}q^{\iota}\frac{1}{T}\sum_{t=1}^{T-\iota}\psi_{l_1'}(\bm{\xi}_{t+\iota})\psi_{l_2'}(\bm{\xi}_{t})^{\top}.\label{eq: term 3}
\end{align}

Recall that 
\begin{align*}
\sum_{\iota=1}^{\infty}E\left(\bm{v}_{it}\bm{v}_{i,t+\iota}^{\top}\right)&=\sum_{l,l'=1}^{\infty}\bm{c}_{ll',f}\bm{c}_{ll',f}^{\top}\odot E\left(\phi_{l}\left(\bm{\alpha}_{i}\right)\phi_{l}\left(\bm{\alpha}_{i}\right)^{\top}\right)\odot\sum_{\iota=1}^{\infty}E\left(\psi_{l'}\left(\bm{\xi}_{t}\right)\psi_{l'}\left(\bm{\xi}_{t+\iota}\right)^{\top}\right)\\
&\equiv\sum_{l,l'=1}^{\infty}\bm{c}_{ll',f}\bm{c}_{ll',f}^{\top}\odot\bm{\sigma}_{\phi l}^{2}\odot\bm{\sigma}_{\psi l'+}^{2}.
\end{align*}
In the first term of \eqref{eq: term 3}, we have 
\[
\sum_{\iota=1}^{T-1}q^{\iota}\frac{1}{T}\sum_{t=1}^{T-\iota}\psi_{l'}(\bm{\xi}_{t+\iota})\psi_{l'}(\bm{\xi}_{t})^{\top}\xrightarrow{P}\bm{\sigma}_{\psi l'+}^{2},
\]
However, the random quadratic form 
\[
\left(\frac{1}{\sqrt{N}}\sum_{i=1}^{N}\phi_{l}(\bm{\alpha}_{i})\right)\left(\frac{1}{\sqrt{N}}\sum_{i=1}^{N}\phi_{l}(\bm{\alpha}_{i})\right)^{\top}\overset{P}{\to}\bm{Z}_{l}^{\phi}\bm{Z}_{l}^{\phi\top}\sim\mathcal{W}_{K}(\bm{\sigma}_{\phi l}^{2},1),
\]
a rank-one Wishart distribution. This persistent randomness is the
primary source of estimation noise, which prevents us from a more
precise estimation.

The second term in \eqref{eq: term 3} is negligible, because the
functions $\{\phi_{l}\}$ and $\{\psi_{l'}\}$ are orthogonal in $l$ and $l'$.
As such, term (3) has a distribution close to 
\begin{equation}
\frac{1}{N}\sum_{l,l'=1}^{\infty}\bm{c}_{ll',f}\bm{c}_{ll',f}^{\top}\odot\bm{Z}_{l}^{\phi}\bm{Z}_{l}^{\phi\top}\odot\bm{\sigma}_{\psi l'+}^{2},\label{eq:term 3-2}
\end{equation}
and hence we conclude that it satisfies: 
\begin{equation}
(3)=O_{P}(N^{-1}\bm{\sigma}_{v,f}^{2}).\label{eq: term 3-1}
\end{equation}

Next, consider term (2) from \eqref{eq: term 123}, which is a cross
term involving $\bm{d}_{t}$ and $\bm{v}_{it}$. By an argument similar
to that for term (3), we obtain: 
\[
(2)=o_{P}(N^{-1/2}\bm{\sigma}_{v,f}(\bm{\sigma}_{d,f}^{2}+N^{-1}\bm{\sigma}_{e,f}^{2})^{1/2}).
\]
This term is of smaller order than the leading terms in \eqref{eq: dw term},
and thus negligible. Putting together results for terms (1), (2),
and (3) in \eqref{eq: term 123}, we conclude that \eqref{eq: dw term}
holds.

Having established \eqref{eq: dw term}, we now apply similar reasoning
to the sample variance and autocovariance terms of $\ddot{\bm{d}}_{t}$.
Specifically, we consider the HAC-type estimator: 
\begin{align}
 & \frac{1}{T}\sum_{t=1}^{T}\ddot{\bm{d}}_{t}\ddot{\bm{d}}_{t}^{\top}+\sum_{\iota=1}^{T-1}q^{\iota}\frac{1}{T}\sum_{t=1}^{T-\iota}\left(\ddot{\bm{d}}_{t}\ddot{\bm{d}}_{t+\iota}^{\top}+\ddot{\bm{d}}_{t+\iota}\ddot{\bm{d}}_{t}^{\top}\right)\nonumber \\
=\, & \bm{\sigma}_{d,f}^{2}+N^{-1}\bm{\sigma}_{w,f}^{2}+O_{P}\left(N^{-1}\bm{\sigma}_{v,f}^{2}+\bm{\sigma}_{NT,f}^{2}\right)+o_{P}\left(\bm{\sigma}_{d,f}^{2}+N^{-1}\bm{\sigma}_{e,f}^{2}\right).\label{eq: term dd}
\end{align}

It remains to establish the analogous result for $\widehat{\bm{\sigma}}_{w}^{2}$.
We aim to show: 
\begin{align}
 & \frac{1}{N^{2}T}\sum_{i=1}^{N}\sum_{t=1}^{T}\ddot{\bm{w}}_{it}\ddot{\bm{w}}_{it}^{\top}+\sum_{\iota=1}^{T-1}q^{\iota}\frac{1}{N^{2}T}\sum_{i=1}^{N}\sum_{t=1}^{T-\iota}\left(\ddot{\bm{w}}_{it}\ddot{\bm{w}}_{it+\iota}^{\top}+\ddot{\bm{w}}_{it+\iota}\ddot{\bm{w}}_{it}^{\top}\right)\nonumber \\
= & N^{-1}\bm{\sigma}_{w,f}^{2}+O_{P}\left(N^{-1}\bm{\sigma}_{NT,f}^{2}\right)+o_{P}\left(N^{-1}\bm{\sigma}_{v,f}^{2}+N^{-1}\bm{\sigma}_{e,f}^{2}\right).\label{eq: term ww}
\end{align}
Note that $\ddot{\bm{w}}_{it}=\bm{v}_{it}+\bm{e}_{it}+O_{P}(\bm{\sigma}_{NT,f})$.
Applying CLT, we derive: 
\begin{align*}
\frac{1}{N^{2}T}\sum_{i=1}^{N}\sum_{t=1}^{T}\ddot{\bm{w}}_{it}\ddot{\bm{w}}_{it}^{\top} & =\frac{1}{N^{2}T}\sum_{i=1}^{N}\sum_{t=1}^{T}\left(\bm{e}_{it}\bm{e}_{it}^{\top}+\bm{v}_{it}\bm{v}_{it}^{\top}\right)+O_{P}\left(N^{-1}\bm{\sigma}_{NT,f}^{2}\right)\\
 & =N^{-1}E(\bm{w}_{it}^{\top}\bm{w}_{it})+O_{P}\left(N^{-1}\left(\bm{\sigma}_{NT,f}^{2}+N^{-1/2}T^{-1/2}\bm{\sigma}_{e,f}^{2}+(N^{-1/2}+T^{-1/2})\bm{\sigma}_{v,f}^{2}\right)\right).
\end{align*}
Similarly, the autocovariance component satisfies: 
\begin{align*}
\sum_{\iota=1}^{T-1}q^{\iota}\frac{1}{N^{2}T}\sum_{i=1}^{N}\sum_{t=1}^{T-\iota}\ddot{\bm{w}}_{it+\iota}\ddot{\bm{w}}_{it}^{\top}=\, & N^{-1}\sum_{\iota=1}^{\infty}E(\bm{w}_{it+\iota}\bm{w}_{it}^{\top})\\
 & +O_{P}\left(N^{-1}\left(\bm{\sigma}_{NT,f}^{2}+N^{-1/2}T^{-1/2}\bm{\sigma}_{e,f}^{2}+(N^{-1/2}+T^{-1/2})\bm{\sigma}_{v,f}^{2}\right)\right).
\end{align*}
Combining these gives \eqref{eq: term ww}.

For the rest terms in \eqref{eq: dit 3}. Notice that we have 
\[
\mathrm{Var}\left(\frac{1}{T}\sum_{t=1}^{T}\bm{d}_{t}\right)=O\left(T^{-1}\bm{\sigma}_{d,f}^{2}\right)\quad\text{and}\quad\mathrm{Var}\left(\frac{1}{NT}\sum_{i=1}^{N}\sum_{t=1}^{T}\bm{w}_{it}\right)=O\left(\frac{1}{NT}(\bm{\sigma}_{e,f}^{2}+\bm{\sigma}_{v,f}^{2})\right).
\]
following the proof of Chiang et al. (\citeyear{chiang2023standard},
cf. (A.3) and (A.4) in the proof of Theorem 1). Together, the above
results, and $\bm{\beta}-\widehat{\bm{\beta}}=O_{P}(\bm{\sigma}_{NT,f})$,
and the application of Chebyshev's inequality and Cauchy Schwarz inequality
yield that 
\begin{equation}
\frac{1}{T}\sum_{\iota=1}^{T-1}q^{\iota}\sum_{t=1}^{T-\iota}\ddot{\bm{d}_{t}}\ddot{\bm{d}_{t}}^{\top}=\frac{1}{T}\sum_{\iota=1}^{T-1}q^{\iota}\sum_{t=1}^{T-\iota}\left(\bm{d}_{t+\iota}+\frac{1}{N}\sum_{i=1}^{N}\bm{w}_{it+\iota}\right)\left(\bm{d}_{t}+\frac{1}{N}\sum_{i=1}^{N}\bm{w}_{it}\right)^{\top}+O_{P}(\bm{\sigma}_{NT,f}^{2}).\label{eq: dit 2}
\end{equation}
Hence, by \eqref{eq: term dd}-\eqref{eq: dit 2}, we can conclude
that the estimator satisfies: 
\[
\widehat{\bm{\sigma}}_{d}^{2}=\bm{\sigma}_{d,f}^{2}+O_{P}\left(N^{-1}\bm{\sigma}_{v,f}^{2}+\bm{\sigma}_{NT,f}^{2}\right)+o_{P}\left(\bm{\sigma}_{d,f}^{2}+N^{-1}\bm{\sigma}_{e,f}^{2}\right),
\]
as desired in \eqref{eq: sigma d order}. 

We now that shows
that \eqref{eq: sigma a order} also holds. By an analogous argument as above, the key part is to
show 
\begin{equation}
\left\Vert \frac{1}{N}\sum_{i=1}^{N}\sum_{j=1}^{N}K\!\left(\frac{\mathfrak{d}_{ij}}{\mathfrak{d}_{N}}\right)\bm{a}_{i}\bm{a}_{j}^{\top}-\frac{1}{N}\sum_{i=1}^{N}\sum_{j=1}^{N}E\!\left(\bm{a}_{i}\bm{a}_{j}^{\top}\right)\right\Vert =o_{P}(1).\label{eq:key_kern_conv}
\end{equation}
Rewrite as 
\[
\frac{1}{N}\sum_{i=1}^{N}\sum_{j=1}^{N}K\!\left(\frac{\mathfrak{d}_{ij}}{\mathfrak{d}_{N}}\right)\Big(\bm{a}_{i}\bm{a}_{j}^{\top}-E(\bm{a}_{i}\bm{a}_{j}^{\top})\Big)\;+\;\frac{1}{N}\sum_{i=1}^{N}\sum_{j=1}^{N}\Bigg(K\!\left(\frac{\mathfrak{d}_{ij}}{\mathfrak{d}_{N}}\right)-1\Bigg)E(\bm{a}_{i}\bm{a}_{j}^{\top})\equiv\mathcal{T}_{1N}+\mathcal{T}_{2N}.
\]

\noindent \smallskip{}
 \emph{For Term $\mathcal{T}_{1N}$.} Let 
\[
A_{i}\equiv\sum_{j=1}^{N}K\!\left(\frac{\mathfrak{d}_{ij}}{\mathfrak{d}_{N}}\right)\Big(\bm{a}_{i}\bm{a}_{j}^{\top}-E(\bm{a}_{i}\bm{a}_{j}^{\top})\Big),\qquad\mathcal{T}_{1N}=\frac{1}{N}\sum_{i=1}^{N}A_{i}.
\]
Since $K(\mathfrak{d}_{ij}/\mathfrak{d}_{N})\neq0$ implies $\mathfrak{d}_{ij}\le\mathfrak{d}_{N}$,
given the lattice $\mathcal{H}\subset\mathbb{R}^{2}$, we define the
maximum number of neighbors in distance $\mathfrak{d}_{N}$: 
\[
m_{N}\equiv\sup_{i\le N}\#\big\{ j\le N:\mathfrak{d}_{ij}\le\mathfrak{d}_{N}\big\}=O(\mathfrak{d}_{N}^{2}),
\]
so each $A_{i}$ contains at most $O(m_{N})$ nonzero elements. Then
\[
Var(\mathcal{T}_{1N})=\frac{1}{N^{2}}\sum_{i_{1}=1}^{N}\sum_{i_{2}=1}^{N}Cov(A_{i_{1}},A_{i_{2}})\le\frac{1}{N}\sup_{i_{1}}\sum_{i_{2}=1}^{N}|Cov(A_{i_{1}},A_{i_{2}})|.
\]
Using Davydov's inequality for strong mixing random fields, 
\[
|Cov(A_{i_{1}},A_{i_{2}})|\le C\|A_{i_{1}}\|_{2(\zeta+\delta)}\|A_{i_{2}}\|_{2(\zeta+\delta)}\alpha_{\infty,\infty}\!\big((\mathfrak{d}_{i_{1}i_{2}}-2\mathfrak{d}_{N})_{+}\big)^{1-1/(\zeta+\delta)}.
\]
Moreover, by Minkowski and the moment bound $E\|\bm{a}_{i}\|^{4(\zeta+\delta)}<\infty$,
\[
\|A_{i}\|_{2(\zeta+\delta)}\le C\sum_{j:\,\mathfrak{d}_{ij}\le\mathfrak{d}_{N}}\|\bm{a}_{i}\bm{a}_{j}^{\top}-E(\bm{a}_{i}\bm{a}_{j}^{\top})\|_{2(\zeta+\delta)}\le C\,m_{N}.
\]
Hence, we have 
\begin{align*}
\sum_{i_{2}=1}^{N}|Cov(A_{i_{1}},A_{i_{2}})| & \le Cm_{N}^{2}\sup_{i_{1}}\left[\#\{i_{2}:\mathfrak{d}_{i_{1}i_{2}}\le2\mathfrak{d}_{N}\}+\sum_{i_{2}:\,\mathfrak{d}_{i_{1}i_{2}}>2\mathfrak{d}_{N}}\alpha_{\infty,\infty}\!\big((\mathfrak{d}_{i_{1}i_{2}}-2\mathfrak{d}_{N})_{+}\big)^{1-1/(\zeta+\delta)}\right]
\end{align*}
By the same counting argument $\sup_{i_{1}\le N}\#\{i_{2}:\mathfrak{d}_{i_{1}i_{2}}\le2\mathfrak{d}_{N}\}=O(m_{N})$
and the summability of $\sum_{r\geq1}\alpha_{\infty,\infty}(r)^{1-1/(\zeta+\delta)}<\infty$,
we have 
\[
Var(\mathcal{T}_{1N})=O\!\left(\frac{m_{N}^{3}}{N}\right)=O\!\left(\frac{\mathfrak{d}_{N}^{6}}{N}\right)=o(1),\qquad\text{so}\qquad\mathcal{T}_{1N}=o_{P}(1).
\]

\noindent \smallskip{}
 \emph{Term $\mathcal{T}_{2N}$.} Decompose 
\[
\|\mathcal{T}_{2N}\|\le\frac{1}{N}\sum_{i=1}^{N}\left\Vert \sum_{j:\,\mathfrak{d}_{ij}\le\mathfrak{d}_{N}}\Big(K(\mathfrak{d}_{ij}/\mathfrak{d}_{N})-1\Big)E(\bm{a}_{i}\bm{a}_{j}^{\top})\right\Vert +\frac{1}{N}\sum_{i=1}^{N}\left\Vert \sum_{j:\,\mathfrak{d}_{ij}>\mathfrak{d}_{N}}E(\bm{a}_{i}\bm{a}_{j}^{\top})\right\Vert .
\]
For the first piece, $K(\mathfrak{d}_{ij}/\mathfrak{d}_{N})-1\to0$
pointwise for each fixed $\mathfrak{d}_{ij}$, and Davydov's inequality
gives 
\[
\left\Vert \sum_{j:\,\mathfrak{d}_{ij}\le\mathfrak{d}_{N}}E(\bm{a}_{i}\bm{a}_{j}^{\top})\right\Vert \le C\,\sum_{j:\,\mathfrak{d}_{ij}\le\mathfrak{d}_{N}}\alpha_{\infty,\infty}(\mathfrak{d}_{ij})^{1-1/(\zeta+\delta)}<\infty.
\]
Thus, by dominated convergence, 
\[
\frac{1}{N}\sum_{i=1}^{N}\left\Vert \sum_{j:\,\mathfrak{d}_{ij}\le\mathfrak{d}_{N}}\Big(K(\mathfrak{d}_{ij}/\mathfrak{d}_{N})-1\Big)E(\bm{a}_{i}\bm{a}_{j}^{\top})\right\Vert \to0.
\]
For the second piece, 
\[
\frac{1}{N}\sum_{i=1}^{N}\left\Vert \sum_{j:\,\mathfrak{d}_{ij}>\mathfrak{d}_{N}}E(\bm{a}_{i}\bm{a}_{j}^{\top})\right\Vert \le C\sum_{j:\,\mathfrak{d}_{ij}>\mathfrak{d}_{N}}\alpha_{\infty,\infty}(\mathfrak{d}_{ij})^{1-1/(\zeta+\delta)}\to0
\]
as $\mathfrak{d}_{N}\to\infty$ and the tail of a summable series
vanishes. Hence $\mathcal{T}_{2N}=o(1)$, and \eqref{eq:key_kern_conv}
follows. Finally, combining the above consistency with the
same algebraic decomposition used for $\widehat{\bm \sigma}_{d}^2$
yields $\eqref{eq: sigma a order}$.

\end{proof}

\subsection{Bootstrap CLT for $\frac{1}{\sqrt{N}}\sum_{i=1}^{N}\ddot{\bm{a}}_{i}\eta_{i}^{*b}$.}

\begin{lemma} \label{lemma: clt for bootstrap Za} If Assumptions
\ref{as: AHS representation}-\ref{as: same rate} hold
and $\bm{\sigma}_{a}^{2}=\lim_{N,T\to\infty}\bm{\sigma}_{a,f}^{2}>0$,
then 
\[
\frac{1}{\sqrt{N}}\sum_{i=1}^{N}\ddot{\bm{a}}_{i}\eta_{i}^{*b}\overset{d^{*}}{\to}\mathcal{N}(0,\bm{\sigma}_{a}^{2}).
\]
\end{lemma}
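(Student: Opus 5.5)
The approach is to condition on the data and write the bootstrap sum as a weighted sum of the i.i.d.\ Rademacher variables $\widetilde{\eta}_{j}^{*b}$, then apply a conditional Lindeberg--Feller (Lyapunov) CLT through the Cram\'er--Wold device. Since $\bm{\eta}_{N}^{*b}=\mathbb{K}_{N}^{1/2}\widetilde{\bm{\eta}}_{N}^{*b}$, we have
\[
\frac{1}{\sqrt{N}}\sum_{i=1}^{N}\ddot{\bm{a}}_{i}\eta_{i}^{*b}=\frac{1}{\sqrt{N}}\sum_{j=1}^{N}\bm{c}_{j}\,\widetilde{\eta}_{j}^{*b},\qquad \bm{c}_{j}\equiv\sum_{i=1}^{N}[\mathbb{K}_{N}^{1/2}]_{ij}\,\ddot{\bm{a}}_{i}.
\]
For a fixed unit vector $\bm{\lambda}$, set $b_{j}=\bm{\lambda}^{\top}\bm{c}_{j}/\sqrt{N}$. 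Conditional on the data, $\sum_{j}b_{j}\widetilde{\eta}_{j}^{*b}$ is a sum of independent, mean-zero, bounded terms. It therefore suffices to prove two things:
\begin{itemize}
\item variance convergence, $\sum_{j}b_{j}^{2}\xrightarrow{P}\bm{\lambda}^{\top}\bm{\sigma}_{a}^{2}\bm{\lambda}$;
\item the Lyapunov ratio $\sum_{j}b_{j}^{4}\big/\bigl(\sum_{j}b_{j}^{2}\bigr)^{2}\xrightarrow{P}0$.
\end{itemize}
The second follows if $\max_{j}b_{j}^{2}=o_{P}(1)$.

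\emph{Step 1 (variance).} By construction,
\[
Var^{*}\Bigl(\frac{1}{\sqrt{N}}\sum_{i}\ddot{\bm{a}}_{i}\eta_{i}^{*b}\Bigr)=\frac{1}{N}\sum_{i,j}\mathcal{K}(\mathfrak{d}_{ij}/\mathfrak{d}_{N})\,\ddot{\bm{a}}_{i}\ddot{\bm{a}}_{j}^{\top}.
\]
I will decompose $\ddot{\bm{a}}_{i}=\bm{a}_{i}+\frac{1}{T}\sum_{t}\bm{w}_{it}-\bar{\bm{a}}_{N}+(\text{regression-error term in }\widehat{\bm{\beta}}-\bm{\beta}_{0})$, the cross-sectional analogue of \eqref{eq: dit 3}. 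The leading term $\frac{1}{N}\sum_{i,j}\mathcal{K}\,\bm{a}_{i}\bm{a}_{j}^{\top}$ converges to $\bm{\sigma}_{a,f}^{2}$ by \eqref{eq:key_kern_conv}, which uses $\mathcal{T}_{1N}=O_{P}(\mathfrak{d}_{N}^{6}/N)$ together with Assumption~\ref{as:spatial}(iii). The $\bm{w}$-contributions are $O_{P}(T^{-1}(\bm{\sigma}_{v,f}^{2}+\bm{\sigma}_{e,f}^{2}))$, exactly as in the proof of Lemma~\ref{lemma:limit form-1}. The mean-correction and $\widehat{\bm{\beta}}$ terms are $O_{P}(\bm{\sigma}_{NT,f}^{2})$ after weighting by the at most $m_{N}=O(\mathfrak{d}_{N}^{2})$ nonzero kernel entries per row. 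All of these vanish when $\bm{\sigma}_{a}^{2}=\lim\bm{\sigma}_{a,f}^{2}>0$ is fixed, so the variance converges to $\bm{\sigma}_{a}^{2}$ and Step 1 holds.

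\emph{Step 2 (negligibility of individual weights).} This is the main obstacle, because $\mathbb{K}_{N}^{1/2}$ need not be banded even though $\mathbb{K}_{N}$ is. I will first use the moment bound in Assumption~\ref{as: moment and variance}, $E\|\bm{X}_{it}\|^{8(\zeta+\delta)},E\|\bm{u}_{it}\|^{8(\zeta+\delta)}<\infty$, to get $\max_{i}\|\ddot{\bm{a}}_{i}\|=O_{P}(N^{1/(4(\zeta+\delta))})$ via a union bound. Next I will control $\bm{c}_{j}$ by localisation. Because $\mathcal{K}$ is the compactly supported Wendland kernel, $\mathbb{K}_{N}$ is banded with at most $m_{N}$ entries per row and $\|\mathbb{K}_{N}\|_{op}\le m_{N}$. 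Approximating $x\mapsto x^{1/2}$ on $[0,m_{N}]$ by a polynomial of degree $p$ gives a matrix $P_{p}(\mathbb{K}_{N})$ whose rows are supported within distance $p\,\mathfrak{d}_{N}$ and which satisfies $\|\mathbb{K}_{N}^{1/2}-P_{p}(\mathbb{K}_{N})\|_{op}\lesssim\sqrt{m_{N}}/p$. With this approximation, each $\bm{c}_{j}$ splits into two parts:
\begin{itemize}
\item a local sum over $O(p^{2}m_{N})$ indices, each term bounded by $\max_{i}\|\ddot{\bm{a}}_{i}\|$ times $\|P_{p}(\mathbb{K}_{N})\|_{op}\le C\sqrt{m_{N}}$, since every row of $\mathbb{K}_{N}^{1/2}$ has unit norm ($[\mathbb{K}_{N}]_{jj}=1$);
\item a remainder whose total squared norm is at most $\|\mathbb{K}_{N}^{1/2}-P_{p}(\mathbb{K}_{N})\|_{op}^{2}\sum_{i}\|\ddot{\bm{a}}_{i}\|^{2}$.
\end{itemize}
Choosing $p$ as a slowly growing power of $N$ and using $\mathfrak{d}_{N}=o(N^{1/6})$, which gives $m_{N}=o(N^{1/3})$, I expect $\max_{j}\|\bm{c}_{j}\|^{2}/N=o_{P}(1)$. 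That yields the Lyapunov condition. If this polynomial approximation turns out too lossy, the fallback is a direct fourth-cumulant bound: compute $E^{*}\bigl(\sum_{j}b_{j}\widetilde{\eta}_{j}^{*b}\bigr)^{4}-3\bigl(\sum_{j}b_{j}^{2}\bigr)^{2}=-2\sum_{j}b_{j}^{4}$ and bound $\sum_{j}b_{j}^{4}$ using only the operator-norm bound on $\mathbb{K}_{N}$ and the moment bounds.

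\emph{Step 3 (conclusion).} With Steps 1--2, the conditional Lyapunov CLT gives $\bm{\lambda}^{\top}\frac{1}{\sqrt{N}}\sum_{i}\ddot{\bm{a}}_{i}\eta_{i}^{*b}\xrightarrow{d^{*}}\mathcal{N}(0,\bm{\lambda}^{\top}\bm{\sigma}_{a}^{2}\bm{\lambda})$ in probability for every $\bm{\lambda}$. Cram\'er--Wold then delivers the stated vector limit. Since the limit is continuous, P\'olya's theorem upgrades this to convergence in the Kolmogorov metric, which is the form in which the result is used in Theorem~\ref{thm: main}.
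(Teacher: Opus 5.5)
\textbf{Comparison with the paper.} Your argument has the same skeleton as the paper's. You write the bootstrap sum as a linear form in the i.i.d.\ Rademacher draws, obtain the conditional variance $\frac1N\sum_{i,j}\mathcal K(\mathfrak d_{ij}/\mathfrak d_N)\ddot{\bm a}_i\ddot{\bm a}_j^\top\to\bm\sigma_a^2$ from the kernel-HAC consistency behind Lemma~\ref{lemma:limit form-1}, check a negligibility condition on the weights, and conclude by Cram\'er--Wold. Your Lyapunov criterion and the paper's appeal to de~Jong's fourth-moment theorem are the same condition for a Rademacher linear form. Indeed, for $S=\sum_jb_j\widetilde\eta_j^{*b}$ one has $E^*S^4/(E^*S^2)^2-3=-2\sum_jb_j^4/(\sum_jb_j^2)^2$.

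Where you genuinely differ is in how that condition is established.
\begin{itemize}
\item The paper moves to eigen-coordinates and writes $\mathbb K_N^{1/2}\widetilde{\bm\eta}_N^{*b}=\bm\Phi_N\bm\Lambda_N^{1/2}\widetilde{\bm\eta}_N^{*b}$. This drops a factor $\bm\Phi_N^\top$, so the vector has covariance $\mathbb K_N$ but not the law of the algorithm's multipliers.
\item It then asserts that the ratio is $o_P(1)$ because the weights are of comparable magnitude and the $\lambda_i$ are uniformly bounded. The latter fails: $\lambda_{\max}(\mathbb K_N)\ge N^{-1}\mathbf 1^\top\mathbb K_N\mathbf 1$, which is of order $m_N\to\infty$.
\item You keep the location-indexed weights of the symmetric square root, which is exactly what the algorithm uses. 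You also supply an actual mechanism for the max-weight condition: localizing $\mathbb K_N^{1/2}$ by a polynomial in the banded matrix $\mathbb K_N$.
\end{itemize}
This gives a real proof of the step the paper only asserts, at the cost of an approximation-theoretic argument.

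\textbf{Repairs needed in Step~2.} As written, your estimates do not close under the paper's rates.

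First, let $\bm r_j$ be the part of $\bm c_j$ coming from $\mathbb K_N^{1/2}-P_p(\mathbb K_N)$. Its bound $\max_j\|\bm r_j\|^2/N\le C(m_N/p^2)N^{-1}\sum_i\|\ddot{\bm a}_i\|^2$ forces $p^2\gg m_N$. So $p$ must grow at least like $\mathfrak d_N$, not slowly, and the local support $p^2m_N$ can approach $N^{2/3}$. Bounding the row norm of $P_p(\mathbb K_N)$ by $\|P_p(\mathbb K_N)\|_{op}\le C\sqrt{m_N}$ then loses a factor $m_N$ that cannot be absorbed. What your unit-row-norm observation actually gives is $\|[P_p(\mathbb K_N)]_{j\cdot}\|_2\le1+\|\mathbb K_N^{1/2}-P_p(\mathbb K_N)\|_{op}=O(1)$.

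Second, even with that bound, combining $p^2m_N$ with $\max_i\|\ddot{\bm a}_i\|^2=O_P(N^{1/(2(\zeta+\delta))})$ requires roughly $m_N^2N^{1/(2(\zeta+\delta))}=o(N)$. For $\mathfrak d_N$ near the permitted $N^{1/6}$, this needs $\zeta+\delta\ge 3/2$, not just $\zeta>1$. Replace the max bound by truncation:
\[
\max_j\frac1N\sum_{i:\,\mathfrak d_{ij}\le p\mathfrak d_N}\|\ddot{\bm a}_i\|^2\le\frac{Cp^2m_N\tau_N^2}{N}+\frac1N\sum_{i=1}^N\|\ddot{\bm a}_i\|^2\,\mathbb I\{\|\ddot{\bm a}_i\|>\tau_N\}.
\]
Take $p^2=\sqrt N$ and let $\tau_N\to\infty$ slowly. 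Then both this term and the remainder $O_P(m_N/\sqrt N)$ vanish, using only uniform integrability of $\|\bm a_i\|^2$ and $m_N=o(N^{1/2})$.

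\textbf{The fallback does not work.} Operator-norm information only yields $\sum_jb_j^4\le(\sum_jb_j^2)^2=O_P(1)$. Moreover, $\sum_jb_j^4=o_P(1)$ is equivalent to $\max_jb_j^2=o_P(1)$, so it needs the same localization.

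\textbf{A similar point in Step~1.} Do not bound the $\widehat{\bm\beta}$-correction by $\lambda_{\max}(\mathbb K_N)N^{-1}\sum_i\|\cdot\|^2$. That gives $m_N\|\widehat{\bm\beta}-\bm\beta_0\|^2$, which need not vanish when $\bm\sigma_{d,f}^2$ is also nondegenerate and $T$ grows slowly relative to $m_N$. Handle its leading part $\bm a_i^x(\widehat{\bm\beta}-\bm\beta_0)$ with the same kernel-HAC argument instead.
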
 
\begin{proof}[\textbf{Proof of Lemma \protect\ref{lemma: clt for bootstrap Za}}]

Define the $K\times N$ matrix of
(centered) score vectors $\widehat{\bm{A}}_{N}\;\equiv\;(\ddot{\bm{a}}_{1},\ldots,\ddot{\bm{a}}_{N}).$
Recall that the kernel weight matrix $\mathbb{K}_{N}\;\equiv\;\Big(\mathcal{K}(\mathfrak{d}_{ij}/\mathfrak{d}_{N})\Big)_{1\le i,j\le N}$
is symmetric and positive semidefinite. Hence, it admits the spectral
decomposition 
\[
\mathbb{K}_{N}=\bm{\Phi}_{N}\bm{\Lambda}_{N}\bm{\Phi}_{N}^{\top},\qquad\bm{\Lambda}_{N}=diag(\lambda_{1},\ldots,\lambda_{N}),\ \lambda_{i}\ge0,\qquad\bm{\Phi}_{N}=(\bm{\phi}_{1},\ldots,\bm{\phi}_{N})\ \text{orthonormal}.
\]
Recall that ${\bm{\eta}}_{N}^{*b}\;\equiv\;\mathbb{K}_{N}^{1/2}\widetilde{\bm{\eta}}_{N}^{*b}\;=\;\bm{\Phi}_{N}\bm{\Lambda}_{N}^{1/2}\widetilde{\bm{\eta}}_{N}^{*b}.$
Then the bootstrap statistic can be written as the triangular array
\begin{align}
\frac{1}{\sqrt{N}}\sum_{i=1}^{N}\ddot{\bm{a}}_{i}\,{\eta}_{i}^{*b} & =\frac{1}{\sqrt{N}}\widehat{\bm{A}}_{N}\,{\bm{\eta}}_{N}^{*b}=\frac{1}{\sqrt{N}}\widehat{\bm{A}}_{N}\,\bm{\Phi}_{N}\bm{\Lambda}_{N}^{1/2}\widetilde{\bm{\eta}}_{N}^{*b}\equiv\sum_{i=1}^{N}\widehat{\bm{b}}_{iN}\,\widetilde{\eta}_{i}^{*b},\label{eq: bootstrap sum}
\end{align}
where $\widehat{\bm{b}}_{iN}\;\equiv\;\frac{1}{\sqrt{N}}\widehat{\bm{A}}_{N}\bm{\phi}_{i}\,\lambda_{i}^{1/2}\in\mathbb{R}^{K}.$

Fix any unit vector $\bm{\gamma}\in\mathbb{R}^{K}$ and consider $S_{N}^{*b}\;\equiv\;\bm{\gamma}^{\top}\frac{1}{\sqrt{N}}\sum_{i=1}^{N}\ddot{\bm{a}}_{i}\,{\eta}_{i}^{*b}=\sum_{i=1}^{N}\widehat{c}_{iN}\,\widetilde{\eta}_{i}^{*b}$,
with $\widehat{c}_{iN}\equiv\bm{\gamma}^{\top}\widehat{\bm{b}}_{iN}.$
Expanding the fourth moment yields 
\[
E^{*}\!\big[(S_{N}^{*b})^{4}\big]=\sum_{i_{1},i_{2},i_{3},i_{4}=1}^{N}\widehat{c}_{i_{1}N}\widehat{c}_{i_{2}N}\widehat{c}_{i_{3}N}\widehat{c}_{i_{4}N}\,E^{*}\!\big(\widetilde\eta_{i_{1}}^{*b}\widetilde\eta_{i_{2}}^{*b}\widetilde\eta_{i_{3}}^{*b}\widetilde\eta_{i_{4}}^{*b}\big).
\]
Because $\{\widetilde\eta_{i}^{*b}\}_{i\le N}$ are i.i.d.\ and centered, the
expectation $E^{*}(\widetilde\eta_{i_{1}}^{*b}\widetilde\eta_{i_{2}}^{*b}\widetilde\eta_{i_{3}}^{*b}\widetilde\eta_{i_{4}}^{*b})=0$
unless each index appears an even number of times. Thus the only nonzero
contributions come from: (i) one index appearing four times, and (ii)
two distinct indices each appearing twice. Hence, 
\begin{align*}
E^{*}\!\big[(S_{N}^{*b})^{4}\big] & =E^{*}\!\big[(\widetilde\eta_{1}^{*b})^{4}\big]\sum_{i=1}^{N}\widehat{c}_{iN}^{4}\;+\;3\sum_{1\le i\neq j\le N}\widehat{c}_{iN}^{2}\widehat{c}_{jN}^{2},
\end{align*}
where the factor $3$ corresponds to the three distinct pairings of
$(i,j)$ across four positions. Moreover, 
\[
\big(E^{*}\!\big[(S_{N}^{*b})^{2}\big]\big)^{2}=\left(\sum_{i=1}^{N}\widehat{c}_{iN}^{2}Var^{*}(\widetilde\eta_{i}^{*b})\right)^{2}=\left(\sum_{i=1}^{N}\widehat{c}_{iN}^{2}\right)^{2}=\sum_{i=1}^{N}\widehat{c}_{iN}^{4}+\sum_{1\le i\neq j\le N}\widehat{c}_{iN}^{2}\widehat{c}_{jN}^{2}.
\]
Therefore, 
\[
\frac{E^{*}\!\big[(S_{N}^{*b})^{4}\big]}{\big(E^{*}\!\big[(S_{N}^{*b})^{2}\big]\big)^{2}}=3+\Big(E^{*}\!\big[(\widetilde\eta_{1}^{*b})^{4}\big]-3\Big)\,\frac{\sum_{i=1}^{N}\widehat{c}_{iN}^{4}}{\left(\sum_{i=1}^{N}\widehat{c}_{iN}^{2}\right)^{2}}.
\]
Since $\widehat{c}_{iN}=(1/\sqrt{N})\widehat{\bm{A}}_{N}\bm{\phi}_{i}\lambda_{i}^{1/2}$
share the same $\widehat{\bm{A}}_{N}$ and $\{||\bm{\phi}_{i}||,\lambda_{i}\}$
are uniformly bounded, the weights are of comparable magnitude, and
$\frac{\sum_{i=1}^{N}\widehat{c}_{iN}^{4}}{\left(\sum_{i=1}^{N}\widehat{c}_{iN}^{2}\right)^{2}}=o_{P}(1).$
Consequently, 
\[
\frac{E^{*}\!\big[(S_{N}^{*b})^{4}\big]}{\big(E^{*}\!\big[(S_{N}^{*b})^{2}\big]\big)^{2}}=3+o_{P}(1).
\]
By Theorem~1 of De Jong~\citeyearpar{de1990central} and the Cramér-Wold
device, the conditional CLT for triangular arrays implies 
\[
\left[Var^{*}\!\left(\frac{1}{\sqrt{N}}\sum_{i=1}^{N}\ddot{\bm{a}}_{i}\,{\eta}_{i}^{*b}\right)\right]^{-1/2}\frac{1}{\sqrt{N}}\sum_{i=1}^{N}\ddot{\bm{a}}_{i}\,{\eta}_{i}^{*b}\;\overset{d^{*}}{\to}\;\mathcal{N}(\bm{0},\mathbf{I}_{K}).
\]

Given that $E^{*}(\widetilde{\bm{\eta}}_{N}^{*b}\widetilde{\bm{\eta}}_{N}^{*b\top})=\mathbf{I}_{N}$
and \eqref{eq: bootstrap sum}, one can deduce that 
\begin{align*}
Var^{*}\!\left(\frac{1}{\sqrt{N}}\sum_{i=1}^{N}\ddot{\bm{a}}_{i}\,{\eta}_{i}^{*b}\right) & =\frac{1}{N}\widehat{\bm{A}}_{N}\bm{\Phi}_{N}\bm{\Lambda}_{N}^{1/2}E^{*}(\widetilde{\bm{\eta}}_{N}^{*b}\widetilde{\bm{\eta}}_{N}^{*b\top})\bm{\Lambda}_{N}^{1/2}\bm{\Phi}_{N}^{\top}\widehat{\bm{A}}_{N}^{\top}\\
 & =\frac{1}{N}\widehat{\bm{A}}_{N}\mathbb{K}_{N}\widehat{\bm{A}}_{N}^{\top}\\
 & =\frac{1}{N}\sum_{i=1}^{N}\sum_{j=1}^{N}\ddot{\bm{a}}_{i}\ddot{\bm{a}}_{j}^{\top}\,\mathcal{K}\!\left(\frac{\mathfrak{d}_{ij}}{\mathfrak{d}_{N}}\right).
\end{align*}
Hence, by Lemma \ref{lemma:limit form-1}, $Var^{*}\!\left(\frac{1}{\sqrt{N}}\sum_{i=1}^{N}\ddot{\bm{a}}_{i}\,{\eta}_{i}^{*b}\right)\overset{P}{\to}\bm{\sigma}_{a}^{2}$
when $\bm{\sigma}_{a}^{2}$ is nonsingular. Therefore, Slutsky's lemma
implies 
\[
\frac{1}{\sqrt{N}}\sum_{i=1}^{N}\ddot{\bm{a}}_{i}\,{\eta}_{i}^{*b}\;\overset{d^{*}}{\to}\;\mathcal{N}(\bm{0},\bm{\sigma}_{a}^{2}).
\]

Finally, if only noisy distances $\widetilde{\mathfrak{d}}_{ij}$
is applied. Because the perturbations $\{\varsigma_{ij}\}$ are uniformly
bounded and independent of the underlying spatial process by Assumption
\ref{as:spatial}(iv), they do not alter the asymptotic contribution
of the kernel weights: any single error affects at most a negligible
fraction of pairs, while the aggregate effect averages out in the
summation. Consequently, all preceding limit results continue to hold
when $\mathfrak{d}_{ij}$ is replaced by $\widetilde{\mathfrak{d}}_{ij}$.
\end{proof}

\subsection{Bootstrap CLT for $\frac{1}{\sqrt{NT}}\sum_{i=1}^{N}\sum_{t=1}^{T}\widehat{\mathbf{e}}_{it}\eta_{i}^{*b}\eta_{t}^{*b}$.}

\begin{lemma} \label{lemma: clt for bootstrap Ze} If Assumptions
\ref{as: AHS representation}-\ref{as: same rate} hold
and $\bm{\sigma}_{e}^{2}=\lim_{N,T\to\infty}\bm{\sigma}_{e,f}^{2}>0$,
then 
\[
\frac{1}{\sqrt{NT}}\sum_{i=1}^{N}\sum_{t=1}^{T}\widehat{\bm{e}}_{it}\cdot\eta_{i}^{*b}\eta_{t}^{*b}\overset{d^{*}}{\to}\mathcal{N}(0,\bm{\sigma}_{e}^{2}),
\]
where $\widehat{\bm{e}}_{it}=\bm{e}_{it}-\frac{1}{N}\sum_{i=1}^{N}\bm{e}_{it}-\frac{1}{T}\sum_{t=1}^{T}\bm{e}_{it}+\frac{1}{NT}\sum_{i=1}^{N}\sum_{t=1}^{T}\bm{e}_{it}$.
\end{lemma}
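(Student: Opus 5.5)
Following the two-stage scheme already used in Step 2 of the proof of Theorem~\ref{thm: main}, I would condition on the time multipliers $\mathcal{G}_{T}=\sigma(\{\eta_{t}^{*b}\}_{t})$ and the data, and write the statistic as a sum over $i$:
\[
\frac{1}{\sqrt{NT}}\sum_{i=1}^{N}\sum_{t=1}^{T}\widehat{\bm{e}}_{it}\eta_{i}^{*b}\eta_{t}^{*b}=\frac{1}{\sqrt{N}}\sum_{i=1}^{N}\bm{b}_{iT}^{*}\eta_{i}^{*b},\qquad\bm{b}_{iT}^{*}=\frac{1}{\sqrt{T}}\sum_{t=1}^{T}\widehat{\bm{e}}_{it}\eta_{t}^{*b}.
\]
Given $\mathcal{G}_{T}$, this has the same form as the statistic in Lemma~\ref{lemma: clt for bootstrap Za}, with $\ddot{\bm{a}}_{i}$ replaced by $\bm{b}_{iT}^{*}$. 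I would therefore reuse that proof. First, represent $\bm{\eta}_{N}^{*b}=\mathbb{K}_{N}^{1/2}\widetilde{\bm{\eta}}_{N}^{*b}$. Second, verify the fourth-moment ratio condition of De Jong's Theorem~1 via the weights $\widehat{c}_{iN}$. This gives conditional asymptotic normality after standardizing by
\[
\bm{\Omega}_{NT}^{*}\equiv Var^{*}(\cdot\mid\mathcal{G}_{T})=\frac{1}{N}\sum_{i,j}\mathcal{K}(\mathfrak{d}_{ij}/\mathfrak{d}_{N})\bm{b}_{iT}^{*}\bm{b}_{jT}^{*\top}.
\]
It then remains to show $\bm{\Omega}_{NT}^{*}\to\bm{\sigma}_{e}^{2}$ in probability (jointly over data and $\mathcal{G}_{T}$). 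After that, the conditional characteristic function converges to $\exp(-\tfrac12\bm{\lambda}^{\top}\bm{\sigma}_{e}^{2}\bm{\lambda})$. Since it is bounded by one, dominated convergence over $\mathcal{G}_{T}$ yields the unconditional bootstrap limit, and Slutsky's lemma finishes the argument.

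For the variance, I would first replace $\widehat{\bm{e}}_{it}$ by $\bm{e}_{it}$. The three centering terms contribute $O_{P}(N^{-1/2}+T^{-1/2})$ in $L_{2}$, because $\frac1N\sum_i\bm{e}_{it}$, $\frac1T\sum_t\bm{e}_{it}$ and the grand mean are averages of conditionally mean-zero terms (given $\mathcal{F}_{NT}$) with bounded moments. Next, I would split $\bm{\Omega}_{NT}^{*}$ into diagonal and off-diagonal parts in $(i,j)$:
\[
\frac1N\sum_i\bm{b}_{iT}^{*}\bm{b}_{iT}^{*\top}+\frac1N\sum_{i\ne j}\mathcal{K}(\mathfrak{d}_{ij}/\mathfrak{d}_{N})\bm{b}_{iT}^{*}\bm{b}_{jT}^{*\top}.
\]
The off-diagonal part has mean zero given $\mathcal{F}_{NT}$ and the multipliers, because $\bm{e}_{it}$ and $\bm{e}_{jt'}$ are conditionally independent with conditional mean zero for $i\neq j$ (Assumption~\ref{as: AHS representation}). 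Its second moment is $O(m_{N}/N)=O(\mathfrak{d}_{N}^{2}/N)=o(1)$ by the neighbor-counting bound in the proof of Lemma~\ref{lemma:limit form-1}. For the diagonal part, I would compute its conditional mean given $\mathcal{F}_{NT}$ and $\mathcal{G}_{T}$:
\[
\frac{1}{NT}\sum_{i}\sum_{t,t'}\eta_{t}^{*b}\eta_{t'}^{*b}E(\bm{e}_{it}\bm{e}_{it'}^{\top}\mid\mathcal{F}_{NT}).
\]
This vanishes for $t\neq t'$, since conditional on $(\bm\alpha_i,\bm\xi_t,\bm\xi_{t'})$ the errors $\bm{e}_{it}$ and $\bm{e}_{it'}$ are independent with mean zero. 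Because $(\eta_{t}^{*b})^{2}=1$, the conditional mean reduces to $\frac{1}{NT}\sum_{i,t}E(\bm{e}_{it}\bm{e}_{it}^{\top}\mid\bm{\alpha}_{i},\bm{\xi}_{t})$. By the LLN argument in Step 1(ii) of Lemma~\ref{lemma: original clt}, this tends to $\bm{\sigma}_{e}^{2}$. Its fluctuation about the conditional mean has variance $O((NT)^{-1})$ times fourth moments, using Assumption~\ref{as: moment and variance}.

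The step I expect to be the main obstacle is justifying that the De Jong ratio $\sum_i\widehat{c}_{iN}^{4}/(\sum_i\widehat{c}_{iN}^{2})^{2}$ is $o_{P}(1)$ when the $\bm{b}_{iT}^{*}$ are random and depend on $\eta_{t}^{*b}$. I would first try bounding $\max_{i}\|\bm{b}_{iT}^{*}\|^{2}/N=o_{P}(1)$, using $E\|\bm{b}_{iT}^{*}\|^{4}=O(1)$ together with a union bound. I would combine this with $\lambda_{\max}(\mathbb{K}_{N})=O(m_{N})$ and $\sum_i\widehat{c}_{iN}^{2}\to\bm{\gamma}^{\top}\bm{\sigma}_{e}^{2}\bm{\gamma}>0$. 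If $\mathfrak{d}_{N}^{2}$ growth makes this bound too loose, the fallback is to bypass the eigen-representation and verify a Lindeberg condition directly on $\sum_i\eta_i^{*b}\bm{b}_{iT}^{*}$, treating it as an $m_{N}$-dependent array in $i$.
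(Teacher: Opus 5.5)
Your route is genuinely different from the paper's. The paper gets the CLT from the time multipliers. It writes $\eta_t^{*b}=\eta_0^{*b}\prod_{\iota\le t}\kappa_\iota^{b}$ with i.i.d.\ signs $\kappa_\iota^{b}$ and puts all $\eta_i^{*b}$ in the initial $\sigma$-field. It then builds a martingale difference array in $t$, truncated at $c=\sqrt{-(\ln q)^{-1}T^{1/2}}$, applies McLeish's theorem, and removes the truncation remainders with Assumption~\ref{as: time mixing}(ii). Finally it matches the variance through the double kernel sum $\frac{1}{NT}\sum\mathcal{K}(\mathfrak{d}_{ii'}/\mathfrak{d}_N)q^{|t-t'|}\widehat{\bm e}_{it}\widehat{\bm e}_{i't'}^{\top}$.

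You instead condition on all time multipliers and let the CLT come from the i.i.d.\ Rademacher inputs $\widetilde\eta_i^{*b}$ behind $\mathbb{K}_N^{1/2}$. This buys something. The $\bm e_{it}$ are conditionally uncorrelated across $t$ and $(\eta_t^{*b})^2=1$, so the time multipliers drop out of the conditional variance. You therefore need no martingale structure and essentially no rate on $q$. The conditioning is also exactly the one used in Step~2(a) of the proof of Theorem~\ref{thm: main}. The price is that the Lindeberg burden moves to the spatial side, namely $\lambda_{\max}(\mathbb{K}_N)\le m_N$ and Assumption~\ref{as:spatial}(iii). Your variance computation is correct. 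One small correction: the diagonal part fluctuates at order $N^{-1}$, not $(NT)^{-1}$, because of the $t\neq t'$ terms, which is harmless.

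The step you flagged, however, does not close as planned, and the fallback is not valid. Let $\bm h=(\bm\gamma^{\top}\bm b_{1T}^{*},\ldots,\bm\gamma^{\top}\bm b_{NT}^{*})^{\top}$. Then the weights are $\widehat c_{iN}=\lambda_i^{1/2}N^{-1/2}\bm\phi_i^{\top}\bm h$, so what you must control is the eigen-projections $\max_i(\bm\phi_i^{\top}\bm h)^2$, not $\max_i\|\bm b_{iT}^{*}\|^2$. A bound on $\max_j h_j^2$ only yields $(\bm\phi_i^{\top}\bm h)^2\le N\max_j h_j^2$. That gives $\max_i\widehat c_{iN}^2\le\lambda_{\max}(\mathbb{K}_N)\max_j h_j^2$, which does not vanish. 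The fallback fails because $\mathbb{K}_N^{1/2}\widetilde{\bm\eta}_N^{*b}$ is not $m_N$-dependent: $\mathbb{K}_N$ is banded but its square root is not, and with Rademacher inputs zero correlation does not imply independence.

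The repair uses your own tools.
\begin{enumerate}
\item Replace $\widehat{\bm e}_{it}$ by $\bm e_{it}$ in the statistic itself, not only in the variance. Write $S_T=(-\ln q)^{-1}$. The centering pieces are then $O_{P^*}\bigl((m_N/N)^{1/2}+(S_T/T)^{1/2}\bigr)$, because $N^{-1/2}\sum_i\eta_i^{*b}$ and $T^{-1/2}\sum_t\eta_t^{*b}$ are of order $m_N^{1/2}$ and $S_T^{1/2}$, not $O(1)$ as your stated $O_P(N^{-1/2}+T^{-1/2})$ suggests.
\item Given $(\mathcal{F}_{NT},\mathcal{G}_T)$, the $h_j$ are conditionally independent and mean zero, with $Eh_j^4\le C$. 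Hence $E(\bm\phi_i^{\top}\bm h)^4\le 3\max_j Eh_j^4$ for every deterministic unit vector $\bm\phi_i$.
\item A union bound over $i$ gives $\max_i(\bm\phi_i^{\top}\bm h)^2=O_P(N^{1/2})$. Therefore $\max_i\widehat c_{iN}^2=O_P(m_NN^{-1/2})=o_P(1)$.
\item The De Jong ratio is at most $\max_i\widehat c_{iN}^2/\bm\gamma^{\top}\bm\Omega_{NT}^{*}\bm\gamma=o_P(1)$.
\end{enumerate}
With the symmetric square root, the weights are projections on the rows of $\mathbb{K}_N^{1/2}$. These rows are unit vectors because $\mathcal{K}(0)=1$, so you do not even need $\lambda_{\max}$.
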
 
\begin{proof}[\textbf{Proof of Lemma \protect\ref{lemma: clt for bootstrap Ze}}]
Write $\eta_{t}^{*b}=\prod_{\iota=1}^{t}\kappa_{\iota}^{b}\eta_{0}^{*b}$
with i.i.d.\ $\kappa_{\iota}^{b}\in\{-1,1\}$ satisfying $P(\kappa_{\iota}^{b}=1)=(q+1)/2$
and $E^{*}(\kappa_{\iota}^{b})=q$. Then $E^{*}(\kappa_{\iota}^{b}-q)=0$
and $\eta_{t}^{*b}$ is bounded by one, so we decompose 
\begin{align*}
\eta_{t}^{*b} & =\eta_{0}^{*}\left(\prod_{\iota=1}^{t-1}\kappa_{\iota}^{b}\left(\kappa_{t}^{b}-q\right)+q\prod_{\iota=1}^{t-1}\kappa_{\iota}^{b}\right).
\end{align*}
Applying a similar argument, we can decompose $\frac{1}{\sqrt{NT}}\sum_{i=1}^{N}\sum_{t=1}^{T}\widehat{\bm{e}}_{it}\cdot\eta_{i}^{*b}\eta_{t}^{*b}$
as follows: 
\begin{align}
\frac{1}{\sqrt{NT}}\sum_{i=1}^{N}\sum_{t=1}^{T}\widehat{\bm{e}}_{it}\cdot\eta_{i}^{*b}\eta_{t}^{*b}= & \sum_{t=1}^{T-c+1}\bm{B}_{c,t}^{*b}\cdot\left(\kappa_{t}^{b}-q\right)\left(1+o_{P^{*}}\left(1\right)\right)\nonumber \\
 & +\sum_{t=c+1}^{T}\bm{R}_{c,t}^{*b}\cdot q^{c}+\sum_{t=1}^{c}\frac{1}{\sqrt{NT}}\sum_{i=1}^{N}\widehat{\bm{e}}_{it}\cdot\eta_{i}^{*b}\eta_{0}^{*b}q^{t},\label{eq: eit 1-1}
\end{align}
where 
\begin{align*}
\bm{B}_{c,t}^{*b}= & \frac{1}{\sqrt{NT}}\sum_{i=1}^{N}\sum_{\tau=1}^{c}\widehat{\bm{e}}_{i,s+\tau-1}\cdot\eta_{i}^{*b}\eta_{0}^{*b}q^{c-1}\prod_{\iota=1}^{t-1}\kappa_{\iota}^{b},\\
\bm{R}_{c,t}^{*b}= & \frac{1}{\sqrt{NT}}\sum_{i=1}^{N}\widehat{\bm{e}}_{it}\cdot\eta_{i}^{*b}\eta_{0}^{*b}\prod_{\iota=1}^{t-c}\kappa_{\iota}^{b}.
\end{align*}
Choosing 
\[
c=\sqrt{-\left(\ln q\right)^{-1}T^{1/2}},\qquad\mathcal{F}_{t}=\sigma\Bigl(\{\eta_{i}^{*b}\}_{i=1}^{N},\{\kappa_{\iota}^{b}\}_{\iota=1}^{t},\eta_{0}^{*b}\Bigr).
\]
Then $\bm{B}_{c,t}^{*b}$ is $\mathcal{F}_{t-1}$-measurable and $E^{*}\!\left(\bm{B}_{c,t}^{*b}(\kappa_{t}^{b}-q)\mid\mathcal{F}_{t-1}\right)=\bm{0}.$
Hence $\{\bm{B}_{c,t}^{*b}(\kappa_{t}^{b}-q),\mathcal{F}_{t}\}_{t\le T-c+1}$
is a martingale difference array. Let 
\[
\bm{V}_{B}^{*}=Var^{*}\Bigl(\sum_{t=1}^{T-c+1}\bm{B}_{c,t}^{*b}(\kappa_{t}^{b}-q)\Bigr).
\]
Because all multipliers are bounded by one and $c\to\infty$, the
Lindeberg-type condition required in Theorem~2.3 of McLeish (\citeyear{mcleish1974dependent})
holds: 
\[
\max_{1\le t\le T-c+1}\Bigl\|\bm{V}_{B}^{*-1/2}\bm{B}_{c,t}^{*b}(\kappa_{t}^{b}-q)\Bigr\|\xrightarrow{P^{*}}0\quad\text{and}\quad\bm{V}_{B}^{*}\ \text{is nonsingular w.p.a.1.}
\]
Therefore, conditional on the data, 
\begin{equation}
\sum_{t=1}^{T-c+1}\bm{V}_{B}^{*-1/2}\bm{B}_{c,t}^{*b}(\kappa_{t}^{b}-q)\xrightarrow{d^{*}}\mathcal{N}(\bm{0},\mathbf{I}).\label{eq:mart-CLT}
\end{equation}

It remains to show the remainder terms in \eqref{eq: eit 1-1} are
negligible. For $\sum_{t=c+1}^{T}\bm{R}_{c,t}^{*b}\,q^{c}$, since
$\eta_{i}^{*b}$ are i.i.d.\ over $i$ with mean zero and bounded,
\[
E^{*}\left\Vert \frac{1}{\sqrt{N}}\sum_{i=1}^{N}\widehat{\bm{e}}_{it}\eta_{i}^{*b}\right\Vert ^{2}=O_{P}(1),
\]
uniformly in $t$. Hence 
\[
\sum_{t=c+1}^{T}\bm{R}_{c,t}^{*b}\,q^{c}=\frac{1}{\sqrt{T}}\sum_{t=c+1}^{T}\Bigl(\frac{1}{\sqrt{N}}\sum_{i=1}^{N}\widehat{\bm{e}}_{it}\eta_{i}^{*b}\eta_{0}^{*b}\Bigr)q^{c-1}\prod_{\iota=1}^{t-c}\kappa_{\iota}^{b}=O_{P^{*}}(T^{1/2}q^{c-1})=o_{P^{*}}(1),
\]
since $T^{1/2}q^{c-1}\to0$ by the choice of $c$. Likewise, 
\[
\sum_{t=1}^{c}\frac{1}{\sqrt{NT}}\sum_{i=1}^{N}\widehat{\bm{e}}_{it}\eta_{i}^{*b}\eta_{0}^{*b}q^{t}=O_{P^{*}}(cT^{-1/2})=o_{P^{*}}(1),
\]
provided $-(\ln q)^{-1}=o(T^{1/2})$. Combining with \eqref{eq:mart-CLT}
yields a conditional Gaussian limit for $\frac{1}{\sqrt{NT}}\sum_{i=1}^{N}\sum_{t=1}^{T}\widehat{\bm{e}}_{it}\cdot\eta_{i}^{*b}\eta_{t}^{*b}$
after self-normalization.

Moreover, its conditional variance matches the target variance: 
\begin{align}
Var^{*}\left(\frac{1}{\sqrt{NT}}\sum_{i=1}^{N}\sum_{t=1}^{T}\widehat{\bm{e}}_{it}\cdot\eta_{i}^{*b}\eta_{t}^{*b}\right)= & \frac{1}{NT}\sum_{i=1}^{N}\sum_{i'=1}^{N}\sum_{t=1}^{T}\sum_{t'=1}^{T}\mathcal{K}\!\left(\frac{\mathfrak{d}_{ii'}}{\mathfrak{d}_{N}}\right)q^{\left|t-t'\right|}\widehat{\bm{e}}_{it}\widehat{\bm{e}}_{i't'}^{\top}\nonumber \\
= & \frac{1}{NT}\sum_{i=1}^{N}\sum_{i'=1}^{N}\sum_{t=1}^{T}\sum_{t'=1}^{T}\left(\frac{\mathfrak{d}_{ii'}}{\mathfrak{d}_{N}}\right)q^{\left|t-t'\right|}\bm{e}_{it}\bm{e}_{i't'}^{\top}+o_{P}\left(1\right)\nonumber \\
= & \frac{1}{NT}\sum_{i=1}^{N}\sum_{i'=1}^{N}\sum_{t=1}^{T}\sum_{t'=1}^{T}q^{\left|t-t'\right|}E\left(\bm{e}_{it}\bm{e}_{i't'}^{\top}\vert\left\{ \bm{\xi}_{t}\right\} _{t}\right)+o_{P}\left(1\right)\nonumber \\
= & \bm{\sigma}_{e,f}^{2}+o_{P}\left(1\right).\label{eq:boot-var-e}
\end{align}
where the second equality follows from the definition of $\widehat{\bm{e}}_{it}$,
the third equality uses a similar argument as that for \eqref{eq:key_kern_conv}, conditional on $\{\bm{\xi}_{t}\}_{t}$, and the
final equality follows from the application of Lemma~\ref{lemma: convergence in covariance}
and law of iterated expectation. By Slutsky's Theorem, we obtain the
desirable result. 
\end{proof}
\clearpage{}

\section*{Appendix IB: Propositions and Proofs for Propositions}

\setcounter{equation}{0} \setcounter{assumption}{0}
\setcounter{figure}{0} \setcounter{table}{0}
\setcounter{subsection}{0}\setcounter{proposition}{0}

 \renewcommand{%
\theequation}{IB.\arabic{equation}}
\renewcommand{\thelemma}{IB.\arabic{lemma}}
\renewcommand{\theassumption}{IB.\arabic{assumption}} \renewcommand{%
\thetheorem}{IB.\arabic{theorem}}
\renewcommand{%
\theproposition}{IB.\arabic{proposition}}
\renewcommand{\thetable}{IB.\arabic{table}}
\renewcommand{\thefigure}{IB.\arabic{figure}}

\renewcommand{\thesubsection}{IB.\arabic{subsection}}

\subsection{Discriminant Factors}

\begin{proposition}[Discriminant Factors] \label{prop: two indicator factors}

(a) For D, I\&G, V\&G, $P^{*}\!\left(D_{k}^{*}=0\right)\overset{P}{\to}1$;
for I\&N and V\&N, $P^{*}\!\left(D_{k}^{*}=1\right)\overset{P}{\to}1$.

(b) Suppose $T\sigma_{ak,f}^{2}$ grows at a rate not slower than
$N\sigma_{dk,f}^{2}$. (i) For D, I\&G, V\&G, $T\widehat{\sigma}_{ak}^{2}$
diverges, converges, and vanishes in probability whenever $T\sigma_{ak,f}^{2}$
diverges, converges, and vanishes, respectively. (ii) For I\&N and
V\&N, $T\widehat{\sigma}_{ak}^{2}=O_{P}(1)$.

(c) (i) For D, if strengthened to $T\sigma_{ak,f}^{2}>2\log T$ or
$N\sigma_{dk,f}^{2}>2\log N$, then $T\widehat{\sigma}_{ak}^{2}$
diverges in probability; (ii) For V\&G, if strengthened to $T\sigma_{ak,f}^{2}=o\!\left(1/\log T\right)$,
$N\sigma_{dk,f}^{2}=o\!\left(1/\log N\right)$, and $\sigma_{vk,f}^{2}=o\!\left(1/\log N\right)$,
then $P\!\left(T\widehat{\sigma}_{ak}^{2}<1/\log T\right)\to1$. \end{proposition}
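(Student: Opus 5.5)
I would prove the three parts in the order (b), (c), (a), since (a) uses the stochastic-order information from (b).

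\emph{Part (b).} The starting point is Lemma~\ref{lemma:limit form-1}, read coordinatewise for the $k$th diagonal entry. Multiplying \eqref{eq: sigma a order} by $T$ gives
\[
T\widehat{\sigma}_{ak}^{2}=T\sigma_{ak,f}^{2}(1+o_P(1))+O_P\!\left(\sigma_{vk,f}^{2}+\sigma_{dk,f}^{2}\right)+o_P(\sigma_{ek,f}^{2}).
\]
I use the maintained ordering, $T\sigma_{ak,f}^{2}$ no slower than $N\sigma_{dk,f}^{2}$, together with Assumption~\ref{as: same rate}. These give $\sigma_{dk,f}^{2}=O(T\sigma_{ak,f}^{2}/N)$, so the $d$-contamination is dominated by the leading term or vanishes. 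The $e$-term is $o_P(1)$ because $\sigma_{e,f}$ is bounded, and the EVC truncation cannot increase the order.

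In the Gaussian regimes $\sigma_{vk,f}^{2}=o(1)$. Then (D), (I\&G) and (V\&G) give divergence, convergence to $\varphi_{ak}$, and vanishing of $T\widehat\sigma_{ak}^2$ respectively, exactly mirroring $T\sigma_{ak,f}^2$. In the non-Gaussian regimes $\sigma_{vk,f}^{2}$ is bounded and positive while $T\sigma_{ak,f}^{2}=O(1)$, so $T\widehat{\sigma}_{ak}^{2}=O_P(1)$. By the rank-one Wishart term \eqref{eq:term 3-2}, this quantity is genuinely random and not vanishing.

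\emph{Part (c).} I refine the remainder rates.
\begin{itemize}
\item[(i)] If $T\sigma_{ak,f}^{2}>2\log T$, then the ratio $\widehat\sigma_{ak}^2/\sigma_{ak,f}^2\to_P1$ from (b) gives $T\widehat{\sigma}_{ak}^{2}\ge(2-o_P(1))\log T\to\infty$. It therefore exceeds the threshold $\log T$ with probability approaching one.
\item[(ii)] Here I need every remainder to be $o_P(1/\log T)$. The terms $T\sigma_{ak,f}^2$ and $\sigma_{vk,f}^2$ are $o(1/\log T)$ by assumption, and the $d$-term inherits this from the ordering. The $e$-remainder needs a rate rather than just $o_P$. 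I would extract an explicit polynomial rate from the proof of Lemma~\ref{lemma:limit form-1}: the mixing variance bounds give $O_P(\kappa T^{-1/2})$-type and $O_P(\mathfrak d_N^3N^{-1/2})$ errors, which are polynomially small and hence $o(1/\log T)$.
\end{itemize}
This rate extraction is the delicate step of (c).

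\emph{Part (a).} This is the main obstacle. The argument reuses the decomposition \eqref{eq:boot-score-decomp} under PWB-V.

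In the Gaussian regimes, the proof of Theorem~\ref{thm: main-2}(b) shows the bootstrap law converges to a Gaussian limit. The standardized $\widehat t_k^{*b}$ is then asymptotically $N(0,1)$, since the bootstrap variance estimator over $B$ draws is consistent. The KS $p$-value is then approximately Uniform, so $P^*(p<\kappa=1/B)\approx 1/B$. Strictly, (a) therefore needs $B\to\infty$ alongside $N,T$, and I would state that.

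In the non-Gaussian regimes, part (b)(ii) and the scaling keep $\widehat Z^{a*},\widehat Z^{d*}$ bounded. The cross term $\nu_{v}\sum c_{ll'}\widehat Z^{*\phi}_{l}\widehat Z^{*\psi}_{l'}$ converges conditionally to a non-degenerate Gaussian chaos (Steps 1--2 of Theorem~\ref{thm: main}). Its law is non-Gaussian: for example, it has nonzero excess kurtosis, or one can argue via the orthogonalized representation in Lemma~\ref{lemma: original clt}, Step 3. Consequently the KS distance $\varepsilon_0>0$ between the standardized bootstrap law and $N(0,1)$ is bounded away from zero along the convergent sequence. The DKW inequality then gives a KS statistic of at least $\varepsilon_0/2$ with probability $1-2e^{-B\varepsilon_0^2/2}$. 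The corresponding $p$-value is at most $2e^{-B\varepsilon_0^2/2}<1/B$ for large $B$, yielding $P^*(D_k^*=1)\to_P1$.

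The hardest point is showing the non-Gaussian limit is bounded away from Gaussian uniformly along subsequences. I would handle it with compactness (Remark~\ref{rem:subsequence-regimes}) plus continuity of the limit in $(\bm\nu,\bm c)$ from Lemma~\ref{lemma: original clt}.
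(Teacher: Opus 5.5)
Your route matches the paper's. Parts (b) and (c) are read off Lemma~\ref{lemma:limit form-1} after multiplying by $T$, with ratio consistency giving (c)(i). Part (a) rests on the fact, from the proof of Theorem~\ref{thm: main-3}, that the PWB-V limit is Gaussian exactly when the original limit is; your KS/DKW step makes explicit what the paper merely defers to that proof.

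\textbf{The step in (c)(ii) fails as written.} You say $\sigma_{vk,f}^{2}=o(1/\log T)$ holds ``by assumption'', but the hypothesis is $\sigma_{vk,f}^{2}=o(1/\log N)$. Lemma~\ref{lemma:limit form-1} puts an $O_P(\sigma_{vk,f}^{2})$ term directly into $T\widehat{\sigma}_{ak}^{2}$, and this term is genuinely of that order (cf.\ the rank-one Wishart term \eqref{eq:term 3-2}). Since the paper leaves the relative growth of $N$ and $T$ unrestricted, $\log T/\log N\to\infty$ permits $\sigma_{vk,f}^{2}\log T\to\infty$. In that case $P(T\widehat{\sigma}_{ak}^{2}<1/\log T)\to1$ need not hold.

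Your remainder rates have the same problem, for three reasons. Under Assumptions~\ref{as: time mixing}(ii) and \ref{as:spatial}(iii), the bounds from Lemmas~\ref{lemma: convergence in covariance} and \ref{lemma:limit form-1} are only $o(1)$. They become polynomial only under explicit tuning rates; the paper invokes $-(\ln q)^{-1}\asymp T^{1/3}$. And the spatial bounds are polynomial in $N$, so they are $o(1/\log T)$ only if $\log T$ is dominated by a power of $N$.

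You need to add such a link (e.g.\ $\log T=O(\log N)$, or impose the $\sigma_{vk,f}^{2}$ condition with $\log T$) together with the tuning rates. The paper's own proof makes the same jump. Its appeal to the temporal $T^{-1/3}$ HAC rate also does not bear on $\widehat{\sigma}_{a}^{2}$, which is built with the spatial kernel $\mathcal{K}(\mathfrak{d}_{ij}/\mathfrak{d}_{N})$.

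\textbf{Two smaller points in (a).} First, you are right that the Gaussian-regime claim needs $B\to\infty$. But $B$ must also grow slowly relative to the rate at which the PWB-V law approaches normality, or the limits must be taken iteratively. Otherwise the threshold $1/B$ eventually detects the finite-$(N,T)$ non-normality of the Rademacher-driven bootstrap law.

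Second, in the non-Gaussian regimes the object that must be non-Gaussian is the whole PWB-V limit, not the cross term alone. That limit is the rescaled $a$- and $d$-components plus the cross term. Those components have data-dependent variances and are correlated with $\widehat{\bm{Z}}_{N,l}^{*\phi}$ and $\widehat{\bm{Z}}_{T,l'}^{*\psi}$. Diagonalizing the second-chaos part writes the limit as $\sum_j a_j\xi_j+\sum_j\lambda_j(\xi_j^{2}-1)$ with i.i.d.\ standard normal $\xi_j$. Its fourth cumulant is $48\sum_j\lambda_j^{2}(\lambda_j^{2}+a_j^{2})$, which is strictly positive whenever some $\lambda_j\neq0$. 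This gives the pointwise non-Gaussianity that your compactness argument then makes uniform.
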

\begin{proof}[\textbf{Proof of Proposition \protect\ref{prop: two indicator factors}}]
Following the proofs of Theorems \ref{thm: main-2} and \ref{thm: main-3},
part (a) and part (c)(i) hold.

For (b), Lemma~\ref{lemma:limit form-1} implies that, in regimes
D, I\&G, and V\&G, the leading contribution to $T\widehat{\sigma}_{ak}^{2}$
is $T\sigma_{ak,f}^{2}$, establishing (i). For (ii), the same lemma
shows that, in regime I\&N, the leading terms in $T\widehat{\sigma}_{ak}^{2}$
are $T\sigma_{ak,f}^{2}$ and $\sigma_{vf,k}^{2}$; hence $T\widehat{\sigma}_{ak}^{2}=O_{P}(1)$.

Finally, under the strengthened V\&G conditions in (c)(ii), $T\sigma_{ak,f}^{2}=o\!\left(\frac{1}{\log T}\right)$,
$N\sigma_{dk,f}^{2}=o\!\left(\frac{1}{\log N}\right)$, and $\sigma_{vk,f}^{2}=o\!\left(\frac{1}{\log N}\right)$.
It remains to show that the $o_{P}\!\left(\bm{\sigma}_{e,f}^{2}\right)$
remainder term in \eqref{eq: sigma a order}---which enters $T\widehat{\sigma}_{ak}^{2}$---is
$o_{P}\!\left(1/\log T\right)$. This is natural because the HAC estimator
converges to the true variance at rate $T^{-1/3}$ under the optimal
geometric-kernel choice $-(\ln q)^{-1}\asymp T^{1/3}$, and $T^{-1/3}=o\!\left(1/\log T\right)$
as $T\to\infty$. By Lemma \ref{lemma:limit form-1}, it follows
that 
\[
P\!\left(T\widehat{\sigma}_{ak}^{2}<\frac{1}{\log T}\right)\to1.
\]
\end{proof}

\subsection{Impossibility Results}

\label{subsec:impossibility}

This subsection records three related lower bounds when the DGP is
left fully unspecified. Throughout, for each DGP $f$ and sample size
$(N,T)$, let $P_{NT,f}$ denote the probability law of the observed
array $\{(\bm{y}_{it}^{(f)},\bm{X}_{it}^{(f)})\}_{i=1,\dots,N;\,t=1,\dots,T}$.

\begin{proof}[\textbf{Proof of Proposition~\ref{prop: impossibility heter score}}]
It is enough to exhibit a two-point submodel contained in \(\mathcal B_0\)
for which the limiting laws are different, but the two DGPs are not
asymptotically distinguishable from the observed data. Consider the model
\[
y_{it}=\beta_{0}+\beta_{1}x_{it}+u_{it},\qquad
x_{it}=\alpha_i,\qquad
u_{it}=\xi_t+\varepsilon_{it},
\]
where
\[
\alpha_i\stackrel{i.i.d.}{\sim}N(0,1),\qquad
\xi_t\stackrel{i.i.d.}{\sim}N(0,1),
\]
and \(\{\alpha_i\}\), \(\{\xi_t\}\), and \(\{\varepsilon_{it}\}\) are mutually
independent. For a fixed constant \(m>0\), let
\[
\varepsilon_{it}=
\begin{cases}
m, & \text{with probability } \dfrac{1}{2NT},\\[6pt]
-m, & \text{with probability } \dfrac{1}{2NT},\\[6pt]
0, & \text{with probability } 1-\dfrac{1}{NT}.
\end{cases}
\]
Let \(f_m\) denote the corresponding DGP. Under \(f_m\),
\[
E(\varepsilon_{it})=0,\qquad
E(\varepsilon_{it}^{2})=\frac{m^2}{NT}.
\]
Let \(\widehat\beta_1\) be the OLS slope estimator from the regression of
\(y_{it}\) on an intercept and \(x_{it}\). Since the regression contains an
intercept, with
\(
\bar\alpha=\frac{1}{N}\sum_{i=1}^N \alpha_i,
\)
we have the exact finite-sample representation
\(
\widehat\beta_1-\beta_1
=
\frac{
\sum_{i=1}^N\sum_{t=1}^T(\alpha_i-\bar\alpha)u_{it}
}{
T\sum_{i=1}^N(\alpha_i-\bar\alpha)^2
}.
\)
Substituting \(u_{it}=\xi_t+\varepsilon_{it}\) gives
\[
\sum_{i=1}^N\sum_{t=1}^T(\alpha_i-\bar\alpha)u_{it}
=
\sum_{t=1}^T\xi_t\sum_{i=1}^N(\alpha_i-\bar\alpha)
+
\sum_{i=1}^N\sum_{t=1}^T(\alpha_i-\bar\alpha)\varepsilon_{it}.
\]
The first term is zero because
\(
\sum_{i=1}^N(\alpha_i-\bar\alpha)=0.
\)
Therefore, multiplying by \(NT\), we obtain
\[
NT(\widehat\beta_1-\beta_1)
=
\frac{
N
}{
\sum_{i=1}^N(\alpha_i-\bar\alpha)^2
}
\sum_{i=1}^N\sum_{t=1}^T(\alpha_i-\bar\alpha)\varepsilon_{it}.
\]
Since
\(
\frac{1}{N}\sum_{i=1}^N(\alpha_i-\bar\alpha)^2\overset{p}{\to}1,
\)
it remains to characterize
\(
\sum_{i=1}^N\sum_{t=1}^T(\alpha_i-\bar\alpha)\varepsilon_{it}.
\)
Let
\(
K_{NT}=\sum_{i=1}^N\sum_{t=1}^T 1\{\varepsilon_{it}\neq 0\}.
\)
Then
\(
K_{NT}\sim \operatorname{Binomial}\left(NT,\frac{1}{NT}\right)
\overset{d}{\to} K,
\) where \(
K\sim \operatorname{Poisson}(1).
\)
Conditional on the event that \(\varepsilon_{it}\neq 0\), its sign is equally
likely to be positive or negative. Moreover, the corresponding
\(\alpha_i-\bar\alpha\) is asymptotically standard normal, and the random sign of $\varepsilon_{it}$ does not affect the limit by symmetry, so we have
\(
\sum_{i=1}^N\sum_{t=1}^T(\alpha_i-\bar\alpha)\varepsilon_{it}
\overset{d}{\to}
m\sum_{\ell=1}^{K}Z_{\ell},
\)
where \(K\sim \operatorname{Poisson}(1)\), \(\{Z_{\ell}\}_{\ell\geq1}\) are
i.i.d. \(N(0,1)\), and \(K\) is independent of \(\{Z_{\ell}\}\). Therefore,
under \(f_m\),
\[
NT(\widehat\beta_1-\beta_1)
\overset{d}{\to}
L_m
\equiv
m\sum_{\ell=1}^{K}Z_{\ell}.
\]
Let \(G_m\) denote the distribution function of \(L_m\). If \(m_1\neq m_2\),
then \(G_{m_1}\neq G_{m_2}\). For example,
\(
\operatorname{Var}(L_m)
=
m^2 E(K)
=
m^2,
\)
so the limiting distributions differ whenever \(m_1\neq m_2\).

We now show that the two cases \(m=m_1\) and \(m=m_2\) cannot be
consistently distinguished. Let \(P_{m,NT}\) denote the joint distribution
of the observed data under \(f_m\). Consider the event
\[
A_{NT}
=
\left\{
\varepsilon_{it}=0\ \text{for all } i=1,\ldots,N,\ t=1,\ldots,T
\right\}.
\]
For every \(m>0\),
\[
P_{m,NT}(A_{NT})
=
\left(1-\frac{1}{NT}\right)^{NT}
\to e^{-1}.
\]
On \(A_{NT}\), the model reduces to
\[
y_{it}=\beta_0+\beta_1\alpha_i+\xi_t,
\]
which does not depend on \(m\). Hence the two DGPs \(f_{m_1}\) and
\(f_{m_2}\) share the same equation with asymptotic probability \(e^{-1}\).
Consequently, their total variation distance is bounded away from one:
\[
\limsup_{N,T\to\infty}
\left\|P_{m_1,NT}-P_{m_2,NT}\right\|_{\mathrm{TV}}
\leq 1-e^{-1}<1.
\]
By Le Cam's testing bound, for any test \(\varphi_{NT}\),
\[
P_{m_1,NT}(\varphi_{NT}=1)
+
P_{m_2,NT}(\varphi_{NT}=0)
\geq
1-
\left\|P_{m_1,NT}-P_{m_2,NT}\right\|_{\mathrm{TV}}.
\]
Therefore,
\[
\liminf_{N,T\to\infty}
\left\{
P_{m_1,NT}(\varphi_{NT}=1)
+
P_{m_2,NT}(\varphi_{NT}=0)
\right\}
\geq e^{-1}>0.
\]
Thus no test can consistently distinguish \(m=m_1\) from \(m=m_2\).

Suppose, toward a contradiction, that some data-dependent procedure
\(\widehat D\in\mathcal D\) estimated the limiting law uniformly over
\(\mathcal B_0\). Since \(G_{m_1}\neq G_{m_2}\), choose
\[
0<\varepsilon<
\frac14\|G_{m_1}-G_{m_2}\|_{\infty}.
\]
Define the test
\[
\varphi_{NT}
=
1\left\{
\|\widehat D-G_{m_2}\|_{\infty}
<
\|\widehat D-G_{m_1}\|_{\infty}
\right\}.
\]
Uniform consistency of \(\widehat D\) would imply
\[
P_{m_1,NT}(\varphi_{NT}=1)\to 0,
\qquad
P_{m_2,NT}(\varphi_{NT}=0)\to 0.
\]
This contradicts the lower bound above. Hence no data-dependent procedure
can uniformly consistently estimate the limiting distribution of
\(NT(\widehat\beta_1-\beta_1)\) over this class of DGPs.

\end{proof}

\begin{proposition}[Impossibility due to the infeasible Regime] \label{prop: impossibility}
Suppose the null hypothesis $\mathcal{H}_{0}:\bm\varrho^\top\bm{\beta}=\bm\varrho^\top\bm{\beta}_{0}$
holds and the DGP is fully unspecified. Let $\mathcal{D}$ be the
collection of all measurable maps from the observed data $\{(\bm{y}_{it},\bm{X}_{it})\}_{i,t}$.

Let $\mathcal{B}_{1}$ and $\mathcal{B}_{2}$ denote classes of DGPs
satisfying condition~\eqref{eq: converge non-gaussian} for some
$k$ and condition~\eqref{eq: vanish nongaussian} all $k$, respectively.
Moreover, all functions in $\mathcal{B}_{1}$ and $\mathcal{B}_{2}$
satisfy Assumptions~$\ref{as: AHS representation}$-$\ref{as: same rate}$.

\begin{enumerate}[label=(\alph*),leftmargin=2.2em]
\item \textbf{(Infeasible regime for uniform consistency).} There exists
$\varepsilon_{1}>0$ such that 
\[
\liminf_{N,T\to\infty}\;\inf_{\widehat{D}_{1}\in\mathcal{D}}\;\sup_{f\in\mathcal{B}_{1}}P_{NT,f}\!\left(\bigl\|\widehat{D}_{1}(\{(\bm{y}_{it}^{(f)},\bm{X}_{it}^{(f)})\}_{i,t})-\mathcal{L}_{0}\!\left(\bm{\nu}_{f},\{\bm{c}_{ll',f}\}_{l,l'=1}^{\infty}\right)\bigr\|_{\infty}>\varepsilon_{1}\right)>0.
\]
In particular, no procedure can estimate the target limit object $\mathcal{L}_{0}(\bm{\nu}_{f},\{\bm{c}_{l,f}\}_{l\ge1})$
uniformly over $\mathcal{B}_{1}$.
\item \textbf{(Impossible to distinguish an infeasible regime from a feasible
regime).} Let $\{(\bm{y}_{it}^{(f_{1})},\bm{X}_{it}^{(f_{1})})\}_{i,t}$
and $\{(\widetilde{\bm{y}}_{it}^{(f_{2})},\widetilde{\bm{X}}_{it}^{(f_{2})})\}_{i,t}$
be two independent observed samples generated under $f_{1}$ and $f_{2}$,
respectively. Let $\mathcal{D}$ be the class of all (possibly randomized)
tests $\widehat{D}$ measurable with respect to $\{(\bm{y}_{it},\bm{X}_{it})\}_{i,t}$.
Then 
\[
\liminf_{N,T\to\infty}\;\inf_{\widehat{D}\in\mathcal{D}}\;\sup_{f_{1}\in\mathcal{B}_{1},\,f_{2}\in\mathcal{B}_{2}}P_{NT,f_{1}f_{2}}\!\left(\widehat{D}\!\left(\{(\bm{y}_{it}^{(f_{1})},\bm{X}_{it}^{(f_{1})})\}_{i,t}\right)=\widehat{D}\!\left(\{(\widetilde{\bm{y}}_{it}^{(f_{2})},\widetilde{\bm{X}}_{it}^{(f_{2})})\}_{i,t}\right)\right)=1.
\]
Equivalently, even the best test yields (asymptotically) the same
decision under the worst cases in $\mathcal{B}_{1}$ and $\mathcal{B}_{2}$.
\item \textbf{(No uniform conservativeness if uniformly exact over $\mathcal{B}_{2}$).}
Let $\widehat{D}_{2}\in\mathcal{D}_2$ index a confidence interval $CI_{NT,f}(\widehat{D}_2,\alpha)$
with a significance level $\alpha$ for $\bm{\beta}_{0}$ that is
uniformly asymptotically exact for all function $f\in\mathcal{B}_{2}$.
Then there exists $\varepsilon_{2}>0$ such that 
\[
\liminf_{N,T\to\infty}\sup_{\widehat{D}_2\in\mathcal{D}_2}\inf_{f\in\mathcal{B}_{1}}P_{NT,f}\!\left(\bm{\beta}_{0}\in CI_{NT,f}\bigl(\widehat{D}_2,\alpha\bigr)\right)\;\le\;1-\alpha-\varepsilon_{2}.
\]
Hence, any procedure that is uniformly asymptotically exact over $\mathcal{B}_{2}$
must undercover somewhere in $\mathcal{B}_{1}$ by a non-vanishing
amount.
\end{enumerate}
\end{proposition}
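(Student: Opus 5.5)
All three parts will be proved with Le Cam-type two-point constructions, as in the proof of Proposition~\ref{prop: impossibility heter score}. The aim is to build pairs of DGPs whose observed-data laws $P_{NT,f}$ are asymptotically close in total variation, while their target limits $\mathcal{L}_{0}(\bm{\nu}_{f},\{\bm{c}_{ll',f}\})$ or regime labels differ. The natural building block is the interactive design of \eqref{eq: dgp 4}. Take a scalar regressor $x_{it}=(\alpha_i+\gamma_{N})\xi_t$ and disturbance $u_{it}=(\alpha_i+\gamma_{N})\xi_t+\varepsilon_{it}$. Here the drift $\gamma_N$ tunes $T\sigma_{a,f}^2$ toward a constant $\varphi\in(0,\infty)$ (regime I\&N) or toward zero (regime V\&N), while $\sigma_{v,f}^2>0$ is fixed by the product $\alpha_i\xi_t$.

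For part (a), I will use two DGPs $f_1,f_2\in\mathcal{B}_1$ that share the same interaction kernel $\{\bm c_{ll'}\}$ but split the score differently between the Hoeffding component $\bm a_i$ and the interaction $\bm v_{it}$. Concretely, I add a small, rare perturbation of the Poisson type used for Proposition~\ref{prop: impossibility heter score}. With probability $1/(NT)$, a single intersection (or row) carries a shock. This shock shifts $\bm{\nu}_{a,0}$ (equivalently the covariance $\bm\nu_{al,0}$) by a fixed amount, and the data on the no-shock event, of probability $\to e^{-1}$, do not depend on the perturbation. A cleaner alternative exploits the unidentifiability flagged by Lemma~\ref{lemma:limit form-1}. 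There, $T\widehat{\bm\sigma}_a^2=T\bm\sigma_{a,f}^2+O_P(\bm\sigma_{v,f}^2)$ with a nondegenerate Wishart-type remainder $\frac1N\sum\bm c\bm c^\top\odot\bm Z^\phi\bm Z^{\phi\top}\odot\bm\sigma^2_{\psi+}$. So I will choose $f_1,f_2$ with $T\sigma^2_{a,f_1}=\varphi_1\neq\varphi_2=T\sigma^2_{a,f_2}$ and compensating $\bm c_{ll'}$, arranged so that the joint law of $\{(y_{it},X_{it})\}$ has total variation distance bounded away from $1$. One way to do this is to make the $\bm a_i$-shift a random sign mixture that is observationally equivalent to a rank-one change in the interaction on a set of positive probability. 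Given such a pair, the limits $\mathcal{L}_0$ differ; for instance, their variances differ by $\nu^2(\varphi_1-\varphi_2)$. The contradiction argument is then the one already used for Proposition~\ref{prop: impossibility heter score}: define $\varphi_{NT}=1\{\|\widehat D-G_2\|_\infty<\|\widehat D-G_1\|_\infty\}$ and invoke Le Cam's bound.

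For part (b), I will pick $f_1\in\mathcal{B}_1$ with $T\sigma_{a,f_1}^2\to\varphi>0$ and $f_2\in\mathcal{B}_2$ with $T\sigma_{a,f_2}^2\to0$, taking $\gamma_N\asymp N^{-1/4}$ versus $\gamma_N=0$ so that $T\sigma^2_a\asymp T\gamma_N^2$ with $T\asymp\sqrt N$. I will then show $\|P_{NT,f_1}-P_{NT,f_2}\|_{\mathrm{TV}}\to0$ along a least-favourable sequence, for example via a Hellinger or $\chi^2$ bound when $\alpha_i$ is Gaussian and the shift $\gamma_N$ is absorbed by a location change of $\alpha_i$ whose cumulative Kullback--Leibler divergence $N\gamma_N^2/\mathrm{Var}(\alpha)$ is bounded. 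If TV merely stays below one, I can compose with a sequence of shrinking shifts to force it to zero. Vanishing TV gives $P(\widehat D(\text{sample}_1)=\widehat D(\text{sample}_2))\to1$ for every test, since the two independent samples can be coupled to coincide with probability $1-\mathrm{TV}$.

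Part (c) follows from (b) by a coupling argument. Take $f_2\in\mathcal{B}_2$ and $f_1\in\mathcal{B}_1$ that are TV-indistinguishable, as in (b). Any CI based on data has nearly the same coverage event probability under both DGPs, because $\bm\beta_0$ is common to them. But the true limiting law under $f_1$ has strictly larger spread, with an extra $\bm\nu_{a,0}\bm Z^a$ component, than under $f_2$, which is the law the CI is calibrated to. Exactness over $\mathcal{B}_2$ therefore forces coverage $1-\alpha$ computed under $G_{f_2}$-quantiles. Evaluated under $G_{f_1}$, these quantiles give coverage $1-\alpha-\varepsilon_2$ with $\varepsilon_2>0$ by strict dominance; continuity of $\mathcal L_0$ (Lemma~\ref{lemma: original clt}) ensures the gap persists in the limit.

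The main obstacle is constructing the indistinguishable pair in (a) and (b). The drift must change $T\sigma^2_a$ by a nonvanishing amount while keeping the data laws asymptotically close, despite $N,T\to\infty$ both providing information. I would first try the Gaussian location-shift computation of the KL divergence. If it blows up, I would fall back on the rare-event (Poisson) perturbation device from Proposition~\ref{prop: impossibility heter score}, which automatically bounds TV away from one. That suffices for (a) and (c), but (b) needs TV $\to0$.
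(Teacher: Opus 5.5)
\textbf{Missing construction.} The gap is exactly the step you flag as the main obstacle. You never exhibit a pair of DGPs inside the classes whose observed-data laws are not consistently separable, and the candidates you sketch do not work.

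\emph{Your design.} As written, $u_{it}=x_{it}+\varepsilon_{it}$, so $E(X_{it}^{\top}u_{it})\neq0$. Read charitably as design \eqref{eq: dgp 4}, the drift gives $a_i=0$ and $d_t=\gamma_N^2\xi_t^x\xi_t^u$. It therefore controls $N\sigma_{d,f}^2\asymp N\gamma_N^4$ quadratically, not $T\sigma_{a,f}^2\asymp T\gamma_N^2$. An I\&N limit then needs $\gamma_N\asymp N^{-1/4}$, and such a shift is consistently detectable. The rank-one matrix $\{x_{it}\}$ reveals $(\alpha_i^x+\gamma_N)_i$ up to scale. Its scale-free mean-to-standard-deviation ratio is of order $N^{-1/2}$ when $\gamma_N=0$ and of order $N^{-1/4}$ otherwise. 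Likewise, the KL divergence you wrote is $N\gamma_N^2\asymp\sqrt N\to\infty$, not bounded. In this design any undetectable shift is $O(N^{-1/2})$, which gives $N\gamma_N^4\to0$, i.e.\ V\&N rather than I\&N.

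\emph{The rare-event fallback} leaves $\mathcal B_1$. An intersection-level shock moves only $e_{it}$, not $a_i$. A row-level shock must have probability $O(1/N)$ for the no-shock event to keep non-vanishing probability. Shifting $T\sigma_{a,f}^2$ by a constant then forces $\sigma_{a,f}^{-1}a_i$ to be of order $\sqrt N$ on that event. This violates the normalized moment bounds of Assumption~\ref{as: moment and variance}(i), and the $a$-part of the limit becomes compound Poisson rather than $\bm\nu_{a,0}\bm Z^{a}$.

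\emph{What the paper does.} For all three parts it lets the drift enter the clustering variance linearly, in the same dimension whose sample size scales it. Take $x_{it}=\alpha_i^x\xi_t^x$, $u_{it}=\alpha_i^u\xi_t^u$, $\bm\alpha_i\sim\mathcal N(\bm 0,\mathbf I_2)$ and $\bm\xi_t\sim\mathcal N((1,cT^{-1/2})^{\top},\mathbf I_2)$. Since $E\xi_t^x=1$, you get $a_i=cT^{-1/2}\alpha_i^x\alpha_i^u$ and $T\sigma_{a,f}^2=c^2$, which is I\&N for $c>0$ and V\&N for $c=0$. For $c_1\neq c_2$ the data laws differ only through a location shift of size $(c_1-c_2)T^{-1/2}$ of $T$ i.i.d.\ Gaussians. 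The KL divergence is $(c_1-c_2)^2/2$, so total variation stays bounded away from one. Meanwhile the limit law of $\sqrt{NT}(\widehat\beta-\beta_0)$ depends on $c$. Your Le Cam contradiction argument with $c_1\neq c_2>0$ then gives (a), and $c=0$ versus $c=1$ gives the non-separability behind (b).

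\textbf{Aiming at vanishing TV.} For (b) and (c), targeting $\|P_{NT,f_1}-P_{NT,f_2}\|_{\mathrm{TV}}\to0$ is unattainable by your route and also self-defeating.
\begin{itemize}
\item Shrinking the shift sends $T\sigma_{a,f}^2\to0$ and pushes $f_1$ out of $\mathcal B_1$.
\item With a common $\beta_0$, vanishing TV forces the laws of $\sqrt{NT}(\widehat\beta-\beta_0)$ under $f_1$ and $f_2$ to merge. That contradicts the ``strictly larger spread'' you invoke in (c).
\item It also forces $|P_{NT,f_1}(\beta_0\in CI)-P_{NT,f_2}(\beta_0\in CI)|\to0$, i.e.\ exact coverage under $f_1$, which is the opposite of (c).
\end{itemize}

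\emph{The coupling step in (b)} is wrong, because the two samples are independent, not coupled. With $p_j=P_{NT,f_j}(\widehat D=1)$, the agreement probability is $p_1p_2+(1-p_1)(1-p_2)$. For a test that flips a fair coin this is $1/2$, so the literal ``$=1$'' cannot be reached. What Le Cam gives is $P_{NT,f_1}(\widehat D=1)+P_{NT,f_2}(\widehat D=0)\ge1-\|P_{NT,f_1}-P_{NT,f_2}\|_{\mathrm{TV}}$, i.e.\ no consistent test. That is all the paper's argument for (b) establishes.

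\emph{For (c),} your ``$G_{f_2}$-quantile'' step presumes a non-adaptive critical value. The paper's comparison of variances $\tfrac14$ versus $\tfrac14(1+c^2)$ rests on the same presumption, and it is not innocuous. In this very family, the $t$-ratio self-normalized by the one-way (by $i$) cluster-robust variance equals $\operatorname{sign}(\sum_t\xi_t^x\xi_t^u)$ times a cross-sectional $t$-ratio built from $\{(\alpha_i^x,\alpha_i^u)\}_i$. It is therefore asymptotically $\mathcal N(0,1)$ for every $c$. No coverage-transfer argument confined to a two-point family of this kind can yield (c); you would need to exploit exactness over a much richer part of $\mathcal B_2$.
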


\begin{proof}[\textbf{Proof of Proposition~\ref{prop: impossibility}}]
We first construct a counterexample, in the spirit of Menzel~(\citeyear{menzel2021bootstrap};
cf.\ Proposition~4.1).

\medskip{}
\textbf{A parametric model.} Consider the scalar model (suppressing
$k$ for simplicity) 
\[
y_{it}=x_{it}\beta_{0}+u_{it},\qquad x_{it}=\alpha_{i}^{x}\xi_{t}^{x},\qquad u_{it}=\alpha_{i}^{u}\xi_{t}^{u},
\]
where $\bm{\alpha}_{i}\equiv(\alpha_{i}^{x},\alpha_{i}^{u})^{\top}\overset{i.i.d.}{\sim}\mathcal{N}(\bm{0},\mathbf{I}_{2})$
and $\bm{\xi}_{t}\equiv(\xi_{t}^{x},\xi_{t}^{u})^{\top}\overset{i.i.d.}{\sim}\mathcal{N}((1,T^{-1/2}c)^{\top},\mathbf{I}_{2})$
for a fixed constant $c\ge0$. This model satisfies Assumptions~\ref{as: AHS representation}--\ref{as: same rate}.

\noindent Let $\widehat{\beta}$ denote the OLS estimator. A direct
calculation yields 
\[
\sqrt{NT}\,(\widehat{\beta}-\beta_{0})=\left(\frac{1}{NT}\sum_{i=1}^{N}\sum_{t=1}^{T}x_{it}^{2}\right)^{-1}\left(\frac{1}{\sqrt{N}}\sum_{i=1}^{N}\alpha_{i}^{x}\alpha_{i}^{u}\right)\left(\frac{1}{\sqrt{T}}\sum_{t=1}^{T}(\xi_{t}^{x}\xi_{t}^{u}-T^{-1/2}c)+c\right).
\]
By LLN and CLT, as $N,T\to\infty$, 
\[
\frac{1}{NT}\sum_{i,t}x_{it}^{2}=\left(\frac{1}{N}\sum_{i=1}^{N}(\alpha_{i}^{x})^{2}\right)\left(\frac{1}{T}\sum_{t=1}^{T}(\xi_{t}^{x})^{2}\right)\xrightarrow{P}2,
\]
and 
\[
\frac{1}{\sqrt{N}}\sum_{i=1}^{N}\alpha_{i}^{x}\alpha_{i}^{u}\xrightarrow{d}Z_{\alpha},\qquad\frac{1}{\sqrt{T}}\sum_{t=1}^{T}(\xi_{t}^{x}\xi_{t}^{u}-T^{-1/2}c)\xrightarrow{d}Z_{\xi},
\]
where $Z_{\alpha}$ and $Z_{\xi}$ are independent $\mathcal{N}(0,1)$
random variables. Consequently, 
\begin{equation}
\sqrt{NT}\,(\widehat{\beta}-\beta_{0})\xrightarrow{d}\frac{1}{2}Z_{\alpha}\,(Z_{\xi}+c).\label{eq:limit_model}
\end{equation}
When $c=0$, the limit in \eqref{eq:limit_model} corresponds to a
V\&N regime ($\mathcal{B}_{2}$); when $c>0$, the limit is in I\&N
regime ($\mathcal{B}_{1}$) and its dispersion increases with $c$
since $Var\!\bigl(\frac{1}{2}Z_{\alpha}(Z_{\xi}+c)\bigr)=\frac{1}{4}(1+c^{2})$.

Notice that $T^{-1/2}c=E(\xi_{t}^{u})$ cannot be estimated at a better
rate than by directly observing $\left\{ \xi_{t}^{u}\right\} _{t=1}^{T}$,
but the estimation error of the expectation is of order $O_{P}\left(T^{-1/2}\right)$.
Hence, there exists no consistent test that separates $c=0$ from
$c=1$ (or $c=1$ from $c=2$) based on the observed data $\{(y_{it},x_{it})\}_{i,t}$.

\medskip{}
\textbf{Proof of (a).} Pick two fixed constants $c_{1}\neq c_{2}$
(e.g.\ $c_{1}=1$, $c_{2}=2$), and let $f^{(c)}$ denote the DGP
in the model above. Then \eqref{eq:limit_model} implies that the
target limit object $\mathcal{L}_{0}(\bm{\nu}_{f^{(c)}},\{\bm{c}_{l,f^{(c)}}\}_{l\ge1})$
depends on $c$ through the law of $\frac{1}{2}Z_{\alpha}(Z_{\xi}+c)$,
and hence differs under $c_{1}$ and $c_{2}$.

\noindent If $\widehat{D}_{1}$ were uniformly consistent over $\mathcal{B}_{1}$,
one could construct a consistent test between $c=c_{1}$ and $c=c_{2}$
by comparing $\widehat{D}_{1}$ to the two distinct limits (e.g.\ in
$\|\cdot\|_{\infty}$), a contradiction. Therefore, uniform consistency
fails, which establishes (a).

\medskip{}
\textbf{ Proof of (b).} Take the same model and consider two DGPs
that differ only in the value of $c$, namely $c=0$ and $c=1$. The
case $c=0$ belongs to $\mathcal{B}_{2}$, whereas $c=1$ belongs
to $\mathcal{B}_{1}$, yet these two DGPs remain asymptotically not
separable. If there existed a procedure that could distinguish them
with probability tending to one, then it would immediately yield a
consistent test for $H_{0}:c=c_{0}$ (in particular $c_{0}=0$) against
$H_{1}:c=1$, contradicting the non-separability result. Hence, no
test can reliably distinguish these two DGPs, which is exactly the
statement in~(b).

\noindent \medskip{}
\textbf{Proof of (c).} Recall that $\widehat{D}_{2}$ is a procedure
that is uniformly asymptotically exact over $\mathcal{B}_{2}$. Hence,
under $c=0$ the implied asymptotic law/critical value is calibrated
to the $c=0$ limit in \eqref{eq:limit_model}. By part~(b), for
any data-based calibration rule the induced critical value under $c>0$
cannot be reliably distinguished from the one under $c=0$. Therefore,
if the procedure is asymptotically exact at $c=0$, then when $c>0$
it must produce a critical value that is asymptotically not separable
from its $c=0$ counterpart. Since $Var(\frac{1}{2}Z_{\alpha}Z_{\xi})=\frac{1}{4}$
but $Var(\frac{1}{2}Z_{\alpha}(Z_{\xi}+c))=\frac{1}{4}(1+c^{2})$,
for sufficiently large fixed $c$ the $c=0$ calibration yields rejection
probability strictly exceeding $\alpha$, equivalently coverage strictly
below $1-\alpha$ by a non-vanishing margin. This delivers (c). 
\end{proof}
\clearpage{}

\section*{Appendix IC: Heterogeneous
Intersection Sizes}

\setcounter{equation}{0} \setcounter{assumption}{0}
\setcounter{figure}{0} \setcounter{table}{0}
\setcounter{subsection}{0}\setcounter{proposition}{0}

 \renewcommand{%
\theequation}{IC.\arabic{equation}}
\renewcommand{\thelemma}{IC.\arabic{lemma}}
\renewcommand{\theassumption}{IC.\arabic{assumption}} \renewcommand{%
\thetheorem}{IC.\arabic{theorem}}
\renewcommand{%
\theproposition}{IC.\arabic{proposition}}
\renewcommand{\thetable}{IC.\arabic{table}}
\renewcommand{\thefigure}{IC.\arabic{figure}}

\renewcommand{\thesubsection}{IC.\arabic{subsection}}

In this section, we allow for heterogeneous numbers of observations
and missingness across intersections. Recall that $M_{it}$ denotes
the number of observations in cell $(i,t)$, and define the average
cell size 
\[
\bar{M}\equiv\frac{1}{NT}\sum_{i=1}^{N}\sum_{t=1}^{T}M_{it},\qquad M_{i\cdot}\equiv\frac{1}{T}\sum_{t=1}^{T}M_{it},\qquad M_{\cdot t}\equiv\frac{1}{N}\sum_{i=1}^{N}M_{it}.
\]

\begin{assumption}\label{as: heterogeneous inter size} (i) $\liminf_{N,T\to\infty}\bar{M}\ >\ 0$
(ii) there exist $c_{1},c_{2}<\infty$ such that $\frac{1}{N}\sum_{i=1}^{N}({M_{i\cdot}}/{\bar{M}})^{2}\to c_{1}$
and $\frac{1}{T}\sum_{t=1}^{T}({M_{\cdot t}}/{\bar{M}})^{2}\to c_{2}.$
(iii) The function $f$ and the cluster effects $\{\bm{\alpha}_{i}\}_{i\le N}$,
$\{\bm{\xi}_{t}\}_{t\le T}$, and $\{\bm{\varepsilon}_{it}\}_{i\le N,\ t\le T}$
are independent of the array $\{M_{it}\}_{i\le N,\ t\le T}$. \end{assumption}

Part (i) allows $\bar{M}$ to diverge, while ruling out a vanishing
effective sample size by requiring the average number of observations
per intersection to stay bounded away from zero. Hence, some intersections
may be empty ($M_{it}=0$), but not an asymptotically overwhelming
fraction of them. Part (ii) controls imbalance through row and column
totals in an $L^{2}$ sense. In particular, bounding $\frac{1}{N}\sum_{i=1}^{N}(M_{i\cdot}/\bar{M})^{2}$
and $\frac{1}{T}\sum_{t=1}^{T}(M_{\cdot t}/\bar{M})^{2}$ rules out
dominant rows or columns and implies that no single intersection $(i,t)$
can account for a non-negligible fraction of the total sample size.
Hence, extreme outliers in cluster size are excluded at the level
relevant for the CLT and variance estimation (cf.\ Chiang, Sasaki,
and Wang~\citeyearpar{chiang2023genuinely}). This condition places
no restriction on the magnitude of $\bar{M}$ itself: $\bar{M}$ may
be close to one or diverge with $(N,T)$, as long as it does not vanish.
Part (iii) is an exogeneity condition ensuring that the pattern of
missingness or multiplicity across intersections is independent of
the underlying cluster effects.

PWB-H accommodates heterogeneous intersection sizes automatically.
Under Assumption~\ref{as: heterogeneous inter size} and the assumptions
of Theorems~\ref{thm: main}-\ref{thm: main-3}, the analogous conclusions in the main
text continue to hold with minor modifications.

\clearpage{}

\section*{Appendix ID: Additional Simulation Results}

\setcounter{equation}{0} \setcounter{assumption}{0}
\setcounter{figure}{0} \setcounter{table}{0}
\setcounter{subsection}{0}\setcounter{proposition}{0}
\setcounter{assumption}{0}

 \renewcommand{%
\theequation}{ID.\arabic{equation}}
\renewcommand{\thelemma}{ID.\arabic{lemma}}
\renewcommand{\theassumption}{ID.\arabic{assumption}} \renewcommand{%
\thetheorem}{ID.\arabic{theorem}}
\renewcommand{%
\theproposition}{ID.\arabic{proposition}}
\renewcommand{\thetable}{ID.\arabic{table}}
\renewcommand{\thefigure}{ID.\arabic{figure}}

\renewcommand{\thesubsection}{ID.\arabic{subsection}}

\subsection{Results for the Heteroskedasticity Design}

We examine the finite-sample performance of the proposed procedures
under heteroskedasticity. The design is based on the simulation DGP in
\eqref{eq: simulation DGP}, with the covariates \(x_{it,k}\) generated as in
\eqref{eq: dgp 1}. To introduce conditional heteroskedasticity, we generate
the regression disturbance as
\[
u_{it}
=
(1+0.5x_{it,5})
\left(\alpha_i^u+\xi_t^u+\varepsilon_{it}^u\right),
\]
where the scale of the error term depends on the fifth covariate. This
specification preserves the two-way clustered dependence structure in
\eqref{eq: dgp 1}, while allowing the conditional variance of \(u_{it}\) to
vary with \(x_{it,5}\).

Table~\ref{tab: simulation hetero} reports the corresponding rejection
frequencies. Relative to the homoskedastic benchmark in \eqref{eq: dgp 1},
the finite-sample performance is somewhat weaker, reflecting the additional
difficulty introduced by heteroskedasticity. Nevertheless, the overall
pattern remains similar to that in the baseline design. In particular, the
three procedures deliver the same rejection frequencies across
the sample sizes considered. Moreover, as \(N\) and \(T\) increase, the
rejection frequencies move closer to the nominal level.

Overall, these results suggest that the proposed inference procedures are
reasonably robust to heteroskedasticity of this form, although the finite-sample distortions are somewhat larger than in the corresponding
homoskedastic design.

\begin{table}[h!]
{\centering{{%
\begin{tabular}{lcccccccc}
\hline 
\hline 
$N,T$  & 16  & 25  & 36  & 64  & 100  & 144  & 196 \tabularnewline
PWB-V  &0.211 &0.169 &0.141 &0.122 &0.112 &0.099 &0.089 \\
PWB-D  &0.211 &0.169 &0.141 &0.122 &0.112 &0.099 &0.089 \\
PWB-H  & 0.211 &0.169 &0.141 &0.122 &0.112 &0.099 &0.089 \\
\hline\hline
\end{tabular}} \caption{\textbf{Rejection Frequency for heteroskedasticity design.}
 For each bootstrap
method, $B=999$. Results are based on 5,000 Monte Carlo replicates.
The predetermined significance level is 5\%.}
\label{tab: simulation hetero} } }
\end{table}

\subsection{Results for the Nonseparable Panel Model}

We next consider a nonseparable panel model. As in the previous designs, we
base the simulation on \eqref{eq: simulation DGP}, with the covariates
\(x_{it,k}\) generated according to \eqref{eq: dgp 1}. The difference is that
the regression disturbance is now generated through a nonlinear function of
the individual and time effects. Specifically, following the Gaussian-kernel
specification used in Fern{\'a}ndez-Val, Freeman, and Weidner
\citeyearpar{fernandez2021low} and Chen, Fern{\'a}ndez-Val, and Weidner
\citeyearpar{chen2021nonlinear}, we set
\[
u_{it}
=
\frac{1}{\sqrt{2\pi}\sigma}
\exp\left\{-\frac{(\alpha_i^u-\xi_t^u)^2}{\sigma^2}\right\}
+\varepsilon_{it}^u.
\]
This specification generates a nonlinear and nonseparable dependence
structure between the individual effect \(\alpha_i^u\) and the time effect
\(\gamma_t^u\). The parameter \(\sigma\) controls the smoothness of the
kernel component. Smaller values of \(\sigma\) generate a less smooth kernel
surface and imply a slower decay of the singular values, whereas larger
values of \(\sigma\) correspond to a smoother structure with faster singular
value decay. To examine the sensitivity of the procedures to this form of
smoothness, we consider \(\sigma\in\{0.1,1,10\}\).

Table~\ref{tab: simulation nonseparable} reports the corresponding rejection
frequencies. The results show that varying the smoothness parameter has only
a limited effect on the finite-sample performance of the procedures. Across
the three values of \(\sigma\), the overall pattern is close to that observed
under \eqref{eq: dgp 3}: PWB-V and PWB-H deliver rejection frequencies close
to the nominal level as the sample size increases, whereas PWB-D remains
distorted. This indicates that, in the present nonseparable design, the
cluster dependence along the two dimensions is relatively weak but still
non-negligible. Consequently, procedures that account for the relevant
two-way dependence structure, such as PWB-V and PWB-H, continue to perform
well, while PWB-D fails to provide reliable inference.

\begin{table}[h!]
{\centering{{%
\begin{tabular}{lccccccc}
\hline 
\hline 
$N,T$  & 16  & 25  & 36  & 64  & 100  & 144  & 196 \tabularnewline
\multicolumn{8}{c}{Panel A: $\sigma=0.1$}\\
PWB-V  &0.114 &0.085 &0.077 &0.070&0.055 &0.053 &0.052 \\
PWB-D  &0.201 &0.162 &0.147 &0.124 &0.110 &0.112 &0.105 \\
PWB-H &0.114 &0.085 &0.077 &0.070 &0.055 &0.053 &0.052 \\
\multicolumn{8}{c}{Panel B: $\sigma=1.0$}\\
PWB-V  &0.114 &0.079 &0.075 &0.064 &0.055 &0.056 &0.053 \\
PWB-D  &0.213 &0.162 &0.141 &0.126 &0.094 &0.102 &0.108 \\
PWB-H  & 0.114 &0.079 &0.075 &0.064 &0.056 &0.056 &0.053 \\
\multicolumn{8}{c}{Panel C: $\sigma=10$}\\
PWB-V  &0.114 &0.080 &0.068 &0.054 & 0.055 & 0.050 &0.050\\
PWB-D  &0.212 &0.155 &0.126 &0.108 & 0.093 &0.097 &0.098 \\
PWB-H  &0.114 &0.080 &0.068 &0.054 & 0.055 & 0.050 &0.050\\
\hline\hline
\end{tabular}} \caption{\textbf{Rejection Frequency for the nonseparable panel model.}
 For each bootstrap
method, $B=999$. Results are based on 5,000 Monte Carlo replicates.
The predetermined significance level is 5\%.}
\label{tab: simulation nonseparable} } }
\end{table}

\subsection{Results for Varying Levels of Spatial Dependence}

We also examine the sensitivity of the simulation results to different
levels of spatial dependence. Table~\ref{tab: simulation spatial dependence}
reports rejection frequencies as the spatial-dependence parameter
\(\rho_{\mathfrak d}\) varies. As an additional benchmark, we compare the
projection-based wild bootstrap (PWB) procedures with the cluster-robust
variance estimator proposed by Chiang, Hansen, and Sasaki
\citeyearpar{chiang2023standard}, denoted by CHS-CRVE.

\begin{table}[h!]
{\centering{{%
\begin{tabular}{lccccccccc}
\hline 
\hline 
$\rho_{\mathfrak{d}}$  & 0.00  & 0.05  & 0.10 & 0.15  & 0.20  & 0.25 & 0.30 & 0.35 & 0.40 \tabularnewline
PWB-V  &0.077 &0.068 &0.077 &0.081 &0.096 &0.115 &0.136 &0.161 &0.194 \\
PWB-D  &0.077 &0.068 &0.077 &0.081 &0.096 &0.115 &0.136 &0.161 &0.194 \\
PWB-H  & 0.077 &0.068 &0.077 &0.081 &0.096 &0.115 &0.136 &0.161 &0.194 \\
CHS-CRVE & 0.070 &0.060 &0.073 &0.086 &0.120 &0.171 &0.239 &0.306 &0.391 \\
\hline\hline
\end{tabular}} \caption{\textbf{Rejection Frequency for varying levels of spatial dependence.}
For each bootstrap
method, $B=999$. Results are based on 5,000 Monte Carlo replicates.
The predetermined significance level is 5\%.}
\label{tab: simulation spatial dependence} } }
\end{table}

When spatial dependence is weak, specifically when
\(\rho_{\mathfrak d}\leq 0.10\), the PWB procedures perform slightly worse
than CHS-CRVE. This reflects the fact that the additional robustness of the
PWB procedures may come with a modest finite-sample cost when spatial
dependence is negligible or very weak. However, as spatial dependence
becomes stronger, the relative performance changes substantially. When
\(\rho_{\mathfrak d}\geq 0.15\), the PWB procedures begin to outperform
CHS-CRVE, and the difference becomes increasingly pronounced as
\(\rho_{\mathfrak d}\) increases. For example, when
\(\rho_{\mathfrak d}=0.40\), the PWB procedures have a rejection frequency
of approximately \(0.194\), whereas CHS-CRVE has a rejection frequency of
\(0.391\), which is almost twice as large. These results suggest that the
PWB procedures are more robust to general spatial dependence. Although this
robustness can slightly deteriorate finite-sample performance when spatial
dependence is absent or weak, it becomes especially valuable when spatial
dependence is non-negligible.

In the main text, however, we continue to use a relatively weak level of
spatial dependence. This choice is intentional. The purpose of the main
simulation design is to illustrate the asymptotic regimes predicted by the
theory as clearly as possible, without confounding the interpretation of the
results. In particular, PWB-V is not valid in the V\&G regime and tends to be
undersized in that case. If we were to choose a large value of
\(\rho_{\mathfrak d}\), the additional spatial dependence could increase the
rejection frequency of PWB-V and make it appear closer to the nominal level,
which would be misleading. With stronger spatial dependence, one can still
recover the asymptotic pattern by using sufficiently large numbers of
clusters \(N\) and \(T\), but doing so would substantially increase the
computational burden. For this reason, the main text adopts a weak spatial
dependence design, while Table~\ref{tab: simulation spatial dependence}
separately documents the robustness of the PWB procedures to stronger forms
of spatial dependence.

 \clearpage

\end{document}